   \pdfoutput=1

\documentclass{article}
\usepackage{excludeonly}

\usepackage{savesym}
\usepackage{scrextend}  
\usepackage{amsthm,amsmath}
\usepackage{amssymb,latexsym,graphicx}
\usepackage{amscd}
 \usepackage[all,cmtip]{xy}
\usepackage{tikz-cd}
  \usetikzlibrary{arrows}
\tikzset{
  commutative diagrams/.cd,
  arrow style=tikz
  }
\usepackage{tikz}

\usepackage{accents}
\usepackage{cite}
\usepackage{mathtools}
\usepackage{stmaryrd} 

\usepackage{calligra}
\DeclareMathAlphabet{\mathcalligra}{T1}{calligra}{m}{n}
\DeclareMathAlphabet{\mathpzc}{OT1}{pzc}{m}{it}
\usepackage{hycolor}
\usepackage{xcolor}
\usepackage[
                       colorlinks=true,
                       linkcolor=black, 
                       citecolor=black, 
                       urlcolor=blue,
                     ]{hyperref}
\usepackage{soul} 

\usepackage{makeidx}

\usepackage[intoc]{nomentbl}
\makenomenclature

\usepackage{stackengine} 
\usepackage{booktabs} 

\newtheorem{theoremABC}{Theorem}

\newtheorem{theorem}{Theorem}[section]

\newtheorem{corollary}[theorem]{Corollary}

\newtheorem{lemma}[theorem]{Lemma}
\newtheorem{proposition}[theorem]{Proposition}

\theoremstyle{definition}
\newtheorem{definition}[theorem]{Definition}

\newtheorem{convention}[theorem]{Convention}
\newtheorem{remark}[theorem]{Remark}

\theoremstyle{remark}
\newtheorem*{notation}{Notation}
\newcommand{\C}{{\mathbb{C}}}
\newcommand{\D}{{\mathbb{D}}}

\newcommand{\HH}{{\mathbb{H}}}

\newcommand{\R}{{\mathbb{R}}}
\renewcommand{\SS}{{\mathbb{S}}}

\newcommand{\Z}{{\mathbb{Z}}}
\newcommand{\Aa}{{\mathcal{A}}}   
\newcommand{\Bb}{{\mathcal{B}}}
\newcommand{\Cc}{{\mathcal{C}}}   

\newcommand{\Hh}{{\mathcal{H}}}

\newcommand{\Kk}{{\mathcal{K}}}
\newcommand{\Ll}{{\mathcal{L}}}   
\newcommand{\Mm}{{\mathcal{M}}}   

\newcommand{\Uu}{{\mathcal{U}}}
\newcommand{\Vv}{{\mathcal{V}}}

\newcommand{\AAA}{\mbf{A}}       
\newcommand{\aaa}{\mbf{a}}       %
\newcommand{\AAAdot}{\mbf{\dot A}} 
\newcommand{\bbb}{\mbf{b}}       %
\newcommand{\rot}{\mathrm{rot}}  
\newcommand{\im}{{\rm im\, }}             
\newcommand{\Id}{{\rm Id}}
\newcommand{\diag}{{\rm diag}}            
\newcommand{\grad}{\mathop{\mathrm{grad}}}    

\newcommand{\cgraph}[1]{\Gamma_{\kern-.5ex{}#1}}     
\renewcommand{\Re}{{\rm Re}}       
\newcommand{\Hess}{\mathrm{Hess}}          
\newcommand{\CAP}{\mathop{\cap}}           
 
\newcommand{\norm}{{\rm norm}}

\newcommand{\eps}{{\varepsilon}}

\newcommand{\Bfrak}{{\mathfrak B}}

\newcommand{\hfrak}{{\mathfrak h}}

\newcommand{\Qfrak}{{\mathfrak Q}}

\newcommand{\tfrak}{{\mathfrak t}}
\newcommand{\ufrak}{{\mathfrak u}}
\newcommand{\Ufrak}{{\mathfrak U}}
\newcommand{\Vfrak}{{\mathfrak V}}

\newcommand{\Zfrak}{{\mathfrak Z}}
\newcommand{\inner}[2]{\langle #1, #2\rangle}   
\newcommand{\INNER}[2]{\left\langle #1, #2\right\rangle}

\newcommand{\mbf}[1]{\text{\boldmath $#1$}}  

\def\Nablatop#1{\nabla^{#1}\kern-.5ex{}}

\def\NABLA#1{{\mathop{\nabla\kern-.5ex\lower1ex\hbox{$#1$}}}}
\def\Nabla#1{\nabla\kern-.5ex{}_{#1}}
\def\Tabla#1{\Tilde\nabla\kern-.5ex{}_{#1}}
\def\Babla#1{\widebar\nabla\kern-.5ex{}_{#1}}
\def\abs#1{\mathopen|#1\mathclose|}   
\def\Abs#1{\left|#1\right|}            
\def\norm#1{\mathopen\|#1\mathclose\|}
\def\Norm#1{\left\|#1\right\|}

\renewcommand{\Tilde}{\widetilde}

\newcommand{\p}{{\partial}}

\newcommand{\INTO}{\hookrightarrow}              

\renewcommand{\1}{{{\mathchoice {\rm 1\mskip-4mu l} {\rm 1\mskip-4mu l}
{\rm 1\mskip-4.5mu l} {\rm 1\mskip-5mu l}}}}

\newlength\eqshift
\renewcommand\theequation{\thesection.\arabic{equation}}
\let\savetheequation\theequation

\makeatletter
\renewcommand*\env@matrix[1][\arraystretch]{%
  \edef\arraystretch{#1}%
  \hskip -\arraycolsep
  \let\@ifnextchar\new@ifnextchar
  \array{*\c@MaxMatrixCols c}}
\makeatother

\makeatletter
\let\save@mathaccent\mathaccent
\newcommand*\if@single[3]{%
  \setbox0\hbox{${\mathaccent"0362{#1}}^H$}%
  \setbox2\hbox{${\mathaccent"0362{\kern0pt#1}}^H$}%
  \ifdim\ht0=\ht2 #3\else #2\fi
  }
\newcommand*\rel@kern[1]{\kern#1\dimexpr\macc@kerna}
\newcommand*\widebar[1]{\@ifnextchar^{{\wide@bar{#1}{0}}}{\wide@bar{#1}{1}}}
\newcommand*\wide@bar[2]{\if@single{#1}{\wide@bar@{#1}{#2}{1}}{\wide@bar@{#1}{#2}{2}}}
\newcommand*\wide@bar@[3]{%
  \begingroup
  \def\mathaccent##1##2{%
    \let\mathaccent\save@mathaccent
    \if#32 \let\macc@nucleus\first@char \fi
    \setbox\z@\hbox{$\macc@style{\macc@nucleus}_{}$}%
    \setbox\tw@\hbox{$\macc@style{\macc@nucleus}{}_{}$}%
    \dimen@\wd\tw@
    \advance\dimen@-\wd\z@
    \divide\dimen@ 3
    \@tempdima\wd\tw@
    \advance\@tempdima-\scriptspace
    \divide\@tempdima 10
    \advance\dimen@-\@tempdima
    \ifdim\dimen@>\z@ \dimen@0pt\fi
    \rel@kern{0.6}\kern-\dimen@
    \if#31
      \overline{\rel@kern{-0.6}\kern\dimen@\macc@nucleus\rel@kern{0.4}\kern\dimen@}%
      \advance\dimen@0.4\dimexpr\macc@kerna
      \let\final@kern#2%
      \ifdim\dimen@<\z@ \let\final@kern1\fi
      \if\final@kern1 \kern-\dimen@\fi
    \else
      \overline{\rel@kern{-0.6}\kern\dimen@#1}%
    \fi
  }%
  \macc@depth\@ne
  \let\math@bgroup\@empty \let\math@egroup\macc@set@skewchar
  \mathsurround\z@ \frozen@everymath{\mathgroup\macc@group\relax}%
  \macc@set@skewchar\relax
  \let\mathaccentV\macc@nested@a
  \if#31
    \macc@nested@a\relax111{#1}%
  \else
    \def\gobble@till@marker##1\endmarker{}%
    \futurelet\first@char\gobble@till@marker#1\endmarker
    \ifcat\noexpand\first@char A\else
      \def\first@char{}%
    \fi
    \macc@nested@a\relax111{\first@char}%
  \fi
  \endgroup
}
\makeatother

\def\XXint#1#2#3{{\setbox0=\hbox{$#1{#2#3}{\int}$}
     \vcenter{\hbox{$#2#3$}}\kern-.5\wd0}}

\long\def\symbolfootnote[#1]#2{\begingroup%
\def\thefootnote{\fnsymbol{footnote}}\footnote[#1]{#2}\endgroup}

\tikzset{
  symbol/.style={
    draw=none,
    every to/.append style={
      edge node={node [sloped, allow upside down, auto=false]{$#1$}}}
  }
}

\begin{document}
\sloppy

\author{\quad Urs Frauenfelder \quad \qquad\qquad
             Joa Weber\footnote{
  Email: urs.frauenfelder@math.uni-augsburg.de
  \hfill
  joa@unicamp.br
  }
        %
        %
    \\
    Universit\"at Augsburg \qquad\qquad
    UNICAMP
}

\title{Towards a Floer theory for Mars \\
         \Large II - Floer Hessian field almost extends}


\date{\today}

\maketitle 
%


%
%

%





\begin{abstract}
In part~I,~\cite{Frauenfelder:2026c}, we showed that
collisional periodic orbits of twisted Zeeman systems
can be detected variationally by a non-local Hamiltonian
action functional.

In this part~II we show that the linearized gradient flow
of this non-local functional is a Fredholm operator
and prove a non-local elliptic regularity result.

These results are obtained with the theory of almost extendability
of weak Hessian fields introduced in~\cite{Frauenfelder:2025g}.
\end{abstract}

\tableofcontents

\section{Introduction}

In~\cite{Frauenfelder:2026c} we introduce the notion of twisted Zeeman
system.
This system models an electron in the plane
attracted by a proton fixed in the origin and
subject to additional forces:
an electric force,
a Lorentz force,
and an Euler force.
All these additional forces are allowed to depend periodically on time.

A motivating example of such a system is the restricted elliptic
three-body-problem.
In the example of the restricted elliptic three-body-problem the
proton can be thought of as the planet Mars,
the electron is a space station in the vicinity of Mars.
The Lorentz force is given by the Coriolis force of the rotating frame.
The electric force is a combination of the gravitational force of the
sun and the centrifugal force.
The Euler force is an additional fictitious force, beside
the Coriolis force and the centrifugal force, due to acceleration
and deceleration of the rotating frame,
which is due to the eccentricity of the elliptic orbit of Mars.

There is the danger that the electron collides with the proton.
However two-body-collisions can be regularized.
For systems depending periodically on time
a new regularization technique was discovered by
Barutello-Ortega-Verzini~\cite{Barutello:2021b}
which blows up the loop space.
In part I,~\cite{Frauenfelder:2026c}, we showed that
this new regularization technique can be applied to
twisted Zeeman systems and gives rise to a variational approach
to collisional periodic orbits of a twisted Zeeman system.
In fact, we showed that there are two action functionals to
detect these periodic collisional orbits, a Lagrangian one and a
Hamiltonian one, related by a non-local Legendre transformation.
Since blowing up the loop space is a non-local technique,
both of these functionals are non-local.

\subsection{Main results}

In this article we prove a Fredholm result for
the linearized gradient flow of the Hamiltonian functional
for regularized twisted Zeeman systems.
\\
Given such a system, the second derivative of the regularized
Hamiltonian functional gives rise to a weak Hessian field
$u\mapsto A^u$, roughly as follows.
Namely, for the Hilbert space triple of maps
$$
   (H_0,H_1,H_0)
   =\left(L^2(\SS^1,\C^2),W^{1,2}(\SS^1,\C^2),W^{2,2}(\SS^1,\C^2)\right)
$$
there exists an open subset $U_1\subset H_1$
such that for any $u\in U_2:=U_1\cap H_2$ the Hessian operator
$A^u\colon H_1\to H_0$ is a Fredholm operator of index zero
which restricts to a Fredholm operator 
$A^u_2\colon H_2\to H_1$ also of index zero.
\\
Given $u_-,u_+\in U_2$ such that $A^{u_-}$ and $A^{u_+}$
are invertible as maps $H_1\to H_0$,
we are considering a connecting path
$u\colon\R\to U_2$ from $u_-$ to $u_+$
(Definition~\ref{def:connecting-paths}).
This gives rise to two linear operators 
\begin{equation*}
\begin{split}
   \D^u=\p_s+A^u\colon
   &W^{1,2}(\R,H_0)\cap L^2(\R,H_1)
   \to L^2 (\R,H_0)
\\
   \D^u_2=\p_s+A^u_2\colon 
   &W^{1,2}(\R,H_1)\cap L^2(\R,H_2)
   \to L^2 (\R,H_1)
   .
\end{split}
\end{equation*}
In the present article we show furthermore

\begin{theoremABC}\label{thm:A}
Both $\D^u$ and $\D^u_2$
are Fredholm and their Fredholm indices agree.
\end{theoremABC}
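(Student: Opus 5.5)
The plan is to reduce Theorem~\ref{thm:A} to the abstract Fredholm theorem for almost extendable weak Hessian fields from~\cite{Frauenfelder:2025g}. As we recall from that work, a weak Hessian field $u\mapsto A^u$ on a Hilbert space triple $(H_0,H_1,H_2)$ is called almost extendable if each $A^u_2$ extends to a bounded operator on $H_0$ modulo a compact perturbation, and this extension depends continuously on $u$. For such fields the operators $\D^u$ and $\D^u_2$ associated to a connecting path are Fredholm, and their indices coincide (both equal a spectral flow). So the work splits into three steps. First, write down the Hessian $A^u$ of the regularized Hamiltonian functional from part~I explicitly. Second, verify the axioms of a weak Hessian field, together with almost extendability, on the triple $(L^2,W^{1,2},W^{2,2})$. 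Third, check that the asymptotic invertibility of $A^{u_\pm}$ and the connecting-path assumption of Definition~\ref{def:connecting-paths} are exactly what the abstract theorem needs.

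For the first step, I expect $A^u$ to decompose as $A^u=J_0\p_t+B^u$. Here $J_0\p_t$ is the standard (local) operator. The term $B^u$ collects two kinds of contributions: the non-local terms produced by the Barutello--Ortega--Verzini blow-up of the loop space, which involve integrals over $\SS^1$ of expressions in $u$ times rank-one-type operators, and the multiplication operators coming from the electric, Lorentz and Euler forces. Since $J_0\p_t\colon W^{k+1,2}\to W^{k,2}$ is Fredholm of index zero for $k=0,1$, and $B^u$ maps $H_1\to H_1$ and $H_2\to H_2$, it suffices to show that $B^u$ is compact from $H_1$ to $H_0$ and from $H_2$ to $H_1$. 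Rellich compactness of $W^{k+1,2}\hookrightarrow W^{k,2}$ on $\SS^1$ gives this. The fact that $A^u$ is symmetric with respect to the $L^2$ inner product follows from its being a second derivative.

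The main obstacle is almost extendability: the $u$-dependence must be continuous in the right topology, and the non-local terms must extend in the right way. Multiplication by a $W^{1,2}$ (hence continuous) function extends boundedly to $L^2$, so the local terms are harmless. The non-local terms are the delicate part. They have the form $\xi\mapsto \bigl(\int_{\SS^1}\langle a(u),\xi\rangle\,dt\bigr)\, b(u)$, possibly with derivatives falling on $\xi$. When a derivative falls on $\xi$, I would integrate by parts to move it onto $a(u)$. For $u\in U_2$ the coefficient $a(u)$ lies in $W^{1,2}$, so the functional extends to $L^2$. Because this shifts regularity to $u$, continuity only holds along paths in $U_2$ with the $H_2$-topology, not the $H_1$-topology; this is precisely why the extension is only almost and not full. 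I would check that the connecting-path definition supplies exactly this continuity.

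Finally, given these verifications, the abstract theorem yields that $\D^u$ and $\D^u_2$ are Fredholm. The equality of indices then follows from its statement, or alternatively from the elliptic regularity part: the kernels coincide, since $\ker\D^u\subset W^{1,2}(\R,H_1)\cap L^2(\R,H_2)$, and the cokernels coincide by the same argument applied to the formal adjoint $-\p_s+A^u$.
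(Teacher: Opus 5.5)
Your overall strategy (prove Theorem~\ref{thm:B} and feed it into the abstract result Theorem~\ref{thm:main-pDarboux}) is the paper's, but the verification you sketch breaks down in two places.

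First, $A^u$ is \emph{not} a compact perturbation of $J_0\p_\tau$. The Lorentz term $-(\rot\,\aaa_{t_z}|_z)\,j_0z'$ in $\grad\Mm$ linearizes, among other summands, to the first-order operator $F_{44}^z=f_{44}^z\p_\tau$ in the upper-left block, where $f_{44}^z$ is a multiple of $b_{t_z}|_z\,j_0$. This operator is not compact $h_1\to h_0$ (nor $h_2\to h_1$) unless $b_{t_z}|_z\equiv 0$. The paper handles it through the homotopy $F^{r,z}=\bbb^{r,z}\p_\tau$, $r\in[0,1]$. The coefficient matrix is pointwise invertible, with inverse bounded on $L^2$ and on $W^{1,2}$ uniformly in $r$. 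This gives uniform semi-Fredholm estimates on both levels, and homotopy invariance then gives index zero (Lemma~\ref{le:f_44-sFred}). Your claim that the remaining terms map $H_1\to H_1$ is also false for $u\in U_1\setminus U_2$. For example, $C_{43}^z\xi=(db_{t_z}|_z\xi)\,j_0z'$ lies only in $L^2$ when $z\in W^{1,2}$. Level-one compactness must therefore come from boundedness $h_r\to h_0$ for some $r\in(\frac12,1)$, combined with the compact inclusion $h_1\INTO h_r$.

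Second, you misstate almost extendability, and so you look for the difficulty in the wrong place. Definition~\ref{def:alm.ext} requires a splitting $A=F+C$ in which:
\begin{itemize}
\item $F$ is continuous on all of $U_1$ into $\Ll(H_1,H_0)\cap\Ll(H_2,H_1)$, with $F^u_2$ Fredholm of index zero for every $u\in U_1$;
\item $C$ is bounded $H_r\to H_0$ for some $r<1$ and satisfies the scale Lipschitz estimate~(\ref{eq:(C)}).
\end{itemize}
Continuity along connecting paths is not the relevant condition. The non-local terms with $\xi'$ under the integral ($T_{65}$, $T_{75}$) are harmless on level two: they are bounded $h_1\to h_1$ for every $z\in u_1$. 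The paper uses the integration by parts you propose only for their level-one compactness (Lemma~\ref{le:f_65_75-comp}). The terms that fail to extend are the \emph{local} ones $C_{42}$ and $C_{43}$, because of the factor $z'$: differentiating them produces $z''$, so they map $h_1\to h_1$ only for $z\in u_2$. They must go into $C$. The main work of the proof is the term-by-term verification of~(\ref{eq:(C)}) for them on a neighbourhood $\mathrm{v}_z$ (Propositions~\ref{prop:C_43} and~\ref{prop:C_42}), and your plan omits this entirely.

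Finally, your alternative argument for equality of indices via $\ker\D^u\subset W^{1,2}(\R,H_1)\cap L^2(\R,H_2)$ presupposes exactly the non-local regularity the theorem is meant to supply. Only the appeal to Theorem~\ref{thm:main-pDarboux} is legitimate.
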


The proof of Theorem~\ref{thm:A} uses a technique we developed
in~\cite{Frauenfelder:2025g}.
In that article we introduced the notion of an \emph{almost extendable
weak Hessian field}.

\begin{theoremABC}\label{thm:B}
The weak Hessian field
$A=\{A^u\}_{u\in U_1}$ is almost extendable.
\end{theoremABC}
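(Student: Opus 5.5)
To prove Theorem~\ref{thm:B} I will check the defining conditions of almost extendability from~\cite{Frauenfelder:2025g}. I recall them as follows. The field $u\mapsto A^u$ on $U_1$ should split as $A^u=A_0+B^u$. Here $A_0\colon H_1\to H_0$ is a fixed symmetric operator which is Fredholm as a map $H_1\to H_0$ and also as a map $H_2\to H_1$. The perturbation $B^u$ should satisfy three things:
\begin{itemize}
\item[(i)] $u\mapsto B^u$ is continuous as a map $U_1\to\Ll(H_1,H_0)$;
\item[(ii)] for $u\in U_2$ the operator $B^u$ maps $H_2$ into $H_1$ and extends to a bounded operator $H_0\to H_0$;
\item[(iii)] the extension satisfies an almost-continuity estimate in $u$ measured in the $H_1$ norm, which is the "almost" in the name.
\end{itemize}
The abstract theory then provides Theorem~\ref{thm:A}. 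So the task reduces to writing down the second derivative of the regularized Hamiltonian functional explicitly and sorting its terms.

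\textbf{Step 1: split off the principal part.} I take $A_0=-J\p_t$ acting on loops $\SS^1\to\C^2$, possibly shifted by a constant multiple of the identity to make it invertible. All remaining contributions go into $B^u$. These come from three sources:
\begin{itemize}
\item the Levi-Civita type quadratic regularized Hamiltonian, which gives local multiplication operators by functions of $u$;
\item the twisted Zeeman potentials, which give pulled-back smooth functions of $u(t)$ and $t$;
\item the non-local factors from blowing up the loop space, which are integrals such as $\int_0^1\abs{u}^2\,dt$ and products of such functionals with local terms.
\end{itemize}

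\textbf{Step 2: check (i) and (ii) term by term.} For $u\in H_1=W^{1,2}$ we have $u\in C^0$ by the Sobolev embedding. Hence each multiplication operator $\xi\mapsto M(u)\xi$, with $M$ smooth, is bounded on $L^2$ and depends continuously on $u$ in the sup norm. For $u\in H_2$ the function $M(u)$ is $C^1$, so the same operator preserves $W^{1,2}$. The non-local terms have the form $\xi\mapsto \langle a(u),\xi\rangle_{L^2}\, b(u)$, which are rank one operators. They extend to $H_0$ as long as $a(u),b(u)\in L^2$, which holds for $u\in H_1$. They map into $H_1$ when $b(u)\in W^{1,2}$, which holds for $u\in H_2$. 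I also need that $U_1$ stays away from the collision set where the regularizing denominators vanish, so that all these functions are smooth near $u(\SS^1)$. I will take this from the choice of $U_1$ in part~I.

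\textbf{Step 3, the main obstacle: the almost-continuity estimate (iii).} The difficulty is that $u\mapsto B^u\in\Ll(H_1)$ is not continuous in the $H_1$ topology. Derivatives of $u$ appear when $\p_t$ acts on $M(u)\xi$: the term $dM(u)\,\dot u\,\xi$ is controlled only in $L^2$, not in the sup norm. I will use the Leibniz splitting
\[
\p_t\bigl(M(u)\xi\bigr)=M(u)\,\dot\xi+dM(u)[\dot u]\,\xi .
\]
The first term depends on $u$ only through $\norm{u}_{C^0}$, so it is continuous. For the second term I will use
\[
\norm{dM(u)[\dot u]\,\xi}_{L^2}\le C\,\norm{\dot u}_{L^2}\,\norm{\xi}_{C^0}\le C'\,\norm{u}_{H_1}\,\norm{\xi}_{H_1}.
\]
This bound shows that the difference $B^u-B^v$ is controlled by $\norm{u-v}_{H_1}$ plus a compact or lower-order remainder, which is precisely the tolerance the definition allows. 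The non-local rank one terms are handled the same way, with the derivative falling only on $b(u)$. If the precise form of (iii) in~\cite{Frauenfelder:2025g} is instead stated as a uniform bound on bounded sets of $H_2$, I will use the same estimate: it gives bounds depending only on $\norm{u}_{H_1}$ together with a linear dependence on $\dot u$, which suffices.
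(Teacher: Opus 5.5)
There is a genuine gap. The conditions you ``recall'' are not the definition the theorem refers to (Definition~\ref{def:alm.ext}), so the proposal never reaches the two places where the actual work lies. What has to be produced is a splitting $A=F+C$ with two properties. First, $F\in C^0(U_1,\Ll(H_1,H_0)\cap\Ll(H_2,H_1))$ and $F^u_2\colon H_2\to H_1$ is Fredholm of index zero for \emph{every} $u\in U_1$. Second, $C\in C^0(U_1,\Ll(H_r,H_0))\cap C^0(U_2,\Ll(H_1))$ for some $r<1$, and $C$ satisfies the scale Lipschitz estimate~(\ref{eq:(C)}).

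A fixed principal part $A_0=-J\p_t$, with everything else lumped into a perturbation, cannot work. The Lorentz force contributes the first order term $F_{44}^z\xi=-4b_{t_z}|_z\, j_0\xi^\prime$. Wherever $b$ does not vanish along $z$, this term is not compact as a map $H_1\to H_0$, not bounded on $H_0$, and not bounded $H_r\to H_0$ for any $r<1$. So it can only sit in $F$. Axiom (\texttt{F}) then requires proving that $\begin{pmatrix} f_{44}^z&-\Id\\ \Id&0\end{pmatrix}\p_\tau$ is Fredholm of index zero on both levels for all $z\in u_1$. The paper does this in Lemma~\ref{le:f_44-sFred}: it inverts the coefficient matrix, derives semi-Fredholm estimates uniformly along the homotopy $r f_{44}^z$, $r\in[0,1]$, and uses homotopy invariance of the index. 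Your plan has no substitute for this step.

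Second, your model of the Hessian misses exactly the terms that obstruct extendability. You model it as multiplication by $M(u)$ with $M$ smooth, plus rank one terms $\langle a(u),\xi\rangle b(u)$. The obstructing terms are $C_{42}^z\xi=\dot b_{t_z}|_z\,(dt_z\xi)\,j_0z^\prime$ and $C_{43}^z\xi=(db_{t_z}|_z\xi)\,j_0z^\prime$. Their multipliers contain $z^\prime$, their coefficients depend non-locally on $z$ through $t_z$, and $dt_z$ is a Volterra integral operator rather than rank one. Their $\tau$-derivatives contain $z^{\prime\prime}$, so they lie in $\Ll(h_1)$ only for $z\in u_2$. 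That is why they constitute $C$, and placing $C_{43}$ in $\Ll(h_r,h_0)$ over $u_1$ also needs $h_r\subset C^0$ for $r>\tfrac12$.

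Your Step~3 bound $\norm{dM(u)[\dot u]\xi}_{L^2}\le C\norm{\dot u}_{L^2}\norm{\xi}_{C^0}$ only bounds a single operator and says nothing about differences. The claim that $C^v-C^w$ is controlled by $\norm{v-w}_{H_1}$ up to a compact remainder is neither what the definition asks nor true. The derivative of $(C_{43}^v-C_{43}^w)\xi$ contains $(db_{t_w}|_{w}\xi)\,j_0(v^{\prime\prime}-w^{\prime\prime})$, which forces the $\abs{v-w}_{H_2}$ term.

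The missing core is Propositions~\ref{prop:C_43} and~\ref{prop:C_42}, which run as follows:
\begin{itemize}
\item Fix a neighborhood $\mathrm{v}_z$ on which $\norm{v}_2>\tfrac12\norm{z}_2$, $\norm{v}_{1,2}<2\norm{z}_{1,2}$, and all images lie in a fixed compact set.
\item Telescope $C^v\xi-C^w\xi$ and its derivative into differences in the time argument $t_v-t_w$, in the space argument, in $dt_v\xi-dt_w\xi$, and in $v^{(k)}-w^{(k)}$.
\item Bound each piece by $\abs{v-w}_{h_2}$ or by $\abs{v}_{h_2}\abs{v-w}_{h_1}$.
\item Symmetrize in $v,w$ to obtain the minimum.
\end{itemize}
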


Theorem~\ref{thm:A} then follows from Theorem~\ref{thm:B}
in view of the abstract result~\cite[Thm.\,6.11]{Frauenfelder:2025g};
cf. \S\,\ref{sec:proof-AB}.

\medskip\noindent
\textsc{Non-local elliptic regularity.}
Theorem~\ref{thm:A} can be thought of as a
non-local elliptic regularity result.
Since blowing up the loop space is non-local,
elements of the kernel and cokernel of the operator $\D^u$
cannot be thought of as solutions of an elliptic PDE.\footnote{
  If the solutions in the kernel of $\D^u$ would satisfy an elliptic
  PDE, by elliptic regularity they would automatically lie in the kernel
  of $\D^u_2$, and therefore the kernel and cokernel of $\D^u$ could be
  identified  with the kernel and cokernel of $\D^u_2$, so that
  both operators would have the same index.  
  }
But in view of Theorem~\ref{thm:A} one might interpret them
as solutions of an 'elliptic' delay equation.

\medskip
In this paper we consider the Fredholm property
in the Hamiltonian setup.
The functional $\Aa$ considered in this paper can be interpreted as the
Legendre transform of a Lagrangian one, as we explained in the
introduction,
and therefore the Fredholm property can also be asked for
the Lagrangian functional $\Bb$.
In Appendix~\ref{sec:Lag-Kepler}
we show that in the Kepler case (no Coriolis/magnetic contribution)
the Hessian of the Lagrangian functional is almost extendable as well.

The relation between the Fredholm indices
in the Lagrangian case and in the Hamiltonian case
for these non-local functionals we plan to discuss in a forthcoming
part III~\cite{Frauenfelder:2026e}.

\subsection{Outline}

Throughout we allow for twisted-periodic one forms (vector potentials)
and also for twisted loops, as defined in~\S\,\ref{sec:twist}.

\smallskip
Section~\ref{sec:Zeeman} summarizes
notions and results of~\cite{Frauenfelder:2026b}
on which the present article is based.

\smallskip
Section~\ref{sec:weak-Hess-fields} summarizes
notions and results of~\cite{Frauenfelder:2025g}
which are needed to understand the statement in Theorem~\ref{thm:B}
and to see how Theorem~\ref{thm:A} follows from Theorem~\ref{thm:B}.

\smallskip
Section~\ref{sec:Hess-Ham} is the main part of this article.
We prove Theorems~\ref{thm:A} and~\ref{thm:B}.

\smallskip
Section~\ref{sec:L2-Hess-symmetry}
discusses the symmetry of weak Hessians
from an abstract point of view.

\smallskip
Appendix~\ref{sec:Hess-Lag} deals with the Hessian field of the
regularized Lagrangian action functional.
The Hessian is calculated in~\ref{sec:calc-Lag-Hess}.
The magnetic contributions also
play a crucial role in the main Section~\ref{sec:Hess-Ham} of this article.
Having all the machinery in place
we show in~\ref{sec:Lag-Kepler} that in the Kepler case
(no magnetic field) the Lagrangian Hessian field almost extends
(while the Hamiltonian one in~\S\,\ref{sec:Ham-Hess-Kepler-extends}
even extended).
The difference comes from the Hamiltonian equations being first
order versus second order of the Lagrangian ones.
Unfortunately, in the general Lagrangian case (including magnetic
contributions) doing the scale Lipschitz estimate seems practically
hopeless, because second order leads to an explosion of the number of
terms, even in comparison to the already very long calculation
in the Hamiltonian scenario~\S\,\ref{sec:sc-Lipschitz-Ham}.

\smallskip
Section~\ref{sec:ests-summands-Hess-mag-Ham}
carries out the technical estimates.

\medskip
\begin{notation}
Throughout $\inner{\cdot}{\cdot}$ denotes
$L^2$ inner products with induced norm $\norm{\cdot}$.
The induced norm of a Hilbert space $H_k$ is often denoted by
$\abs{\cdot}_{H_k}$ or $\abs{\cdot}_k$.

Working with functions on function spaces easily triggers
excesses of parentheses, which harms legibility.
Therefore we often write variables either as subscripts
$\Mm_z$ or in the form $\Mm|_z$, as opposed to $\Mm(z)$.
For time-dependence subscript has priority, for example if $t\mapsto
q(t)$ is a loop then $\theta_t|_{q_t} \dot q_t$ denotes a
time-dependent $1$-form at time $t$ and at the spatial point $q(t)$
evaluated on the velocity vector $\dot q(t)$.
\end{notation}

\medskip\noindent
{\bf Acknowledgements.}
UF~acknowledges 
support by DFG grant
FR~2637/5-1.

\section{Regularized twisted Zeeman systems}
\label{sec:Zeeman}

\boldmath
\subsection{Let's twist again}\label{sec:twist}
\unboldmath

\boldmath
\subsubsection*{Euclidean plane -- two models $\C$ and $\R^2$}
\unboldmath

Configuration spaces in this article are open subsets of the Euclidean
plane containing the origin.
There are two natural models for the plane,
the set $\C$ of complex numbers and the set $\R^2$ of pairs of real
numbers, each one endowed with the natural structure of a vector
space over the real numbers $\R$.
The natural isomorphism $\C\to\R^2$, $x+iy\mapsto (x,y)$,
identifies multiplication by $i$ viewed as linear map on $\C$
with the matrix $j_0\colon\R^2\to\R^2$ of counter-clockwise rotation
by $\pi/2$.
As each model has its advantage,
\emph{we freely change from one to the other}.
The complex side is extremely effective
with respect to notation, complex multiplication
$zw=(z+iy)(u+iv)$ encodes composition of linear maps
as well as application of a linear map $z$ to a vector $w$,
on the $\R^2$-side this is
$$
\small
   \begin{pmatrix}
      x&-y\\y&x
   \end{pmatrix}
   \begin{pmatrix}
      u&-v\\v&u
   \end{pmatrix}
   =
   \begin{pmatrix}
      xu-yv&-(yu+xv)\\yu+xv&xu-yv
   \end{pmatrix}
   ,\quad
   \begin{pmatrix}
      x&-y\\y&x
   \end{pmatrix}
   \begin{pmatrix}
      u\\v
   \end{pmatrix}
   =
   \begin{pmatrix}
      xu-yv\\yu+xv
   \end{pmatrix}
   .
$$
Instead of dealing with inverse matrices one just multiplies and
divides by a real number, namely
$z w^{-1}=\frac{z}{w}=\frac{z\bar w}{w\bar w}$
where $w\bar w=u^2+v^2=:\abs{w}^2$.
The $\R^2$-side might appeal more to ones geometric intuition,
or just ones customs.

Concerning the Euclidean inner product, we feel free to write
$$
   \inner{z}{w}_0=xu+yv=\Re(\bar z w)
$$
using on the left pairs $(x,y)$, $(u,v)$ and
on the right sums $x+iy$ and $u+iv$.

\boldmath
\subsubsection*{Complex squaring map and sign involution}
\unboldmath

Throughout $0\in\Qfrak\subset\C$ is an open subset which contains the
origin, also called singularity or collision locus.
To exclude the origin we write $\Qfrak^\times:=\Qfrak\setminus\{0\}$.

\begin{remark}[complex square root is not continuous]
The complex squaring map $\varsigma\colon\C^\times\to\C^\times$,
$z\mapsto z^2=(-z)^2$, is not a bijection, but only 2:1 due to the
sign ambiguity.
Reverting the perspective, let us define the complex square root
$\sqrt{re^{i\varphi}}$, in analogy to the real case,
as that one of the two candidates $\pm \sqrt{r}e^{i\phi/2}$
which lies in the upper half plane $\HH^+$ or on the positve half axis
$\R^+$.
One gets to a bijection, see~(\ref{eq:square-root}), by dividing out
the sign $\pm$.
Unfortunately, the square root definition is not
continuous, as illustrated in Figure~\ref{fig:fig-square-root}:
while the unit circle elements $w$ and $1$ are close, their square roots
are not.
\end{remark}
\begin{figure}[h]
  \centering
  \includegraphics
                             {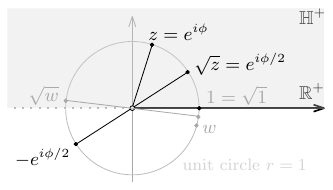}
  \caption{Square root $\sqrt{z}\in\HH^+\CAP\R^+$
                of non-zero $z\in\C$ -- not continuous}
   \label{fig:fig-square-root}
\end{figure} 

The pre-image $\Zfrak^\times:=\varsigma^{-1}(\Qfrak)$
is invariant under sign involution 
and complex squaring is a double cover and invariant under sign
involution, in symbols
\begin{equation}\label{eq:I-Zfrak-i}
   \mathfrak{i}\colon \Zfrak^\times\to \Zfrak^\times
   ,\;
   z\mapsto-z
    ,\qquad
   \varsigma\colon\Zfrak^\times
   \stackrel{2 : 1}{\longrightarrow}
   \Qfrak^\times
   ,\;
   z\mapsto z^2
   ,\qquad
   \varsigma\circ \mathfrak{i}=\varsigma
   .
\end{equation}
Dividing out by sign involution,
complex squaring becomes a bijection
whose inverse is the complex square root, in symbols
%
\begin{equation}\label{eq:square-root}
\begin{tikzcd} [row sep=tiny] 
\overline{\Zfrak^\times}=\Zfrak^\times/\mathfrak{i}
    \arrow[rr, shift left=2, "1:1"']
  &&
     \Qfrak^\times
     \arrow[ll, shift left=2]
\\
  \quad\;\;\pm z\quad\;\;
    \arrow[rr, mapsto, "\varsigma"]
  &&
    z^2
\\
  \pm\sqrt{r}e^{i\phi/2}
  &&
    re^{i\phi}=q
    \arrow[ll, mapsto, "\sqrt{\cdot}"']
\end{tikzcd}
\end{equation}
%
To ease notation we use for space and quotient space elements
the same notation.

\boldmath
\subsubsection*{Twisted loops}
\unboldmath

While loops $q\colon\SS^1\to\Qfrak^\times\subset\C$ of even winding
number around the origin lift to loops ($+$) in $\Zfrak^\times$,
loops of odd winding number lift to twisted ($-$) loops in $\Zfrak^\times$.
This is illustrated in Figure~\ref{fig:fig-twisted}.
A map $\gamma$ with domain $\R$ such that
$\gamma_{\tau+1}=\gamma_\tau$ for all
$\tau$ is called \textbf{periodic}, domain notation $\SS^1=\R/\Z$.

\medskip\noindent
The spaces of \textbf{(periodic) loops} ($+$) and
\textbf{twisted loops}  ($-$),
still called loops, 
\begin{equation}\label{eq:twisted-loop-space}
   \Ll_\pm^\times\Zfrak
   :=\{z\in C^\infty (\R,\Zfrak)\mid\forall \tau\in\R\colon
   z_{\tau+1}=\pm z_\tau\}
   \setminus\{0\}
   ,
\end{equation}
are disjoint and
invariant under \textbf{sign involution}
%
%
$I\colon \Ll_\pm\Zfrak\to \Ll_\pm\Zfrak$, $z\mapsto -z$
which acts freely.
Set $\Ll^\times\Zfrak=\Ll_+^\times\Zfrak\cup \Ll_-^\times\Zfrak$.
The elements $\tau_*$ of the set $z^{-1}(0)$
are called \textbf{collision times} or simply \textbf{collisions}.

\medskip\noindent
The two cotangent bundles are disjoint
\begin{equation}\label{eq:twisted-loop-space-*}
\small
\begin{split}
   T^*\Ll_\pm^\times\Zfrak:
   &=\{(z,\eta)\in C^\infty (\R,\Zfrak\times\C)\mid
   z\not\equiv0,\,
   \left(z_{\tau+1},\eta_{\tau+1}\right)=\pm\left(z_{\tau},\eta_{\tau}\right)
   ,\forall \tau
   \}
\\
   &=\Ll_\pm^\times\Zfrak \times \Ll_\pm\C   
\end{split}
\end{equation}
and invariant under the \textbf{sign involution}
$T^*I(z,\eta)=-(z,\eta)$ which acts freely.
The \textbf{base point projection} 
$$
   \pi\colon T^*\Ll_\pm^\times\Zfrak\to \Ll_\pm^\times\Zfrak
   ,\quad
   \Upsilon=(z,\eta)\mapsto z
   ,\qquad
   \pi\circ T^*I=I\circ \pi
   ,
$$
is sign involution equivariant.
The tangent spaces $T\Ll_\pm^\times\Zfrak$
are given by the same formulas (since
Euclidean $\C$ and its dual space are canonically isomorphic). 
The uppercase greek letter $\Xi=(z,\xi)$ is a ``Xi''
and $\Upsilon=(z,\eta)$ is an ``Upsilon''.
\begin{figure}[h]
  \centering
  \includegraphics
                             {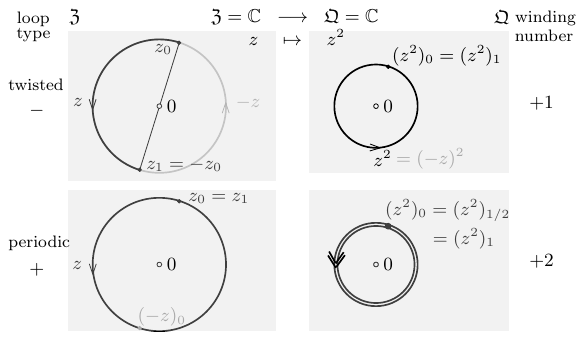}
  \caption{Loop correspondence -- $\pm$ loops $z$ in $\Zfrak$ and
                loops $q=z^2$ in $\Qfrak$}
   \label{fig:fig-twisted}
\end{figure}

\begin{remark}
While 
in part I,~\cite{Frauenfelder:2026c}, it was important to build
a model for the paths in $\Qfrak$, in particular to have
a $1:1$ correspondence, and therefore we had to quotient out by sign
involution $\pm$, the present part II is of analytic nature and
the analysis is done on representatives anyway.
Hence, for simplicity, in the present text we work directly on
loop spaces, not on quotient spaces.
\end{remark}

\boldmath
\subsubsection*{Twisted-periodic one-forms}
\unboldmath

If one allows the $1$-form to be twisted-periodic
and not just periodic, then
additional (external) electric forces for an electric potential depending
periodically on time can be modeled by the twist,
see~\cite[\S\,3.3]{Frauenfelder:2026a}.

\begin{definition}[twisted-periodic]
\label{def:tw-per-1-form}
A \textbf{twisted-periodic \boldmath$1$-form} $\theta$
is a smooth family of $1$-forms $\{\theta_t\}_{t\in\R}$ on $\Qfrak$
such that a)~the time-derivative is periodic
and b)~$\forall t$ $\exists$ a smooth \textbf{twist function}
$f_t\colon\Qfrak\to \R$, 
that is
\begin{equation}\label{eq:df_t}
   \text{a) $\dot\theta_{t+1}=\dot\theta_t$}
   ,\quad
   \text{b) $\theta_{t+1}=\theta_t+df_t$}
   ,\qquad
{\color{gray}
   \text{c) $\AAAdot_{t+1}\stackrel{\text{a)}}{=}\AAAdot_t$}
   ,\quad
   \text{d) $d\theta_{t+1}\stackrel{\text{b)}}{=} d\theta_t$}
   .
}
\end{equation}
The time-dependent vector field
$$
   \AAA_t:=\left(A_t^1,A_t^2\right)\colon\Qfrak\to\R^2
   ,\qquad
   \{\theta_t=A_t^1\, dq_1+A_t^2\, dq_2\}_{t\in\R}
   ,
$$
is called a \textbf{vector potential} of the magnetic field $d\theta$.
\end{definition}

\begin{remark}\label{rmk:tw-per}
Let $\theta$ be twisted-periodic along $\Qfrak$ with twist function $f$.
Then the following is true
by~\cite[\S5]{Frauenfelder:2026a}$_{\text{(i--ii)}}$
and~\cite[\S4.1]{Frauenfelder:2026b}$_{\text{(iii)}}$.

\begin{itemize}\setlength\itemsep{0ex}
\item[\rm(i)]
  The time-slice $f:=f_0\colon\Qfrak\to\R$ is a twist function.
\item[\rm(ii)]
   In the periodic case ($\theta_{t+1}=\theta_t$, $\forall t$)  the
   twist $f=0$ vanishes.
\item[\rm(iii)]
  The pull-back $1$-form
  $\vartheta:=\varsigma^*\theta$ under complex
  squaring~(\ref{eq:I-Zfrak-i}) is twisted-periodic along
  $\Zfrak=\varsigma^{-1}(\Qfrak)$ and
  $\varsigma^* f$ is a twist function.
  \\
  The vector potential $\aaa$ of $\vartheta$ along $\Zfrak$, notation
  $$
     \aaa_t:=\left(a_t^1,a_t^2\right)\colon\Zfrak\to\R^2 
     ,\qquad
     \{\vartheta_t=a_t^1\, dx+a_t^2\, dy\}_{t\in\R}
     ,
  $$
  and $\AAA$ of $\theta$ along $\Qfrak$
  satisfy, at any point $z\in\Zfrak$, the identities
  \begin{equation*}
  \begin{split}
     \rot\,\aaa_t|_z :
     &=(\p_x a_t^2-\p_y a_t^1)_z
     =4\Abs{z}^2 \rot\,\AAA_t|_{z^2}
  \\
     (d\vartheta_t)_z
     &=(\rot\,\aaa_t|_z)\,\INNER{j_0\cdot}{\cdot}_0
     =4\Abs{z}^2(\rot\,\AAA_t|_{z^2})\, \INNER{j_0\cdot}{\cdot}_0 .
  \end{split}
  \end{equation*}
\end{itemize}
\end{remark}

\boldmath
\subsubsection*{Barutello-Ortega-Verzini reparametrization}
\unboldmath

Given $z\in \Ll^\times\Zfrak$,
the variable $\tau$ of $z\colon\R\to\Zfrak^\times$
is \textbf{regularized time},
\textbf{classical time} are the values of the map
$t_z\colon\SS^1\to\SS^1$ defined by (cf. App.\,~\ref{sec:t_z})
\begin{equation}\label{eq:class-time-tau-C}
   \forall\tau\in\R\colon
   \quad
   t_z(\tau)
   :=\frac{\int_0^\tau \Abs{z(s)}^2\, ds}{\norm{z}^2}
   ,\quad
   t_z(0)=0
   ,\quad
   t_z(1)=1
   .
\end{equation}

\boldmath
\subsection{Lagrangian action functional $\Bb$}
\unboldmath


For a twisted-periodic $1$-form $\theta$
along $\Qfrak$ with twist function $f$, see~(\ref{eq:df_t}),
we defined in~\cite[\S 4.2]{Frauenfelder:2026b}
the \textbf{non-local Lagrangian action functional}
\begin{equation}\label{eq:Bb}
\begin{split}
   \Bb
   \colon \Ll^\times\Zfrak
    &\to\R
{\color{gray}
   \qquad\qquad \qquad\qquad \qquad\qquad\;\;
   \text{\small
   $\Bb=\Kk-\Uu+\Mm$
   }
}
\\
   z
   &\mapsto
   \underbrace{2\norm{z}^2\norm{z^\prime}^2}_{\Kk(z)}      
   -\underbrace{{\color{cyan}\frac{-1}{\norm{z}^2}}}_{\Uu(z)}                  
   +\underbrace{\int_0^1\vartheta_{t_z(\tau)}|_{z_\tau}         
   z^\prime_\tau\, d\tau
   -f(z_0^2)}_{\Mm(z)}
   .
\end{split}
\end{equation}
Here $\vartheta:=\varsigma^*\theta$ is twisted-periodic along $\Zfrak$
with twist function $f^*\sigma$ and $\varsigma\colon\Zfrak\to\Qfrak$
is the complex squaring map~(\ref{eq:I-Zfrak-i}).
Observe that $\Bb=\Kk-\Uu+\Mm$ is the sum of 
three terms, kinetic, potential, and magnetic non-local action.

\boldmath
\subsubsection*{$L^2$-gradient}
\unboldmath

We showed in~\cite[\S 4.2]{Frauenfelder:2026b} that the $L^2$-gradient
is given by the formula
\begin{equation}\label{eq:grad-Bb}
\begin{split}
   \grad\Bb|_z
   &=
{\color{brown}\; 
   \grad\Kk|_z
}
   -
{\color{cyan}\;
   \grad\Uu|_z
}
   +\grad\Mm|_z
\\
   &=
{\color{brown}\; 
   4\norm{z^\prime}^2z-4\norm{z}^2z^{\prime\prime}
}
   -
{\color{cyan}\;
   \frac{2z}{\norm{z}^4}
}
\\
   &\quad
   -\frac{2z}{\norm{z}^4}
   \int_0^1
{\textstyle
   \int_0^\sigma\Abs{z_\rho}^2 d\rho
}
   \cdot
   \inner{\dot{\aaa}_{t_z(\sigma)}|_{z_\sigma}}{z^\prime_\sigma}_0\; d\sigma
   -\frac{\Abs{z}^2}{\norm{z}^2}\dot{\aaa}_{t_z}|_z
   \\
   &\quad
   +\frac{2 z}{\norm{z}^2}
   \int_{\sigma=\tau}^1
   \inner{\dot{\aaa}_{t_z(\sigma)}|_{z_\sigma}}{z^\prime_\sigma}_0\;
   d\sigma
   -\bigl(\rot\,\aaa_{t_z}|_z\bigr)\; j_0 z^\prime
\end{split}
\end{equation}
whenever $z\in \Ll^\times\Zfrak$.
Note that all integrands are periodic due to~(\ref{eq:df_t})
and so is, using Remark~\ref{rmk:tw-per}\,(iii), the final summand
$\bigl(\rot\,\aaa_{t_z}|_z\bigr)\; \inner{j_0 z^\prime}{\cdot}_0
=d\vartheta_{t_z}|_z(z^\prime,\cdot)$

\boldmath
\subsection{Hamiltonian action functional $\Aa$}
\label{sec:Aa_Hh}
\unboldmath

On the cotangent bundle $T^*\Ll^\times\Zfrak$,
see~(\ref{eq:twisted-loop-space-*}),
define the \textbf{mechanic Hamiltonian}
\begin{equation}\label{eq:Hh}
\begin{split}
   \Hh=\colon
   T^*\Ll^\times\Zfrak
   &\to\R
   ,\quad
   (z,\eta)
   \mapsto
   \tfrac12 \INNER{\eta}{\eta}^z
   -\tfrac{1}{\norm{z}^2}
   =
\underbrace{
   \tfrac{\norm{\eta}^2}{8\norm{z}^2}
}_{\Kk^*(z,\eta)}
\underbrace{
{\;\color{cyan}
   -\tfrac{1}{\norm{z}^2}
}
}_{+\;{\color{cyan}\Uu(z)}}
   .
\end{split}
\end{equation}
In~\cite[\S 5.6]{Frauenfelder:2026b}
the Hamiltonian is obtained as the Legendre dual of 
the natural extension of $\Bb$ to the tangent bundle.
The \textbf{\boldmath$L^2$-gradient of $\Hh$}
is determined by
$$
   d\Hh|_{(z,\eta)}=\INNER{\grad\Hh|_{(z,\eta)}}{\cdot}
   .
$$

\begin{lemma}[$L^2$-gradient]
\label{le:grad-Hh}
At $(z,\eta)\in T^*\Ll^\times\Zfrak$ the value
of $\grad(\Kk^*+\pi^*\Uu)$ is
\begin{equation}\label{eq:grad-Hh}
\begin{split}
   \grad\Hh|_{(z,\eta)}
   &=\tfrac{1}{4\norm{z}^2}
   \begin{pmatrix}
      \tfrac{8-\norm{\eta}^2}{\norm{z}^2}\, z
      \\
      \eta
   \end{pmatrix}
   =
   -\tfrac{1}{4\norm{z}^2}
   \begin{pmatrix}
      \tfrac{\norm{\eta}^2}{\norm{z}^2}\, z
      \\
      \eta
   \end{pmatrix}
   +
   \begin{pmatrix}
      \tfrac{2z}{\norm{z}^4}
      \\
      0
   \end{pmatrix}
   .
\end{split}
\end{equation}
\end{lemma}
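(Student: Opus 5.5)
The plan is to compute the differential $d\Hh|_{(z,\eta)}$ directly by differentiating each summand of~(\ref{eq:Hh}) along a tangent vector $\Xi=(\zeta,\xi)\in T_{(z,\eta)}T^*\Ll^\times\Zfrak$. The gradient is then read off by writing the result as an $L^2$ inner product $\inner{\cdot}{\zeta}+\inner{\cdot}{\xi}$. Since $\Hh$ depends on $(z,\eta)$ only through the two real numbers $\norm{z}^2$ and $\norm{\eta}^2$, everything reduces to the chain rule applied to
$$
   \Hh(z,\eta)=\phi\bigl(\norm{z}^2,\norm{\eta}^2\bigr),
   \qquad
   \phi(a,b)=\tfrac{b}{8a}-\tfrac1a ,
$$
together with the elementary formulas $d(\norm{z}^2)\zeta=2\inner{z}{\zeta}$ and $d(\norm{\eta}^2)\xi=2\inner{\eta}{\xi}$. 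These formulas hold because $\norm{\cdot}^2$ is the squared $L^2$ norm of real inner product $\inner{z}{w}=\int_0^1\Re(\bar z w)\,d\tau$. The twisted case $z_{\tau+1}=-z_\tau$ causes no problem, since $\abs{z}^2$ is still $1$-periodic.

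Concretely, I would first compute the partial derivatives: $\p_a\phi=-\tfrac{b}{8a^2}+\tfrac1{a^2}=\tfrac{8-b}{8a^2}$ and $\p_b\phi=\tfrac1{8a}$. This gives
$$
   d\Hh|_{(z,\eta)}(\zeta,\xi)
   =\tfrac{8-\norm{\eta}^2}{8\norm{z}^4}\,2\inner{z}{\zeta}
   +\tfrac{1}{8\norm{z}^2}\,2\inner{\eta}{\xi}
   =\INNER{\tfrac{8-\norm{\eta}^2}{4\norm{z}^4}\,z}{\zeta}
   +\INNER{\tfrac{1}{4\norm{z}^2}\,\eta}{\xi}.
$$
Factoring out $\tfrac1{4\norm{z}^2}$ yields the first expression in~(\ref{eq:grad-Hh}). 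The second expression follows by splitting $8-\norm{\eta}^2$: the part with $8$ gives $\tfrac{2z}{\norm{z}^4}$, which is exactly $\grad\Uu$ (consistent with~(\ref{eq:grad-Bb}), where $\grad\Uu|_z=\tfrac{2z}{\norm{z}^4}$ appears). The remaining terms give $\grad\Kk^*$, with the sign convention displayed.

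There is no real obstacle; the only point needing care is justifying that $\Hh$ is differentiable on $T^*\Ll^\times\Zfrak$. This holds because $z\not\equiv0$ forces $\norm{z}>0$, and $\phi$ is smooth on $\{a>0\}$. A second point of care is the bookkeeping of the sign in front of $\tfrac{\norm{\eta}^2}{\norm{z}^2}z$: as written, the second form of~(\ref{eq:grad-Hh}) carries a minus sign in front of the whole $\eta$-component as well. I would double check this sign, since direct computation gives $+\tfrac{1}{4\norm{z}^2}\eta$ in the second component, agreeing with the first form.
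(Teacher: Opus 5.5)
Your proposal is correct and is essentially the paper's argument. The paper likewise just computes the pair $(\p_z\Hh,\p_\eta\Hh)$ directly, singling out $\p_z(-\norm{z}^{-2})=\tfrac{2z}{\norm{z}^4}=\grad\Uu|_z$, and your chain rule through $\phi(a,b)$ is the same computation in compact form.

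Your suspicion about the sign is also justified. The second displayed expression should read $\tfrac{1}{4\norm{z}^2}\bigl(-\tfrac{\norm{\eta}^2}{\norm{z}^2}\,z,\ \eta\bigr)+\bigl(\tfrac{2z}{\norm{z}^4},\,0\bigr)$. This is what the paper itself uses afterwards, e.g.\ in Lemma~\ref{le:grad-Aa}, where $-\grad\Kk^*$ enters as $\tfrac{1}{4\norm{z}^2}\bigl(\tfrac{\norm{\eta}^2}{\norm{z}^2}\,z,\,-\eta\bigr)$.
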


\begin{proof}
The gradient is the pair $\left(\p_z\Hh,\p_\eta\Hh\right)$.
Component one is obvious.
Observe that $\p_z(-\norm{z}^{-2})=-\p_z(\norm{z}^2)^{-1}
=-(-1)(\norm{z}^2)^{-2} 2z=
{\color{cyan}
\frac{2z}{\norm{z}^4}
=\grad\Uu_z
}
$.
\end{proof}


We define the symplectic action ad-hoc\footnote{
  Abstractly one would twist $\Lambda$ by adding the pull-back
  of a $1$-form $\Theta$ on loop space.
  This works in the periodic case in which such $\Theta$
  exists as discussed in~\cite[App.\,C]{Frauenfelder:2026b}.
  Calculating $d\Theta$ the formula obtained makes sense
  in the general, twisted-periodic, case defining
  a $2$-form $\Sigma$ on loop
  space~\cite[App.\,B]{Frauenfelder:2026b}, just not an exact one.
  Similarly, while $\Mm$ is not defined on loop space,
  since $\vartheta$ is not necessarily periodic,
  its gradient~(\ref{eq:grad-Bb}) is, hence~(\ref{eq:grad-Aa}) is.
  }
in Definition~\ref{def:Aa_Hh} below
where $\Vv(z,\eta):=(z^\prime,\eta^\prime)$
is the canonical loop space vector field and $\Lambda$
arises by integrating the Liouville form $\lambda$ associated to
$T^*\Zfrak$; for details see~\cite[\S 5]{Frauenfelder:2026b}.

\begin{definition}\label{def:Aa_Hh}
By definition
$\Aa:=i_\Vv\Lambda+\pi^*\Mm-\Kk^*{\color{cyan}\,-\,\pi^*\Uu}$,
that is
\begin{equation}\label{eq:Aa_Hh}
\begin{split}
   \Aa
   \colon T^*\Ll^\times\Zfrak
   &\to\R
\\
   (z,\eta)
   &\mapsto
   \INNER{\eta}{z^\prime}
   +\Mm(z)
   -\tfrac{\norm{\eta}^2}{8\norm{z}^2} 
   +{\color{cyan}\tfrac{1}{\norm{z}^2}}
\\
   &\quad\; =
   -\INNER{\eta^\prime}{z}
   +\Mm(z)
   -\tfrac{\norm{\eta}^2}{8\norm{z}^2} 
   +{\color{cyan}\tfrac{1}{\norm{z}^2}}
\end{split}
\end{equation}
where equality is integration by parts
and the magnetic term $\Mm$ is given by~(\ref{eq:Bb}).
\end{definition}

\begin{lemma}[$L^2$-gradient]\label{le:grad-Aa}
At $(z,\eta)\in T^*\Ll^\times\Zfrak$
the $L^2$-gradient is given by
\begin{equation}\label{eq:grad-Aa}
\begin{split}
   \grad\Aa|_{(z,\eta)}
   &=
   \begin{pmatrix}
      -\eta^\prime
      \\
      z^\prime
   \end{pmatrix}
   +
   \begin{pmatrix}
      \grad\Mm|_z-{\color{cyan}\grad\Uu|_z}
      \\
      0
   \end{pmatrix}
   +
   \tfrac{1}{4\norm{z}^2}
   \begin{pmatrix}
      \tfrac{\norm{\eta}^2}{\norm{z}^2}\, z
      \\
      -\eta
   \end{pmatrix}
\\
   &=
   \begin{pmatrix}
   -\eta^\prime+\grad\Mm|_z-{\color{cyan}\tfrac{2z}{\norm{z}^4}}
   +\tfrac{\norm{\eta}^2}{4\norm{z}^4}\, z
   \\
   z^\prime-\tfrac{1}{4\norm{z}^2} \eta
   \end{pmatrix}
   .
\end{split}
\end{equation}
\end{lemma}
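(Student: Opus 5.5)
The plan is to compute the two partial gradients $(\partial_z\Aa,\partial_\eta\Aa)$ of the four summands in Definition~\ref{def:Aa_Hh} separately, using the $L^2$ inner product on pairs. By definition the gradient is the pair such that $d\Aa|_{(z,\eta)}(\zeta,\hat\eta)=\inner{\partial_z\Aa}{\zeta}+\inner{\partial_\eta\Aa}{\hat\eta}$. The result is linear in the summands, so it suffices to treat $i_\Vv\Lambda$, $\pi^*\Mm$, $\Kk^*$ and $\pi^*\Uu$ one at a time.

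For the symplectic term $\INNER{\eta}{z^\prime}$ I would differentiate in the direction $(\zeta,\hat\eta)$ to get $\INNER{\hat\eta}{z^\prime}+\INNER{\eta}{\zeta^\prime}$. Integrating by parts turns the second summand into $-\INNER{\eta^\prime}{\zeta}$. The boundary terms cancel because $\eta$ and $\zeta$ are both $\pm$-twisted with the same sign, so $\eta_{1}\zeta_{1}=(\pm)^2\eta_0\zeta_0$ and $\Re(\bar\eta\zeta)$ is $1$-periodic. This contributes $(-\eta^\prime,z^\prime)$.

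The term $\pi^*\Mm$ depends only on $z$, so its contribution is $(\grad\Mm|_z,0)$. Here $\grad\Mm|_z$ is the magnetic part of~(\ref{eq:grad-Bb}), which I take from~\cite{Frauenfelder:2026b}. The point to check is that $d\Mm$ is well defined on the twisted loop space even though $\Mm$ itself may not be, as the footnote explains. I would simply invoke the earlier computation of~(\ref{eq:grad-Bb}).

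The terms $-\Kk^*-\pi^*\Uu$ together equal $-\Hh$. By Lemma~\ref{le:grad-Hh} their contribution is therefore $-\grad\Hh$, namely $\frac{1}{4\norm{z}^2}\bigl(\frac{\norm{\eta}^2}{\norm{z}^2}z,\,-\eta\bigr)-\bigl(\frac{2z}{\norm{z}^4},0\bigr)$. As a check, $\partial_\eta\bigl(-\norm{\eta}^2/(8\norm{z}^2)\bigr)=-\eta/(4\norm{z}^2)$, and $\partial_z$ of the same term is $+\norm{\eta}^2 z/(4\norm{z}^4)$. The sign conventions already make $-\grad\Uu=-2z/\norm{z}^4$.

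Summing the three contributions gives both displayed forms of~(\ref{eq:grad-Aa}). The only delicate step is the vanishing of the boundary terms under twisting, together with the periodicity of $\grad\Mm$; both are routine given Remark~\ref{rmk:tw-per}.
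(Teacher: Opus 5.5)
Your proposal is correct and follows essentially the same route as the paper: you split off $-\Hh$ and use Lemma~\ref{le:grad-Hh}, take $\grad\Mm$ from~(\ref{eq:grad-Bb}), and integrate the symplectic term by parts, with the boundary terms vanishing because the loops involved carry the same twist sign. Your version is slightly more explicit than the paper's, since you integrate by parts against the variation $\zeta\in\Ll_\pm\C$, which has the same twist as $\eta$.
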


\begin{proof}
Use~(\ref{eq:grad-Hh}) and integrate by parts
$\inner{z}{\eta^\prime}=\underline{0}-\inner{z^\prime}{\eta}$.
Here the boundary terms
$
   \inner{z_1}{\eta_1}_0
   -\inner{z_0}{\eta_0}_0
   =\inner{\pm z_0}{\pm\eta_0}_0
   -\inner{z_0}{\eta_0}_0
   =\left((\pm 1)^2-1\right) \inner{z_0}{\eta_0}_0
   =\underline{0}
$
vanish, since indeed $(z_1,\eta_1)=\pm (z_0,\eta_0)=(\pm z_0,\pm\eta_0)$
by~(\ref{eq:twisted-loop-space-*}).
\end{proof}

\section{Weak Hessian fields and almost extendability}
\label{sec:weak-Hess-fields}

Consider an abstract Hilbert space triple $(H_0,H_1,H_2)$.
Let $U_1\subset H_1$ be open. Set $U_2:=U_1\cap H_2$.
The following definitions are from the article~\cite{Frauenfelder:2025g}.

\boldmath
\subsection{Almost extendability}\label{sec:pDarboux-ae}
\unboldmath

\begin{definition}[Weak Hessian field]\label{def:weak-Hessian-field}
A \textbf{weak Hessian field} on $U_1$ is a continuous map
$A\in C^0(U_1,\Ll(H_1,H_0)) \cap C^0(U_2,\Ll(H_2,H_1))$,
notation $u\mapsto A^u$ and $A^u_2:=A^u|_{H_2}\colon H_2\to H_1$,
satisfying the following two conditions:
\begin{labeling}{\texttt{(Fredholm)}}
\item[\texttt{(Symmetry)}]
  At any point $u\in U_1$ there is $H_0$-symmetry in the sense that
  \begin{equation}\label{eq:u-0-symmetric}
     \forall x,y\in H_1\colon \quad
     \INNER{A^u x}{y}_{H_0}=\INNER{x}{A^u y}_{H_0} .
  \end{equation}
\item[\texttt{(Fredholm)}]
  $\forall u\in U_1\colon$
  $A^u\colon H_1\to H_0$ is Fredholm of index zero.
  
  $\forall u\in U_2\colon$ 
  $A^u_2\colon H_2\to H_1$ is Fredholm of index zero.
\end{labeling}
\end{definition}

\begin{definition}[Extendability]\label{def:extends}
We say that a weak Hessian field $A$ on $U_1$ \textbf{extends}
if $A$ extends to a continuous map
$U_1\to \Ll(H_1,H_0)\cap\underline{\Ll(H_2,H_1)}$,
still denoted by $u\mapsto A^u$, such that the restriction
$A^u_2:=A^u|_{H_2}\colon H_2\to H_1$ is Fredholm of index zero at
every point $u$ of \underline{$U_1$}, and not only of $U_2$.
\end{definition}

In general, the extendability condition is too strong;
see~\cite[\S 6.1]{Frauenfelder:2025g}.
A way out is to decompose the operator family $A$ into two summands.
The idea why and how this should be done is detailed
in~\cite[\S 5.2]{Frauenfelder:2025g}.

\begin{definition}[Almost extendability]\label{def:alm.ext}
(i)~We say that a weak Hessian field $A$ on $U_1$ \textbf{almost extends}
if there exists a decomposition
\begin{equation}\label{eq:A=F+C}
   A=F+C
\end{equation}
with
\begin{equation}\label{eq:hyp-F}
   F\in C^0(U_1,\Ll(H_1,H_0)\cap\Ll(H_2,H_1))
\end{equation}
and
\begin{equation}\label{eq:hyp-C}
   \exists r\in[0,1)\colon
   \quad
   C\in C^0(U_1,\Ll(\underline{H_r},H_0))\cap C^0(U_2,\Ll(\underline{H_1},H_1))
\end{equation}
such that the following two axioms hold.
\begin{labeling}{\texttt{(F)}}

\item[\texttt{(F)}]
  $\forall u \in\underline{\, U_1}\colon$
  $F^u_2:=F^u|_{H_2}\colon H_2\to H_1$
  is Fredholm of index zero.

\item[\texttt{(C)}]
  $\forall u\in U_1$ there exists an $H_1$-open neighborhood $V_u$ of $u$
  and a constant $\kappa$ such that
  for all $v,w\in V_u\cap H_2$ it holds the
  \emph{scale Lipschitz estimate}
  \begin{equation}\label{eq:(C)}
     \norm{C^v-C^w}_{\Ll(H_1)}
     \le\kappa\Bigl(\abs{v-w}_{H_2}
     +\min\{\abs{v}_{H_2},\abs{w}_{H_2}\}\cdot\abs{v-w}_{H_1}\Bigr).
  \end{equation}
\end{labeling}
(ii)~If $A$ almost extends we call the pair $(F,C)$
a \textbf{decomposition of \boldmath$A$}.
\end{definition}

\boldmath
\subsection{A Fredholm result}\label{sec:pDarboux-Fred}
\unboldmath

\begin{definition}[Connecting paths]\label{def:connecting-paths}
Fix two points $u_-,u_+\in U_2:=U_1\cap H_2$.
Fix a \textbf{basic path} $\hat u$ from $u_-$ to $u_+$
(see~\cite{Frauenfelder:2025e}), 
i.e. $\hat u\in C^2(\R,U_2)$ with the property that there exists $T>0$
such that $\hat u(s)=u_-$ whenever $s\le -T$ and
$\hat u(s)=u_+$ whenever $s\ge T$.
A \textbf{connecting path from \boldmath$u_-$ to $u_+$}
is a continuous map $u\colon \R\to U_1$
such that the difference $u-\hat u$ lies in the intersection
Hilbert space $W^{1,2}(\R,H_1)\cap L^2(\R,H_2)$, i.e.
\begin{equation*}
   u\in C^0(\R,U_1)
   ,\qquad
   u-\hat u\in W^{1,2}(\R,H_1)\cap L^2(\R,H_2) .
\end{equation*}
\end{definition}

The following theorem was proved in~\cite[Thm.\,6.11]{Frauenfelder:2025g}.

\begin{theorem}\label{thm:main-pDarboux}
Let $A$ be an almost extendable weak Hessian field on~$U_1$.
Consider two points $u_-,u_+\in U_2$ and a connecting path $u$.
Assume that both asymptotic operators
$A^{u_\mp}$ are isomorphisms as maps $H_1\to H_0$.
Then the operators
\begin{equation*}
\begin{split}
   \D^u=\p_s+A^u\colon
   &W^{1,2}(\R,H_0)\cap L^2(\R,H_1)
   \to L^2(\R,H_0)
\\
   \D^u_2=\p_s+A^u_2\colon 
   &W^{1,2}(\R,H_1)\cap L^2(\R,H_2)
   \to L^2(\R,H_1)
\end{split}
\end{equation*}
are both Fredholm operators of the same Fredholm index.
\end{theorem}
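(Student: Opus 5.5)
The plan is to reduce $\D^u$ to an operator whose Fredholm theory over the larger space is already under control, and then transfer the result between the two scales. Write $A=F+C$ as in Definition~\ref{def:alm.ext}. Since $u-\hat u\in W^{1,2}(\R,H_1)\cap L^2(\R,H_2)$ and $\hat u$ is $C^2$ with values in $U_2$, the path $u$ takes values in $H_2$ for almost every $s$. It also converges to $u_\mp$ in $H_1$ as $s\to\mp\infty$, because $W^{1,2}(\R,H_1)\subset C^0$ and the function vanishes at infinity.

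\emph{Step 1 (the $H_1\to H_0$ level).} Treat $\D^u=\p_s+A^u$ as $\p_s+A^{u_\mp}$ near $\pm\infty$ plus a perturbation $A^{u(s)}-A^{u_\mp}$ that tends to zero in $\Ll(H_1,H_0)$. This uses continuity of $u\mapsto A^u$ on $U_1$ together with $H_1$-convergence of $u(s)$. The asymptotic operators are $H_0$-symmetric isomorphisms by the (Symmetry) axiom of Definition~\ref{def:weak-Hessian-field}, so they are hyperbolic, and $\p_s+A^{u_\mp}$ is an isomorphism $W^{1,2}(\R,H_0)\cap L^2(\R,H_1)\to L^2(\R,H_0)$. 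From this I get the standard semi-Fredholm estimate
\[
\norm{\xi}\le c\bigl(\norm{\D^u\xi}+\norm{\xi}_{L^2([-T,T],H_0)}\bigr).
\]
To make the local term compact I need a compactness input. I would use compactness of $H_1\INTO H_0$, which is implicit in a Hilbert scale, together with the Rellich-type argument on $[-T,T]$. Applying the same argument to the adjoint $-\p_s+A^u$, which again uses symmetry, gives Fredholmness of $\D^u$.

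\emph{Step 2 (the $H_2\to H_1$ level).} Here $A^u_2$ is defined only where $u\in U_2$, so one cannot perturb naively. Instead I would freeze $F$ and treat $C$ as the dangerous part. By \eqref{eq:hyp-F}, $F^{u(s)}$ is continuous in $\Ll(H_2,H_1)$ along $s$ for the $H_1$-continuous path, and by (F) it is Fredholm of index zero on all of $U_1$. So $\p_s+F^u_2$ can be handled as in Step 1 one scale higher, with asymptotic invertibility inherited up to a compact or small correction. For $C^u$ as a map $H_1\to H_1$, the scale Lipschitz estimate \eqref{eq:(C)} compared against the basic path gives
\[
\norm{C^{u(s)}-C^{\hat u(s)}}_{\Ll(H_1)}\lesssim \abs{u-\hat u}_{H_2}+\abs{\hat u}_{H_2}\abs{u-\hat u}_{H_1}.
\]
The right-hand side lies in $L^2$ in $s$, while $C^{\hat u}$ is continuous and asymptotically constant. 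Multiplication by an $L^2(\R,\Ll(H_1))$ function, composed with the embedding of $W^{1,2}(\R,H_1)\cap L^2(\R,H_2)$ into $C^0$-type spaces, should then be compact or at least relatively bounded with small constant away from a compact set.

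\emph{Step 3 (equality of indices).} The kernel of $\D^u_2$ is contained in the kernel of $\D^u$. I would show the reverse inclusion by a regularity bootstrap: if $\D^u\xi=0$, then $\p_s\xi+F^u\xi=-C^u\xi$. The right-hand side lies in $L^2(\R,H_1)$ because $C\colon H_r\to H_0$ with $r<1$, and $C^u\in\Ll(H_1,H_1)$ on $U_2$. Applying the $H_2$-level estimate for $\p_s+F^u_2$ then upgrades $\xi$. The same argument applies to cokernels via the adjoint, which yields equal indices.

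I expect the main obstacle to be Step 2, specifically controlling the unbounded-in-$s$ behaviour of $C^{u(s)}$ on $H_1$ when $u(s)$ is only in $H_2$ almost everywhere. This is exactly where the min term in \eqref{eq:(C)} has to be used with the basic path $\hat u$ as the reference point.
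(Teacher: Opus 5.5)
The paper does not reprove this theorem; it quotes it from \cite[Thm.\,6.11]{Frauenfelder:2025g}. I therefore assess your proposal on its own terms.

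\textbf{Step~1 is fine; Step~2 has a genuine gap.} Step~1 is the standard Robbin--Salamon argument. The instruction ``handle $\p_s+F^u_2$ as in Step~1 one scale higher'' has nothing to run on. Step~1 works because each $A^{u(s)}$ is $H_0$-self-adjoint, and that is what produces both the asymptotic isomorphisms and the local estimate. On level two there is no such structure:
\begin{itemize}
\item $F^{u(s)}_2$ is not symmetric for the $H_1$ inner product;
\item $F$ is not even $H_0$-symmetric, only $F+C$ is;
\item $F^{u_\mp}_2$ need not be invertible.
\end{itemize}
Continuity in $\Ll(H_2,H_1)$, index zero and invertible limits do not suffice. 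On $h_k=W^{k,2}(\SS^1,\C)$ the constant operator $B=i(1-\p_t^2)^{1/2}$ is an isomorphism $h_2\to h_1$. Yet $\p_s+B\colon W^{1,2}(\R,h_1)\cap L^2(\R,h_2)\to L^2(\R,h_1)$ is not Fredholm, because its symbol $i\bigl(\omega+(1+4\pi^2k^2)^{1/2}\bigr)$ has zeros.

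\textbf{The missing idea is to lift the $H_0$-symmetry to level two.} Freeze at $A^v$ with $v\in U_2$ close in $H_1$ to $u(s_0)$, and at $A^{u_\mp}$ near the ends. Since $A^v$ commutes with $\p_s$, apply the level-one estimate to $A^v\zeta$ and combine it with $\abs{x}_{H_2}\le c(\abs{A^vx}_{H_1}+\abs{x}_{H_1})$. This gives the level-two estimate for $\p_s+A^v_2$, and shows that $\p_s+A^{u_\mp}_2$ is an isomorphism on level two. Then write $F^{u(s)}=A^v_2+(F^{u(s)}-F^v)-C^v$. The middle term is small in $\Ll(H_2,H_1)$, and $C^v\in\Ll(H_1)$ is of lower order.

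\textbf{Your use of~\texttt{(C)} needs a covering argument.} Axiom~\texttt{(C)} is local on the neighbourhoods $V_x$, so comparing $u(s)$ with $\hat u(s)$ for all $s$ is not licensed. Apply it against $u_\mp$ on the tails. In the middle, apply it against reference points in finitely many $V_{x_i}\cap H_2$ that cover the compact $H_1$-closure of $u(\R)$. With this, $\norm{C^{u(s)}-C^{\hat u(s)}}_{\Ll(H_1)}$ does lie in $L^2(\R)$, and your compactness argument for the multiplication operator goes through.

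\textbf{Step~3 also fails as written.} For $\xi$ in the level-one domain, $C^u\xi\in L^2(\R,H_1)$ does not follow. The norm $\norm{C^{u(s)}}_{\Ll(H_1)}$ is only bounded plus $L^2$ in $s$, and $\abs{\xi(s)}_{H_1}$ is only $L^2$, so the product is merely $L^1$. The property $C\colon H_r\to H_0$ gives only $H_0$-information. Moreover, an a priori level-two estimate does not by itself upgrade regularity. Finally, the cokernel of $\D^u_2$ in $L^2(\R,H_1)$ is not the kernel of an $H_0$-adjoint.

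\textbf{A working route for Step~3.} Since $\mathrm{im}\,\D^u$ is closed of finite codimension and $L^2(\R,H_1)$ is dense in $L^2(\R,H_0)$, the map $L^2(\R,H_1)/\mathrm{im}\,\D^u_2\to L^2(\R,H_0)/\mathrm{im}\,\D^u$ is onto. So both the kernel and the cokernel comparison reduce to one regularity statement: if $\xi$ is in the level-one domain and $\D^u\xi\in L^2(\R,H_1)$, then $\xi$ is in the level-two domain.

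Prove this for $\beta\xi$, with $\beta$ a cutoff on a short interval $I$, by a Neumann series around $\p_s+A^v+\lambda$. Choose $\lambda$ so that $A^v+\lambda$ is invertible; then $\p_s+A^v+\lambda$ is an isomorphism on both levels with compatible inverses. The perturbation $A^u-A^v$ must be small \emph{simultaneously} on both levels. For its $C$-part, smallness on level two comes from $\norm{C^u-C^v}_{L^2(I,\Ll(H_1))}\to0$ as $\abs{I}\to0$, together with $W^{1,2}(\R,H_1)\subset L^\infty(\R,H_1)$. It does not come from pointwise integrability.
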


\boldmath
\section{Hamiltonian Hessian field \boldmath$A$}
\label{sec:Hess-Ham}
\unboldmath

\begin{convention}\label{con:loop-spaces}
To simplify the presentation in Section~\ref{sec:Hess-Ham}
we consider only spaces of loops
$\Ll^\times_+\Zfrak=C^\infty(\SS^1,\Zfrak)\setminus\{0\}$ 
in an open subset $0\in\Zfrak\subset\C$, as
opposed to the space $\Ll^\times_-\Zfrak$ of twisted ($-$) loops;
cf. \S\,\ref{sec:twisted-loop-case}.
The use the notation
$$
   \Upsilon=(z,\eta) \in \Ll\Zfrak\times\Ll\C
   =T^*\Ll\Zfrak
   ,\quad
   \Xi=(z,\xi) \in \Ll\Zfrak\times\Ll\C
   =T\Ll\Zfrak
   .
$$
We canonically identify Euclidean $\R^2$
with~$\C\simeq\C^*\simeq(\R^2)^*$.
\end{convention}

\begin{definition}[Analytic setup]\label{def:analytic-setup}
Let $0\in\Zfrak\subset\C$ be an open subset of $\C$.
Consider the Hilbert space triples defined by
\begin{equation*}
\begin{split}
   \left(H_0,H_1,H_2\right)
   :&=\left(L^2(\SS^1,\C^2), W^{1,2}(\SS^1,\C^2), W^{2,2}(\SS^1,\C^2)\right)
\\
   \left(h_0,h_1,h_2\right)
   :&=\left(L^2(\SS^1,\C), W^{1,2}(\SS^1,\C), W^{2,2}(\SS^1,\C)\right)
\end{split}
\end{equation*}
the Hilbert space $h_r:=W^{r,2}(\SS^1,\C)$, $r\in(\frac12,1)$,
the open subsets
$$
   u_1:=\{z\in h_1\mid z\not\equiv 0\wedge
   \forall\tau\in\SS^1\colon z_\tau\in\Zfrak\}
   ,\qquad
   u_2:=u_1\cap W^{2,2}(\SS^1,\C)
   ,
$$
as well as
$
   U_1:=u_1\times h_1\subset H_1
$
and
$
   U_2:=u_2\times h_2\subset H_2
$.
\end{definition}

\boldmath
\subsection[Without magnetic term]
{Kepler case -- $A$ extends}
\label{sec:Hess-Ham-non-mag}
\unboldmath

We first consider the case $\Mm=0$ (no magnetic term) which
corresponds to the Kepler case.

\begin{definition}
The \textbf{\boldmath$\Hh$-perturbed non-local symplectic action}
$$
   \Aa=\Aa_\Hh^\Lambda=i_\Vv \Lambda-\Hh
   \colon 
   T^*\Ll^\times\Zfrak=\Ll^\times\Zfrak\times\Ll\C
   \to\R
$$
takes at a point $\Upsilon=(z,\eta)$
and with $\Hh$ given by~(\ref{eq:Hh}) the value
\begin{equation*}
\begin{split}
   \Aa(z,\eta) :
   &=
   \Lambda_\Upsilon \Upsilon^\prime
   -\Hh(\Upsilon)
\\
   &=
   \INNER{\eta}{z^\prime}
   -\tfrac{\norm{\eta}^2}{8\norm{z}^2} 
   +\tfrac{1}{\norm{z}^2}
\\
   &=
   -\INNER{\eta^\prime}{z}
   -\tfrac{\norm{\eta}^2}{8\norm{z}^2} 
   +\tfrac{1}{\norm{z}^2} .
\end{split}
\end{equation*}
Here the final equality is integration by parts.
The functional $\Aa$ continuously extends to the Sobolev 
$W^{1,2}$-completion $U_1$ by the same formula.
\end{definition}

The \textbf{\boldmath$L^2$-gradient of $\Aa$}
at $\Upsilon=(z,\eta)\in U_1$~is, by~(\ref{eq:grad-Aa}), of the form
\begin{equation}\label{eq:grad-Aa-magn-zero}
   \grad\Aa|_{(z,\eta)}
   =
   \begin{pmatrix}
      -\eta^\prime
      +\frac{\norm{\eta}^2z}{4\norm{z}^4}
      -\frac{2z}{\norm{z}^4}
      \\
      z^\prime-\frac{\eta}{4\norm{z}^2}
   \end{pmatrix}
   =J_0\Upsilon^\prime-\grad\Hh|_\Upsilon
\end{equation}
where
$$
   J_0=\begin{pmatrix}0&-\Id\\\Id&0\end{pmatrix}
   ,\quad
   \Id=\Id_{W^{1,2}(\SS^1,\R)}
   .
$$
Linearizing the gradient at a pair
$\Upsilon=(z,\eta) \in U_1$
in direction of a smooth vector field
$\hat\Upsilon=(\hat z,\hat \eta) \in H_1$ along the loop $\Upsilon$
defines the \textbf{Hessian operator}
\begin{equation}\label{eq:Hess-Aa-magn-zero}
   A^\Upsilon\hat\Upsilon
   :=\left.\tfrac{d}{d\eps}\right|_{\eps=0}\grad\Aa(z_\eps,\eta_\eps)
   =J_0\hat\Upsilon^\prime
   -\left.\tfrac{d}{d\eps}\right|_{\eps=0}\grad\Hh|_{\Upsilon_\eps}
\end{equation}
where
$
   \eps\to z_\eps\not\equiv 0
$
and
$
   \eps\to\eta_\eps
$
are smooth paths in loop space $ U_1$, respectively in
$H_1$, notation $\Upsilon_\eps=(z_\eps,\eta_\eps)$, such that
\begin{equation}\label{eq:par-var}
   \Upsilon_0=(z,\eta)
   ,\qquad
   \left.\tfrac{d}{d\eps}\right|_0 \Upsilon_\eps
   =\hat\Upsilon=(\hat z,\hat\eta).
\end{equation}
The Hessian operator of the non-local Hamiltonian $\Hh$ in~(\ref{eq:Hh}) is
given by
\begin{equation}\label{eq:Hess-Hh}
\begin{split}
  &\left.\tfrac{d}{d\eps}\right|_{0}
  \grad\Hh|_{\Upsilon_\eps}
\\
   &=
   \left.\tfrac{d}{d\eps}\right|_{0}
   \begin{pmatrix}
      \frac{\norm{\eta_\eps}^2z_\eps}{4\norm{z_\eps}^4}
      -\frac{2z_\eps}{\norm{z_\eps}^4}
      \\
      -\frac{\eta_\eps}{4\norm{z_\eps}^2}
   \end{pmatrix}
\\
   &=
   \begin{pmatrix}
      \frac{\inner{\eta}{\hat \eta} z}{2\norm{z}^4}
      +\frac{\norm{\eta}^2\hat z}{4\norm{z}^4}
     -\frac{\norm{\eta}^2\inner{z}{\hat z} z}{\norm{z}^6}
\overbrace{
      -\tfrac{2\hat z}{\norm{z}^4}
      +\tfrac{8\inner{z}{\hat z} z}{\norm{z}^6}
}^{=U^z}
      \\
      -\frac{\hat\eta}{4\norm{z}^2}
      +\frac{\inner{z}{\hat z}\eta}{2\norm{z}^4}
   \end{pmatrix}
\\
   &=
   \tfrac{1}{4\norm{z}^6}
   \begin{pmatrix}
\text{\small$
      2\norm{z}^2z\, \eta^*\hat\eta
      +\norm{z}^2\norm{\eta}^2\hat z
     -4\norm{\eta}^2z\, z^*\hat z
      -8\norm{z}^2\hat z
      +32z\, z^*\hat z
$}
      \\
\text{\small$
      -\norm{z}^4\hat\eta
      +2\norm{z}^2\eta\, z^*\hat z
$}
   \end{pmatrix}
\\
   &=
   \tfrac{1}{4\norm{z}^6}
   \begin{pmatrix}
\text{\small$
   \norm{z}^2\norm{\eta}^2\Id
      -4\norm{\eta}^2z\, z^*
      -8\norm{z}^2\Id
      +32z\, z^*
$}
      &
\text{\small\color{magenta}$
      2\norm{z}^2z\, \eta^*
$}
      \\
\text{\small\color{magenta}$
      2\norm{z}^2\eta\, z^*
$}
      &
\text{\small$
      -\norm{z}^4\Id
$}
   \end{pmatrix}
   \begin{pmatrix}
   \text{\small$\hat z$}\\\text{\small$\hat\eta$}
   \end{pmatrix}
\\
   &=:
   \Hess\,\Hh^{(z,\eta)}
   \begin{pmatrix}
   \hat z\\\hat\eta
   \end{pmatrix}
   .
\end{split}
\end{equation}
Here $z^*\colon \hat z\mapsto \inner{z}{\hat z}=:z^*\hat z$ is
the duality pairing. The Hessian operator takes the form of a
matrix $\Hess\,\Hh^\Upsilon$
called the \textbf{Hessian matrix} of the Hamiltonian~$\Hh$.
The matrix is \textbf{\color{magenta}symmetric}
since $(\eta z^*)^*=z^{**}\eta^*=z\eta^*$.
Hence $\Hess\,\Hh^\Upsilon$ is $H_0$-symmetric.
Along $ U_1\subset W^{1,2}\times W^{1,2}$
the multiplication operator is well defined as a bounded linear map
\begin{equation}\label{eq:Hess-matrix-bd}
   \Hess\,\Hh^\Upsilon\colon H_1\stackrel{\rm bd}{\longrightarrow}H_1
   ,\qquad 
   \forall \Upsilon=(z,\eta)\in U_1
\end{equation}
since the product of two $W^{1,2}$ maps with domain $\SS^1$
lies in $W^{1,2}$, in symbols,
$W^{1,2}\cdot W^{1,2}\subset W^{1,2}$; see~(\ref{eq:Behz-Hol-2}).

\begin{lemma}[compact Hessian matrix]\label{le:Hess-Hh-comp}
As a linear map between the spaces
$$
   \forall \Upsilon\in U_1\colon
   \qquad
   \Hess\,\Hh^\Upsilon\colon
   \quad
   H_1\stackrel{\rm cp}{\longrightarrow} H_0
   ,\quad
   H_2\stackrel{\rm cp}{\longrightarrow} H_1
   ,
$$
the Hessian matrix of $\Hh$ is a compact linear operator.
It is continuous as a map
$$
   [\Upsilon\mapsto \Hess\,\Hh^\Upsilon]
   \in C^0( U_1,\Ll(H_1,H_0))
   \cap C^0(\underline{ U_1},\Ll(H_2,H_1))
   .
$$
\end{lemma}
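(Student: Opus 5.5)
The plan is to exploit the explicit structure of the Hessian matrix in~(\ref{eq:Hess-Hh}). Every entry is a linear combination, with coefficients that are real numbers depending continuously on $\Upsilon$ (namely powers of $\norm{z}^{-2}$ and $\norm{\eta}^2$), of two kinds of operators. The first kind is a multiple of the identity, $\Id$. The second kind is a rank-one operator of the form $a\, b^*\colon \hat x\mapsto \inner{b}{\hat x}\, a$, where $a,b\in\{z,\eta\}\subset h_1$. So the strategy is to treat these two building blocks separately and then assemble: compactness comes from the compact Sobolev embeddings, and continuity comes from continuity of the scalar coefficients together with continuity of the rank-one pieces.

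For compactness, I would first factor the identity-type entries. On $h_1\to h_0$ the map $\Id$ is the inclusion $W^{1,2}(\SS^1)\INTO L^2(\SS^1)$. On $h_2\to h_1$ it is the inclusion $W^{2,2}\INTO W^{1,2}$. Both inclusions are compact by Rellich on the compact manifold $\SS^1$. The rank-one entries $a b^*$ are finite rank. Their boundedness as maps $h_1\to h_0$ and $h_2\to h_1$ is automatic, since $\abs{\inner{b}{\hat x}}\le\norm{b}\norm{\hat x}$ and $a\in h_1\subset h_0$. Hence each block is compact, and so the whole matrix is compact as a map $H_1\to H_0$ and as a map $H_2\to H_1$. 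Note that this only uses $\Upsilon\in U_1$, which is why compactness on $H_2\to H_1$ holds at every point of $U_1$ and not only of $U_2$.

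For continuity, the coefficient functions $\Upsilon\mapsto \norm{z}^{-2k}$ and $\Upsilon\mapsto\norm{\eta}^2$ are continuous on $U_1$ in the $H_1$ topology. Indeed, $\norm{z}>0$ because $z\not\equiv0$, and the $L^2$ norm is continuous for the $W^{1,2}$ topology. For the rank-one pieces I would estimate
$$
   \norm{a b^*-\tilde a\tilde b^*}_{\Ll(h_k,h_{k-1})}
   \le \abs{a-\tilde a}_{h_1}\norm{b}+\abs{\tilde a}_{h_1}\norm{b-\tilde b}
$$
for $k=1,2$, using $\abs{\cdot}_{h_0}\le\abs{\cdot}_{h_1}$. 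This is Lipschitz in $(a,b)\in h_1\times h_1$, so continuity in $\Upsilon\in U_1$ follows. The identity blocks contribute only through their scalar coefficients, which we already know are continuous. Products and sums of continuous maps then give continuity into $\Ll(H_1,H_0)$ and into $\Ll(H_2,H_1)$ on all of $U_1$.

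The main subtlety, rather than a real obstacle, is making sure that the $h_2\to h_1$ statement really requires nothing more than $z,\eta\in W^{1,2}$. The image of a rank-one piece $a b^*$ lies in $\mathrm{span}\{a\}\subset h_1$, so no $W^{2,2}$ regularity of $z$ or $\eta$ is ever needed. This is exactly what permits the underlined $U_1$ in the continuity statement. I would also remark that the operators are real-linear, since $\inner{\cdot}{\cdot}$ is the real $L^2$ pairing, and that compactness is unaffected by this.
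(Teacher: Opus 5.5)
Your proof is correct, but it is organized differently from the paper's.

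\textbf{The paper's route.} The paper does not split the matrix. It asserts that $\Upsilon\mapsto\Hess\,\Hh^\Upsilon$ is a continuous map $U_1\to\Ll(H_1)$, citing the multiplication theorem $W^{1,2}\times W^{1,2}\to W^{1,2}$. It then gets compactness and both continuity statements at once by composing with the compact inclusions: $H_1\to H_1\INTO H_0$ on the target side and $H_2\INTO H_1\to H_1$ on the domain side.

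\textbf{Your route.} You separate two kinds of pieces. The scalar multiples of $\Id$ are literally the compact inclusions $h_1\INTO h_0$ and $h_2\INTO h_1$, times continuous coefficients. The blocks $ab^*$ with $a,b\in\{z,\eta\}$ are compact because they have finite rank. You then get continuity from the explicit Lipschitz bound for $(a,b)\mapsto ab^*$.

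\textbf{What each buys.} The paper's factorization is shorter and more robust. It would apply verbatim even if the entries were genuine pointwise multiplication operators with $W^{1,2}$ coefficients, which is the situation the cited multiplication theorem addresses.

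Your decomposition is tailored to the actual nonlocal structure, in which all pairings are global $L^2$ pairings. It needs no multiplication theorem at all. It also yields more: apart from the scalar diagonal part, the Hessian matrix is a finite-rank operator that is even bounded $H_0\to H_1$. This is exactly why only $z,\eta\in W^{1,2}$ is needed on level two.
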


\begin{proof}
The composition of a bounded and a compact operator is compact.
Consider~(\ref{eq:Hess-matrix-bd}).
On the target side there is the compact embedding
$H_1\INTO H_0$ and on the domain side there is the compact
embedding $H_2\INTO H_1$.

The two continuity assertions are a consequence of the compact
embeddings together with continuity as a map
$$
   [(z,\eta)\mapsto \Hess\,\Hh^{(z,\eta)}]
   \in C^0( U_1,\Ll(H_1))
   .
$$
This continuity follows from the continuity of the
multiplication map $W^{1,2}\times W^{1,2}\to W^{1,2}$;
see~(\ref{eq:Behz-Hol-2}).
This proves Lemma~\ref{le:Hess-Hh-comp}.
\end{proof}

\subsubsection{Weak Hessian field extends}
\label{sec:Ham-Hess-Kepler-extends}

\begin{lemma}[weak Hessian field]\label{le:weak-Hess-nonmag}
The Kepler Hessian operators
\begin{equation}\label{eq:Hess-Ham-nonmag}
   A^\Upsilon
   =J_0\p_\tau-\Hess\,\Hh^\Upsilon
   \colon H_1\to H_0
\end{equation}
one for each $\Upsilon\in U_1$,
determine a weak Hessian field $A$ on $ U_1$.
\end{lemma}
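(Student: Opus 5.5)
We have to check the three requirements of Definition~\ref{def:weak-Hessian-field}: continuity of $\Upsilon\mapsto A^\Upsilon$ in $C^0(U_1,\Ll(H_1,H_0))\cap C^0(U_2,\Ll(H_2,H_1))$, $H_0$-symmetry, and the Fredholm property of index zero on both levels. The plan is to split $A^\Upsilon=J_0\p_\tau-\Hess\,\Hh^\Upsilon$ into a constant principal part $J_0\p_\tau$ and the compact perturbation $\Hess\,\Hh^\Upsilon$ from Lemma~\ref{le:Hess-Hh-comp}, and to deduce every property from this splitting.

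\emph{Continuity.} The operator $J_0\p_\tau$ does not depend on $\Upsilon$. It is bounded as a map $H_1\to H_0$ and as a map $H_2\to H_1$, since $\p_\tau\colon W^{k+1,2}(\SS^1)\to W^{k,2}(\SS^1)$ is bounded and $J_0$ is a constant pointwise isometry. The continuity of $\Upsilon\mapsto\Hess\,\Hh^\Upsilon$ with the required targets is exactly the second assertion of Lemma~\ref{le:Hess-Hh-comp}, which even gives continuity on all of $U_1$ at the $\Ll(H_2,H_1)$ level. Hence $A$ has the required continuity.

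\emph{Symmetry.} For $x,y\in H_1$, integration by parts on $\SS^1$ gives $\inner{J_0x'}{y}=-\inner{x}{J_0y'}'$-type identities with no boundary terms, because the loops are periodic. More precisely, $J_0^T=-J_0$ gives $\inner{J_0x'}{y}=-\inner{x'}{J_0y}=\inner{x}{J_0y'}$, so $J_0\p_\tau$ is $H_0$-symmetric on $H_1$. The Hessian matrix is symmetric by the magenta observation after~(\ref{eq:Hess-Hh}). Concretely, the diagonal entries are real multiples of $\Id$ plus rank-one terms $zz^*$, which are symmetric, and the off-diagonal entries $z\eta^*$ and $\eta z^*$ are mutual adjoints with respect to the $L^2$ pairing. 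So $\Hess\,\Hh^\Upsilon$ is $H_0$-symmetric, and therefore so is $A^\Upsilon$.

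\emph{Fredholm of index zero.} By stability of the Fredholm index under compact perturbations, together with the compactness statements $H_1\to H_0$ and $H_2\to H_1$ of Lemma~\ref{le:Hess-Hh-comp}, it suffices to show that $J_0\p_\tau\colon H_1\to H_0$ and $J_0\p_\tau\colon H_2\to H_1$ are Fredholm of index zero. Since $J_0$ is an isomorphism at each level, this reduces to $\p_\tau\colon W^{k+1,2}(\SS^1,\C^2)\to W^{k,2}(\SS^1,\C^2)$ for $k=0,1$. Using Fourier series, $\p_\tau$ has kernel equal to the constants, of real dimension $4$. Its image is the set of mean-zero functions, which is closed and has a $4$-dimensional complement, namely the constants. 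So the index is $4-4=0$.

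Equivalently, one may use the shifted operator $J_0\p_\tau+\Id$, which is an isomorphism because its Fourier symbol is invertible, and note that $\Id\colon H_{k+1}\to H_k$ is compact.

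I expect no real obstacle here. The only point needing care is that the $H_2\to H_1$ statement is required only on $U_2$, while Lemma~\ref{le:Hess-Hh-comp} already supplies it on $U_1$. The argument in fact shows that $A$ extends in the sense of Definition~\ref{def:extends}.
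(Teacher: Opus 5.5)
Your proposal is correct and is essentially the paper's proof: the constant principal part $J_0\p_\tau$ plus the compact, continuous perturbation $\Hess\,\Hh^\Upsilon$ from Lemma~\ref{le:Hess-Hh-comp}, symmetry of each summand separately, and Fredholm index zero by compact perturbation. You merely spell out the Fourier-series argument for $J_0\p_\tau$ that the paper cites, and your closing remark is exactly Proposition~\ref{prop:Theta=0-extends}. The first identity in your symmetry paragraph has a stray sign and prime, but your subsequent computation $\inner{J_0x'}{y}=-\inner{x'}{J_0y}=\inner{x}{J_0y'}$ is the correct one.
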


\begin{proof}
There are three steps.

\smallskip
\noindent
\textsc{Step 1 (Spaces).}
It holds that
\begin{equation}\label{eq:Hess-Ham-nonmag-extend}
\begin{split}
   A\in 
   &C^0( U_1,
   \Ll( H_1, H_0)\cap\Ll( H_2, H_1))
\\
   &
{\color{gray}
   \subset 
   C^0( U_1,\Ll( H_1, H_0))
   \cap
   C^0(U_2,\Ll( H_2, H_1)) .
}
\end{split}
\end{equation}

\smallskip
\noindent

The summand $J_0\p_\tau$ is a constant map in
$\Ll( H_1, H_0)\cap \Ll( H_2, H_1)$.
Furthermore, by Lemma~\ref{le:Hess-Hh-comp}
the summand $\Hess\,\Hh$
lies in $C^0( U_1,\Ll( H_1))$.

\smallskip
\noindent
\textsc{Step 2.} The axiom (\texttt{Symmetry}) holds.

\smallskip
\noindent
Both $J_0\p_\tau$ and $\Hess\,\Hh^\Upsilon$
are $ H_0$-symmetric at any point $\Upsilon\in U_1$.

\smallskip
\noindent
\textsc{Step 3.} The axiom (\texttt{Fredholm}) holds.

\smallskip
\noindent
That $J_0\p_\tau$ is Fredholm of index zero as a map
$ H_1\to  H_0$ and as a map $ H_2\to  H_1$ is well
known; see e.g.~\cite[Thm.\,C.1]{Frauenfelder:2025g}.
In either case, by Lemma~\ref{le:Hess-Hh-comp},
the Hessian is a compact perturbation,
but such preserve Fredholm property and index.
This concludes the proof of Lemma~\ref{le:weak-Hess-nonmag}.
\end{proof}

Observe that~(\ref{eq:Hess-Ham-nonmag-extend})
shows that the non-magnetic weak Hessian
field~(\ref{eq:Hess-Ham-nonmag}) is extendable, see
Definition~\ref{def:extends}, which we state as

\begin{proposition}[extends]\label{prop:Theta=0-extends}
The Kepler Hessian field $A$ extends.
\end{proposition}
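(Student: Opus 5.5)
To prove Proposition~\ref{prop:Theta=0-extends} we check the two requirements of Definition~\ref{def:extends} directly for the Kepler field $A^\Upsilon=J_0\p_\tau-\Hess\,\Hh^\Upsilon$. The point is that the formula defining $A^\Upsilon$ already makes sense at every $\Upsilon\in U_1$, not only at points of $U_2$. So we do not need to construct an extension: the map $\Upsilon\mapsto A^\Upsilon$ on $U_1$ is itself the candidate, and we only have to verify its regularity and the Fredholm property of its restrictions.

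\emph{Continuity into the intersection space.} We need $A\in C^0(U_1,\Ll(H_1,H_0)\cap\Ll(H_2,H_1))$, which is exactly~(\ref{eq:Hess-Ham-nonmag-extend}) from Step~1 of the proof of Lemma~\ref{le:weak-Hess-nonmag}.
\begin{itemize}
\item The summand $J_0\p_\tau$ is a fixed operator, bounded both as $H_1\to H_0$ and as $H_2\to H_1$, so it contributes a constant map.
\item For $\Hess\,\Hh$, Lemma~\ref{le:Hess-Hh-comp} gives continuity $U_1\to\Ll(H_1)$. This comes from the Banach algebra property $W^{1,2}\cdot W^{1,2}\subset W^{1,2}$ on $\SS^1$, together with the fact that the scalar coefficients $\norm{z}^{-2k}$ and $\norm{\eta}^2$ depend continuously on $(z,\eta)\in U_1$ (here $z\not\equiv0$ on $u_1$).
\item Composing with the bounded embedding $H_2\INTO H_1$ on the domain side yields continuity into $\Ll(H_2,H_1)$ on all of $U_1$. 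Composing with the embedding $H_1\INTO H_0$ on the target side yields continuity into $\Ll(H_1,H_0)$. Both assertions are already recorded, with the underlined $U_1$, in Lemma~\ref{le:Hess-Hh-comp}.
\end{itemize}

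\emph{Fredholm property of the restriction at every point of $U_1$.} Fix an arbitrary $\Upsilon\in U_1$. The restriction $A^\Upsilon_2=J_0\p_\tau-\Hess\,\Hh^\Upsilon\colon H_2\to H_1$ splits into two pieces.
\begin{itemize}
\item $J_0\p_\tau\colon H_2\to H_1$ is Fredholm of index zero by~\cite[Thm.\,C.1]{Frauenfelder:2025g}.
\item $\Hess\,\Hh^\Upsilon\colon H_2\to H_1$ is compact by Lemma~\ref{le:Hess-Hh-comp}, since it is the bounded operator $H_1\to H_1$ composed with the compact embedding $H_2\INTO H_1$.
\end{itemize}
Compact perturbations preserve both the Fredholm property and the index, so $A^\Upsilon_2$ is Fredholm of index zero.

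There is no genuine obstacle here. The only point requiring care is that Lemma~\ref{le:Hess-Hh-comp} must be used at $\Upsilon\in U_1$ rather than at $\Upsilon\in U_2$. This is legitimate because $\Hess\,\Hh^\Upsilon$ is a multiplication operator whose coefficients $z,\eta$ need only $W^{1,2}$ regularity: it involves no derivatives of $z$ or $\eta$. This is precisely where the Kepler case differs from the magnetic case, and why there only almost extendability can be expected.
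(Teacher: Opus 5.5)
Your proof is correct and is essentially the paper's argument. The paper deduces the proposition from the inclusion (\ref{eq:Hess-Ham-nonmag-extend}) in Step~1 of Lemma~\ref{le:weak-Hess-nonmag}, and the Fredholm property at every point of $U_1$ comes, exactly as in your proposal, from the compact-perturbation argument of Step~3 together with Lemma~\ref{le:Hess-Hh-comp} holding on all of $U_1$. You make explicit the second requirement of Definition~\ref{def:extends}, which the paper leaves implicit. As a small aside, the $z z^*$, $z\eta^*$ and $\eta z^*$ terms of $\Hess\,\Hh^\Upsilon$ are $L^2$-pairings times fixed loops, hence finite rank, so boundedness on $H_1$ is even more elementary than the product estimate you cite.
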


\boldmath
\subsection{With magnetic term $\Mm$}
\label{sec:Hess-Ham-mag}
\unboldmath

We use $H_k$, $U_k$, $u_k$ as
in Definition~\ref{def:analytic-setup}.
Consider the action functional
$\Aa=i_\Vv\Lambda+\pi^*\Mm-\Hh$
in~(\ref{eq:Aa_Hh}). By~(\ref{eq:grad-Aa}), the $L^2$-gradient at 
$\Upsilon=(z,\eta)\in U_1$ is
\begin{equation*}
\begin{split}
   \grad\Aa|_{(z,\eta)}
   &=
   \begin{pmatrix}
      -\eta^\prime
      +\grad\Mm|_z
      +\frac{\norm{\eta}^2z}{4\norm{z}^4}
      -\frac{2z}{\norm{z}^4}
      \\
      z^\prime-\frac{\eta}{4\norm{z}^2}
   \end{pmatrix}
\\
   &=J_0\Upsilon^\prime
   +\begin{pmatrix}\grad\Mm|_z\\0\end{pmatrix}
   -\grad\Hh|_\Upsilon
\end{split}
\end{equation*}
where $\grad\Mm|_z$ is the sum of the last four terms
in~(\ref{eq:grad-Bb}).
Linearizing the gradient at a pair
$\Upsilon=(z,\eta)\in U_1$ in direction of a vector
field $\hat\Upsilon=(\hat z,\hat \eta)\in H_1$
along the loop $\Upsilon$ yields,
analogously to~(\ref{eq:Hess-Aa-magn-zero}) and~(\ref{eq:Hess-Hh}),
\textbf{Hessian operators}
\begin{equation}\label{eq:Hess-op-mag}
\begin{split}
   A^\Upsilon\hat\Upsilon
   &=A^{(z,\eta)}(\hat z,\hat \eta)
   :
   =\left.\tfrac{d}{d\eps}\right|_{\eps=0}\grad\Aa(z_\eps,\eta_\eps)
\\
   &=J_0\hat\Upsilon^\prime
   +\begin{pmatrix}M^z&0\\0&0\end{pmatrix}
      \begin{pmatrix}\hat z\\\hat\eta\end{pmatrix}
   -\Hess\,\Hh^{(z,\eta)}\begin{pmatrix}\hat z\\\hat\eta\end{pmatrix}
\end{split}
\end{equation}
one operator at every element $\Upsilon=(z,\eta)\in U_1$.
Lemma~\ref{le:Hessians-Lagrange}
provides the Hessian operators $M^z$, one for each $z\in u_1$,
of the magnetic function $\Mm$.
The Hessian matrix of $\Hh$ is given by~(\ref{eq:Hess-Hh}).

\smallskip\noindent
The task at hand is to show that $\Upsilon\mapsto A^\Upsilon$
defines a \emph{weak Hessian field} along~$ U_1$
(Proposition~\ref{prop:Ham-weak-mag})
which \emph{almost extends} (Theorem~\ref{thm:Ham-ae});
cf.~\S\,\ref{sec:weak-Hess-fields}.

\begin{remark}[Magnetic Hessian $M$]\label{rem:T_ij}
Of the three summands of the Hessian~(\ref{eq:Hess-op-mag})
it remains to analyze the second one, namely the map
$$
{\color{gray}
   U_1=u_1\times h_1\supset\;
}
   u_1\ni z\mapsto
   \begin{pmatrix}M^z&0\\0&0\end{pmatrix}
   \colon
   H_1\to H_0
   ,\quad
   H_2\to H_1
   .
$$
Since $H_k=h_k\times h_k$, the magnetic Hessian is actually a linear map
$M^z\colon h_1\to h_0$, $h_2\to h_1$ which is composed of
18 summands
$$
\underbrace{
   0, C_{42}, C_{43}, F_{44}
}_{T_4}
   ,
\underbrace{
   T_{51}, T_{52}, T_{53}, T_{54}
}_{T_5}
   ,
\underbrace{
   T_{61}, T_{62}, T_{63}, T_{64}, T_{65}
}_{T_6}
   ,
\underbrace{
   T_{71}, T_{72}, T_{73}, T_{74}, T_{75}
}_{T_7}
   .
$$
These linear operators are defined and analyzed one-by-one in
Appendix\,\ref{sec:T4}.
\end{remark}

\boldmath
\subsubsection{Weak Hessian field}
\unboldmath

\begin{proposition}[weak Hessian field]\label{prop:Ham-weak-mag}
The Hessian operators $A^\Upsilon$ in~(\ref{eq:Hess-op-mag}),
one operator for every $\Upsilon\in U_1$,
form a weak Hessian field $A$ on $ U_1$
(Definition~\ref{def:weak-Hessian-field}).
\end{proposition}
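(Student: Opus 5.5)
The plan mirrors the three steps of Lemma~\ref{le:weak-Hess-nonmag}, treating the magnetic block as an additional perturbation. By~(\ref{eq:Hess-op-mag}),
$$
A^\Upsilon = J_0\p_\tau+\begin{pmatrix}M^z&0\\0&0\end{pmatrix}-\Hess\,\Hh^\Upsilon .
$$
The first and third summands are already controlled by Lemma~\ref{le:weak-Hess-nonmag} and Lemma~\ref{le:Hess-Hh-comp}. It therefore suffices to show three things about $z\mapsto M^z$.

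First, for the spaces condition, I need $M\in C^0(u_1,\Ll(h_1,h_0))\cap C^0(u_2,\Ll(h_2,h_1))$. Second, for symmetry, each $M^z$ must be $h_0$-symmetric on $h_1$. Third, for the Fredholm condition, $M^z\colon h_1\to h_0$ must be compact for $z\in u_1$, and $M^z\colon h_2\to h_1$ must be compact for $z\in u_2$. Once these hold, the Fredholm axiom follows exactly as in Step~3 of Lemma~\ref{le:weak-Hess-nonmag}. There $J_0\p_\tau$ is Fredholm of index zero as a map $H_1\to H_0$ and as a map $H_2\to H_1$, and compact perturbations preserve both the Fredholm property and the index.

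For continuity and compactness I would go through the 18 summands listed in Remark~\ref{rem:T_ij} one by one, using their analysis in Appendix~\ref{sec:T4}. Each summand is built from three kinds of pieces.

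(a) Multiplication operators by smooth functions of $(t_z,z)$ and of $z'$. These include $\dot\aaa$, $\rot\,\aaa$ and their spatial derivatives evaluated at $(t_z(\tau),z_\tau)$.

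(b) Non-local integral operators, such as $\hat z\mapsto \int_\tau^1\inner{\cdot}{\hat z}\,d\sigma$ and the variation of $t_z$, which is $\hat z\mapsto 2\int_0^\tau\inner{z}{\hat z}/\norm{z}^2-\dots$.

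(c) Rank-one operators of the form $z\,\inner{\cdot}{\hat z}$.

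Integral and rank-one pieces gain one derivative, so they are compact by the compact embeddings $h_{k+1}\INTO h_k$. The only terms involving $\hat z'$ come from linearizing $\inner{\dot\aaa}{z'}$ and $(\rot\,\aaa)\,j_0z'$. In the first, $\hat z'$ sits inside an integral and is smoothed out. In the second, the term $(\rot\,\aaa_{t_z}|_z)\,j_0\hat z'$ is a first-order operator.

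I expect this first-order term to be the main obstacle, since it is not compact from $h_1$ to $h_0$. My first move would be to use symmetrization, as is standard for magnetic Hessians. Writing $b:=\rot\,\aaa_{t_z}|_z$, combine the contribution $-b\,j_0\hat z'$ with the term $-(\p_\tau b)\,\tfrac12 j_0\hat z$ arising from differentiating $b$ in direction $\hat z$ against $z'$. Together these form the operator $-\tfrac12\bigl(b\,j_0\p_\tau+\p_\tau\circ b\,j_0\bigr)$, which is skew-adjoint composed with $j_0$, hence symmetric. I then need to check whether it is compact relative to $J_0\p_\tau$. The block structure helps here: $M^z$ occupies only the $(z,z)$ entry, while $J_0\p_\tau$ couples $z$ with $\eta$. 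I would therefore treat $\begin{pmatrix}b\,j_0\p_\tau&0\\0&0\end{pmatrix}$ as a perturbation and verify the Fredholm property directly. One way is to conjugate by the bounded invertible map $(\hat z,\hat\eta)\mapsto(\hat z,\hat\eta+\tfrac12 b\,j_0\hat z)$, which absorbs the first-order term into $J_0\p_\tau$ up to zeroth-order, hence compact, errors. Since this conjugation is homotopic to the identity, the index stays zero.

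Continuity in $z$ is handled via $W^{1,2}\cdot W^{1,2}\subset W^{1,2}$ and smooth dependence of $\aaa$. Note that $t_z$ depends continuously on $z\in h_1$ in $C^1$, because $\norm{z}\neq0$.

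Finally, symmetry. Rather than checking it summand by summand, I would argue abstractly. $M^z$ is the second derivative of a $C^2$ function on $h_1$, namely $\Mm$, or locally $\Mm$ up to a constant in the twisted case, since only its gradient is globally defined. Its Hessian is therefore a symmetric bilinear form, so $\inner{M^z x}{y}=d^2\Mm|_z(x,y)=\inner{x}{M^z y}$ for $x,y\in h_1$ by density of smooth loops. This is the content I expect Section~\ref{sec:L2-Hess-symmetry} to formalize, and I would invoke it at this point.
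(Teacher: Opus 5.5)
Your treatment of the spaces and of symmetry agrees with the paper, which also obtains symmetry abstractly from Corollary~\ref{cor:H0-symm} applied to $\Aa$.

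\textbf{The shear step as written fails.} This is where you diverge from the paper. Let $F_{44}^z=c\,b\,j_0\p_\tau$ be the first-order part of $M^z$. The shear must then carry the same constant: $\Phi(\hat z,\hat\eta)=(\hat z,\hat\eta+c\,b\,j_0\hat z)$. With your coefficient $\tfrac12$, a term $(c-\tfrac12)\,b\,j_0\hat z'$ survives, and it is not compact $H_1\to H_0$.

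You must also compose on the right rather than conjugate. With the corrected $\Phi$ one finds
\[
A^\Upsilon\Phi(\hat z,\hat\eta)=J_0\p_\tau(\hat z,\hat\eta)+\bigl(-c\,b'j_0\hat z+(M^z-c\,b\,j_0\p_\tau)\hat z,\,0\bigr)-\Hess\,\Hh^\Upsilon\Phi(\hat z,\hat\eta).
\]
By contrast, $\Phi^{-1}A^\Upsilon\Phi$ picks up the new first-order term $c\,b\,j_0\hat\eta'$ in its second component.

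With these corrections the argument works. $\Phi$ is an isomorphism of $H_1$ for $z\in u_1$ (since $b\in W^{1,2}$) and of $H_2$ for $z\in u_2$ (since $b\in W^{2,2}$). So no homotopy is needed, and $b'j_0\hat z$ is compact on both levels.

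Two smaller points. The symmetrization detour plays no role, and its description is inaccurate: differentiating $b$ in direction $\hat z$ gives $C_{42}+C_{43}$, not $-\tfrac12 b'j_0\hat z$. This is harmless, since all these terms are compact. You should also say why the zeroth-order pieces of type (a) are compact $h_1\to h_0$. In $C_{43}$, for instance, the coefficient $z'$ is only $L^2$, so you need the compact embedding $h_1\INTO C^0$; the paper uses $h_r$ with $r>\tfrac12$ for this.

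\textbf{Comparison with the paper.} The paper writes the principal part as $F^{1,z}=\bbb^{1,z}\p_\tau$, with an invertible multiplication matrix. It proves Lemma~\ref{le:f_44-sFred} by semi-Fredholm estimates plus homotopy in $r$.

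Your right shear is shorter, but the paper's left factorization gives more. $\bbb^{1,z}$ is an isomorphism of $W^{1,2}\times W^{1,2}$ already for $z\in u_1$. Hence $F^{1,z}_2\colon H_2\to H_1$ is Fredholm for every $z\in u_1$, which is exactly what axiom (\texttt{F}) in Theorem~\ref{thm:Ham-ae} requires. Your $\Phi$ preserves $H_2$, and makes $b'j_0\hat z$ compact $H_2\to H_1$, only for $z\in u_2$.

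On the other hand, your handling of $T_{65}$ and $T_{75}$ is simpler than Lemma~\ref{le:f_65_75-comp}. $T_{65}$ is the bounded map $\xi\mapsto\xi'$ followed by a Volterra operator with bounded kernel, which is Hilbert--Schmidt on $L^2$. $T_{75}$ has rank one. So no integration by parts into $h_r$ is needed.
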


The proof of the proposition requires two lemmas which we state and
prove thereafter (Lemma~\ref{le:f_44-sFred} and
Lemma~\ref{le:f_65_75-comp}).

\begin{proof}
There are three steps.

\smallskip
\noindent
\textsc{Step 1 (Spaces).}
It holds that
$A\in C^0( U_1,\Ll(H_1,H_0)) \cap C^0(U_2,\Ll(H_2,H_1))$.

\smallskip
\noindent
By the non-magnetic case~(\ref{eq:Hess-Ham-nonmag-extend})
it suffices to show that the magnetic Hessian operators form a map
$z\mapsto M^z\in C^0(u_1,\Ll(h_1,h_0)) \cap C^0(u_2,\Ll(h_2,h_1))$.
This is true by the explicit formulas in~\S\,\ref{sec:T4}.

\smallskip
\noindent
\textsc{Step 2.} The axiom (\texttt{Symmetry}) holds.

\smallskip
\noindent
The Hessian operator $A^\Upsilon$ is the derivative
of the $L^2$-gradient of the functional $\Aa$ in~(\ref{eq:Aa_Hh}).
Hence $A^\Upsilon$ is $H_0$-symmetric by Corollary~\ref{cor:H0-symm}.

\smallskip
\noindent
\textsc{Step 3.} The axiom (\texttt{Fredholm}) holds.

\smallskip
\noindent
On level one and also on level two
there is ${\color{orange}1}$ Fredholm operator of index zero 
$F^{1,\Upsilon}\colon H_1\to H_0$,
$\forall \Upsilon \in U_1$, and 
$F^{1, \Upsilon }_2\colon H_2\to H_1$,
$\forall \Upsilon \in U_2$
(Lemma~\ref{le:f_44-sFred})
plus 17 compact perturbations
$\diag(M^z_{ij},0)$ from the magnetic field
(see \S\,\ref{sec:T4} where to $T_{65}$ and $T_{75}$ we apply, in addition,
Lemma~\ref{le:f_65_75-comp})
plus the compact perturbation $\Hess\, \Hh$
(Lemma~\ref{le:Hess-Hh-comp}).
Since the Fredholm property as well as the index
is stable under compact perturbation the Fredholm axiom follows.
This proves Proposition~\ref{prop:Ham-weak-mag}.
\end{proof}

\subsubsection*{Fredholm perturbation}

The following lemma enters step~3 of the proof of
Proposition~\ref{prop:Ham-weak-mag}.

\begin{lemma}\label{le:f_44-sFred}
Let $(z,\eta)\in U_1$
and $f_{44}^z:=-4 b_{t_z}|_z j_0$, cf.~(\ref{eq:f_44-individual}).
For $r\in[0,1]$ set
\begin{equation*}
   F^{r,z}:=J_0\p_\tau+\diag(r\underbrace{f_{44}^z\p_\tau}_{F_{44}^z},0)
   =
   \underbrace{
   \begin{pmatrix}
      r f_{44}^z & -\Id
      \\
      \Id &0
   \end{pmatrix}
   }_{=: \bbb^{r,z}}
   \p_\tau
   \colon
   \;\;
    H_1\stackrel{F^{r,z}}{\longrightarrow}  H_0
   ,\;\;
    H_2\stackrel{F^{r,z}_2}{\longrightarrow}  H_1
   .
\end{equation*}
Then both operators $F^{r,z}$ and $F^{r,z}_2$  are
Fredholm of index zero for each $r$.
\end{lemma}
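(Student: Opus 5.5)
The plan is to exploit the factorization $F^{r,z}=\bbb^{r,z}\p_\tau$ stated in the lemma. I would show that $\p_\tau$ is Fredholm of index zero on both levels, and that multiplication by the matrix-valued function $\bbb^{r,z}$ is a Banach space isomorphism of $H_0$ and of $H_1$. Composing a Fredholm operator with an isomorphism preserves the Fredholm property and the index, so the lemma would follow.

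\smallskip\noindent
\textsc{Step 1 ($\p_\tau$).}
As a map $H_1\to H_0$, and likewise as a map $H_2\to H_1$, the operator $\p_\tau$ on loops $\SS^1\to\C^2$ has kernel the constant loops, a space of real dimension $4$. Its image is the closed subspace of loops with zero mean, since any mean-zero loop $w\in H_k$ has the primitive $\int_0^\tau w$, which lies in $H_{k+1}$ and is periodic. This image has codimension $4$, so $\p_\tau$ is Fredholm of index zero on both levels. Alternatively one can write $\p_\tau=-J_0\cdot J_0\p_\tau$ and quote \cite[Thm.\,C.1]{Frauenfelder:2025g}, using that $J_0$ is a constant isomorphism.

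\smallskip\noindent
\textsc{Step 2 (regularity of $f_{44}^z$).}
I would show that $\tau\mapsto f_{44}^z(\tau)=-4b_{t_z(\tau)}|_{z_\tau}j_0$ is a $W^{1,2}$ map $\SS^1\to\R^{2\times 2}$ whenever $z\in u_1$. The function $b$ is smooth in $(t,q)$ and periodic in $t$, because by Remark~\ref{rmk:tw-per} it is built from $\rot\,\aaa$ and $d\vartheta$ is periodic. The loop $z$ lies in $W^{1,2}\subset C^0$. Classical time $t_z$ has derivative $\abs{z}^2/\norm{z}^2\in W^{1,2}$, so $t_z\in W^{2,2}$. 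By the chain rule, $\tau\mapsto b_{t_z(\tau)}(z_\tau)$ is continuous, and its derivative
\[
(\p_t b)\, t_z'+(d_q b)\, z'
\]
lies in $L^2$, because the coefficients $\p_t b$ and $d_q b$ are bounded on the compact image. Hence $f_{44}^z\in W^{1,2}\cap C^0$. I expect this to be the only point needing care. In particular, continuity of $t_z$ relies on $z\not\equiv0$, that is, on $\norm{z}>0$.

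\smallskip\noindent
\textsc{Step 3 ($\bbb^{r,z}$ is an isomorphism).}
Pointwise, the block matrix $\begin{pmatrix} a&-\Id\\ \Id&0\end{pmatrix}$ with $a=rf_{44}^z(\tau)$ has the explicit inverse $\begin{pmatrix} 0&\Id\\ -\Id& a\end{pmatrix}$. This can be checked by block multiplication: the product equals $\begin{pmatrix}\Id & a-a\\ 0&\Id\end{pmatrix}=\Id$. Both the matrix and its inverse have entries in $W^{1,2}\cap L^\infty$. Multiplication by such a matrix is bounded on $H_0$ by the $L^\infty$ bound, and bounded on $H_1$ because $W^{1,2}\cdot W^{1,2}\subset W^{1,2}$; see~(\ref{eq:Behz-Hol-2}). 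Therefore $\bbb^{r,z}$ is a bounded isomorphism of $H_0$ and of $H_1$, for every $r\in[0,1]$.

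\smallskip\noindent
\textsc{Step 4 (conclusion).}
We have $F^{r,z}=\bbb^{r,z}\circ\p_\tau$ as a map $H_1\to H_0\to H_0$, and $F^{r,z}_2=\bbb^{r,z}\circ\p_\tau$ as a map $H_2\to H_1\to H_1$. By Steps~1 and~3 both are Fredholm of index zero. Note that only $z\in u_1$ is used, not $z\in u_2$, which matches the requirement in axiom \texttt{(F)}.
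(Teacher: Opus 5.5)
Your proof is correct, and it takes a more direct route than the paper.

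\textbf{The paper's route.} The paper never factors $F^{r,z}$ through an isomorphism. It proceeds in three stages:
\begin{itemize}
\item It starts from the fact that $F^{0,z}=J_0\p_\tau$ is Fredholm of index zero.
\item For every $r\in[0,1]$ it proves the semi-Fredholm estimates $\norm{\hat\Upsilon}_{H_1}\le c_z(\norm{F^{r,z}\hat\Upsilon}_{H_0}+\norm{\hat\Upsilon}_{H_0})$ and $\norm{\hat\Upsilon}_{H_2}\le d_z(\norm{F^{r,z}_2\hat\Upsilon}_{H_1}+\norm{\hat\Upsilon}_{H_1})$, with constants independent of $r$. Semi-Fredholmness then follows from the compact embeddings $H_1\hookrightarrow H_0$ and $H_2\hookrightarrow H_1$.
\item It transports index zero from $r=0$ to all $r\in[0,1]$ by homotopy invariance of the semi-Fredholm index.
\end{itemize}

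\textbf{Common core.} The analytic ingredient is the same in both arguments: the explicit inverse $(\bbb^{r,z})^{-1}$ and its boundedness on $L^2\times L^2$ and on $W^{1,2}\times W^{1,2}$. The second bound rests on $f^z_{44}$ being a $W^{1,2}$ multiplier for $z\in u_1$; this is your Step~2 and the paper's estimate~(\ref{eq:f_44}). The paper uses this only through the identity $\hat\Upsilon'=(\bbb^{r,z})^{-1}F^{r,z}\hat\Upsilon$.

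\textbf{What your route buys.} Once $\bbb^{r,z}$ is known to be an isomorphism of $H_0$ and of $H_1$, composing with the index-zero operator $\p_\tau$ gives the conclusion at once. You need no uniformity in $r$, no semi-Fredholm theory and no homotopy. You also get more information: $\ker F^{r,z}$ is exactly the constant loops, and the range is $\bbb^{r,z}$ applied to the mean-zero loops, for every real $r$. The paper's a priori estimates follow from your setup anyway, by writing $\norm{\hat\Upsilon}_{H_1}^2=\norm{\hat\Upsilon}_{H_0}^2+\norm{(\bbb^{r,z})^{-1}F^{r,z}\hat\Upsilon}_{H_0}^2$. So the homotopy step in the paper is dispensable here.

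\textbf{One small point of care.} The function $t_z$ is not $1$-periodic, since $t_z(\tau+1)=t_z(\tau)+1$, so read ``$t_z\in W^{2,2}$'' locally or as a statement about $t_z-\tau$. Periodicity of $\tau\mapsto b_{t_z(\tau)}|_{z_\tau}$ then comes from the $1$-periodicity of $b$ in $t$, which you correctly noted.
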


\begin{proof}
Pick $z\in u_1$.
For $r=0$ both operators $F^{0,z}=J_0\p_\tau$ and
$F^{0,z}_2=J_0\p_\tau$ do not depend on $z$ and
are well known to be Fredholm of index zero;
see proof of Proposition~\ref{prop:Theta=0-extends}.
Hence, because the semi-Fredholm index is invariant under homotopy,
see e.g.~\cite[\S 18 Cor.\,3]{Muller:2007a},
it suffices to show that $F^{r,z}$ and $F^{r,z}_2$ are semi-Fredholm for every
$r\in[0,1]$. To this end we derive a semi-Fredholm estimate for
the level one operator $F^{r,z}$ and the level two operator $F^{r,z}_2$.

\smallskip
\noindent
\textbf{Step 1 (semi-Fredholm level \boldmath$1$).}
Given $z\in u_1$, there is $c_z>0$ such that
$$
   \norm{\hat\Upsilon}_{ H_1}
   \le c_z
   \left(
   \norm{F^{r,z} \hat\Upsilon}_{ H_0}+\norm{\hat\Upsilon}_{ H_0}
   \right)
$$
for all $\hat\Upsilon=(\hat z,\hat\eta)\in H_1$ and
$r\in[0,1]$.

\begin{proof}
Fix $z\in u_1$.
While $H_0:=L^2(\SS^1,\C^2)$ by Definition~\ref{def:analytic-setup},
let us abbreviate
$$
   L^2:=L^2(\SS^1,\C)
   ,\qquad
   H_0=L^2\times L^2
   .
$$
For $r\in [0,1]$ the operator $\bbb^{r,z}$
is invertible with inverse
\begin{equation*}
\begin{split}
   (\bbb^{r,z})^{-1}
   =
   \begin{pmatrix}
      0& \1
      \\
      -\1 &r f_{44}^z 
   \end{pmatrix}
   \colon L^2\times L^2
   &\to L^2\times L^2 
\\
   (\hat z,\hat\eta)
   &\mapsto\left(\hat \eta,-\hat z+r f_{44}^z\hat\eta\right)
   .
\end{split}
\end{equation*}
To estimate the operator norm of $(\bbb^{r,z})^{-1}$,
let $\hat\Upsilon=(\hat z,\hat \eta)\in L^2\times L^2$ and estimate
\begin{equation*}
\begin{split}
   \norm{(\bbb^{r,z})^{-1}\hat\Upsilon}_{L^2\times L^2}^2
   &=\norm{\hat\eta}_{L^2}^2
   +\norm{-\hat z+r f_{44}^z\hat\eta}_{L^2}^2
\\
   &\le \norm{\hat\eta}_{L^2}^2
   +2 \norm{\hat z}_{L^2}^2
   +2r^2 \norm{f_{44}^z\hat\eta}_{L^2}^2
\\
   &\le \norm{\hat\eta}_{L^2}^2
   +2 \norm{\hat z}_{L^2}^2
   +2 (4 \beta_{\SS^1}^z)^2 \norm{\hat\eta}_{L^2}^2
   \;\;\text{\small , by~(\ref{eq:f_44-individual})}
\\
   &\le
   \max\{2,1+2(4\beta^z_{\SS^1})^2\}
   \norm{\hat\Upsilon}_{L^2\times L^2}^2 .
\end{split}
\end{equation*}
We used that $r\le 1$.
Hence the operator norm of the inverse is bounded 
by
$$
   \norm{(\bbb^{r,z})^{-1}}_{\Ll(L^2\times L^2)}
   \le \mu_z
   ,\qquad
   \mu_z:=\sqrt{\max\{2,1+2(4\beta^z_{\SS^1})^2\}}
   .
$$
To get the semi-Fredholm estimate let
$\hat\Upsilon=(\hat z,\hat \eta)\in W^{1,2}\times W^{1,2}= H_1$, then
\begin{equation*}
\begin{split}
   \norm{\hat \Upsilon}_{W^{1,2}\times W^{1,2}}^2
   &=\norm{\hat \Upsilon}_{L^{2}\times L^{2}}^2
   +\norm{\hat \Upsilon^\prime}_{L^{2}\times L^{2}}^2
\\
   &=\norm{\hat \Upsilon}_{L^{2}\times L^{2}}^2
   +\norm{(\bbb^{r,z})^{-1}\bbb^{r,z}\hat \Upsilon^\prime}_{L^{2}\times L^{2}}^2
\\
   &=\norm{\hat \Upsilon}_{L^{2}\times L^{2}}^2
   +\norm{(\bbb^{r,z})^{-1} F^{r,z}\hat \Upsilon}_{L^{2}\times L^{2}}^2
\\
   &\le\norm{\hat \Upsilon}_{L^{2}\times L^{2}}^2
   +\norm{(\bbb^{r,z})^{-1}}_{\Ll(L^2\times L^2)}^2
   \norm{F^{r,z}\hat \Upsilon}_{L^{2}\times L^{2}}^2
\\
   &\le\max\{1,\mu_z^2\}
   \left(
   \norm{\hat \Upsilon}_{L^{2}\times L^{2}}^2
   +\norm{F^{r,z}\hat \Upsilon}_{L^{2}\times L^{2}}^2
   \right)
\\
   &\le\max\{1,\mu_z^2\}
   \left(
   \norm{\hat \Upsilon}_{L^{2}\times L^{2}}
   +\norm{F^{r,z}\hat \Upsilon}_{L^{2}\times L^{2}}
   \right)^2
   .
\end{split}
\end{equation*}
This proves Step~1 for $c_z=\max\{1,\mu_z\}$.
\end{proof}

\smallskip
\noindent
\textbf{Step 2 (semi-Fredholm level \boldmath$2$).}
Given $z\in u_1$, there is $d_z>0$ such that 
$$
   \norm{\hat\Upsilon}_{ H_2}
   \le d_z
   \left(
   \norm{F^{r,z}_2 \hat\Upsilon}_{ H_1}+\norm{\hat\Upsilon}_{ H_1}
   \right)
$$
for all $\hat\Upsilon=(\hat z,\hat\eta)\in H_2$ and
$r\in[0,1]$.

\begin{proof}
Fix $z\in u_1$. 
While $H_1:=W^{1,2}(\SS^1,\C^2)$ by Definition~\ref{def:analytic-setup},
we abbreviate
$$
   W^{1,2}:=W^{1,2}(\SS^1,\C)
   ,\qquad
   H_1=W^{1,2} \times W^{1,2}
   .
$$
For $r\in [0,1]$ the operator $\bbb^{r,z}$
is invertible with inverse
\begin{equation*}
\begin{split}
   (\bbb^{r,z})^{-1}
   =
   \begin{pmatrix}
      0& \1
      \\
      -\1 &r f_{44}^z 
   \end{pmatrix}
   \colon W^{1,2}\times W^{1,2}
   &\to W^{1,2}\times W^{1,2}
\\
   (\hat z,\hat\eta)
   &\mapsto\left(\hat \eta,-\hat z+r f_{44}^z\hat\eta\right)
   .
\end{split}
\end{equation*}
To estimate the operator norm of $(\bbb^{r,z})^{-1}$, let
$\hat\Upsilon=(\hat z,\hat \eta)\in W^{1,2}\times W^{1,2}$, then
\begin{equation*}
\begin{split}
   \norm{(\bbb^{r,z})^{-1}\hat\Upsilon}_{W^{1,2}\times W^{1,2}}^2
   &=\norm{\hat\eta}_{W^{1,2}}^2
   +\norm{-\hat z+r f_{44}^z\hat\eta}_{W^{1,2}}^2
\\
   &\le \norm{\hat\eta}_{W^{1,2}}^2
   +2 \norm{\hat z}_{W^{1,2}}^2
   +2r^2 \norm{f_{44}^z\hat\eta}_{W^{1,2}}^2
\\
   &\le \norm{\hat\eta}_{W^{1,2}}^2
   +2 \norm{\hat z}_{W^{1,2}}^2
   +8\gamma_{\norm{z}_{1,2}}^{\;2} \norm{\hat\eta}_{W^{1,2}}^2
   \;\text{\small, by~(\ref{eq:f_44})}
\\
   &\le
   \max\{2,1+8\gamma_{\norm{z}_{1,2}}^{\;2}\}
   \norm{\hat\Upsilon}_{W^{1,2}\times W^{1,2}}^2
   .
\end{split}
\end{equation*}
We used that $r\le 1$.
Hence the operator norm of the inverse is bounded by
$$
   \norm{(\bbb^{r,z})^{-1}}_{\Ll(W^{1,2}\times W^{1,2})}
   \le \kappa_z
   ,\qquad
   \kappa_z:=\sqrt{\max\{2,1+8\gamma_{\norm{z}_{1,2}}^{\;2}\}}
   .
$$
To get the semi-Fredholm estimate let
$\hat\Upsilon=(\hat z,\hat \eta)\in W^{1,2}\times W^{1,2}= H_1$,
then
\begin{equation*}
\begin{split}
   \norm{\hat \Upsilon}_{W^{2,2}\times W^{2,2}}^2
   &=\norm{\hat \Upsilon}_{W^{1,2}\times W^{1,2}}^2
   +\norm{\hat \Upsilon^\prime}_{W^{1,2}\times W^{1,2}}^2
\\
   &=\norm{\hat \Upsilon}_{W^{1,2}\times W^{1,2}}^2
   +\norm{(\bbb^{r,z})^{-1}\bbb^{r,z}\hat \Upsilon^\prime}_{W^{1,2}\times W^{1,2}}^2
\\
   &=\norm{\hat \Upsilon}_{W^{1,2}\times W^{1,2}}^2
   +\norm{(\bbb^{r,z})^{-1} F^{r,z}\hat \Upsilon}_{W^{1,2}\times W^{1,2}}^2
\\
   &=\norm{\hat \Upsilon}_{W^{1,2}\times W^{1,2}}^2
   +\norm{(\bbb^{r,z})^{-1}}_{\Ll(W^{1,2}\times W^{1,2})}^2
   \norm{F^{r,z}\hat \Upsilon}_{W^{1,2}\times W^{1,2}}^2
\\
   &\le\max\{1,\kappa_z^2\}
\text{\small$
   \left(
   \norm{\hat \Upsilon}_{W^{1,2}\times W^{1,2}}^2
   +\norm{F^{r,z}\hat \Upsilon}_{W^{1,2}\times W^{1,2}}^2
   \right)
$}
\\
   &\le\max\{1,\kappa_z^2\}
   \left(
   \norm{\hat \Upsilon}_{W^{1,2}\times W^{1,2}}
   +\norm{F^{r,z}\hat \Upsilon}_{W^{1,2}\times W^{1,2}}
   \right)^2
   .
\end{split}
\end{equation*}
This proves Step~2 for $d_z=\max\{1,\kappa_z\}$.
\end{proof}
Since the inclusion maps $ H_1\INTO  H_0$ and
$ H_2\INTO  H_1$ are compact,
Step~1 and Step~2 provide semi-Fredholm estimates,
see e.g.~\cite[Le.\,A.1.1]{mcduff:2004a}.
This completes the proof of Lemma~\ref{le:f_44-sFred}.
\end{proof}

\subsubsection*{Compact perturbations}

The following lemma enters step~3 of the proof of
Proposition~\ref{prop:Ham-weak-mag}.

\begin{lemma}\label{le:f_65_75-comp}
Pick $z\in u_1$.
Both linear operators $T_{65}^z$ and $T_{75}^z$
in~(\ref{eq:T_65}) and~(\ref{eq:T_75}), respectively,
are compact as operators $h_1\to h_0$ and $h_2\to h_1$.
\end{lemma}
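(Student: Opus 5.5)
We don't know the explicit formulas (\ref{eq:T_65}) and (\ref{eq:T_75}), but from the structure of $\grad\Mm$ in~(\ref{eq:grad-Bb}) we expect $T_{65}^z$ and $T_{75}^z$ to arise from linearizing the last two summands: the non-local term $\frac{2z}{\norm{z}^2}\int_\tau^1\inner{\dot\aaa_{t_z(\sigma)}|_{z_\sigma}}{z^\prime_\sigma}_0\,d\sigma$ and the magnetic term $-(\rot\,\aaa_{t_z}|_z)\,j_0z^\prime$. In each case the variation $\hat z$ enters through the classical time $t_z$. By~(\ref{eq:class-time-tau-C}) the linearization is
\[
   \hat t_z(\tau)=\frac{2\int_0^\tau\inner{z_s}{\hat z_s}_0\,ds}{\norm{z}^2}
   -\frac{2\inner{z}{\hat z}\int_0^\tau\abs{z_s}^2ds}{\norm{z}^4},
\]
which produces factors $\p_t(\dots)|_{t_z}\cdot\hat t_z$. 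We therefore expect both operators to have the form
\[
   \hat z\mapsto g^z\cdot \Phi^z(\hat z),
\]
where $\Phi^z$ is a finite rank operator or a Volterra-type integral operator $\hat z\mapsto\int_0^\tau k^z\hat z$, and $g^z$ is a coefficient built from $z$, $z^\prime$ and the smooth data $\dot\aaa$, $\p_t\rot\,\aaa$ evaluated along $(t_z,z)$.

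The plan is to factor each operator as a bounded multiplication composed with a \emph{smoothing} map, gaining one derivative and then using a compact Sobolev embedding.

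First, the integral part. For $\hat z\in h_0$ the map $\tau\mapsto\int_0^\tau\inner{z_s}{\hat z_s}_0ds$ lies in $W^{1,2}$, with norm bounded by $C\norm{z}_\infty\norm{\hat z}$, because its derivative is $\inner{z}{\hat z}_0\in L^2$. For $\hat z\in h_1$ this map lies in $W^{2,2}$, since $z\in W^{1,2}$ is bounded and $W^{1,2}\cdot W^{1,2}\subset W^{1,2}$ by~(\ref{eq:Behz-Hol-2}). Scalar factors such as $\inner{z}{\hat z}$ are rank one, hence compact.

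Second, the coefficients. Factors like $\dot\aaa_{t_z}|_z$ or $\p_t\rot\,\aaa_{t_z}|_z$ are compositions of smooth functions with the $W^{1,2}$ maps $t_z,z$, so they lie in $W^{1,2}$. The factor $z^\prime$ lies in $L^2$ for $z\in u_1$. Thus the compositions $h_1\to h_2\to h_1$ or $h_0\to h_1\to h_0$ are not directly available, and we route through intermediate spaces instead. For the level $h_1\to h_0$, we write the operator as $h_1\hookrightarrow h_0$ (compact) followed by the bounded map $h_0\to W^{1,2}\subset L^\infty$ (the integral) followed by multiplication by $g^z\in L^2$, which maps $L^\infty\to L^2$. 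For the level $h_2\to h_1$, we write it as $h_2\hookrightarrow h_1$ (compact) followed by $h_1\to W^{2,2}$ (the integral) followed by multiplication by $g^z$. Here we need $g^z\cdot W^{2,2}\subset W^{1,2}$, which holds because $(g w)^\prime=g^\prime w+gw^\prime$ with $w,w^\prime\in L^\infty$ and $g,g^\prime\in L^2$; this requires $g^z\in W^{1,2}$.

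The main obstacle is exactly this last requirement. If $g^z$ contains $z^\prime$, then for $z\in u_1$ only $g^z\in L^2$, and the $h_2\to h_1$ multiplication fails. In that case we would instead bound the operator $h_1\to h_1$ directly: differentiating $g^z\Phi^z(\hat z)$ gives terms $g^{z\prime}\Phi^z(\hat z)+g^z\Phi^z(\hat z)^\prime$, and we would check, using the explicit form of $T_{65}$ and $T_{75}$, that the ``bad'' derivative $z^{\prime\prime}$ is never needed. This is presumably why these two summands require the separate Lemma~\ref{le:f_65_75-comp}. Compactness $h_2\to h_1$ then follows from boundedness $h_1\to h_1$ composed with the compact inclusion $h_2\hookrightarrow h_1$, and likewise $h_1\to h_0$ follows from boundedness $h_0\to h_0$ or $h_1\to h_1$ composed with the compact inclusion $h_1\hookrightarrow h_0$.
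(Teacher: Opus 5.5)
There is a genuine gap. Your main mechanism does not match the operators in the lemma, and the step that would actually prove it is only announced (``we would check \ldots'').

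\textbf{What the operators are.} You guessed the right source for $T_{65}^z$ but the wrong mechanism, and the wrong source for $T_{75}^z$. Both arise from varying $z'$ inside the two non-local integrals of $\grad\Mm$: the $\int_\tau^1$ term and the double-integral term. They do not come from varying $t_z$ (that gives $C_{42},T_{53},T_{63},T_{73}$), nor from the $\rot\,\aaa$ term (that gives $C_{42},C_{43},F_{44}$). So the \emph{derivative} of the variation enters:
\[
   T_{65}^z\xi=\tfrac{2z_\tau}{\norm{z}_2^2}\int_\tau^1\inner{\dot{\aaa}_{t_z(\sigma)}|_{z_\sigma}}{\xi'_\sigma}_0\,d\sigma,
   \qquad
   T_{75}^z\xi=-\tfrac{2z_\tau}{\norm{z}_2^4}\int_0^1\Bigl(\int_0^s\abs{z_\sigma}^2d\sigma\Bigr)\inner{\dot{\aaa}_{t_z(s)}|_{z_s}}{\xi'_s}_0\,ds.
\]

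\textbf{Why your factorizations fail.} The map $\xi\mapsto\int_\tau^1\inner{\dot{\aaa}_{t_z(\sigma)}|_{z_\sigma}}{\xi'_\sigma}_0\,d\sigma$ is not defined on $h_0$. On $h_1$ it takes values only in $h_1$, not in $h_2$: integrating by parts exhibits the non-smoothing multiplication part $\xi\mapsto-\inner{\dot{\aaa}_{t_z(\tau)}|_{z_\tau}}{\xi_\tau}_0$. So the maps $h_0\to W^{1,2}$ and $h_1\to W^{2,2}$, on which both of your level arguments rest, are not available. Your ``main obstacle'' is also misplaced. The outer coefficient is $z\in h_1$, and $z'$ only appears after differentiating, multiplied by a bounded factor.

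\textbf{How to close the gap.} Your fallback is the right idea, but it has to be carried out; it needs only $z\in u_1$. By Cauchy--Schwarz, $\sup_\tau\bigl|\int_\tau^1\inner{\dot{\aaa}_{t_z(\sigma)}|_{z_\sigma}}{\xi'_\sigma}_0\,d\sigma\bigr|\le\dot\alpha_{\SS^1}^z\norm{\xi'}_2$. Moreover,
\[
   (T_{65}^z\xi)'=\tfrac{2}{\norm{z}_2^2}\Bigl(z'_\tau\int_\tau^1\inner{\dot{\aaa}_{t_z(\sigma)}|_{z_\sigma}}{\xi'_\sigma}_0\,d\sigma-z_\tau\inner{\dot{\aaa}_{t_z(\tau)}|_{z_\tau}}{\xi'_\tau}_0\Bigr)
\]
has $L^2$-norm at most $\tfrac{2\dot\alpha_{\SS^1}^z}{\norm{z}_2^2}\bigl(\norm{z'}_2+\norm{z}_\infty\bigr)\norm{\xi'}_2$, so $z''$ never occurs. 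Also $T_{75}^z\xi=c^z(\xi)\,z$ is rank one, with $\abs{c^z(\xi)}\le\tfrac{2\dot\alpha_{\SS^1}^z}{\norm{z}_2^2}\norm{\xi'}_2$. Thus $T_{65}^z,T_{75}^z\in\Ll(h_1)$, which is (\ref{eq:F_65-est}) and (\ref{eq:F_75-est}). Composing with the compact inclusions $h_2\INTO h_1$ and $h_1\INTO h_0$ gives compactness on both levels.

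\textbf{Comparison with the paper.} On level two this is exactly the paper's argument. On level one the paper instead integrates by parts to extend the operators boundedly to $h_r\to h_0$ for $r\in(\tfrac12,1)$, then uses the compact inclusion $h_1\INTO h_r$. Once the $\Ll(h_1)$ bound is in hand, your composition $h_1\to h_1\INTO h_0$ makes that detour unnecessary.
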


\begin{proof}
On level two compactness follows from the fact that
$T_{65}^z,T_{75}^z\in\Ll(h_1)$, see
estimates~(\ref{eq:F_65-est}) and~(\ref{eq:F_75-est}),
and that the embedding $h_2\INTO h_1$ is compact.

\smallskip\noindent
Level one is harder.
For that we show that the operators extend
to bounded linear operators $h_r\to h_0$ for every $r\in(\tfrac12,1)$.
Then the compactness follows from the compact inclusion
$h_1\INTO h_r$ whenever $r\in(\tfrac12,1)$.
Fix $r\in(\tfrac12,1)$.

\smallskip\noindent
\textbf{The operator \boldmath$T_{65}^z$.}
We apply to~(\ref{eq:T_65})
integration by parts to obtain
\begin{equation*}
\begin{split}
   F^z_{65}\xi:
   &=\tfrac{2z_\tau}{\norm{z}_2^2}\int_{\tau}^1
   \inner{\dot{\aaa}_{t_{z}(\sigma)}|_{z_{\sigma}}}{{\color{orange}\xi^\prime_{\sigma}}}_0
   \; d\sigma
\\
   &=\tfrac{2z_\tau}{\norm{z}_2^2}
   \left(
   \inner{\dot{\aaa}_0|_{z_0}}{\xi_0}_0
   -\inner{\dot{\aaa}_{t_{z}(\tau)}|_{z_{\tau}}}{\xi_{\tau}}_0
   -\int_{\tau}^1
   \inner{(\dot{\aaa}_{t_{z}(\sigma)}|_{z_{\sigma}})^\prime}{\xi_\sigma}_0
   \; d\sigma
   \right)
\\
   &=\tfrac{2z_\tau}{\norm{z}_2^2}
   \left(
   \inner{\dot{\aaa}_0|_{z_0}}{\xi_0}_0
   -\inner{\dot{\aaa}_{t_{z}(\tau)}|_{z_{\tau}}}{\xi_{\tau}}_0
   \right)
\\
   &\quad
   -\tfrac{2z_\tau}{\norm{z}_2^2}
   \int_{\tau}^1
   \INNER{\ddot{\aaa}_{t_{z}(\sigma)}|_{z_{\sigma}} 
   \tfrac{\abs{z_\sigma}^2}{\norm{z}_2^2}
   +d \dot{\aaa}_{t_{z}(\sigma)}|_{z_{\sigma}} z^\prime_\sigma}{\xi_\sigma}_0
   \; d\sigma
\end{split}
\end{equation*}
where we used $t_z(1)=1$
and $1$-periodicity of $\dot\aaa$, $z$, and $\xi$.
With the $\alpha$-constants in~(\ref{eq:a-infty})
for $\xi\in W^{r,2}$ we estimate
(cf.~(\ref{eq:C_61}) for the last summand)
\begin{equation*}
\begin{split}
   \norm{F^z_{65}\xi}_{L^2}
   &\le\tfrac{2 \dot\alpha_{\SS^1}^z}{\norm{z}_2}\norm{\xi}_\infty
   +
   \tfrac{2 \dot\alpha_{\SS^1}^z}{\norm{z}_2}\norm{\xi}_\infty
   +
   \tfrac{2}{\norm{z}_2}\tau \ddot\alpha_{\SS^1}^z
   \tfrac{\norm{z}_\infty^2}{\norm{z}_2^2}\norm{\xi}_\infty
   +\tfrac{2}{\norm{z}_2}d\dot\alpha_{\SS^1}^z
   \norm{z^\prime}_2\norm{\xi}_2
\\
   &\le
   const(\norm{z}_{1,2},r)\norm{\xi}_{r,2}
   .
\end{split}
\end{equation*}
This proves that $F^z_{65}\colon h_r\to h_0$ is bounded.

\smallskip\noindent
\textbf{The operator \boldmath$T_{75}^z$.}
We apply to~(\ref{eq:T_75}) integration by parts to get
\begin{equation*}
\begin{split}
   T_{75}^z\xi:
   &=-\tfrac{2 z_\tau}{\norm{z}^4_2} \int_0^1
   {\textstyle
   \int_0^s\Abs{z_{\sigma}}^2 d\sigma
   }
   \cdot\inner{\dot{\aaa}_{t_{z}(s)}|_{z_{s}}}{{\color{orange}\xi^\prime_s}}_0\;
   ds
\\
   &=-\tfrac{2 z_\tau}{\norm{z}^4_2}\left(
   \norm{z}_2^2\inner{\dot{\aaa}_0|_{z_0}}{\xi_0}_0
   -0
   \right)
   +\tfrac{2 z_\tau}{\norm{z}^4_2}
   \int_0^1\Abs{z_s}^2\inner{\dot{\aaa}_{t_{z}(s)}|_{z_{s}}}{\xi_s}\,
   ds
   \\
   &\quad
   +\tfrac{2 z_\tau}{\norm{z}^4_2}
   \int_0^1
   {\textstyle
   \int_0^s\Abs{z_{\sigma}}^2 d\sigma
   }
   \INNER{\ddot{\aaa}_{t_{z}(s)}|_{z_{s}} 
   \tfrac{\abs{z_s}^2}{\norm{z}_2^2}
   +d \dot{\aaa}_{t_{z}(s)}|_{z_{s}} z^\prime_s}{\xi_s}_0
   ds
\end{split}
\end{equation*}
where we used $t_z(1)=1$
and $1$-periodicity of $\dot\aaa$, $z$, and $\xi$.
(We ignore that the first summand of $T_{75}^z$ actually cancels the
first summand of $T_{65}^z$.)
With the $\alpha$-constants in~(\ref{eq:a-infty})
for $\xi\in W^{r,2}$ we estimate
(cf.~(\ref{eq:C_61}) for the last summand)
\begin{equation*}
\begin{split}
   \norm{F^z_{75}\xi}_{L^2}
   &\le\tfrac{2 \dot\alpha_{\SS^1}^z}{\norm{z}_2}\norm{\xi}_\infty
   +\tfrac{2}{\norm{z}_2^3}\norm{z}_\infty^2\dot\alpha_{\SS^1}^z\norm{\xi}_\infty
   +\tfrac{2 \ddot\alpha_{\SS^1}^z}{\norm{z}_2^3}\norm{z}_\infty^2\norm{\xi}_\infty
   +
   \tfrac{2 d\dot\alpha_{\SS^1}^z}{\norm{z}_2}\norm{z^\prime}_2\norm{\xi}_2
\\
   &\le const(\norm{z}_{1,2},r)\norm{\xi}_{r,2}
   .
\end{split}
\end{equation*}
This proves that $F^z_{75}\colon h_r\to h_0$ is bounded.
This proves Lemma~\ref{le:f_65_75-comp}.
\end{proof}

\subsubsection{Almost extendable}
\label{sec:ae}

\begin{theorem}\label{thm:Ham-ae}
The magnetic weak Hessian field $A$ in~(\ref{eq:Hess-op-mag})
is almost extendable.
The pair $(F,C)$ defined for $(z,\eta)$ in $U_1$,
respectively $U_2$, by
\begin{equation*}
\begin{split}
   C^{(z,\eta)}:&=\diag(C_{42}^z,0)+\diag(C_{43}^z,0)
\\
   F^{(z,\eta)}:&=
   A^{(z,\eta)}-C^{(z,\eta)}
{\color{gray}\;
   =F^{1,z}+{\textstyle\sum_1^{15}}\diag(T_{ij},0)
   -\Hess\,\Hh^{(z,\eta)}
}
\end{split}
\end{equation*}
is a decomposition (Definition~\ref{def:alm.ext}).
\end{theorem}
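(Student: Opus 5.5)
We must check the requirements of Definition~\ref{def:alm.ext} for the pair $(F,C)$. That is, we verify regularity~(\ref{eq:hyp-F}) for $F$, regularity~(\ref{eq:hyp-C}) for $C$, axiom \texttt{(F)}, and the scale Lipschitz estimate \texttt{(C)}. Since $A=F+C$ holds by definition, the decomposition~(\ref{eq:A=F+C}) is automatic.

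\emph{Regularity of $F$ and axiom} \texttt{(F)}. We split $F^{(z,\eta)}$ into three groups of summands and treat each separately.
\begin{itemize}
\item The constant-coefficient part, together with $F_{44}^z=f_{44}^z\p_\tau$, forms $F^{1,z}$. Since the multiplication function $f_{44}^z=-4b_{t_z}|_z j_0$ lies in $W^{1,2}$ and depends continuously on $z\in u_1$ (by the estimates~(\ref{eq:f_44-individual}),~(\ref{eq:f_44})), we get $F^{1,\cdot}\in C^0(U_1,\Ll(H_1,H_0)\cap\Ll(H_2,H_1))$. This uses the algebra property $W^{1,2}\cdot W^{1,2}\subset W^{1,2}$.
\item The fifteen summands $T_{ij}$ in $T_5,T_6,T_7$ are handled one at a time using the explicit formulas and estimates of Appendix~\ref{sec:T4}. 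For each we check that it is bounded $h_1\to h_1$ (hence bounded $h_1\to h_0$ and $h_2\to h_1$) and depends continuously on $z\in u_1$. These operators involve at most one derivative of $\xi$, and that derivative sits under an integral; for $T_{65}$ and $T_{75}$ it is removed by the integration by parts already used in Lemma~\ref{le:f_65_75-comp}.
\item $\Hess\,\Hh$ is handled by Lemma~\ref{le:Hess-Hh-comp}.
\end{itemize}
For axiom \texttt{(F)} we use that $F^{1,z}_2\colon H_2\to H_1$ is Fredholm of index zero for every $z\in u_1$ by Lemma~\ref{le:f_44-sFred} with $r=1$; this holds on all of $U_1$, not just $U_2$. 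The remaining summands are compact $H_2\to H_1$ at every point of $U_1$: they are bounded on $H_1$, and $H_2\INTO H_1$ is compact (Lemma~\ref{le:Hess-Hh-comp}, Lemma~\ref{le:f_65_75-comp}). Compact perturbations preserve the Fredholm property and the index.

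\emph{Regularity of $C$.} The summands $C_{42}^z$ and $C_{43}^z$ come from $T_4$. They contain derivatives of $z$, such as $z'$ multiplied against $\xi$, or derivatives of the vector potential composed with $z$. As maps on $H_1$ they therefore need $z\in h_2$, which is exactly why they are excluded from $F$. I will show two things. First, for $z\in u_1$ they are bounded $h_r\to h_0$ for some $r\in(\tfrac12,1)$: since $\xi\in h_r\subset L^\infty$ and $z'\in L^2$, the $L^2$ norm is controlled by $\norm{z}_{1,2}\norm{\xi}_\infty$. Second, for $z\in u_2$ they are bounded $h_1\to h_1$, because $z''\in L^2$. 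Continuity in $z$ in these topologies then follows from the same estimates applied to differences.

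\emph{Scale Lipschitz estimate} \texttt{(C)}. This is the main step. Fix $z\in u_1$ and choose an $h_1$-neighborhood $V$ of $z$ on which $\norm{w}_{1,2}$, $\norm{w}_\infty$ and $\norm{w}_2^{-1}$ are uniformly bounded. We may also assume that the $w(\SS^1)$ stay in a fixed compact subset of $\Zfrak$; this is possible by Sobolev embedding into $C^0$, and it gives uniform bounds on the derivatives of $\aaa,\dot\aaa$ (the $\alpha$-constants~(\ref{eq:a-infty})). For $v,w\in V\cap h_2$ we write $C^v_{4k}\xi-C^w_{4k}\xi$ as a telescoping sum. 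In each term exactly one factor is replaced by its difference:
\begin{itemize}
\item differences of smooth functions of $v,w$, or of the reparametrization times $t_v,t_w$ (Lipschitz in $\norm{v-w}_{1,2}$ by~(\ref{eq:class-time-tau-C}));
\item differences of the norms $\norm{v}_2$, $\norm{w}_2$;
\item differences of derivatives, $v'-w'$ and $v''-w''$.
\end{itemize}
We then estimate in $W^{1,2}$ by taking one more derivative. Terms in which the highest derivative $v''-w''$ appears give $\abs{v-w}_{2}\norm{\xi}_{1,2}$. Terms in which $v''$ or $w''$ multiplies a lower-order difference give $\min\{\abs{v}_2,\abs{w}_2\}\abs{v-w}_1\norm{\xi}_{1,2}$; to get the minimum rather than either one, we arrange the telescoping so that the second-derivative factor always belongs to the point with the smaller $h_2$ norm (swapping the roles of $v$ and $w$ if needed). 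All other terms are bounded by $\abs{v-w}_1$. The bookkeeping of these many terms is where I expect the real work to lie (cf.\ \S\,\ref{sec:sc-Lipschitz-Ham}).
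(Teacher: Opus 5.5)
Your proposal is correct and follows the paper's proof essentially step for step. You get regularity of $C$ and $F$ from the appendix estimates and Lemma~\ref{le:Hess-Hh-comp}, axiom \texttt{(F)} from Lemma~\ref{le:f_44-sFred} plus compact perturbations, and \texttt{(C)} from the telescoping estimates of Propositions~\ref{prop:C_43} and~\ref{prop:C_42} on the neighborhood $\mathrm{v}_z$ of~(\ref{eq:V_z}).

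One small correction to your bookkeeping: terms quadratic in $v'$, such as $(d^2b_{t_v}|_{v}\,\xi\,v')\,j_0v'$ or $(d\dot b_{t_v}|_{v}\,v')(dt_v\xi)\,j_0v'$, are not bounded by $|v-w|_{h_1}$ alone. They need $\|v'\|_\infty\le|v|_{h_2}$, so they also land in the $\min$-term; the paper obtains that term by proving the bound with $|v|_{h_2}$ and then swapping $v$ and $w$.
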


The proof of the theorem requires two propositions which we state and
prove thereafter (Proposition~\ref{prop:C_43} and
Proposition~\ref{prop:C_42}.

\begin{proof}
The proof has four steps~1, 2, (\texttt{C}), and (\texttt{F}).
In Definition~\ref{def:analytic-setup} we defined spaces
$H_k:=W^{k,2}(\SS^1,\C^2)\supset U_k$ and
$h_k:=W^{k,2}(\SS^1,\C)\supset u_k$.

\smallskip\noindent
\textbf{Step~1.}
There is $r\in[0,1)$ such that $(z,\eta)\mapsto C^{(z,\eta)}$ lies in
$C^0(U_1,\Ll(\underline{H_r},H_0))$, where $H_r=W^{r,2}(\SS^1,\C^2)$,
and in $C^0(U_2,\Ll(\underline{H_1},H_1))$; see~(\ref{eq:hyp-C}).

\begin{proof}
The maps of the diagonal form $(z,\eta)\mapsto \diag(C_{42}^z,0)$
have the required continuity property, since the diagonal block
maps lie in the spaces
\begin{equation}\label{eq:C42-C43-maps}
   z\mapsto C_{42}^z
   ,\;\;
   z\mapsto C_{43}^z
   \;\;
   \in
   C^0(u_1,\Ll(\underline{h_r},h_0))\cap
   C^0(u_2,\Ll(\underline{h_1},h_1))
\end{equation}
by~(\ref{eq:C_42}) and~(\ref{eq:C_43}), respectively.
%
\end{proof}

\smallskip\noindent
\textbf{Step~2.}
We need to show that the map $F\colon (z,\eta)\mapsto F^{(z,\eta)}$
is element of the space $C^0(U_1,\Ll(H_1,H_0)\cap\Ll(H_2,H_1))$;
see~(\ref{eq:hyp-F}).

\begin{proof}
The diagonal block $z\mapsto f_{44}^z\p_\tau$ of
$z\mapsto F^{1,z}$, see Lemma~\ref{le:f_44-sFred}.
is a map
$$
   f_{44}^z\p_\tau\colon
   \quad
   h_1\stackrel{\p_\tau}{\longrightarrow} h_0
   \stackrel{\text{(\ref{eq:f_44-individual})}}{\longrightarrow} h_0
   ,\quad
   h_2\stackrel{\p_\tau}{\longrightarrow} h_1
   \stackrel{\text{(\ref{eq:f_44})}}{\longrightarrow} h_1
   ,
$$
which depends continuously on $z \in u_1$,
see line two in~(\ref{eq:f_44}).
Since the other three blocks of $F^{1,z}$ are constant,
this shows that $z\mapsto F^{1,z}$ has the required continuity property.
The Hessian operator $\Hess\,\Hh$ has the required continuity property
by Lemma~\ref{le:Hess-Hh-comp}.
The fifteen operators of the diagonal form
\begin{align*}
   z\mapsto \diag(T_{ij},0)&\colon
   &H_1&\to H_0
   &H_2&\to H_1&
\\
   z\mapsto T_{ij}^z&\colon
   &h_1&\to h_0
   &h_2&\to h_1&
\end{align*}
have the required continuity property
since so do the diagonal blocks
$T_{ij}$ as shown for each of them in \S\,\ref{sec:T4}.
This proves Step~2.
\end{proof}

\medskip\noindent
\textbf{Step~(\texttt{C}).} The map $(z,\eta) \mapsto C^{(z,\eta)}=\diag(C_{42}^z,0)+\diag(C_{43}^z,0)$ satisfies the scale
Lipschitz estimate~(\ref{eq:(C)}).

\begin{proof}
Due to the specific diagonal form it suffices to consider the
upper diagonal operators~(\ref{eq:C42-C43-maps}).
Proposition~\ref{prop:C_43} and Proposition~\ref{prop:C_42}
prove Step~(\texttt{C}).
\end{proof}

\smallskip\noindent
\textbf{Step~(\texttt{F}).} $\forall \Upsilon=(z,\eta)\in U_1\colon$
$F^{1,z}_2+{\textstyle\sum_1^{15}}\diag(T_{ij},0)-\Hess\,\Hh^\Upsilon
\colon H_2\to H_1$ is Fredholm of index zero.

\begin{proof}
Lemma~\ref{le:f_44-sFred} asserts that
$F^{1,z}_2\colon H_2\to H_1$ is Fredholm of index zero.
Lemma~\ref{le:Hess-Hh-comp} asserts that
$\Hess\,\Hh^\Upsilon\colon H_2\to H_1$ is compact.
The fifteen operators of diagonal form
$\diag(T_{ij},0)\colon H_2\to H_1$
are compact since so are the diagonal blocks
$T_{ij}\colon h_2\to h_1$ as shown in \S\,\ref{sec:T4}.
Since Fredholm property and index are stable under compact
perturbation this proves Step~(\texttt{F}).
\end{proof}

This concludes the proof of Theorem~\ref{thm:Ham-ae}.
\end{proof}

\subsubsection{Scale Lipschitz estimates}
\label{sec:sc-Lipschitz-Ham}

The following proposition is part of Step~(\texttt{C}) of the proof of
Theorem~\ref{thm:Ham-ae}.

\begin{proposition}[$C_{43}$ is scale Lipschitz]\label{prop:C_43}
For $z\mapsto C^z_{43}$ in~(\ref{eq:C42-C43-maps}) the scale Lipschitz
estimate~\texttt{(C)} holds true, namely~(\ref{eq:(C)}) with lower
case spaces $h_k$ and~$u_1$.
\end{proposition}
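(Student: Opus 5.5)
The plan is to write $C_{43}^z$ explicitly as a product of factors of which exactly one, $z'$, loses a derivative, and then telescope $C_{43}^v-C_{43}^w$ factor by factor. I take $C_{43}$ to be the summand of $T_4$ coming from linearizing the last term $-(\rot\,\aaa_{t_z}|_z)\,j_0z'$ of~(\ref{eq:grad-Bb}) through the non-local classical time $t_z$. I have not checked this identification against Appendix~\ref{sec:T4}, so it has to be confirmed there first. Under it,
\begin{equation*}
   C_{43}^z\xi
   =-\bigl(\p_t\rot\,\aaa\bigr)_{t_z}\big|_z
   \cdot\bigl(dt_z\,\xi\bigr)\cdot j_0z' ,
\end{equation*}
where, from~(\ref{eq:class-time-tau-C}),
\begin{equation*}
   (dt_z\xi)(\tau)
   =\tfrac{2}{\norm{z}^2}\int_0^\tau\inner{z_\sigma}{\xi_\sigma}_0\,d\sigma
   -t_z(\tau)\tfrac{2\inner{z}{\xi}}{\norm{z}^2} .
\end{equation*}
So $C_{43}^z\xi=g^z\cdot\lambda^z(\xi)\cdot j_0z'$ with $g^z:=-(\p_t\rot\,\aaa)_{t_z}|_z$ and $\lambda^z(\xi):=dt_z\xi$.

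Fix $u\in u_1$ and choose $V_u$ to be a small $h_1$-ball around $u$. Since $h_1\INTO C^0$, on $V_u$ the loops stay in a fixed compact subset $K\subset\Zfrak$, and $\norm{z}_2$ is bounded below. The vector potential $\aaa$ and its derivatives of order up to two are then bounded on $[0,1]\times K$, using periodicity of $\dot\aaa$ in $t$.

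The steps are as follows.
\begin{itemize}
\item[(a)] The map $z\mapsto t_z$ is Lipschitz $V_u\to W^{1,2}$, because $t_z'=\abs{z}^2/\norm{z}^2$.
\item[(b)] The map $z\mapsto g^z$ is Lipschitz $V_u\to W^{1,2}$. The derivative $(g^z)'$ contains terms of the type $\p_t^2\rot\,\aaa\cdot t_z'$ and $d\,\p_t\rot\,\aaa\cdot z'$. I split
\begin{equation*}
   d\beta(v)v'-d\beta(w)w'
   =\bigl(d\beta(v)-d\beta(w)\bigr)v'+d\beta(w)(v'-w') .
\end{equation*}
This is bounded in $L^2$ by $c\bigl(\norm{v-w}_\infty\norm{v'}_2+\norm{v'-w'}_2\bigr)\le c\abs{v-w}_{h_1}$, using only $h_1$-norms.
\item[(c)] The map $z\mapsto\lambda^z$ is Lipschitz $V_u\to\Ll(h_1)$, and in fact $\Ll(h_0,h_1)$. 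Its derivative in $\tau$ is $\frac{2}{\norm{z}^2}\bigl(\inner{z}{\xi}_0-t_z'\inner{z}{\xi}\bigr)$, which is controlled in $L^2$ by $\norm{\xi}_2$ with Lipschitz dependence on $z\in h_1$.
\end{itemize}
All of these use only that $W^{1,2}(\SS^1)$ is a Banach algebra, see~(\ref{eq:Behz-Hol-2}), together with the sup-norm embedding.

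With these in hand, for $v,w\in V_u\cap h_2$ I telescope
\begin{equation*}
   C^v_{43}-C^w_{43}
   =(g^v-g^w)\lambda^v(\cdot)j_0v'
   +g^w(\lambda^v-\lambda^w)(\cdot)j_0v'
   +g^w\lambda^w(\cdot)j_0(v'-w') .
\end{equation*}
In $\Ll(h_1)$ the last term is bounded by $c\abs{v'-w'}_{h_1}\le c\abs{v-w}_{h_2}$. By (b) and (c), the first two terms are bounded by $c\,\abs{v-w}_{h_1}\abs{v'}_{h_1}\le c\,\abs{v}_{h_2}\abs{v-w}_{h_1}$. Exchanging the roles of $v$ and $w$, that is telescoping with $w'$ placed in the first two terms, yields the same bound with $\abs{w}_{h_2}$. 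Taking the better of the two gives the $\min$ in~(\ref{eq:(C)}).

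The main obstacle I expect is step (b): establishing Lipschitz continuity of the Nemytskii-type map $z\mapsto(\p_t\rot\,\aaa)_{t_z}|_z$ into $W^{1,2}$ uniformly on $V_u$. This is where the non-locality enters, through $t_z$ inside the time slot, and where second derivatives of $\aaa$ are needed; the constants must stay uniform, which relies on the compactness of $K$. If $C_{43}$ in Appendix~\ref{sec:T4} turns out to carry further non-local integral factors such as $\int_0^\tau\abs{z}^2$, these are handled like $\lambda^z$ in (c): they are smoothing, hence Lipschitz into $W^{1,2}$ using $h_1$-norms only.
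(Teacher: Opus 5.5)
\textbf{Gap: you have estimated the wrong operator.} Your identification of $C_{43}$ is incorrect, and it is exactly the point you flagged as unchecked. In Remark~\ref{rem:M_ij} (Term T4), linearizing $b_{t_z}|_z\,j_0z'$ produces two separate summands. The time slot gives $C_{42}^z\xi=-\dot b_{t_z}|_z\,(dt_z\xi)\,j_0z'$, and the space slot gives $C_{43}^z\xi=-(db_{t_z}|_z\xi)\,j_0z'$.

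The operator $g^z\lambda^z(\xi)\,j_0z'$ that you work with is therefore $C_{42}$. Its scale Lipschitz estimate is the separate Proposition~\ref{prop:C_42}. Your argument is sound for $C_{42}$, with one small correction: step (b) uses $\dddot b$, $d\ddot b$, $d^2\dot b$, i.e.\ derivatives of $\aaa$ up to order four rather than two, which is harmless by smoothness. But it does not prove the statement at hand.

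Your closing fallback does not cover the gap either. The true $C_{43}$ contains no non-local factor at all: $\xi$ enters pointwise through the spatial differential $db$. What must be controlled is a different coefficient.

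\textbf{Repair within your framework.} Put $G^z:=db_{t_z}|_z$. This is a periodic $W^{1,2}$ map $\SS^1\to\Ll(\R^2,\R)$, and up to sign $C_{43}^z\xi=(G^z\xi)\,j_0z'$.
\begin{itemize}
\item Show that $z\mapsto G^z$ is Lipschitz from $V_u$ into $W^{1,2}$. Since $(G^z)'=d\dot b_{t_z}|_z\,t_z'+d^2b_{t_z}|_z\,z'$, this is your step (b) with one extra spatial derivative. It uses your step (a) and sup bounds for $d\ddot b$, $d^2\dot b$, $d^3b$ on $[0,1]\times K$.
\item Telescope in two terms, $(G^v-G^w)\xi\,j_0v'+G^w\xi\,j_0(v'-w')$. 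Together with $W^{1,2}\cdot W^{1,2}\subset W^{1,2}$ this gives the bound $\kappa\bigl(\abs{v-w}_{h_2}+\abs{v}_{h_2}\abs{v-w}_{h_1}\bigr)$.
\item Exchanging $v$ and $w$ then yields the $\min$.
\end{itemize}

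\textbf{Comparison with the paper.} With this correction your route differs genuinely from the paper's. The paper differentiates $(C^v-C^w)\xi$ explicitly and splits it into the five differences D1--D5. It then estimates eighteen pieces one by one, using sup constants of $b$ and its derivatives on $\bar\Vfrak_z$.

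Your use of the algebra property, together with Lipschitz continuity of the coefficient map into $W^{1,2}$, reduces this to two terms. It also makes visible that $\abs{v}_{h_2}$ enters only through $\norm{v'}_{1,2}$. The paper's version, in exchange, gives fully explicit constants.
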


\begin{proof}
Let $z\in u_1$ and $\xi\in h_1$. We consider the map
in~(\ref{eq:C_43}), namely
$$
   C^z\xi
   :=C^z_{43}\xi
   :=(db_{t_z(\tau)}|_{z_\tau}\xi_\tau)\, j_0 z_\tau^\prime
   .
$$
Since $z\colon\SS^1\to\Zfrak$ is continuous its image is
compact. So the image of $z$ admits an open neighborhood $\Vfrak_z$
in $\Zfrak\subset\R^2$ of compact closure~$\bar \Vfrak_z$.
By compactness $\bar \Vfrak_z$ is contained in a ball of some radius
$\rho=\rho(z)$, in symbols
\begin{equation}\label{eq:rho_z}
   \bar \Vfrak_z
   \subset \Bfrak_\rho
   :=\{\nu\in \C\colon \abs{\nu}<\rho\}
   .
\end{equation}
We define by
\begin{equation}\label{eq:V_z}
   \mathrm{v}_z:=\{v\in W^{1,2}(\SS^1,\Vfrak_z)\colon
   \text{$\norm{v}_2>\tfrac12\norm{z}_2>0$
   and
   $\norm{v}_{1,2}<2\norm{z}_{1,2}$}
   \}
\end{equation}
an $h_1$-open neighborhood of $z$ in~$u_1$
uniformly bounded away from~$0$.
Hence
$$
   v\in \mathrm{v}_z\quad\Rightarrow\quad
   \norm{v}_2\le\norm{v}_\infty\le \rho.
$$

\smallskip
Pick $v,w\in \mathrm{v}_z\CAP h_2$.
By definition of the $W^{1,2}$ norm, calculating the derivative $\p_\tau$,
the triangle inequality and since
$t_v^\prime(\tau)={\color{cyan}\Abs{v_\tau}^2/\norm{v}_2^2}$
by~(\ref{eq:class-time-tau-deriv-C}), we get
\begin{equation*}
\begin{split}
   &\norm{(C^v-C^w)\xi}_{1,2}
\\
   &\le\norm{
   (db_{t_v(\tau)}|_{v_\tau}\xi_\tau)\, j_0 v_\tau^\prime
   -
   (db_{t_w(\tau)}|_{w_\tau}\xi_\tau)\, j_0 w_\tau^\prime
   }_{2}
   \\
   &\quad
   +\norm{
   \p_\tau\left(
   (db_{t_v(\tau)}|_{v_\tau}\xi_\tau)\, j_0 v_\tau^\prime
   \right)
   -
   \p_\tau\left(
   (db_{t_w(\tau)}|_{w_\tau}\xi_\tau)\, j_0 w_\tau^\prime
   \right)
   }_{2}
\\
   &\le 
   \norm{
   (db_{t_v(\tau)}|_{v_\tau}\xi_\tau)\, j_0 v_\tau^\prime
   -
   (db_{t_w(\tau)}|_{w_\tau}\xi_\tau)\, j_0 w_\tau^\prime
   }_{2}
   \\
   &\quad
   +\norm{
   (d\dot b_{t_v(\tau)}|_{v_\tau}\xi_\tau)
   {\color{cyan}\tfrac{\abs{v_\tau}^2}{\norm{v}_2^2}}\, j_0 v_\tau^\prime
   -
   (d\dot b_{t_w(\tau)}|_{w_\tau}\xi_\tau)
   {\color{cyan}\tfrac{\abs{w_\tau}^2}{\norm{w}_2^2}}\, j_0 w_\tau^\prime
   }_{2}
   \\
   &\quad
   +\norm{
   (d^2b_{t_v(\tau)}|_{v_\tau}\xi_\tau v_\tau^\prime)\, j_0 v_\tau^\prime
   -
   (d^2b_{t_w(\tau)}|_{w_\tau}\xi_\tau w_\tau^\prime)\, j_0 w_\tau^\prime
   }_{2}
   \\
   &\quad
   +\norm{
   (db_{t_v(\tau)}|_{v_\tau}\xi_\tau^\prime)\, j_0 v_\tau^\prime
   -
   (db_{t_w(\tau)}|_{w_\tau}\xi_\tau^\prime)\, j_0 w_\tau^\prime
   }_{2}
   \\
   &\quad
   +\norm{
   (db_{t_v(\tau)}|_{v_\tau}\xi_\tau)\, j_0 v_\tau^{\prime\prime}
   -
   (db_{t_w(\tau)}|_{w_\tau}\xi_\tau)\, j_0 w_\tau^{\prime\prime}
   }_{2}
   .
\end{split}
\end{equation*}
We need to estimate the $L^2$-norms of five differences, 
notation \textbf{D1-D5}.

\medskip
\noindent
\textbf{Difference D1.}
Add twice zero to write D1 as a sum $D_{11}\xi+D_{12}\xi+D_{13}\xi$
\begin{equation*}
\begin{split}
   &(db_{t_v(\tau)}|_{v_\tau}\xi_\tau)\, j_0 v_\tau^\prime
   -
   (db_{t_w(\tau)}|_{w_\tau}\xi_\tau)\, j_0 w_\tau^\prime
\\
   &=
   (db_{t_v(\tau)}|_{v_\tau}\xi_\tau)\, j_0 v_\tau^\prime
   -
   (db_{t_w(\tau)}|_{v_\tau}\xi_\tau)\, j_0 v_\tau^\prime
   \\
   &\quad
   +
   (db_{t_w(\tau)}|_{v_\tau}\xi_\tau)\, j_0 v_\tau^\prime
   -
   (db_{t_w(\tau)}|_{w_\tau}\xi_\tau)\, j_0 v_\tau^\prime
   \\
   &\quad
   +
   (db_{t_w(\tau)}|_{w_\tau}\xi_\tau)\, j_0 v_\tau^\prime
   -
   (db_{t_w(\tau)}|_{w_\tau}\xi_\tau)\, j_0 w_\tau^\prime
\end{split}
\end{equation*}
pointwise at $\tau\in\SS^1$.

\smallskip
\noindent
\textbf{\boldmath$D_{11}$.}
By compactness of $\bar \Vfrak_z$ the maximum of the
operator norm is finite
\begin{equation}\label{eq:d-dot-beta}
   d\dot\beta_{\bar\Vfrak_z}
   :=\max_{\SS^1\times\bar\Vfrak_z}\norm{d\dot b}_{\Ll(\R^2,\R)}
   <\infty .
\end{equation}
By Taylor's theorem, then using~(\ref{eq:t_z-diff}) on
$\Abs{t_v(\tau)-t_w(\tau)}$, we 
estimate
\begin{equation}\label{eq:db-t_v-t_w}
\begin{split}
   \norm{db_{t_v(\tau)}|_{v_\tau}-db_{t_w(\tau)}|_{v_\tau}}_{\Ll(\R^2,\R)}
   &\le d\dot\beta_{\bar\Vfrak_z}
   \Abs{t_v(\tau)-t_w(\tau)}
\\
   &\le 2d\dot\beta_{\bar\Vfrak_z}
   \tfrac{\norm{v}_2+\norm{w}_2}{\norm{v}_2^2}   
   \norm{v-w}_2
\\
   &\le 2 d\dot\beta_{\bar\Vfrak_z} \tfrac{2^2 2\rho}{\norm{z}_2^2}
   \norm{v-w}_2
\\
   &= \tfrac{16\rho (d\dot\beta_{\bar\Vfrak_z})}{\norm{z}_2^2}
   \norm{v-w}_2
   .
\end{split}
\end{equation}
The last step is by~(\ref {eq:rho_z}) and~(\ref{eq:V_z}).
Use this operator norm estimate to obtain
\begin{equation*}
\begin{split}
   \norm{D_{11}\xi}_2
   &=
   \norm{
   \left((db_{t_v(\tau)}|_{v_\tau}-db_{t_w(\tau)}|_{v_\tau})\xi_\tau\right)
   j_0 v_\tau^\prime
   }_2
\\
   &\le
   \tfrac{16\rho (d\dot\beta_{\bar\Vfrak_z})}{\norm{z}_2^2}
   \norm{v-w}_2\norm{\xi}_\infty\norm{v^\prime}_2
\\
   &\le \tfrac{16\rho (d\dot\beta_{\bar\Vfrak_z})}{\norm{z}_2^2}
{\color{blue}\;
   \abs{v}_{h_1}\cdot\abs{v-w}_{h_0}
\;}
   \norm{\xi}_{1,2}
   .
\end{split}
\end{equation*}

\smallskip
\noindent
\textbf{\boldmath$D_{12}$.}
By compactness of $\bar \Vfrak_z$ the maximum of the
operator norm is finite
\begin{equation}\label{eq:d^2beta-frak}
   d^2\beta_{\bar\Vfrak_z}
   :=\max_{\SS^1\times\bar\Vfrak_z}\norm{d^2b}_{\Ll(\R^2\times\R^2,\R)}
   <\infty .
\end{equation}
By Taylor's theorem we estimate
\begin{equation*}
\begin{split}
   \norm{D_{12}\xi}_2
   &=
   \norm{
   \left((db_{t_w(\tau)}|_{v_\tau}-db_{t_w(\tau)}|_{w_\tau})\xi_\tau\right)
   j_0 v_\tau^\prime
   }_2
\\
   &\le
   d^2\beta_{\bar\Vfrak_z}\norm{v-w}_\infty\norm{\xi}_\infty\norm{v^\prime}_2
\\
   &\le
   d^2\beta_{\bar\Vfrak_z}
{\color{red}\;
   \abs{v}_{h_1}\cdot\abs{v-w}_{h_1}
\;}
   \norm{\xi}_{1,2}
   .
\end{split}
\end{equation*}

\smallskip
\noindent
\textbf{\boldmath$D_{13}$.}
With the constant $d\beta_{\SS^1}^w$ defined
by~(\ref{eq:b-infty}) we estimate
\begin{equation*}
\begin{split}
   \norm{D_{13}\xi}_2
   &=\norm{
   (db_{t_w(\tau)}|_{w_\tau}\xi_\tau)\, j_0 \left(v_\tau^\prime-w_\tau^\prime\right)
   }_2
\\
   &\le d\beta_{\SS^1}^w\norm{\xi}_\infty\norm{v^\prime-w^\prime}_2
\\
   &\le d\beta_{\SS^1}^w
{\color{blue}\;
   \abs{v-w}_{h_1  }
\;}
   \norm{\xi}_{1,2}
\end{split}
\end{equation*}
where the constant $d\beta_{\SS^1}^w$ is defined by~(\ref{eq:b-infty}).

\medskip
\noindent
\textbf{Difference D2}
Add four zeroes to write D2 as a sum
$D_{21}\xi+\dots+D_{25}\xi$
\begin{equation*}
\begin{split}
   &(d\dot b_{t_v(\tau)}|_{v_\tau}\xi_\tau)
   \tfrac{\abs{v_\tau}^2}{\norm{v}_2^2}\, j_0 v_\tau^\prime
   -
   (d\dot b_{t_w(\tau)}|_{w_\tau}\xi_\tau)
   \tfrac{\abs{w_\tau}^2}{\norm{w}_2^2}\, j_0 w_\tau^\prime
\\
   &=
   \left(
   d\dot b_{t_v(\tau)}|_{v_\tau}-d\dot b_{t_w(\tau)}|_{v_\tau}
   \right)
   \xi_\tau \tfrac{\abs{v_\tau}^2}{\norm{v}_2^2}\, j_0 v_\tau^\prime
\\
   &\quad+(d\dot b_{t_w(\tau)}|_{v_\tau}\xi_\tau)
   \tfrac{\abs{v_\tau}^2}{\norm{v}_2^2}\, j_0 v_\tau^\prime
   -
   (d\dot b_{t_w(\tau)}|_{w_\tau}\xi_\tau)
   \tfrac{\abs{v_\tau}^2}{\norm{v}_2^2}\, j_0 v_\tau^\prime
\\
   &\quad+
   (d\dot b_{t_w(\tau)}|_{w_\tau}\xi_\tau)
   \tfrac{\abs{v_\tau}^2}{\norm{v}_2^2}\, j_0 v_\tau^\prime
   -
   (d\dot b_{t_w(\tau)}|_{w_\tau}\xi_\tau)
   \tfrac{\abs{w_\tau}^2}{\norm{v}_2^2}\, j_0 v_\tau^\prime
\\
   &\quad+
   (d\dot b_{t_w(\tau)}|_{w_\tau}\xi_\tau)
   \tfrac{\abs{w_\tau}^2}{\norm{v}_2^2}\, j_0 v_\tau^\prime
   -
   (d\dot b_{t_w(\tau)}|_{w_\tau}\xi_\tau)
   \tfrac{\abs{w_\tau}^2}{\norm{w}_2^2}\, j_0 v_\tau^\prime
\\
   &\quad+
   (d\dot b_{t_w(\tau)}|_{w_\tau}\xi_\tau)
   \tfrac{\abs{w_\tau}^2}{\norm{w}_2^2}\, j_0 v_\tau^\prime
   -(d\dot b_{t_w(\tau)}|_{w_\tau}\xi_\tau)
   \tfrac{\abs{w_\tau}^2}{\norm{w}_2^2}\, j_0 w_\tau^\prime
   .
\end{split}
\end{equation*}

\noindent
\smallskip
\textbf{\boldmath$D_{21}$.}
By compactness of $\bar \Vfrak_z$ the maximum of the
operator norm is finite
\begin{equation}\label{eq:d-ddot-beta-43}
   d\ddot\beta_{\bar\Vfrak_z}
   :=\max_{\SS^1\times\bar\Vfrak_z}\norm{d\ddot b}_{\Ll(\R^2,\R)}
   <\infty .
\end{equation}
As in~(\ref{eq:db-t_v-t_w}) we obtain an estimate, uniform in
$\tau\in\SS^1$, for the operator norm
\begin{equation}\label{eq:d-dot-b-diff-43}
\begin{split}
   \norm{d\dot b_{t_v(\tau)}|_{v_\tau}-d\dot b_{t_w(\tau)}|_{v_\tau}}_{\Ll(\R^2,\R)}
   &\le \tfrac{16\rho (d\ddot\beta_{\bar\Vfrak_z})}{\norm{z}_2^2}
   \norm{v-w}_2
   .
\end{split}
\end{equation}
Use this operator norm estimate, as well
as~(\ref {eq:rho_z}) and~(\ref{eq:V_z}), to estimate
\begin{equation*}
\begin{split}
   \norm{D_{21}\xi}_2
   &=\norm{
   (d\dot b_{t_v}|_{v}-d\dot b_{t_w}|_{v})
   \xi \tfrac{\abs{v}^2}{\norm{v}_2^2}\, j_0 v^\prime
   }_2
\\
   &\le \tfrac{16\rho (d\ddot\beta_{\bar\Vfrak_z})}{\norm{z}_2^2}
   \norm{v-w}_2 \norm{\xi}_\infty
   \tfrac{\norm{v}_\infty^2}{\norm{v}_2^2}{\color{cyan}\norm{v}_{1,2}}
\\
   &\le \tfrac{16\rho (d\ddot\beta_{\bar\Vfrak_z})}{\norm{z}_2^2}
   \tfrac{2^2\rho^2}{\norm{z}_2^2} {\color{cyan} 2\norm{z}_{1,2}}
   \norm{v-w}_2
   \norm{\xi}_{1,2}
\\
   &=c_{21}^z
{\color{blue}\;
   \abs{v-w}_{h_0}
\;}
   \norm{\xi}_{1,2}
   \quad
{\color{gray}
   ,\;
   c_{21}^z:=\tfrac{128\rho^3\norm{z}_{1,2}}{\norm{z}_2^4}d\ddot\beta_{\bar\Vfrak_z}
   .
}
\end{split}
\end{equation*}

\noindent
\smallskip
\textbf{\boldmath$D_{22}$.}
By compactness of $\bar \Vfrak_z$ the maximum of the
operator norm is finite
\begin{equation}\label{eq:d^2-dot-beta-43}
   d^2\dot\beta_{\bar\Vfrak_z}
   :=\max_{\SS^1\times\bar\Vfrak_z}\norm{d^2\dot b}_{\Ll(\R^2\times\R^2,\R)}
   <\infty .
\end{equation}
Similarly as for $D_{21}$ we estimate
\begin{equation*}
\begin{split}
   \norm{D_{22}\xi}_2
   &=\norm{
   (d\dot b_{t_w(\tau)}|_{v_\tau}-d\dot b_{t_w(\tau)}|_{w_\tau})
   \xi_\tau
   \tfrac{\abs{v_\tau}^2}{\norm{v}_2^2}\, j_0 v_\tau^\prime
   }_2
\\
   &\le
   d^2\dot\beta_{\bar\Vfrak_z}\norm{v-w}_\infty
   \norm{\xi}_\infty \tfrac{\norm{v}_\infty^2}{\norm{v}_2^2}
   {\color{cyan}\norm{v}_{1,2}}
\\
   &\le    d^2\dot\beta_{\bar\Vfrak_z}\norm{v-w}_{1,2}
   \norm{\xi}_{1,2} \tfrac{2^2\rho^2}{\norm{z}_2^2}
   {\color{cyan} 2\norm{z}_{1,2}}
\\
   &\le c_{22}^z
{\color{blue}\;
   \abs{v-w}_{h_1}
\;}
   \norm{\xi}_{1,2}
   \quad
{\color{gray}
   ,\;
   c_{22}^z:=\tfrac{8\rho^2\norm{z}_{1,2}}{\norm{z}_2^2}d^2\dot\beta_{\bar\Vfrak_z}
   .
}
\end{split}
\end{equation*}

\smallskip
\noindent
\textbf{\boldmath$D_{23}$.}
With $d\dot\beta_{\SS^1}^w$ from~(\ref{eq:b-infty}) we estimate
\begin{equation*}
\begin{split}
   \norm{D_{23}\xi}_2
   &=
   \norm{
   (d\dot b_{t_w(\tau)}|_{w_\tau}\xi_\tau)
   \tfrac{\abs{v_\tau}^2}{\norm{v}_2^2}\, j_0 v_\tau^\prime
   -
   (d\dot b_{t_w(\tau)}|_{w_\tau}\xi_\tau)
   \tfrac{\abs{w_\tau}^2}{\norm{v}_2^2}\, j_0 v_\tau^\prime
   }_2
\\
   &=
   \norm{
   (d\dot b_{t_w(\tau)}|_{w_\tau}\xi_\tau)
   \tfrac{\abs{v_\tau}^2
{\color{gray}\;
   -\langle v_\tau,w_\tau\rangle_0+\langle v_\tau,w_\tau\rangle_0
}
   -\abs{w_\tau}^2}{\norm{v}_2^2}\, j_0 v_\tau^\prime
   }_2
\\
   &=
   \norm{
   (d\dot b_{t_w(\tau)}|_{w_\tau}\xi_\tau)
   \tfrac{\inner{v_\tau}{v_\tau-w_\tau}
   +\inner{v_\tau-w_\tau}{w_\tau}}{\norm{v}^2_2}
   \, j_0 v_\tau^\prime
   }_2
\\
   &\le d\dot\beta_{\SS^1}^w\norm{\xi}_\infty\norm{v^\prime}_\infty
   \tfrac{\norm{v}_\infty\norm{v-w}_2+\norm{w}_\infty\norm{v-w}_2}
   {\norm{v}^2_2}
\\
   &\le
   \tfrac{2^2 2 \norm{z}_{1,2}}{\norm{z}_2^2} d\dot\beta_{\SS^1}^w
{\color{red}\;
   \abs{v}_{h_2}\cdot{\color{blue}\,\abs{v-w}_{h_0}}
\;}
   \norm{\xi}_{1,2}
   .
\end{split}
\end{equation*}
In the last step we used~(\ref{eq:V_z}) in
combination with $\norm{\cdot}_\infty\le \norm{\cdot}_{1,2}$.

\smallskip
\noindent
\textbf{\boldmath$D_{24}$.}
With $d\dot\beta_{\SS^1}^w$ from~(\ref{eq:b-infty}) we estimate
\begin{equation*}
\begin{split}
   \norm{D_{24}\xi}_2
   &=
   \norm{
   (d\dot b_{t_w(\tau)}|_{w_\tau}\xi_\tau)
   \tfrac{\abs{w_\tau}^2}{\norm{v}_2^2}\, j_0 v_\tau^\prime
   -
   (d\dot b_{t_w(\tau)}|_{w_\tau}\xi_\tau)
   \tfrac{\abs{w_\tau}^2}{\norm{w}_2^2}\, j_0 v_\tau^\prime
   }_2
\\
   &=
   \norm{
   (d\dot b_{t_w(\tau)}|_{w_\tau}\xi_\tau) \abs{w_\tau}^2
   \tfrac{\norm{w}_2^2
{\color{gray}\;
   -\langle v,w\rangle+\langle v,w\rangle
}
   -\norm{v}_2^2}{\norm{v}_2^2\norm{w}_2^2}
   j_0 v_\tau^\prime
   }_2
\\
   &=
   \norm{
   (d\dot b_{t_w(\tau)}|_{w_\tau}\xi_\tau) {\color{cyan}\,\abs{w_\tau}^2}
   \tfrac{\langle w-v,w\rangle+\langle v,w-v\rangle}{\norm{v}_2^2\norm{w}_2^2}
   j_0 v_\tau^\prime
   }_2
\\
   &\le d\dot\beta_{\SS^1}^w\norm{\xi}_\infty
   \tfrac{{\color{cyan}\rho^2} 2^2 2^2}{\norm{z}_2^2\norm{z}_2^2}
   \norm{v-w}_2\left(\norm{w}_2+\norm{v}_2\right)
   \norm{v^\prime}_2
\\
   &\le c_{24}^z
{\color{blue}\;
   \abs{v-w}_{h_0}
\;}
   \norm{\xi}_{1,2}
   \quad
{\color{gray}
   ,\;
   c_{24}^z:=\tfrac{64\rho^3 \norm{z}_{1,2}}{\norm{z}_2^4}d\dot\beta_{\SS^1}^w
   .
}
\end{split}
\end{equation*}
The last step is by~(\ref {eq:rho_z}) and~(\ref{eq:V_z}).

\smallskip
\noindent
\textbf{\boldmath$D_{25}$.}
With $d\dot\beta_{\SS^1}^w$ from~(\ref{eq:b-infty}), as well
as~(\ref {eq:rho_z}) and~(\ref{eq:V_z}), we estimate
\begin{equation*}
\begin{split}
   \norm{D_{25}\xi}_2
   &=
   \norm{
   (d\dot b_{t_w(\tau)}|_{w_\tau}\xi_\tau)
   \tfrac{\abs{w_\tau}^2}{\norm{w}^2}\, j_0 v_\tau^\prime
   -(d\dot b_{t_w(\tau)}|_{w_\tau}\xi_\tau)
   \tfrac{\abs{w_\tau}^2}{\norm{w}^2}\, j_0 w_\tau^\prime
   }_2
\\
   &=   \norm{
   (d\dot b_{t_w(\tau)}|_{w_\tau}\xi_\tau)
   \tfrac{\abs{w_\tau}^2}{\norm{w}^2}\, j_0 
   \left(v_\tau^\prime-w_\tau^\prime\right)
   }_2
\\
   &\le d\dot\beta_{\SS^1}^w\norm{\xi}_\infty
   \tfrac{2^2\rho^2}{\norm{z}_2^2}
   \norm{v-w}_{1,2}
\\
   &\le c_{25}^z
{\color{blue}\;
   \abs{v-w}_{h_1}
\;}
   \norm{\xi}_{1,2}
   \quad
{\color{gray}
   ,\;
   c_{25}^z:=\tfrac{4\rho^2}{\norm{z}_2^2}d\dot\beta_{\bar\Vfrak_z}
   .
}
\end{split}
\end{equation*}

\medskip
\noindent
\textbf{Difference D3}
Add three zeroes to write D3 as a sum
$D_{31}\xi+\dots+D_{34}\xi$
\begin{equation*}
\begin{split}
   &(d^2b_{t_v(\tau)}|_{v_\tau}\xi_\tau v_\tau^\prime)\, j_0 v_\tau^\prime
   -
   (d^2b_{t_w(\tau)}|_{w_\tau}\xi_\tau w_\tau^\prime)\, j_0w_\tau^\prime
\\
   &=
   (d^2b_{t_v(\tau)}|_{v_\tau}\xi_\tau v_\tau^\prime)\, j_0 v_\tau^\prime
   -
   (d^2b_{t_w(\tau)}|_{v_\tau}\xi_\tau v_\tau^\prime)\, j_0 v_\tau^\prime
   \\
   &\quad
  +(d^2b_{t_w(\tau)}|_{v_\tau}\xi_\tau v_\tau^\prime)\, j_0 v_\tau^\prime
   -
   (d^2b_{t_w(\tau)}|_{w_\tau}\xi_\tau v_\tau^\prime)\, j_0 v_\tau^\prime
   \\
   &\quad
  +(d^2b_{t_w(\tau)}|_{w_\tau}\xi_\tau v_\tau^\prime)\, j_0 v_\tau^\prime
   -
   (d^2b_{t_w(\tau)}|_{w_\tau}\xi_\tau w_\tau^\prime)\, j_0 v_\tau^\prime
   \\
   &\quad
  +(d^2b_{t_w(\tau)}|_{w_\tau}\xi_\tau w_\tau^\prime)\, j_0 v_\tau^\prime
   -
   (d^2b_{t_w(\tau)}|_{w_\tau}\xi_\tau w_\tau^\prime)\, j_0w_\tau^\prime
   .
\end{split}
\end{equation*}

\smallskip
\noindent
\textbf{\boldmath\color{red}$D_{31}$.}
By compactness of $\bar \Vfrak_z$ the maximum of the
operator norm is finite
$$
   d^2\ddot\beta_{\bar\Vfrak_z}
   :=\max_{\SS^1\times\bar\Vfrak_z}\norm{d^2\ddot b}_{\Ll(\R^2,\R)}
   <\infty .
$$
As in~(\ref{eq:db-t_v-t_w}) we obtain an estimate, uniform in
$\tau\in\SS^1$, for the operator norm
\begin{equation*}
\begin{split}
   \norm{d^2\dot b_{t_v(\tau)}|_{v_\tau}-d^2\dot b_{t_w(\tau)}|_{v_\tau}}_{\Ll(\R^2,\R)}
   &\le \tfrac{16\rho (d^2\ddot\beta_{\bar\Vfrak_z})}{\norm{z}_2^2}
   \norm{v-w}_2
   .
\end{split}
\end{equation*}
Use this operator norm estimate to obtain
\begin{equation*}
\begin{split}
   \norm{D_{31}\xi}_2
   &=\norm{
   \left((d^2b_{t_v(\tau)}|_{v_\tau}-d^2b_{t_w(\tau)}|_{v_\tau})
   (\xi_\tau v_\tau^\prime)\right) j_0 v_\tau^\prime
   }_2
\\
   &\le \tfrac{16\rho (d^2\ddot\beta_{\bar\Vfrak_z})}{\norm{z}_2^2}
   \norm{v-w}_2
   \norm{\xi}_\infty\norm{v^\prime}_2{\color{red}\norm{v^\prime}_\infty}
\\
   &\le c_{31}^z
{\color{blue}\;
   {\color{red}\abs{v}_{h_2}}\cdot\abs{v-w}_{h_0}
\;}
   \norm{\xi}_{1,2}
   \quad
{\color{gray}
   ,\;
   c_{31}^z:=
   \tfrac{32\rho\norm{z}_{1,2}}{\norm{z}_2^2}d^2\ddot\beta_{\bar\Vfrak_z}
   .
}
\end{split}
\end{equation*}

\smallskip
\noindent
\textbf{\boldmath$\color{red}D_{32}$.}
By compactness of $\bar \Vfrak_z$ the maximum of the
operator norm is finite
\begin{equation*}
   d^3\beta_{\bar\Vfrak_z}
   :=\max_{\SS^1\times\bar\Vfrak_z}\norm{d^3b}_{\Ll(\R^2\times\R^2\times\R^2,\R)}
   <\infty .
\end{equation*}
By Taylor's theorem we estimate
\begin{equation*}
\begin{split}
   \norm{D_{32}\xi}_2
   &=\norm{
   \left((d^2b_{t_w(\tau)}|_{v_\tau}-d^2b_{t_w(\tau)}|_{w_\tau})
   (\xi_\tau v_\tau^\prime)\right) j_0 v_\tau^\prime
   }_2
\\
   &\le d^3\beta_{\bar\Vfrak_z}
   \norm{v-w}_\infty\norm{\xi}_\infty
   \norm{v^\prime}_\infty\norm{v^\prime}_2
\\
   &\le 2\norm{z}_{1,2}d^2\beta_{\bar\Vfrak_z}
{\color{red}\;
   \abs{v}_{h_2}\cdot\abs{v-w}_{h_1}
\;}
   \norm{\xi}_{1,2}
   .
\end{split}
\end{equation*}

\smallskip
\noindent
\textbf{\boldmath\color{red}$D_{33}$.}
With the constant $d^2\beta_{\SS^1}^w$ from~(\ref{eq:b-infty}) we estimate
\begin{equation*}
\begin{split}
   \norm{D_{33}\xi}_2
   &=\norm{
   (d^2b_{t_w(\tau)}|_{w_\tau}\xi_\tau
   \left(v_\tau^\prime-w_\tau^\prime)\right)  j_0 v_\tau^\prime
   }_2
\\
   &\le d^2\beta_{\SS^1}^w
   \norm{\xi}_\infty\norm{v^\prime-w^\prime}_\infty\norm{v^\prime}_2
\\
  &\le 2\norm{z}_{1,2}d^2\beta_{\SS^1}^w
{\color{red}\;
   \abs{v-w}_{h_2}
\;}
   \norm{\xi}_{1,2}
   .
\end{split}
\end{equation*}

\smallskip
\noindent
\textbf{\boldmath\color{red}$D_{34}$.}
With the constant $d^2\beta_{\SS^1}^w$ from~(\ref{eq:b-infty}) we estimate
\begin{equation*}
\begin{split}
   \norm{D_{34}\xi}_2
   &=\norm{
   (d^2b_{t_w(\tau)}|_{w_\tau}\xi_\tau w_\tau^\prime)\, j_0 
   \left(v_\tau^\prime-w_\tau^\prime\right)
   }_2
\\
   &\le d^2\beta_{\SS^1}^w
   \norm{\xi}_\infty\norm{w^\prime}_2\norm{v^\prime-w^\prime}_\infty
\\
  &\le 2\norm{z}_{1,2}d^2\beta_{\SS^1}^w
{\color{red}\;
   \abs{v-w}_{h_2}
\;}
   \norm{\xi}_{1,2}
   .
\end{split}
\end{equation*}

\medskip
\noindent
\textbf{Difference D4}
Add twice zero to write D4 as a sum
$D_{41}\xi+D_{42}\xi+ D_{43}\xi$
\begin{equation*}
\begin{split}
   &(db_{t_v(\tau)}|_{v_\tau}\xi_\tau^\prime)\, j_0 v_\tau^\prime
   -
   (db_{t_w(\tau)}|_{w_\tau}\xi_\tau^\prime)\, j_0 w_\tau^\prime
\\
   &=
   (db_{t_v(\tau)}|_{v_\tau}\xi_\tau^\prime)\, j_0 v_\tau^{\prime}
   -
   (db_{t_w(\tau)}|_{v_\tau}\xi_\tau^\prime)\, j_0 v_\tau^{\prime}
   \\
   &\quad
  +(db_{t_w(\tau)}|_{v_\tau}\xi_\tau^\prime)\, j_0 v_\tau^{\prime}
   -
   (db_{t_w(\tau)}|_{w_\tau}\xi_\tau^\prime)\, j_0 v_\tau^{\prime}
   \\
   &\quad
  +(db_{t_w(\tau)}|_{w_\tau}\xi_\tau^\prime)\, j_0 v_\tau^{\prime}
   -
   (db_{t_w(\tau)}|_{w_\tau}\xi_\tau^\prime)\, j_0 w_\tau^{\prime}
\end{split}
\end{equation*}

\smallskip
\noindent
\textbf{\boldmath$D_{41}$.}
We use~(\ref{eq:db-t_v-t_w}) to estimate
\begin{equation*}
\begin{split}
   \norm{D_{41}\xi}_2
   &=\norm{
   \left(
   (db_{t_v(\tau)}|_{v_\tau}-db_{t_w(\tau)}|_{v_\tau})\xi_\tau^{\prime}
   \right)
   j_0 v_\tau^{\prime}
   }_2
\\
   &\le \tfrac{16\rho (d\dot\beta_{\bar\Vfrak_z})}{\norm{z}_2^2}
   \norm{v-w}_2
   \norm{\xi}_\infty\norm{v^\prime}_2
\\
   &\le c_{41}^z
{\color{blue}\;
   \abs{v-w}_{h_0}
\;}
   \norm{\xi}_{1,2}
   \quad
{\color{gray}
   ,\;
   c_{41}^z:=
   \tfrac{32\rho\norm{z}_{1,2}}{\norm{z}_2^2}d\dot\beta_{\bar\Vfrak_z}
   .
}
\end{split}
\end{equation*}

\smallskip
\noindent
\textbf{\boldmath\color{red}$D_{42}$.}
By Taylor's theorem with constant $d^2\beta_{\bar\Vfrak_z}$
in~(\ref{eq:d^2beta-frak}) we estimate
\begin{equation*}
\begin{split}
   \norm{D_{42}\xi}_2
   &=\norm{
   \left(
   (db_{t_w(\tau)}|_{v_\tau}-db_{t_w(\tau)}|_{w_\tau})\xi_\tau^{\prime}
   \right)
   j_0 v_\tau^{\prime}
   }_2
\\
   &\le d^2\beta_{\bar\Vfrak_z}
   \norm{v-w}_\infty\norm{\xi^\prime}_2\norm{v^\prime}_\infty
\\
   &\le d^2\beta_{\bar\Vfrak_z}
{\color{red}\;
   \abs{v}_{h_2}\cdot\abs{v-w}_{h_1}
\;}
   \norm{\xi}_{1,2}
   .
\end{split}
\end{equation*}

\smallskip
\noindent
\textbf{\boldmath\color{red}$D_{43}$.}
With the constant $d\beta_{\SS^1}^w$ from~(\ref{eq:b-infty}) we estimate
\begin{equation*}
\begin{split}
   \norm{D_{43}\xi}_2
   &=\norm{
   (db_{t_w(\tau)}|_{w_\tau}\xi_\tau^{\prime})\, j_0 
   \left(v_\tau^{\prime}-w_\tau^{\prime}\right)
   }_2
\\
   &\le d\beta_{\SS^1}^w\norm{\xi^\prime}_2\norm{v^{\prime}-w^{\prime}}_\infty
\\
   &\le d\beta_{\SS^1}^w
{\color{red}\;
   \abs{v-w}_{h_2}
\;}
   \norm{\xi}_{1,2}
   .
\end{split}
\end{equation*}

\medskip
\noindent
\textbf{Difference D5}
Add twice zero to write D5 as a sum
$D_{51}\xi+D_{52}\xi+ D_{53}\xi$
\begin{equation*}
\begin{split}
   &(db_{t_v(\tau)}|_{v_\tau}\xi_\tau)\, j_0 v_\tau^{\prime\prime}
   -
   (db_{t_w(\tau)}|_{w_\tau}\xi_\tau)\, j_0 w_\tau^{\prime\prime}
\\
   &=
   (db_{t_v(\tau)}|_{v_\tau}\xi_\tau)\, j_0 v_\tau^{\prime\prime}
   -
   (db_{t_w(\tau)}|_{v_\tau}\xi_\tau)\, j_0 v_\tau^{\prime\prime}
   \\
   &\quad
  +
   (db_{t_w(\tau)}|_{v_\tau}\xi_\tau)\, j_0 v_\tau^{\prime\prime}
   -
   (db_{t_w(\tau)}|_{w_\tau}\xi_\tau)\, j_0 v_\tau^{\prime\prime}
   \\
   &\quad
  +
   (db_{t_w(\tau)}|_{w_\tau}\xi_\tau)\, j_0 v_\tau^{\prime\prime}
   -
   (db_{t_w(\tau)}|_{w_\tau}\xi_\tau)\, j_0 w_\tau^{\prime\prime}
\end{split}
\end{equation*}

\smallskip
\noindent
\textbf{\boldmath$D_{51}$.}
We use~(\ref{eq:db-t_v-t_w}) to estimate
\begin{equation*}
\begin{split}
   \norm{D_{51}\xi}_2
   &=\norm{
   \left(
   (db_{t_v(\tau)}|_{v_\tau}-db_{t_w(\tau)}|_{v_\tau})\xi_\tau
   \right)
   j_0 v_\tau^{\prime\prime}
   }_2
\\
   &\le \tfrac{16\rho (d\dot\beta_{\bar\Vfrak_z})}{\norm{z}_2^2}
   \norm{v-w}_2
   \norm{\xi}_\infty\norm{v^{\prime\prime}}_2
\\
   &\le c_{51}^z
{\color{blue}\;
   {\color{red}\abs{v}_{h_2}}\cdot
   \abs{v-w}_{h_0}
\;}
   \norm{\xi}_{1,2}
   \quad
{\color{gray}
   ,\;
   c_{51}^z:=
   \tfrac{16\rho}{\norm{z}_2^2}d\dot\beta_{\bar\Vfrak_z}
   .
}
\end{split}
\end{equation*}

\smallskip
\noindent
\textbf{\boldmath\color{red}$D_{52}$.}
By Taylor's theorem with constant $d^2\beta_{\bar\Vfrak_z}$
in~(\ref{eq:d^2beta-frak}) we estimate
\begin{equation*}
\begin{split}
   \norm{D_{52}\xi}_2
   &=\norm{
   \left(
   (db_{t_w(\tau)}|_{v_\tau}-db_{t_w(\tau)}|_{w_\tau})\xi_\tau
   \right)
   j_0 v_\tau^{\prime\prime}
   }_2
\\
   &\le d^2\beta_{\bar\Vfrak_z}
   \norm{v-w}_\infty\norm{\xi}_\infty\norm{v^{\prime\prime}}_2
\\
   &\le d^2\beta_{\bar\Vfrak_z}
{\color{red}\;
   \abs{v}_{h_2}\cdot\abs{v-w}_{h_1}
\;}
   \norm{\xi}_{1,2}
   .
\end{split}
\end{equation*}

\smallskip
\noindent
\textbf{\boldmath\color{red}$D_{53}$.}
With the constant $d\beta_{\SS^1}^w$ from~(\ref{eq:b-infty}) we estimate
\begin{equation*}
\begin{split}
   \norm{D_{53}\xi}_2
   &=\norm{
   (db_{t_w(\tau)}|_{w_\tau}\xi_\tau)\, j_0 
   \left(v_\tau^{\prime\prime}-w_\tau^{\prime\prime}\right)
   }_2
\\
   &\le d\beta_{\SS^1}^w\norm{\xi}_\infty\norm{v^{\prime\prime}-w^{\prime\prime}}_2
\\
   &\le d\beta_{\SS^1}^w
{\color{red}\;
   \abs{v-w}_{h_2}
\;}
   \norm{\xi}_{1,2}
   .
\end{split}
\end{equation*}

\medskip
Summarizing we have shown that there exists a constant $\kappa=\kappa(z)$
such that
\begin{equation*}
     \norm{C^v-C^w}_{\Ll(h_1)}
     \le\kappa\Bigl(\abs{v-w}_{h_2}
     +\abs{v}_{h_2}\cdot\abs{v-w}_{h_1}\Bigr).
\end{equation*}
whenever $v,w\in \mathrm{v}_z$.
Interchanging the roles of $v$ and $w$ we get
\begin{equation*}
     \norm{C^v-C^w}_{\Ll(h_1)}
     \le\kappa\Bigl(\abs{v-w}_{h_2}
     +\abs{w}_{h_2}\cdot\abs{v-w}_{h_1}\Bigr).
\end{equation*}
whenever $v,w\in \mathrm{v}_z$.
These two inequalities imply that
\begin{equation*}
     \norm{C^v-C^w}_{\Ll(h_1)}
     \le\kappa\Bigl(\abs{v-w}_{h_2}
     +\min\{\abs{v}_{h_2}, \abs{w}_{h_2}\}\cdot\abs{v-w}_{h_1}\Bigr).
\end{equation*}
whenever $v,w\in \mathrm{v}_z$.
This proves Proposition~\ref{prop:C_43}.
\end{proof}

The following proposition is part of Step~(\texttt{C}) of the proof of
Theorem~\ref{thm:Ham-ae}.

\begin{proposition}[$C_{42}$ is scale Lipschitz]\label{prop:C_42}
For $z\mapsto C^z_{42}$ in~(\ref{eq:C42-C43-maps}) the scale Lipschitz
estimate~\texttt{(C)} holds true, namely~(\ref{eq:(C)}) with lower
case spaces $h_k$ and~$u_1$.
\end{proposition}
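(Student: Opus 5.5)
We do not see the explicit formula~(\ref{eq:C_42}) here, but since $C_{42}$ and $C_{43}$ come out of the same magnetic summand $T_4$ (the one containing $f_{44}^z=-4b_{t_z}|_z j_0$), we expect $C_{42}$ to be a zeroth-order multiplication operator of the shape
$$
   C^z\xi:=C^z_{42}\xi=\bigl(\dot b_{t_z(\tau)}|_{z_\tau}\,\tfrac{\abs{z_\tau}^2}{\norm{z}_2^2}\,\xi_\tau\bigr)\,\text{(const.)}\, j_0 z^\prime_\tau ,
$$
or $(\ldots)j_0\xi$ multiplied by a factor of the form $\dot b\, t_z^\prime$. In either case the coefficient depends on $z$ through $t_z$, through $z_\tau$, and through at most one derivative of $z$. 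The plan is to copy the proof of Proposition~\ref{prop:C_43} verbatim in structure.

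First we fix $z\in u_1$. We choose $\Vfrak_z$, $\rho=\rho(z)$ as in~(\ref{eq:rho_z}) and the $h_1$-neighborhood $\mathrm{v}_z$ from~(\ref{eq:V_z}). On this set $\norm{v}_2>\frac12\norm{z}_2$, $\norm{v}_{1,2}<2\norm{z}_{1,2}$ and $\norm{v}_\infty\le\rho$. For $v,w\in\mathrm{v}_z\cap h_2$ and $\xi\in h_1$ we split $\norm{(C^v-C^w)\xi}_{1,2}$ into the $L^2$-norm of the difference and the $L^2$-norm of the $\tau$-derivative of the difference. We expand the derivative by the product and chain rules, using $t_v^\prime=\abs{v}^2/\norm{v}_2^2$. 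This produces a small number of differences, analogous to D1--D5. In each we telescope, adding zeros, so that only one ingredient changes at a time: $t_v\to t_w$, then base point $v\to w$, then the normalization factors $\abs{v_\tau}^2/\norm{v}_2^2$, then $v'\to w'$ or $v''\to w''$.

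We then estimate each piece exactly as before. For $t_v\to t_w$ we use Taylor's theorem in time together with~(\ref{eq:t_z-diff}), which gives a factor $\norm{v-w}_2$ as in~(\ref{eq:db-t_v-t_w}). For $v\to w$ we use Taylor's theorem in space with constants such as $d\dot\beta_{\bar\Vfrak_z}$ and $d^2\dot\beta_{\bar\Vfrak_z}$, together with $\norm{v-w}_\infty\le\norm{v-w}_{1,2}$. Normalization differences are handled by the $D_{23}$/$D_{24}$ trick $\abs{v}^2-\abs{w}^2=\inner{v}{v-w}+\inner{v-w}{w}$. Throughout we use $\norm{\xi}_\infty\le\norm{\xi}_{1,2}$, and the one derivative of $\xi$ is paired with an $L^\infty$ bound on the coefficient.

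The essential bookkeeping is that every term carries at most one factor involving $h_2$ information. That factor is either $\abs{v-w}_{h_2}$ (when $v''-w''$ or $\norm{v'-w'}_\infty$ appears) or $\abs{v}_{h_2}\cdot\abs{v-w}_{h_1}$ (when $v''$ or $\norm{v'}_\infty$ multiplies a lower-order difference). All remaining factors are bounded by constants depending only on $z$. This yields the estimate with $\abs{v}_{h_2}$. Swapping $v$ and $w$ gives the same estimate with $\abs{w}_{h_2}$, and hence the bound with the $\min$.

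The main obstacle is the term in which the derivative hits the $t_z$-dependence or the normalization twice. Here products such as $\dot b\,\abs{v}^2/\norm{v}_2^2\cdot v'$ differentiate into $\ddot b\,(t_v')^2$ and $\dot b\,\partial_\tau(\abs{v}^2)$. We must make sure that none of these terms forces two $h_2$-factors, for example $\norm{v'}_\infty^2$. We expect this to be controlled because $\partial_\tau\abs{v}^2=2\inner{v}{v'}$ involves only one derivative, bounded in $L^2$ by $\norm{z}_{1,2}$. If $C_{42}$ instead contains a nonlocal integral like $T_{65}$, we would first integrate by parts as in Lemma~\ref{le:f_65_75-comp} and then apply the same telescoping.
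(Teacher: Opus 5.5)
\textbf{Gap: the proposal estimates the wrong operator.} By Term~T4 in Remark~\ref{rem:M_ij} and~(\ref{eq:C_42}),
\[
   C^z_{42}\xi=\dot b_{t_z(\tau)}|_{z_\tau}\,(dt_z\xi)_\tau\,j_0 z^\prime_\tau ,
   \qquad
   (dt_z\xi)_\tau=\tfrac{2}{\norm{z}_2^2}\int_0^\tau\inner{z_\sigma}{\xi_\sigma}_0\,d\sigma
   -\tfrac{2\inner{z}{\xi}}{\norm{z}_2^4}\int_0^\tau\abs{z_\sigma}^2\,d\sigma .
\]
The factor next to $\dot b$ is the variation $\left.\tfrac{d}{d\eps}\right|_0 t_{z_\eps}(\tau)$ of the Barutello--Ortega--Verzini time in direction $\xi$. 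This is what you get by differentiating $b_{t_{z_\eps}}|_{z_\eps}$ in $\eps$; the other chain-rule term is $C_{43}$. It is not the $\tau$-derivative $t_z^\prime=\abs{z}^2/\norm{z}_2^2$ that you guessed.

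Hence $C_{42}$ is not a multiplication operator. It is nonlocal in $\xi$, and its dependence on $z$ (through $\norm{z}_2$, the global pairing $\inner{z}{\xi}$, and the running integrals) is exactly what is new compared with $C_{43}$. Your telescoping list ($t_v\to t_w$, base point $v\to w$, normalization, $v^\prime\to w^\prime$, $v^{\prime\prime}\to w^{\prime\prime}$) has no slot for $dt_v\xi\to dt_w\xi$.

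Your fallback of integrating by parts as for $T_{65}$ does not apply either. No $\xi^\prime$ occurs, not even after applying $\p_\tau$, since $(dt_v\xi)^\prime_\tau=\tfrac{2}{\norm{v}_2^2}\inner{v_\tau}{\xi_\tau}_0-\tfrac{2\inner{v}{\xi}}{\norm{v}_2^4}\abs{v_\tau}^2$ is again of order zero in $\xi$.

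\textbf{What is missing} is control of $v\mapsto dt_v$, uniformly on $\mathrm{v}_z$, as prepared in \S\,\ref{sec:t_z}. First, the bounds $\norm{dt_v\xi}_\infty\le\tfrac{4}{\norm{v}_2}\norm{\xi}_2$ and $\norm{(dt_v\xi)^\prime}_\infty\le c_z\norm{\xi}_{1,2}$ from~(\ref{eq:dt|_z-2-infty}) and~(\ref{eq:dt_z-xi-infty}). Above all, the Lipschitz bounds
\[
   \norm{dt_v\xi-dt_w\xi}_\infty
   +\norm{(dt_v\xi)^\prime-(dt_w\xi)^\prime}_\infty
   \le c_z\,\norm{v-w}_\infty\norm{\xi}_\infty
\]
from~(\ref{eq:dt_z-diff}) and~(\ref{eq:dt_z-deriv-difference-infty}). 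These use $\norm{v}_2>\tfrac12\norm{z}_2$ and $\norm{v}_\infty\le\rho$ on $\mathrm{v}_z$.

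With these in hand, the paper splits $\norm{(C^v-C^w)\xi}_{1,2}$ into D1 (the undifferentiated difference) and D2--D5 ($\p_\tau$ hitting $\dot b$ in time, $\dot b$ in space, $dt_v\xi$, or $v^\prime$). Each piece is telescoped with an extra slot $dt_v\xi\to dt_w\xi$. For instance, $(d\dot b_{t_w}|_w w^\prime)(dt_v\xi-dt_w\xi)\,j_0v^\prime$ is bounded via $\norm{w^\prime}_2\norm{dt_v\xi-dt_w\xi}_\infty\norm{v^\prime}_\infty$ by a multiple of $\abs{v}_{h_2}\abs{v-w}_{h_1}\norm{\xi}_{1,2}$. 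Likewise $(dt_v\xi)^\prime-(dt_w\xi)^\prime$ is paired with $\norm{v^\prime}_2$ and yields $\abs{v-w}_{h_1}$.

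Once this is supplied, your bookkeeping (at most one $h_2$ factor per term) and the final $\min$ argument go through as you describe. As written, however, the proposal proves a scale Lipschitz estimate for a different, local operator and skips the one step specific to $C_{42}$.
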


\begin{proof}
Let $z\in u_1$ and $\xi\in h_1$. We consider the map
in~(\ref{eq:C_42}), namely
$$
   C^z\xi
   :=C^z_{42}\xi
   :=\dot b_{t_z(\tau)}|_{z_\tau}(dt_z\xi)_\tau\, j_0 z_\tau^\prime
   .
$$
As in Proposition~\ref{prop:C_43} let the ball
$\bar \Vfrak_z$ be defined by~(\ref{eq:rho_z}) and
$\mathrm{v}_z$ by~(\ref{eq:V_z}).

\smallskip
Pick $v,w\in \mathrm{v}_z\cap h_2$.
By definition of the $W^{1,2}$ norm, calculating the derivative $\p_\tau$,
the triangle inequality and since
$t_v^\prime(\tau)={\color{cyan}\Abs{v_\tau}^2/\norm{v}_2^2}$
by~(\ref{eq:class-time-tau-deriv-C}), we get
\begin{equation*}
\begin{split}
   &\norm{(C^v-C^w)\xi}_{1,2}
\\
   &\le\norm{
   \dot b_{t_v(\tau)}|_{v_\tau}(dt_v\xi)_\tau\, j_0 v_\tau^\prime
   -
   \dot b_{t_w(\tau)}|_{w_\tau}(dt_w\xi)_\tau\, j_0 w_\tau^\prime
   }_{2}
   \\
   &\quad
   +\norm{
   \p_\tau\left(
   \dot b_{t_v(\tau)}|_{v_\tau}(dt_v\xi)_\tau\, j_0 v_\tau^\prime
   \right)
   -
   \p_\tau\left(
   \dot b_{t_w(\tau)}|_{w_\tau}(dt_w\xi)_\tau\, j_0 w_\tau^\prime
   \right)
   }_{2}
\\
   &\le 
   \norm{
   \dot b_{t_v(\tau)}|_{v_\tau}(dt_v\xi)_\tau\, j_0 v_\tau^\prime
   -
   \dot b_{t_w(\tau)}|_{w_\tau}(dt_w\xi)_\tau\, j_0 w_\tau^\prime
   }_{2}
   \\
   &\quad
   +\norm{
   \ddot b_{t_v(\tau)}|_{v_\tau}
   {\color{cyan}\tfrac{\abs{v_\tau}^2}{\norm{v}_2^2}}
   (dt_v\xi)_\tau\, j_0 v_\tau^\prime
   -
   \ddot b_{t_w(\tau)}|_{w_\tau}
   {\color{cyan}\tfrac{\abs{w_\tau}^2}{\norm{w}_2^2}}
   (dt_w\xi)_\tau\, j_0 w_\tau^\prime
   }_{2}
   \\
   &\quad
   +\norm{
   \left(d\dot b_{t_v(\tau)}|_{v_\tau} v_\tau^\prime\right)
   (dt_v\xi)_\tau\, j_0 v_\tau^\prime
   -
   \left(d\dot b_{t_w(\tau)}|_{w_\tau} w_\tau^\prime\right)
   (dt_w\xi)_\tau\, j_0 w_\tau^\prime
   }_{2}
   \\
   &\quad
   +\norm{
   \dot b_{t_v(\tau)}|_{v_\tau}(dt_v\xi)^\prime_\tau\, j_0 v_\tau^\prime
   -
   \dot b_{t_w(\tau)}|_{w_\tau}(dt_w\xi)^\prime_\tau\, j_0 w_\tau^\prime
   }_{2}
   \\
   &\quad
   +\norm{
   \dot b_{t_v(\tau)}|_{v_\tau}(dt_v\xi)_\tau\, j_0 v_\tau^{\prime\prime}
   -
   \dot b_{t_w(\tau)}|_{w_\tau}(dt_w\xi)_\tau\, j_0 w_\tau^{\prime\prime}
   }_{2}
   .
\end{split}
\end{equation*}
We need to estimate the $L^2$-norms of five differences, 
notation \textbf{D1-D5}.

\medskip
\noindent
\textbf{Difference D1.}
Add three times zero to write D1 as a sum $D_{11}\xi+\dots+D_{14}\xi$
\begin{equation*}
\begin{split}
   &
   \dot b_{t_v(\tau)}|_{v_\tau}(dt_v\xi)_\tau\, j_0 v_\tau^\prime
   -
   \dot b_{t_w(\tau)}|_{w_\tau}(dt_w\xi)_\tau\, j_0 w_\tau^\prime
\\
   &=
   \left(
   \dot b_{t_v(\tau)}|_{v_\tau}-\dot b_{t_w(\tau)}|_{v_\tau}
   \right)
   (dt_v\xi)_\tau\, j_0 v_\tau^\prime
   \\
   &\quad
   +
   \left(
   \dot b_{t_w(\tau)}|_{v_\tau} -\dot b_{t_w(\tau)}|_{w_\tau}
   \right)
   (dt_v\xi)_\tau\, j_0 v_\tau^\prime
   \\
   &\quad
   +
   \dot b_{t_w(\tau)}|_{w_\tau}
   \left(
   (dt_v\xi)_\tau-(dt_w\xi)_\tau
   \right)
   j_0 v_\tau^\prime
   \\
   &\quad
   +
   \dot b_{t_w(\tau)}|_{w_\tau}(dt_w\xi)_\tau\, j_0
   \left(v_\tau^\prime-w_\tau^\prime\right)
\end{split}
\end{equation*}
pointwise at $\tau\in\SS^1$.

\smallskip
\noindent
\textbf{\boldmath$D_{11}$.}
By compactness of $\bar \Vfrak_z$ the maximum of the
absolute value is finite
$$
   \ddot\beta_{\bar\Vfrak_z}
   :=\max_{\SS^1\times\bar\Vfrak_z}\abs{\ddot b}
   <\infty .
$$
As in~(\ref{eq:db-t_v-t_w}) we obtain an estimate, uniform in
$\tau\in\SS^1$, for the difference
\begin{equation}\label{eq:dot-b-diff}
\begin{split}
   \Abs{\dot b_{t_v(\tau)}|_{v_\tau}-\dot b_{t_w(\tau)}|_{v_\tau}}
   &\le \tfrac{16\rho \ddot\beta_{\bar\Vfrak_z}}{\norm{z}_2^2}
   \norm{v-w}_2
   .
\end{split}
\end{equation}
Use this difference estimate, then~(\ref{eq:dt|_z-2-infty})
followed by~(\ref{eq:V_z}), to obtain
\begin{equation}\label{eq:43-D11}
\begin{split}
   \norm{D_{11}\xi}_2
   &=\norm{
   \left(
   \dot b_{t_v(\tau)}|_{v_\tau}-\dot b_{t_w(\tau)}|_{v_\tau}
   \right)
   (dt_v\xi)_\tau\, j_0 v_\tau^\prime
   }_2
\\
   &\le \tfrac{16\rho \ddot\beta_{\bar\Vfrak_z}}{\norm{z}_2^2}
   \norm{v-w}_2
   \norm{dt_v\xi}_\infty\norm{v^\prime}_2
\\
   &\le \tfrac{16\rho \ddot\beta_{\bar\Vfrak_z}}{\norm{z}_2^2}
   \norm{v-w}_2
   \tfrac{4}{\norm{v}_2}\norm{\xi}_2\norm{v}_{1,2}
\\
   &\le \tfrac{128\rho \ddot\beta_{\bar\Vfrak_z}}{\norm{z}_2^3}
{\color{blue}\;
   \abs{v}_{h_1}
   \cdot\abs{v-w}_{h_0}
\;}
   \norm{\xi}_{1,2}
   .
\end{split}
\end{equation}

\smallskip
\noindent
\textbf{\boldmath$D_{12}$.}
By Taylor's theorem and the constant $d\dot\beta_{\bar\Vfrak_z}$
in~(\ref{eq:d-dot-beta}) we estimate
\begin{equation}\label{eq:43-D12}
\begin{split}
   \norm{D_{12}\xi}_2
   &=\norm{
   \left(
   \dot b_{t_w(\tau)}|_{v_\tau}-\dot b_{t_w(\tau)}|_{w_\tau}
   \right)
   (dt_v\xi)_\tau\, j_0 v_\tau^\prime
   }_2
\\
   &\le d\dot\beta_{\bar\Vfrak_z}\norm{v-w}_\infty
   \norm{dt_v\xi}_\infty\norm{v^\prime}_2
\\
   &\le \tfrac{8d\dot\beta_{\bar\Vfrak_z}}{\norm{z}_2}
{\color{blue}\;
   \abs{v}_{h_1}
   \cdot\abs{v-w}_{h_1}
\;}
   \norm{\xi}_{1,2}
   .
\end{split}
\end{equation}

\smallskip
\noindent
\textbf{\boldmath$D_{13}$.}
With constants $\dot\beta_{\SS^1}^w$ in~(\ref{eq:b-infty})
and $C_\rho^{\norm{z}_2}$ in~(\ref{eq:dt_z-diff})
we estimate
\begin{equation}\label{eq:43-D13}
\begin{split}
   \norm{D_{13}\xi}_2
   &=\norm{
   \dot b_{t_w(\tau)}|_{w_\tau}
   \left(
   (dt_v\xi)_\tau-(dt_w\xi)_\tau
   \right)
   j_0 v_\tau^\prime
   }_2
\\
   &\le \dot\beta_{\SS^1}^w
   \norm{dt_v\xi-dt_w\xi}_\infty
   \norm{v^\prime}_2
\\
   &\le \dot\beta_{\SS^1}^w 2 C_\rho^{\norm{z}_2} 
{\color{blue}\;
   \abs{v}_{h_1}\cdot\abs{v-w}_{h_1}
\;}
   \norm{\xi}_{1,2}
   .
\end{split}
\end{equation}

\smallskip
\noindent
\textbf{\boldmath$D_{14}$.}
With $\dot\beta_{\SS^1}^w$ in~(\ref{eq:b-infty})
and~(\ref{eq:dt|_z-2-infty}), followed by~(\ref{eq:V_z}),
we estimate
\begin{equation}\label{eq:43-D14}
\begin{split}
   \norm{D_{14}\xi}_2
   &=\norm{
   \dot b_{t_w(\tau)}|_{w_\tau}(dt_w\xi)_\tau\, j_0
   \left(v_\tau^{\prime}-w_\tau^{\prime}\right)
   }_2
\\
   &\le \dot\beta_{\SS^1}^w\tfrac{4}{\norm{v}_2}\norm{\xi}_2
   \norm{v^{\prime}-w^{\prime}}_2
\\
   &\le \tfrac{8d\dot\beta_{\bar\Vfrak_z}}{\norm{z}_2}
{\color{blue}\;
   \abs{v-w}_{h_1}
\;}
   \norm{\xi}_{1,2}
   .
\end{split}
\end{equation}

\medskip
\noindent
\textbf{Difference D2}
Add five zeroes to write D2 as a sum
$D_{21}\xi+\dots+D_{26}\xi$
\begin{equation*}
\begin{split}
   &
   \ddot b_{t_v(\tau)}|_{v_\tau}
   {\color{cyan}\tfrac{\abs{v_\tau}^2}{\norm{v}_2^2}}
   (dt_v\xi)_\tau\, j_0 v_\tau^\prime
   -
   \ddot b_{t_w(\tau)}|_{w_\tau}
   {\color{cyan}\tfrac{\abs{w_\tau}^2}{\norm{w}_2^2}}
   (dt_w\xi)_\tau\, j_0 w_\tau^\prime
\\
   &=
   \left(
   \ddot b_{t_v(\tau)}|_{v_\tau}-\ddot b_{t_w(\tau)}|_{v_\tau}
   \right)
   {\color{cyan}\tfrac{\abs{v_\tau}^2}{\norm{v}_2^2}}
   (dt_v\xi)_\tau\, j_0 v_\tau^\prime
\\
   &\quad+
   \left(
   \ddot b_{t_w(\tau)}|_{v_\tau}-\ddot b_{t_w(\tau)}|_{w_\tau}
   \right)
   {\color{cyan}\tfrac{\abs{v_\tau}^2}{\norm{v}_2^2}}
   (dt_v\xi)_\tau\, j_0 v_\tau^\prime
\\
   &\quad+
   \ddot b_{t_w(\tau)}|_{w_\tau}
   {\color{cyan}\tfrac{\abs{v_\tau}^2-\abs{w_\tau}^2}{\norm{v}_2^2}}
   (dt_v\xi)_\tau\, j_0 v_\tau^\prime
\\
   &\quad+
   \ddot b_{t_w(\tau)}|_{w_\tau} \abs{w_\tau}^2
   {\color{cyan}\tfrac{\norm{w}_2^2-\norm{v}_2^2}{\norm{v}_2^2 \norm{w}_2^2}}
   (dt_v\xi)_\tau\, j_0 v_\tau^\prime
\\
   &\quad+
   \ddot b_{t_w(\tau)}|_{w_\tau}
   {\color{cyan}\tfrac{\abs{w_\tau}^2}{\norm{w}_2^2}}
   \left((dt_v\xi)_\tau-(dt_w\xi)_\tau\right)
   j_0 v_\tau^\prime
\\
   &\quad+
   \ddot b_{t_w(\tau)}|_{w_\tau}
   {\color{cyan}\tfrac{\abs{w_\tau}^2}{\norm{w}_2^2}}
   (dt_w\xi)_\tau\, j_0
   \left(v_\tau^\prime-w_\tau^\prime\right)
   .
\end{split}
\end{equation*}

\smallskip
\noindent
\textbf{\boldmath$D_{21}$.}
By compactness of $\bar \Vfrak_z$ the maximum of the
absolute value is finite
$$
   \dddot\beta_{\bar\Vfrak_z}
   :=\max_{\SS^1\times\bar\Vfrak_z}\abs{\dddot b}
   <\infty .
$$
As in~(\ref{eq:db-t_v-t_w}) we obtain an estimate, uniform in
$\tau\in\SS^1$, for the operator norm
\begin{equation*}
\begin{split}
   \abs{\ddot b_{t_v(\tau)}|_{v_\tau}-\ddot b_{t_w(\tau)}|_{v_\tau}}
   &\le \tfrac{16\rho \dddot\beta_{\bar\Vfrak_z}}{\norm{z}_2^2}
   \norm{v-w}_2
   .
\end{split}
\end{equation*}
Use this operator norm estimate, as well
as~(\ref {eq:rho_z}) and~(\ref{eq:V_z}), to estimate
\begin{equation*}
\begin{split}
   \norm{D_{21}\xi}_2
   &=\norm{
   \left(
   \ddot b_{t_v(\tau)}|_{v_\tau}-\ddot b_{t_w(\tau)}|_{v_\tau}
   \right)
   {\color{cyan}\tfrac{\abs{v_\tau}^2}{\norm{v}_2^2}}
   (dt_v\xi)_\tau\, j_0 v_\tau^\prime
   }_2
\\
   &\le \tfrac{16\rho \dddot\beta_{\bar\Vfrak_z}}{\norm{z}_2^2}
   \norm{v-w}_2
   \tfrac{2^2\rho^2}{\norm{z}_2^2}
   \tfrac{4}{\norm{z}_2}\norm{\xi}_2
   \norm{v^\prime}_2
\\
   &\le \tfrac{16\cdot 32\rho^3\dddot\beta_{\bar\Vfrak_z}\norm{z}_{1,2}}
      {\norm{z}_2^5}
{\color{blue}\;
   \abs{v-w}_{h_0}
\;}
   \norm{\xi}_{1,2}
   .
\end{split}
\end{equation*}

\smallskip
\noindent
\textbf{\boldmath$D_{22}$.}
By Taylor's theorem and $d\ddot\beta_{\bar\Vfrak_z}$
in~(\ref{eq:d-ddot-beta-43}) we estimate similarly as above
\begin{equation*}
\begin{split}
   \norm{D_{22}\xi}_2
   &=\norm{
   \left(
   \ddot b_{t_w(\tau)}|_{v_\tau}-\ddot b_{t_w(\tau)}|_{w_\tau}
   \right)
   {\color{cyan}\tfrac{\abs{v_\tau}^2}{\norm{v}_2^2}}
   (dt_v\xi)_\tau\, j_0 v_\tau^\prime
   }_2
\\
   &\le d\ddot\beta_{\bar\Vfrak_z} \norm{v-w}_\infty
   \tfrac{2^2\rho^2}{\norm{z}_2^2}
   \tfrac{4}{\norm{z}_2}\norm{\xi}_2
   \norm{v^\prime}_2
\\
   &\le \tfrac{32\rho^2(d\ddot\beta_{\bar\Vfrak_z})\norm{z}_{1,2}}
      {\norm{z}_2^3}
{\color{blue}\;
   \abs{v-w}_{h_1}
\;}
   \norm{\xi}_{1,2}
   .
\end{split}
\end{equation*}

\smallskip
\noindent
\textbf{\boldmath$D_{23}$.}
With $\ddot\beta_{\SS^1}^w$ in~(\ref{eq:b-infty})
and~(\ref{eq:dt|_z-2-infty}), followed by~(\ref{eq:V_z}),
we estimate
\begin{equation*}
\begin{split}
   \norm{D_{23}\xi}_2
   &=\norm{
   \ddot b_{t_w(\tau)}|_{w_\tau}
   {\color{cyan}\tfrac{\abs{v_\tau}^2
   {\color{gray}\;-\langle v_\tau,w_\tau\rangle_0+\langle v_\tau,w_\tau\rangle_0 \;}
   -\abs{w_\tau}^2}{\norm{v}_2^2}}
   (dt_v\xi)_\tau\, j_0 v_\tau^\prime
   }_2
\\
   &\le \ddot\beta_{\SS^1}^w
   \tfrac{\norm{v-w}_\infty\left(\norm{v}_\infty+\norm{w}_\infty\right)}{\norm{v}_2^2}
   \norm{dt_v\xi}_\infty \norm{v^\prime}_2
\\
   &\le 
   \tfrac{128\rho \ddot\beta_{\SS^1}^w\norm{z}_{1,2}}{\norm{z}_2^3}
{\color{blue}\;
   \abs{v-w}_{h_1}
\;}
   \norm{\xi}_{1,2}
   .
\end{split}
\end{equation*}

\smallskip
\noindent
\textbf{\boldmath$D_{24}$.}
With $\ddot\beta_{\SS^1}^w$ in~(\ref{eq:b-infty})
and~(\ref{eq:dt|_z-2-infty}), followed by~(\ref{eq:V_z}),
we estimate
\begin{equation*}
\begin{split}
   \norm{D_{24}\xi}_2
   &=\norm{
   \ddot b_{t_w(\tau)}|_{w_\tau} \abs{w_\tau}^2
   {\color{cyan}\tfrac{\norm{w}_2^2
   {\color{gray}\;-\langle v,w\rangle+\langle v,w\rangle \;}
   -\norm{v}_2^2}{\norm{v}_2^2 \norm{w}_2^2}}
   (dt_v\xi)_\tau\, j_0 v_\tau^\prime
   }_2
\\
   &\le \ddot\beta_{\SS^1}^w\norm{w}_\infty^2
   \tfrac{\norm{v-w}_2\left(\norm{v}_\infty+\norm{w}_\infty\right)}
      {\norm{v}_2^2 \norm{w}_2^2}
   \norm{dt_v\xi}_\infty\norm{v^\prime}_2
\\
   &\le 
   \tfrac{512\rho^3 \ddot\beta_{\SS^1}^w\norm{z}_{1,2}}{\norm{z}_2^5}
{\color{blue}\;
   \abs{v-w}_{h_1}
\;}
   \norm{\xi}_{1,2}
   .
\end{split}
\end{equation*}

\smallskip
\noindent
\textbf{\boldmath$D_{25}$.}
With constants $\ddot\beta_{\SS^1}^w$ in~(\ref{eq:b-infty})
and $C_\rho^{\norm{z}_2}$ in~(\ref{eq:dt_z-diff})
we estimate
\begin{equation*}
\begin{split}
   \norm{D_{25}\xi}_2
   &=\norm{
   \ddot b_{t_w(\tau)}|_{w_\tau}
   {\color{cyan}\tfrac{\abs{w_\tau}^2}{\norm{w}_2^2}}
   \left((dt_v\xi)_\tau-(dt_w\xi)_\tau\right)
   j_0 v_\tau^\prime
   }_2
\\
   &\le \ddot\beta_{\SS^1}^w {\color{cyan}\tfrac{4\rho^2}{\norm{z}_2^2}}
   \norm{dt_v\xi-dt_w\xi}_\infty\norm{v^\prime}_2
\\
   &\le \ddot\beta_{\SS^1}^w
   {\color{cyan}\tfrac{4\rho^2}{\norm{z}_2^2}}
   2 C_\rho^{\norm{z}_2}
{\color{blue}\;
   \abs{v}_{h_1}\cdot\abs{v-w}_{h_1}
\;}
   \norm{\xi}_{1,2}
   .
\end{split}
\end{equation*}

\smallskip
\noindent
\textbf{\boldmath$D_{26}$.}
With the constant $\ddot\beta_{\SS^1}^w$ in~(\ref{eq:b-infty}) 
and~(\ref {eq:rho_z}),~(\ref{eq:V_z}),~(\ref{eq:dt|_z-2-infty}) we estimate
\begin{equation*}
\begin{split}
   \norm{D_{26}\xi}_2
   &=\norm{
   \ddot b_{t_w(\tau)}|_{w_\tau}
   {\color{cyan}\tfrac{\abs{w_\tau}^2}{\norm{w}_2^2}}
   (dt_w\xi)_\tau\, j_0
   \left(v_\tau^\prime-w_\tau^\prime\right)
   }_2
\\
   &\le \ddot\beta_{\SS^1}^w\tfrac{2^2\rho^2}{\norm{z}_2^2}
   \tfrac{4\cdot 2}{\norm{z}_2}\norm{\xi}_2\norm{v^\prime-w^\prime}_2
\\
   &\le \tfrac{32\rho^2\ddot\beta_{\bar\Vfrak_z}}
      {\norm{z}_2^3}
{\color{blue}\;
   \abs{v-w}_{h_1}
\;}
   \norm{\xi}_{1,2}
   .
\end{split}
\end{equation*}

\medskip
\noindent
\textbf{\color{red}Difference D3}
Add four zeroes to write D3 as a sum
$D_{31}\xi+\dots+D_{35}\xi$
\begin{equation*}
\begin{split}
   &
   \left(d\dot b_{t_v(\tau)}|_{v_\tau} v_\tau^\prime\right)
   (dt_v\xi)_\tau\, j_0 v_\tau^\prime
   -
   \left(d\dot b_{t_w(\tau)}|_{w_\tau} w_\tau^\prime\right)
   (dt_w\xi)_\tau\, j_0 w_\tau^\prime
\\
   &=
   \left((d\dot b_{t_v(\tau)}|_{v_\tau}-d\dot b_{t_w(\tau)}|_{v_\tau})v_\tau^\prime\right)
   (dt_v\xi)_\tau\, j_0 v_\tau^\prime
   \\
   &\quad+
   \left((d\dot b_{t_w(\tau)}|_{v_\tau}-d\dot b_{t_w(\tau)}|_{w_\tau})v_\tau^\prime\right)
   (dt_v\xi)_\tau\, j_0 v_\tau^\prime
    \\
   &\quad+
   \left(d\dot b_{t_w(\tau)}|_{w_\tau} (v_\tau^\prime-w_\tau^\prime)\right)
   (dt_v\xi)_\tau\, j_0 v_\tau^\prime
   \\
   &\quad+
   \left(d\dot b_{t_w(\tau)}|_{w_\tau} w_\tau^\prime\right)
   \left((dt_v\xi)_\tau-(dt_w\xi)_\tau\right)
   j_0 v_\tau^\prime
   \\
   &\quad+
   \left(d\dot b_{t_w(\tau)}|_{w_\tau} w_\tau^\prime\right)
   (dt_w\xi)_\tau\, j_0 \left(v_\tau^\prime-w_\tau^\prime\right)
   .
\end{split}
\end{equation*}

\smallskip
\noindent
\textbf{\boldmath\color{red}$D_{31}$.}
By estimate~(\ref{eq:d-dot-b-diff-43}) with constant $d\ddot\beta_{\bar\Vfrak_z}$
and~(\ref{eq:dt|_z-2-infty}) we obtain
\begin{equation*}
\begin{split}
   \norm{D_{31}\xi}_2
   &=\norm{
   \left((d\dot b_{t_v(\tau)}|_{v_\tau}-d\dot b_{t_w(\tau)}|_{v_\tau})v_\tau^\prime\right)
   (dt_v\xi)_\tau\, j_0 v_\tau^\prime
   }_2
\\
   &\le \tfrac{16\rho (d\ddot\beta_{\bar\Vfrak_z})}{\norm{z}_2^2}
   \norm{v-w}_2
{\color{red}\;
   \norm{v^\prime}_\infty
}
   \norm{dt_v\xi}_\infty{\color{cyan}\norm{v^\prime}_2}
\\
   &\le \tfrac{16\rho (d\ddot\beta_{\bar\Vfrak_z})}{\norm{z}_2^2}
   \tfrac{8\cdot {\color{cyan}2\norm{z}_{1,2}}}{\norm{z}_2}
{\color{blue}\;
{\color{red}\;
   \abs{v}_{h_2}\cdot
}
   \abs{v-w}_{h_0}
\;}
   \norm{\xi}_{1,2}
   .
\end{split}
\end{equation*}

\smallskip
\noindent
\textbf{\boldmath\color{red}$D_{32}$.}
By Taylor's theorem and $d^2\dot\beta_{\bar\Vfrak_z}$
in~(\ref{eq:d^2-dot-beta-43}) we estimate as above
\begin{equation*}
\begin{split}
   \norm{D_{32}\xi}_2
   &=\norm{
   \left((d\dot b_{t_w(\tau)}|_{v_\tau}-d\dot b_{t_w(\tau)}|_{w_\tau})v_\tau^\prime\right)
   (dt_v\xi)_\tau\, j_0 v_\tau^\prime
   }_2
\\
   &\le d^2\dot\beta_{\bar\Vfrak_z}\norm{v-w}_\infty
{\color{red}\;
   \norm{v^\prime}_\infty
}
   \norm{dt_v\xi}_\infty{\color{cyan}\norm{v^\prime}_2}
\\
   &\le d^2\dot\beta_{\bar\Vfrak_z}
   \tfrac{8\cdot {\color{cyan}2\norm{z}_{1,2}}}{\norm{z}_2}
{\color{red}\;
   \abs{v}_{h_2}\cdot
   \abs{v-w}_{h_1}
\;}
   \norm{\xi}_{1,2}
   .
\end{split}
\end{equation*}

\smallskip
\noindent
\textbf{\boldmath\color{red}$D_{33}$.}
With $d\dot\beta_{\SS^1}^w$ from~(\ref{eq:b-infty})
we estimate as above
\begin{equation*}
\begin{split}
   \norm{D_{33}\xi}_2
   &=\norm{
   \left(d\dot b_{t_w(\tau)}|_{w_\tau} (v_\tau^\prime-w_\tau^\prime)\right)
   (dt_v\xi)_\tau\, j_0 v_\tau^\prime
   }_2
\\
   &\le d\dot\beta_{\SS^1}^w\norm{v^\prime-w^\prime}_\infty
   \norm{dt_v\xi}_\infty{\color{cyan}\norm{v^\prime}_2}
\\
   &\le d^2\dot\beta_{\bar\Vfrak_z}
   \tfrac{8\cdot {\color{cyan}2\norm{z}_{1,2}}}{\norm{z}_2}
{\color{red}\;
   \abs{v-w}_{h_2}
\;}
   \norm{\xi}_{1,2}
   .
\end{split}
\end{equation*}

\smallskip
\noindent
\textbf{\boldmath\color{red}$D_{34}$.}
With constants $d\dot\beta_{\SS^1}^w$ in~(\ref{eq:b-infty})
and $C_\rho^{\norm{z}_2}$ in~(\ref{eq:dt_z-diff})
we estimate
\begin{equation*}
\begin{split}
   \norm{D_{34}\xi}_2
   &=\norm{
   \left(d\dot b_{t_w(\tau)}|_{w_\tau} w_\tau^\prime\right)
   \left((dt_v\xi)_\tau-(dt_w\xi)_\tau\right)
   j_0 v_\tau^\prime
   }_2
\\
   &\le d\dot\beta_{\SS^1}^w
   {\color{cyan}\norm{w^\prime}_2}
   \norm{dt_v\xi-dt_w\xi}_\infty\norm{v^\prime}_\infty
\\
   &\le d\dot\beta_{\SS^1}^w {\color{cyan}2\norm{z}_{1,2}}
   2 C_\rho^{\norm{z}_2}
{\color{red}\;
   \abs{v}_{h_2}\cdot\abs{v-w}_{h_1}
\;}
   \norm{\xi}_{1,2}
   .
\end{split}
\end{equation*}

\smallskip
\noindent
\textbf{\boldmath\color{red}$D_{35}$.}
With $d\dot\beta_{\SS^1}^w$ from~(\ref{eq:b-infty}) and~(\ref{eq:V_z})
for ${\color{magenta}\norm{w}_{1,2}}$ we estimate as above
\begin{equation*}
\begin{split}
   \norm{D_{35}\xi}_2
   &=\norm{
   \left(d\dot b_{t_w(\tau)}|_{w_\tau} w_\tau^\prime\right)
   (dt_w\xi)_\tau\, j_0 \left(v_\tau^\prime-w_\tau^\prime\right)
   }_2
\\
   &\le d\dot\beta_{\SS^1}^w{\color{magenta}\norm{w^\prime}_2}
   \norm{dt_w\xi}_\infty\norm{v^\prime-w^\prime}_\infty
\\
   &\le d\dot\beta_{\SS^1}^w{\color{magenta}2\norm{z}_{1,2}}
   \tfrac{8}{\norm{z}_2}\norm{\xi}_2\norm{v-w}_{2,2}
\\
   &\le \tfrac{16(d^2\dot\beta_{\bar\Vfrak_z})\norm{z}_{1,2}}{\norm{z}_2}
{\color{red}\;
   \abs{v-w}_{h_2}
\;}
   \norm{\xi}_{1,2}
   .
\end{split}
\end{equation*}

\medskip
\noindent
\textbf{Difference D4}
Add three zeroes to write D4 as a sum
$D_{41}\xi+\dots\xi+ D_{44}\xi$
\begin{equation*}
\begin{split}
   &
{\color{red}
   \dot b_{t_v(\tau)}|_{v_\tau}(dt_v\xi)^\prime_\tau\, j_0 v_\tau^\prime
   -
   \dot b_{t_w(\tau)}|_{w_\tau}(dt_w\xi)^\prime_\tau\, j_0 w_\tau^\prime
}
\\
   &=
   \dot b_{t_v(\tau)}|_{v_\tau}(dt_v\xi)^\prime_\tau\, j_0 v_\tau^\prime
   -
   \dot b_{t_w(\tau)}|_{v_\tau}(dt_v\xi)^\prime_\tau\, j_0 v_\tau^\prime
   \\
   &\quad
   \dot b_{t_w(\tau)}|_{v_\tau}(dt_v\xi)^\prime_\tau\, j_0 v_\tau^\prime
   -
   \dot b_{t_w(\tau)}|_{w_\tau}(dt_v\xi)^\prime_\tau\, j_0 v_\tau^\prime
   \\
   &\quad
   \dot b_{t_w(\tau)}|_{w_\tau}(dt_v\xi)^\prime_\tau\, j_0 v_\tau^\prime
   -
   \dot b_{t_w(\tau)}|_{w_\tau}(dt_w\xi)^\prime_\tau\, j_0 v_\tau^\prime
   \\
   &\quad
   \dot b_{t_w(\tau)}|_{w_\tau}(dt_w\xi)^\prime_\tau\, j_0 v_\tau^\prime
   -
   \dot b_{t_w(\tau)}|_{w_\tau}(dt_w\xi)^\prime_\tau\, j_0 w_\tau^\prime
   .
\end{split}
\end{equation*}

\smallskip
\noindent
\textbf{\boldmath$D_{41}$.}
We use~(\ref{eq:V_z}),~(\ref{eq:dot-b-diff})
and~(\ref{eq:dt_z-xi-infty}) to estimate
\begin{equation*}
\begin{split}
   \norm{D_{41}\xi}_2
   &=\norm{
   \left(
   \dot b_{t_v(\tau)}|_{v_\tau}-\dot b_{t_w(\tau)}|_{v_\tau}
   \right)
   (dt_v\xi)^\prime_\tau\, j_0 v_\tau^\prime
   }_2
\\
   &\le \tfrac{16\rho \ddot\beta_{\bar\Vfrak_z}}{\norm{z}_2^2}
   \norm{v-w}_2\norm{(dt_v\xi)^\prime}_\infty{\color{cyan}\norm{v^\prime}_2}
\\
   &\le \tfrac{16\rho \ddot\beta_{\bar\Vfrak_z}{\color{cyan}2\norm{z}_{1,2}}}{\norm{z}_2^2}
   \left(
   \tfrac{2 \norm{z}_{1,2}}{\norm{z}_2^2}
   +\tfrac{2 \norm{z}_{1,2}^2}{\norm{z}_2^3}
   \right)
{\color{blue}\;
   \abs{v-w}_{h_0}
\;}
   \norm{\xi}_{1,2}
   .
\end{split}
\end{equation*}

\smallskip
\noindent
\textbf{\boldmath$D_{42}$.}
By Taylor's theorem and $d\dot\beta_{\bar\Vfrak_z}$
in~(\ref{eq:d-dot-beta}) 
and by~(\ref{eq:dt_z-xi-infty}) we estimate
\begin{equation*}
\begin{split}
   \norm{D_{42}\xi}_2
   &=\norm{
   \left(
   \dot b_{t_w(\tau)}|_{v_\tau}-\dot b_{t_w(\tau)}|_{w_\tau}
   \right)
   (dt_v\xi)^\prime_\tau\, j_0 v_\tau^\prime
   }_2
\\
   &\le d\dot\beta_{\bar\Vfrak_z}\norm{v-w}_\infty
   \norm{(dt_v\xi)^\prime}_\infty{\color{cyan}\norm{v^\prime}_2}
\\
   &\le d\dot\beta_{\bar\Vfrak_z} {\color{cyan}2\norm{z}_{1,2}}
   \left(
   \tfrac{2 \norm{z}_{1,2}}{\norm{z}_2^2}
   +\tfrac{2 \norm{z}_{1,2}^2}{\norm{z}_2^3}
   \right)
{\color{blue}\;
   \abs{v-w}_{h_1}
\;}
   \norm{\xi}_{1,2}
   .
\end{split}
\end{equation*}

\smallskip
\noindent
\textbf{\boldmath$D_{43}$.}
With constants $\dot\beta_{\SS^1}^w$ in~(\ref{eq:b-infty}) 
and $C_\rho^{\norm{z}_2}$ in~(\ref{eq:dt_z-deriv-difference-infty}) we
estimate
\begin{equation}\label{eq:D43-42}
\begin{split}
   \norm{D_{43}\xi}_2
   &=\norm{
   \dot b_{t_w(\tau)}|_{w_\tau}
   \left((dt_v\xi)^\prime_\tau-(dt_w\xi)^\prime_\tau\right)
   j_0 v_\tau^\prime
   }_2
\\
   &\le \dot\beta_{\SS^1}^w
   \norm{(dt_v\xi)^\prime-(dt_w\xi)^\prime}_\infty{\color{cyan}\norm{v^\prime}_2}
\\
   &\le \dot\beta_{\SS^1}^w
   {\color{cyan}2\norm{z}_{1,2}} 2C_\rho^{\norm{z}_2}
{\color{blue}\;
   \abs{v-w}_{h_1}
\;}
   \norm{\xi}_{1,2}
   .
\end{split}
\end{equation}

\smallskip
\noindent
\textbf{\boldmath$D_{44}$.}
With $\dot\beta_{\SS^1}^w$ in~(\ref{eq:b-infty})
and by~(\ref{eq:dt_z-xi-infty}) we estimate
\begin{equation*}
\begin{split}
   \norm{D_{44}\xi}_2
   &=\norm{
   \dot b_{t_w(\tau)}|_{w_\tau}(dt_w\xi)^\prime_\tau\, j_0 
   \left(v_\tau^\prime-w_\tau^\prime\right)
   }_2
\\
   &\le \dot\beta_{\SS^1}^w
   \norm{(dt_v\xi)^\prime}_\infty\norm{v^\prime-w^\prime}_2
\\
   &\le \dot\beta_{\SS^1}^w
   \left(
   \tfrac{2 \norm{z}_{1,2}}{\norm{z}_2^2}
   +\tfrac{2 \norm{z}_{1,2}^2}{\norm{z}_2^3}
   \right)
{\color{blue}\;
   \abs{v-w}_{h_1}
\;}
   \norm{\xi}_{1,2}
   .
\end{split}
\end{equation*}

\medskip
\noindent
\textbf{\color{red}Difference D5}
Add three zeroes to write D5 as a sum
$D_{51}\xi+\dots+ D_{54}\xi$
\begin{equation*}
\begin{split}
   &
   \dot b_{t_v(\tau)}|_{v_\tau}(dt_v\xi)_\tau\, j_0 v_\tau^{\prime\prime}
   -
   \dot b_{t_w(\tau)}|_{w_\tau}(dt_w\xi)_\tau\, j_0 w_\tau^{\prime\prime}
\\
   &=
   \dot b_{t_v(\tau)}|_{v_\tau}(dt_v\xi)_\tau\, j_0 v_\tau^{\prime\prime}
   -
   \dot b_{t_w(\tau)}|_{v_\tau}(dt_v\xi)_\tau\, j_0 v_\tau^{\prime\prime}
   \\
   &\quad
  +
   \dot b_{t_w(\tau)}|_{v_\tau}(dt_v\xi)_\tau\, j_0 v_\tau^{\prime\prime}
   -
   \dot b_{t_w(\tau)}|_{w_\tau}(dt_v\xi)_\tau\, j_0 v_\tau^{\prime\prime}
   \\
   &\quad
  +
   \dot b_{t_w(\tau)}|_{w_\tau}(dt_v\xi)_\tau\, j_0 v_\tau^{\prime\prime}
   -
   \dot b_{t_w(\tau)}|_{w_\tau}(dt_w\xi)_\tau\, j_0 v_\tau^{\prime\prime}
   \\
   &\quad
  +
   \dot b_{t_w(\tau)}|_{w_\tau}(dt_w\xi)_\tau\, j_0 v_\tau^{\prime\prime}
   -
   \dot b_{t_w(\tau)}|_{w_\tau}(dt_w\xi)_\tau\, j_0 w_\tau^{\prime\prime}
   .
\end{split}
\end{equation*}

\smallskip
\noindent
\textbf{\boldmath\color{red}$D_{51}$.}
This is estimate~(\ref{eq:43-D11})
with $v^\prime$ replaced by $v^{\prime\prime}$, so
\begin{equation*}
\begin{split}
   \norm{D_{51}\xi}_2
   &=\norm{
   \left(
   \dot b_{t_v(\tau)}|_{v_\tau}-\dot b_{t_w(\tau)}|_{v_\tau}
   \right)
   (dt_v\xi)_\tau\, j_0 v_\tau^{\prime\prime}
   }_2
\\
   &\le \tfrac{16\rho \ddot\beta_{\bar\Vfrak_z}}{\norm{z}_2^2}
   \norm{v-w}_2\norm{dt_v\xi}_\infty\norm{v^{\prime\prime}}_2
\\
   &\le 
   \tfrac{128\rho \ddot\beta_{\bar\Vfrak_z}}{\norm{z}_2^3}
{\color{blue}\;
{\color{red}\;
   \abs{v}_{h_2}
}
   \cdot\abs{v-w}_{h_0}
\;}
   \norm{\xi}_{1,2}
   .
\end{split}
\end{equation*}

\smallskip
\noindent
\textbf{\boldmath\color{red}$D_{52}$.}
This is estimate~(\ref{eq:43-D12})
with $v^\prime$ replaced by $v^{\prime\prime}$, so
\begin{equation*}
\begin{split}
   \norm{D_{52}\xi}_2
   &=\norm{
   \left(
   \dot b_{t_w(\tau)}|_{v_\tau}-\dot b_{t_w(\tau)}|_{w_\tau}
   \right)
   (dt_v\xi)_\tau\, j_0 v_\tau^{\prime\prime}
   }_2
\\
   &\le d\dot\beta_{\bar\Vfrak_z}\norm{v-w}_\infty
   \norm{dt_v\xi}_\infty\norm{v^{\prime\prime}}_2
\\
   &\le \tfrac{8d\dot\beta_{\bar\Vfrak_z}}{\norm{z}_2}
{\color{red}\;
   \abs{v}_{h_2}
   \cdot\abs{v-w}_{h_1}
\;}
   \norm{\xi}_{1,2}
   .
\end{split}
\end{equation*}

\smallskip
\noindent
\textbf{\boldmath\color{red}$D_{53}$.}
This is estimate~(\ref{eq:43-D13})
with $v^\prime$ replaced by $v^{\prime\prime}$, so
\begin{equation}\label{eq:42-D53}
\begin{split}
   \norm{D_{53}\xi}_2
   &=\norm{
   \dot b_{t_w(\tau)}|_{w_\tau}
   \left((dt_v\xi)_\tau-(dt_w\xi)_\tau\right)
   j_0 v_\tau^{\prime\prime}
   }_2
\\
   &\le \dot\beta_{\SS^1}^w
   \norm{dt_v\xi-dt_w\xi}_\infty
   \norm{v^{\prime\prime}}_2
\\
   &\le \dot\beta_{\SS^1}^w 2 C_\rho^{\norm{z}_2} 
{\color{red}\;
   \abs{v}_{h_2}\cdot\abs{v-w}_{h_1}
\;}
   \norm{\xi}_{1,2}
   .
\end{split}
\end{equation}

\smallskip
\noindent
\textbf{\boldmath\color{red}$D_{54}$.}
This is estimate~(\ref{eq:43-D14}) with $v^\prime-w^\prime$ replaced
by $v^{\prime\prime}-w^{\prime\prime}$, so
\begin{equation*}
\begin{split}
   \norm{D_{54}\xi}_2
   &=\norm{
   \dot b_{t_w(\tau)}|_{w_\tau}(dt_w\xi)_\tau\, j_0
   \left(v_\tau^{\prime\prime}-w_\tau^{\prime\prime}\right)
   }_2
\\
   &\le \dot\beta_{\SS^1}^w\tfrac{4}{\norm{v}_2}\norm{\xi}_2
   \norm{v^{\prime\prime}-w^{\prime\prime}}_2
\\
   &\le \tfrac{8d\dot\beta_{\bar\Vfrak_z}}{\norm{z}_2}
{\color{red}\;
   \abs{v-w}_{h_2}
\;}
   \norm{\xi}_{1,2}
   .
\end{split}
\end{equation*}

\medskip
Summarizing we have shown that there exists a constant $\kappa=\kappa(z)$
such that
\begin{equation*}
     \norm{C^v-C^w}_{\Ll(h_1)}
     \le\kappa\Bigl(\abs{v-w}_{h_2}
     +\abs{v}_{h_2}\cdot\abs{v-w}_{h_1}\Bigr).
\end{equation*}
whenever $v,w\in \mathrm{v}_z$.
Interchanging the roles of $v$ and $w$ we get
\begin{equation*}
     \norm{C^v-C^w}_{\Ll(h_1)}
     \le\kappa\Bigl(\abs{v-w}_{h_2}
     +\abs{w}_{h_2}\cdot\abs{v-w}_{h_1}\Bigr).
\end{equation*}
whenever $v,w\in \mathrm{v}_z$.
These two inequalities imply that
\begin{equation*}
     \norm{C^v-C^w}_{\Ll(h_1)}
     \le\kappa\Bigl(\abs{v-w}_{h_2}
     +\min\{\abs{v}_{h_2}, \abs{w}_{h_2}\}\cdot\abs{v-w}_{h_1}\Bigr).
\end{equation*}
whenever $v,w\in \mathrm{v}_z$.
This proves Proposition~\ref{prop:C_42}.
\end{proof}

\subsection{Twisted loops}
\label{sec:twisted-loop-case}

In the case of twisted loops $z(\tau+1)=-z(\tau)$ $\forall\tau\in\SS^1$ 
the Hilbert space triple has to be adjusted as follows
\begin{equation*}
\begin{split}
   H_0^-
   :&=L^2([0,1],\C^2)
\\
   H_1^-
   :&=\{\Upsilon\in W^{1,2}([0,1],\C^2)\mid \Upsilon(1)=-\Upsilon(0)\}
\\
   H_2^-
   :&=\{\Upsilon\in W^{2,2}([0,1],\C^2)\mid \Upsilon(1)=-\Upsilon(0)
   \wedge \Upsilon^\prime(1)=-\Upsilon^\prime(0)\}
\end{split}
\end{equation*}
and $h_0^-$, $h_1^-$, $h_2^-$
are defined equally, up to replacing $\C^2$ by $\C$.
Apart from this, the case of twisted loops proceeds completely
analogous to the periodic case.

\subsection{Proof of Theorem~\ref{thm:A} and
Theorem~\ref{thm:B}}
\label{sec:proof-AB}

\begin{itemize}\setlength\itemsep{0ex}
\item
  Theorem~\ref{thm:B} is Theorem~\ref{thm:Ham-ae}.
\item
  Theorem~\ref{thm:A} follows from Theorem~\ref{thm:B}
  and Theorem~\ref{thm:main-pDarboux}.
\end{itemize}

\appendix

\boldmath
\section{Symmetry of the \boldmath$L^2$-Hessian}
\label{sec:L2-Hess-symmetry}
\unboldmath

\begin{theorem}\label{thm:2nd-derivative}
Let $(H_0,H_1)$ be a Hilbert space pair\footnote{
  $H_0$ and $H_1$ are both infinite dimensional Hilbert spaces,
  $H_1\subset H_0$ is a dense subset, and inclusion
  $\iota\colon H_1\to H_0$ is a compact linear map.
  $H_0$ and $H_1$ are separable by~\cite[Cor.\,A.5]{Frauenfelder:2024c}.
}
and $U_1\subset H_1$ open.
Suppose $f\colon U_1\to\R$ is (i) a $C^1$ function
such that (ii) there exists a $C^1$ map
$\grad f\colon U_1\to H_0$
and such that (iii) there is the identity
$$
   df|_x\xi
   =\INNER{\grad f|_x}{\xi}_0
$$
whenever $x\in U_1$ and $\xi\in H_1$.
Then the following is true.
The second derivative of $f$ exists at every point $x\in U_1$,
it is given by the formula
$$
   d^2f|_x(\xi,\eta)
   =\INNER{d(\grad f)_x\xi}{\eta}_0
   =:B_x(\xi,\eta)
   ,
$$
and it varies continuously in $x$, that is $f\in C^2(U_1,\R)$.
\end{theorem}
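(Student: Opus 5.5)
The plan is to differentiate the identity (iii) once more, using the product rule for the continuous bilinear pairing $\inner{\cdot}{\cdot}_0$. Since $f$ is $C^1$ on $U_1$, the map $df\colon U_1\to\Ll(H_1,\R)$, $x\mapsto df|_x$, is continuous. By (iii) it factors as
\[
   df|_x=\iota^*\bigl(\inner{\grad f|_x}{\cdot}_0\bigr),
\]
that is, as the composition of $\grad f\colon U_1\to H_0$ with the bounded linear map
\[
   R\colon H_0\to\Ll(H_1,\R),\qquad y\mapsto \inner{y}{\iota\,\cdot}_0 ,
\]
whose operator norm is at most $\norm{\iota}$. So $df=R\circ\grad f$. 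A bounded linear map composed with a $C^1$ map is $C^1$, hence $df$ is differentiable with $d(df)_x\xi=R\bigl(d(\grad f)_x\xi\bigr)$. Evaluated on $\eta$ this gives $d^2f|_x(\xi,\eta)=\inner{d(\grad f)_x\xi}{\eta}_0=B_x(\xi,\eta)$, which is the claimed formula.

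To be careful about the limit, I would write out the difference quotient directly:
\[
   \bigl|df|_{x+\xi}\eta-df|_x\eta-\inner{d(\grad f)_x\xi}{\eta}_0\bigr|
   \le\norm{\grad f|_{x+\xi}-\grad f|_x-d(\grad f)_x\xi}_0\,\norm{\iota}\,\abs{\eta}_{1}.
\]
The first factor is $o(\abs{\xi}_1)$ by differentiability of $\grad f$ as a map $U_1\to H_0$. Taking the supremum over $\abs{\eta}_1\le 1$ shows that $df$ is Fréchet differentiable with the stated derivative.

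Continuity of $x\mapsto d^2f|_x$ in the norm of bounded bilinear forms on $H_1$ comes from the estimate
\[
   \norm{B_x-B_y}\le\norm{\iota}\,\norm{d(\grad f)_x-d(\grad f)_y}_{\Ll(H_1,H_0)} .
\]
The right side tends to $0$ as $y\to x$ because $\grad f$ is $C^1$. Hence $f\in C^2(U_1,\R)$.

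The only point needing attention is that the derivative of $\grad f$ is taken in $\Ll(H_1,H_0)$ while the pairing is the $H_0$ inner product. The bounded inclusion $\iota$ resolves this, so I do not expect a real obstacle. As an afterthought, symmetry of $B_x$ follows from the Schwarz theorem for $C^2$ functions on the Banach space $H_1$; this is presumably what the paper's later corollary on $H_0$-symmetry uses.
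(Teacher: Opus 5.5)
Your proposal is correct and follows the paper's proof: the paper bounds $B_x$, the difference $B_x-B_y$, and the Fr\'echet remainder of $df$ in the same way, via Cauchy--Schwarz in $H_0$ and the operator norm of $d(\grad f)_x$ in $\Ll(H_1,H_0)$; it normalizes the norms so that $\abs{\cdot}_0\le\abs{\cdot}_1$ rather than carrying $\norm{\iota}$, and your factorization $df=R\circ\grad f$ is just a repackaging of the same estimate. Your remainder $df|_{x+\xi}\eta-df|_x\eta-B_x(\xi,\eta)$ has the correct sign ordering; the paper's Step~3 swaps the first two terms, which is a harmless typo.
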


\begin{proof}
The proof has three steps.
Without loss of generality we suppose that
$\abs{\cdot}_0\le\abs{\cdot}_1$, otherwise choose equivalent norms.

\medskip
\noindent
\textbf{Step~1.}
$\forall x\in U_1$ the map
$B_x\colon H_1\times H_1\to\R$ is a bounded bilinear form.

\begin{proof}
By definition of $B$ followed by Cauchy-Schwarz and then pulling out
the operator norm we estimate
\begin{equation*}
\begin{split}
   \Abs{B_x(\xi,\eta)}
   =\Abs{\INNER{d(\grad f)_x\xi}{\eta}_0}
   &\le
   \abs{d(\grad f)_x\xi}_0\abs{\eta}_0
\\
   &\le
   \norm{d(\grad f)_x}_{\Ll(H_1,H_0)}\abs{\xi}_1\abs{\eta}_1.
\end{split}
\end{equation*}
By assumption (ii) the right hand side is bounded.
\end{proof}

\medskip
\noindent
\textbf{Step~2.}
The map\footnote{
  By $\Ll(H_1,H_1;\R)$ we denote the Banach space of bounded bilinear
  maps $H_1\times H_1\to\R$.
  }
$
   B\colon U_1\to \Ll(H_1,H_1;\R)
$,
$
   x\mapsto B_x
$,
is continuous. 

\begin{proof}
For $x,y\in U_1$ and $\xi,\eta\in H_1$ we estimate
\begin{equation*}
\begin{split}
   \Abs{(B_x-B_y)(\xi,\eta)}
   &=\Abs{\INNER{d(\grad f)_x\xi-d(\grad f)_y\xi}{\eta}_0}
\\
   &\le
   \abs{\left(d(\grad f)_x-d(\grad f)_y\right)\xi}_0\abs{\eta}_0
\\
   &\le
   \norm{d(\grad f)_x-d(\grad f)_y}_{\Ll(H_1,H_0)}\abs{\xi}_1\abs{\eta}_1.
\end{split}
\end{equation*}
By assumption (ii) the map $x\mapsto d(\grad f)_x$ is continuous.
\end{proof}

\medskip
\noindent
\textbf{Step~3.}
At any $x\in U_1$ the map $B_x$ is the second derivative $d^2f|_x$ of
$f$ at $x$.

\begin{proof}
The assertion of Step~3 is equivalent to
$$
   \sup_{\abs{\eta}_1=1}
   \tfrac{1}{\abs{\xi}_1}
   \left|
   df|_x\eta-df|_{x+\xi}\eta-B_x(\xi,\eta)
   \right|
   \longrightarrow 0
   \quad
   \text{, as $\abs{\xi}_1\to 0$.}
$$
We use the definitions of $\grad f|_x$ and $B_x$ to write
\begin{equation*}
\begin{split}
   &\sup_{\abs{\eta}_1=1}
   \tfrac{1}{\abs{\xi}_1}
   \left|
   \INNER{\grad  f|_x}{\eta}_0
   -\INNER{\grad  f|_{x+\xi}}{\eta}_0
   -\INNER{d(\grad f)_x\xi}{\eta}_0
   \right|
\\
   &=\sup_{\abs{\eta}_1=1}
   \left|
   \INNER{\tfrac{\grad  f|_x-\grad f|_{x+\xi}-d(\grad f)_x\xi}{\abs{\xi}_1}}
   {\eta}_0
   \right|
\\
   &\le \sup_{\abs{\eta}_1=1}
   \tfrac{\Abs{\grad f|_x-\grad f|_{x+\xi}-d(\grad f)_x\xi}_0}
   {\abs{\xi}_1}
   \Abs{\eta}_0
\\
   &\le
   \tfrac{\Abs{\grad f|_x-\grad f|_{x+\xi}-d(\grad f)_x\xi}_0}
   {\abs{\xi}_1}
\end{split}
\end{equation*}
whenever $x+\xi\in U_1$.
The first inequality is by Cauchy Schwarz.
The second inequality uses $\abs{\cdot}_0\le\abs{\cdot}_1$.
By definition of the derivative $d(\grad f)_x$
the right hand side converges to zero, as $\abs{\xi}_1\to 0$.
Hence the left hand side converges to zero, as $\abs{\xi}_1\to 0$.
This proves Step~3.
\end{proof}

By Step~3 the second derivative of $f$ exists at every $x\in U_1$
and by Step~2 it varies continuously in $x$.
This proves Theorem~\ref{thm:2nd-derivative}.
\end{proof}

The following corollary is used to prove
Proposition~\ref{prop:Ham-weak-mag}
(Step~2).

\begin{corollary}[$H_0$-symmetry]\label{cor:H0-symm}
The derivative of the $H_0$-gradient 
is $H_0$-symmetric, namely
$
   \INNER{d(\grad f)_x\xi}{\eta}_0
   =\INNER{d(\grad f)_x\eta}{\xi}_0
$
for all $\xi,\eta\in H_1$.
\end{corollary}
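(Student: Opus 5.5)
The plan is to obtain the corollary from Theorem~\ref{thm:2nd-derivative} together with the symmetry of second derivatives of a real-valued $C^2$ function on an open subset of a Banach space (Schwarz's theorem). We work under the hypotheses (i)--(iii) of Theorem~\ref{thm:2nd-derivative}. That theorem shows that $f\in C^2(U_1,\R)$, where $U_1$ is regarded as an open subset of the Hilbert space $H_1$, and that its second derivative at $x$ is
$$
   d^2f|_x(\xi,\eta)=\INNER{d(\grad f)_x\xi}{\eta}_0 .
$$
Hence the claim $\INNER{d(\grad f)_x\xi}{\eta}_0=\INNER{d(\grad f)_x\eta}{\xi}_0$ is equivalent to the symmetry $d^2f|_x(\xi,\eta)=d^2f|_x(\eta,\xi)$ of the second derivative.

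To prove this symmetry without quoting a general result, I would reduce to two real variables. Fix $x\in U_1$ and $\xi,\eta\in H_1$, and choose $\delta>0$ so small that $x+s\xi+t\eta\in U_1$ whenever $\abs{s},\abs{t}<\delta$. Set
$$
   g(s,t):=f(x+s\xi+t\eta).
$$
Since $f$ is $C^2$ in the $H_1$-topology, the chain rule makes $g$ a $C^2$ function of two real variables. Its mixed partials are
$$
   \p_s\p_t g(0,0)=d^2f|_x(\eta,\xi),
   \qquad
   \p_t\p_s g(0,0)=d^2f|_x(\xi,\eta).
$$
The classical Schwarz theorem for $C^2$ functions on open subsets of $\R^2$ says these are equal, which gives the corollary.

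The only step needing care is the computation of the mixed partials. Since $\partial_t g(s,t)=df|_{x+s\xi+t\eta}\eta$, one must differentiate this expression in $s$ and identify the result with the bilinear form $B$. This identification is exactly Step~3 of Theorem~\ref{thm:2nd-derivative}, which shows that $x\mapsto df|_x$ is differentiable as a map into $\Ll(H_1,\R)$ with derivative $B$. The continuity of $B$ from Step~2 then gives continuity of the mixed partials of $g$, so Schwarz's theorem applies. I therefore do not expect a genuine obstacle; the main point is to remember that the symmetry comes from $f$ being $C^2$ with respect to $H_1$, not from any symmetry built into the $H_0$-inner product.
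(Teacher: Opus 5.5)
Your argument is correct and is essentially the paper's. Theorem~\ref{thm:2nd-derivative} gives $f\in C^2(U_1,\R)$ with $d^2f|_x=B_x$, and symmetry then follows from Schwarz's theorem: the paper cites it in its Banach-space form (Ambrosetti--Prodi), while you reduce it to the classical two-variable case via $g(s,t)=f(x+s\xi+t\eta)$. One cosmetic slip: since $\p_t g=\INNER{\grad f|_{x+s\xi+t\eta}}{\eta}_0$, you actually get $\p_s\p_t g(0,0)=B_x(\xi,\eta)$ rather than $B_x(\eta,\xi)$, which does not affect the conclusion.
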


\begin{proof}
By Theorem~\ref{thm:2nd-derivative} we have $f\in C^2$.
The second derivative of a $C^2$ function is symmetric by the infinite
dimensional version of the classical Theorem of Schwarz;
see e.g.~\cite[Thm.\,3.4]{ambrosetti:1993a}.
\end{proof}

\boldmath
\section{Lagrangian Hessian field \boldmath$B$}
\label{sec:Hess-Lag}
\unboldmath

In~\ref{sec:calc-Lag-Hess} we calculate the
Hessian of the regularized Lagrangian functional
$\Bb=\Kk-{\,\color{cyan}\Uu}+\Mm$ defined by~(\ref{eq:Bb}).
The contributions from the magnetic functional $\Mm$
play a crucial role in the main
Section~\ref{sec:Hess-Ham} of this article.
In Section~\ref{sec:Lag-Kepler} we show that in the Kepler case
($\Mm=0$) the Lagrangian Hessian field almost extends.
%

\boldmath
\subsection{Calculation of Hessian operators}
\label{sec:calc-Lag-Hess}
\unboldmath

Abbreviate $X_\eps:=\grad\Bb(z_\eps)$.
To get rid of the fractions in formula~(\ref{eq:grad-Bb})
for $\grad\Bb$ we proceed as follows.
Since $\norm{z_\eps}^4\not=0$ and since by the product rule
$$
   \norm{z}^4 \underbrace{\left.\tfrac{d}{d\eps}\right|_0 X_\eps}_{B^z\xi}
   =\left.\tfrac{d}{d\eps}\right|_0\left(\norm{z_\eps}^4 X_\eps\right)
   -\underbrace{\left(\left.\tfrac{d}{d\eps}\right|_0\norm{z_\eps}^4\right)X_0}
   _{4\norm{z}^2\inner{z}{\xi}\grad\Bb(z)}
   ,
$$
the Hessian operator $B^z$ is equal to the difference
\begin{equation}\label{eq:B^z-alternativ}
   B^z\xi
   =
\underline{
   \tfrac{1}{\norm{z}^4}
   \left.\tfrac{d}{d\eps}\right|_0\left(\norm{z_\eps}^4 X_\eps\right)
}
   -\tfrac{4 \inner{z}{\xi}}{\norm{z}^2}
   \,\grad\Bb(z)
   ,\quad
   {\color{gray}X_\eps=\grad\Bb(z_\eps),}
\end{equation}
where summand two is a sum of
\textbf{seven terms \boldmath$m1+\dots+m7$}, namely
\begin{equation}\label{eq:factor-grad-Bb}
\begin{split}
   &-\tfrac{4\inner{z}{\xi}}{\norm{z}^2}(\grad\Bb|_z)_\tau
\\
   &\stackrel{\text{(\ref{eq:grad-Bb})}}{=}
\underbrace{
   -16\inner{z}{\xi}\tfrac{\norm{z^\prime}^2}{\norm{z}^2} z_\tau
}_{=: m_1\text{ ($\tilde T_{11}+m_1=0$)}}
   +
\underbrace{
   16 \inner{z}{\xi} z^{\prime\prime}_\tau
}_{=: m_2\text{ ($+\tilde T_{21}$)}}
   +
\underbrace{
{\color{cyan}
   8 \tfrac{\inner{z}{\xi}}{\norm{z}^6} z_\tau
}
}_{=: {\color{cyan}m_3}\text{ (add to $\tilde T_{31}$)}}
   \\
   &\qquad
   +
\underbrace{
   \tfrac{8\inner{z}{\xi}z_\tau}{\norm{z}^6}
   \int_0^1
{\textstyle
   \int_0^s\Abs{z_\sigma}^2 d\sigma
}
   \cdot
   \inner{\dot{\aaa}_{t_z(s)}|_{z_s}}{z^\prime_s}_0\; ds
}_{=:m_7\text{ (add to $\tilde T_{71}$)}}
   +
\underbrace{
   \tfrac{4\inner{z}{\xi}\Abs{z_\tau}^2}{\norm{z}^4}\dot{\aaa}_{t_z(\tau)}|_{z_\tau}
}_{=:m_5\text{ (add to $\tilde T_{56}$)}}
   \\
   &\qquad
   -
\underbrace{
   \tfrac{8\inner{z}{\xi} z_\tau}{\norm{z}^4}
{\textstyle
   \int_{\tau}^1
}
   \inner{\dot{\aaa}_{t_z(\sigma)}|_{z_\sigma}}{z^\prime_\sigma}_0\;
   d\sigma
}_{=:m_6\text{ (add to $\tilde T_{62}$)}}
   +
\underbrace{
   \tfrac{4\inner{z}{\xi}}{\norm{z}^2}
   \; b_{t_z(\tau)}|_{z_\tau}\; j_0 z_\tau^\prime
}_{=:m_4\text{ (add to $\tilde T_{41}$)}}
   .
\end{split}
\end{equation}
So the task at hand is to calculate summand one
in~(\ref{eq:B^z-alternativ}), namely
\begin{equation*}
\small
\begin{split}
   \left.\tfrac{d}{d\eps}\right|_0
   \tfrac{\norm{z_\eps}^4 X_\eps}{\norm{z}^4}
   &=\tfrac{1}{\norm{z}^4}\left.\tfrac{d}{d\eps}\right|_0
   \left(
   \norm{z_\eps}^4\grad\Kk|_{z_\eps}
   -
{\color{cyan}
   \norm{z_\eps}^4\grad\Uu|_{z_\eps}
}
   +\norm{z_\eps}^4\grad\Mm|_{z_\eps}
   \right) .
\end{split}
\end{equation*}
To this end we differentiate and get the following sum of
\textbf{seven terms T1--T7}
\begin{equation*}
\begin{split}
   &
\underline{
   \tfrac{1}{\norm{z}^4}
   \left.\tfrac{d}{d\eps}\right|_0\left(\norm{z_\eps}^4 X_\eps\right)
}
\\
   &\stackrel{(\ref{eq:grad-Bb})}{=}
   \tfrac{1}{\norm{z}^4}\left.\tfrac{d}{d\eps}\right|_0
   \biggl(
   4\norm{z_\eps}^4\norm{z_\eps^\prime}^2z_\eps
   -4\norm{z_\eps}^6z_\eps^{\prime\prime}
   -
{\color{cyan}
   2z_\eps
}
   -\norm{z_\eps}^4\bigl(\rot\,\aaa_{t_{z_\eps}}|_{z_\eps}\bigr)\; j_0 z_\eps^\prime
   \\
   &\qquad
   -\Abs{z_\eps}^2\norm{z_\eps}^2\dot{\aaa}_{t_{z_\eps}}|_{z_\eps}
   +2 z_\eps\norm{z_\eps}^2\int_{\sigma=\tau}^1
   \inner{\dot{\aaa}_{t_{z_\eps}(\sigma)}|_{z_{\eps,\sigma}}}{z^\prime_{\eps,\sigma}}_0
   \; d\sigma
   \\
   &\qquad
  -2z_\eps\int_0^1
{\textstyle
   \int_0^s\Abs{z_{\eps,\sigma}}^2 d\sigma
}
   \cdot\inner{\dot{\aaa}_{t_{z_\eps}(s)}|_{z_{\eps,s}}}{z^\prime_{\eps,s}}_0\; ds
   \biggr) .
\end{split}
\end{equation*}

\boldmath
\subsubsection{Terms T1--T7}
\unboldmath

\begin{remark}[Notation $T$, $C$, $F$]
In the following we encounter many summands, general notation $T_{ij}$,
occasionally $\tilde T_{rs}$, also $C_{42}$, $C_{43}$, and $F_{44}$.
The tilde indicates that some correction term $m_r$ will eventually be
added. So we can denote the sum $\tilde T_{rs}+m_r$ by $T_{rs}$.
The naming of the $F_{44}$ and the two $C$ summands refers to the
$A=F+C$
decomposition~(\ref{eq:A=F+C}), while all the many $T_{ij}$'s will be
compact perturbations of a Fredholm operator $F$.
For the $C$ summands one must establish the scale Lipschitz
estimate~(\ref{eq:(C)}). So one wishes to minimize the number of $C$
summands.
A summand enforces to be a $C$ summand whenever the base point
$z$ is required to be in level two (i.e. $z\in u_2$ in the Lagrangian or
$z\in U_2$ in the Hamiltonian case \S\,\ref{sec:Hess-Ham}, respectively).
\end{remark}

\noindent
\textbf{Term T1.} (Kinetic energy)
Differentiate $\norm{z_\eps}^4$ as $(\norm{z_\eps}^2)^2$ to get
\begin{equation*}
\begin{split}
   \left.\tfrac{d}{d\eps}\right|_0
   \tfrac{
   4\norm{z_\eps}^4\norm{z_\eps^\prime}^2z_\eps
   }{\norm{z}^4}
   &=\tfrac{1}{\norm{z}^4}
   \left(
   16\norm{z}^2\norm{z^\prime}^2\inner{z}{\xi} z
   +8\norm{z}^4\inner{z^\prime}{\xi^\prime} z
   +4\norm{z}^4\norm{z^\prime}^2\xi
   \right)
\\
   &=
\underbrace{
   16 \tfrac{\norm{z^\prime}^2}{\norm{z}^2}\inner{z}{\xi} z
}_{\tilde T_{11}^z\xi\; (+m_1=T_{11}^z\xi)}
  +
\underbrace{
   8\inner{z^\prime}{\xi^\prime} z
}_{T_{12}^z\xi}
  +
\underbrace{
   \norm{z^\prime}^2\xi
}_{T_{13}^z\xi}
   %
   .
\end{split}
\end{equation*}
\newline
\textbf{Term T2.} (Kinetic energy)
Differentiate $\norm{z_\eps}^6$ as $(\norm{z_\eps}^2)^3$ to get
\begin{equation*}
\begin{split}
   -\left.\tfrac{d}{d\eps}\right|_0
   \tfrac{
   4\norm{z_\eps}^6z_\eps^{\prime\prime}
   }{\norm{z}^4}
   &=-
   \tfrac{
   24\norm{z}^4\inner{z}{\xi} z^{\prime\prime}
   +4\norm{z}^6\xi^{\prime\prime}
   }{\norm{z}^4}
   =
\underbrace{
  -24\inner{z}{\xi} z^{\prime\prime}
}_{\tilde C_{21}^z\xi\; (+m_2=C_{21}^z\xi)}
\underbrace{
   -4\norm{z}^2 {\color{orange}\,\xi^{\prime\prime}}
}_{{\color{orange}\, F_{22}^z\,}\xi}
   .
\end{split}
\end{equation*}

\smallskip
\noindent
\textbf{Term T3.} ({\color{cyan}Potential})
$-\tfrac{1}{\norm{z}^4}\left.\tfrac{d}{d\eps}\right|_0 2z_\eps
=
{\color{cyan}
   -\tfrac{2\xi}{\norm{z}^4}
}
=\tilde T_{31}^z \;(+m_3=T_{31}^z)
.
$

\begin{remark}[Magnetic T4-T7]\label{rem:M_ij}
\smallskip
Consider the function and its differential
\begin{equation}\label{eq:b}
\begin{split}
   b_{t_z}|_z:
   &=\rot\,\aaa_{t_{z}}|_{z}
   =(\p_1a^2-\p_2a^1)_{t_z}|_z
\\
   d b_{t_z}|_z\xi
   &=(\p_{11}a^2\xi^1+\p_{21}a^2\xi^2
   -\p_{12}a^1\xi^1-\p_{22}a^1\xi^1)_{t_z}|_z
\end{split}
\end{equation}
in order to economize the computation of term T4.
In the following we enumerate each summand appearing
in terms T4-T7 in an obvious way and baptize it
$C_{ij}$ or $F_{ij}$ depending on to which side in the
decomposition $F^z+C^z$ in~(\ref{eq:A=F+C}) we put it.
The underbracing of summands indicates to which
of the four summands in~(\ref{eq:factor-grad-Bb}) it will be added
to obtain the magnetic Hessian operator $M^z$.

\smallskip
In the following formulas the term $(dt_z\xi)$
is given by~(\ref{eq:dt|_z}).
\newline
\textbf{Term T4.}
$-\tfrac{1}{\norm{z}^4}\left.\tfrac{d}{d\eps}\right|_0
\norm{z_\eps}^4\, b_{t_{z_\eps}}|_{z_\eps}\, j_0 z_\eps^\prime=$
\begin{equation*}
\begin{split}
   &
\underbrace{
   -\tfrac{4 \inner{z}{\xi}\, b_{t_z}|_z}{\norm{z}^2} \, j_0 z^\prime
}_{\tilde T_{41}^z\xi\; (+m_4=T_{41}^z\xi)}
\underbrace{
   -\dot b_{t_{z}}|_{z} \,(dt_z\xi)\, j_0 z^\prime
}_{C_{42}^z \xi}
\underbrace{
   -(db_{t_{z}}|_{z}\xi)\, j_0 z^\prime
}_{C_{43}^z \xi}
\underbrace{
   -4b_{t_z}|_z\, j_0 {\color{orange}\,\p_\tau\xi}
}_{{\color{orange} F_{44}^z\,}\xi}
   .
\end{split}
\end{equation*}
\textbf{Term T5.} $-\tfrac{1}{\norm{z}^4}\left.\tfrac{d}{d\eps}\right|_0
\Abs{z_\eps}^2\norm{z_\eps}^2\dot{\aaa}_{t_{z_\eps}}|_{z_\eps}=$
\begin{equation*}
\begin{split}
   &
\underbrace{
   -\tfrac{2\inner{z_\tau}{\xi_\tau}_0}{\norm{z}^2}\,\dot\aaa_{t_z}|_z
}_{T_{51}^z\xi}
\underbrace{
   -\tfrac{2\abs{z_\tau}^2\inner{z}{\xi}}{\norm{z}^4} \,\dot\aaa_{t_z}|_z
}_{\tilde T_{52}^z\xi\; (+m_5=T_{52}^z\xi)}
\underbrace{
   -\tfrac{\abs{z_\tau}^2}{\norm{z}^2}\,
   \ddot\aaa_{t_z}|_z {\color{red}\,(dt_z\xi)}
}_{T_{53}^z\xi}
\underbrace{
   -\tfrac{\abs{z_\tau}^2}{\norm{z}^2}
   \begin{pmatrix}
      d\dot a^1_{t_z}|_z \xi
      \\
      d\dot a^2_{t_z}|_z \xi
   \end{pmatrix}
}_{T_{54}^z\xi}
   .
\end{split}
\end{equation*}
\textbf{Term T6.} $\tfrac{1}{\norm{z}^4}\left.\tfrac{d}{d\eps}\right|_0
   2 z_\eps\norm{z_\eps}^2
   \int_{\tau}^1
   \inner{\dot{\aaa}_{t_{z_\eps}(\sigma)}|_{z_{\eps,\sigma}}}{z^\prime_{\eps,\sigma}}_0
   \; d\sigma=$
\begin{equation*}
\begin{aligned}
   &\tfrac{2 \xi}{\norm{z}^2}
{\textstyle
   \int_{\tau}^1
}
   \inner{\dot{\aaa}_{t_{z}(\sigma)}|_{z_{\sigma}}}{z^\prime_{\sigma}}_0
   \; d\sigma
   &&\colon T_{61}^z\xi
\\
   &
   +\tfrac{4 z\inner{z}{\xi}}{\norm{z}^4}
{\textstyle
   \int_{\tau}^1
}
   \inner{\dot{\aaa}_{t_{z}(\sigma)}|_{z_{\sigma}}}{z^\prime_{\sigma}}_0
   \; d\sigma
   &&\colon \tilde T_{62}^z\xi \;(+m_6=T_{62}^z)
\\
   &
   +\tfrac{2z}{\norm{z}^2}
{\textstyle
   \int_{\tau}^1
}
   \inner{\ddot{\aaa}_{t_{z}(\sigma)}|_{z_{\sigma}}{\color{red}\,(dt_z\xi)}_\sigma}{z^\prime_{\sigma}}_0
   \; d\sigma
   &&\colon  T_{63}^z\xi
\\
   &
   +\tfrac{2z}{\norm{z}^2}
   \int_{\tau}^1
   \INNER{
   \begin{pmatrix}
      d\dot a^1_{t_z(\sigma)}|_{z_\sigma} \xi_\sigma
      \\
      d\dot a^2_{t_z(\sigma)}|_{z_\sigma} \xi_\sigma
   \end{pmatrix}
   }
   {z^\prime_{\sigma}}_0
   \; d\sigma
   &&\colon  T_{64}^z\xi
\\
   &
   +\tfrac{2z}{\norm{z}^2}
{\textstyle
   \int_{\tau}^1
}
   \inner{\dot{\aaa}_{t_{z}(\sigma)}|_{z_{\sigma}}}{{\color{orange}\,\xi^\prime_{\sigma}}}_0
   \; d\sigma
   &&\colon {\color{orange} T_{65}} \xi
   .
\end{aligned}
\end{equation*}
%
%
%
\textbf{Term T7.} $-\tfrac{1}{\norm{z}^4}\left.\tfrac{d}{d\eps}\right|_0
   2z_\eps\int_0^1
   \int_0^s\Abs{z_{\eps,\sigma}}^2 d\sigma
   \cdot\inner{\dot{\aaa}_{t_{z_\eps}(s)}|_{z_{\eps,s}}}{z^\prime_{\eps,s}}_0\; ds=$
\begin{equation*}
\begin{aligned}
   &-\xi\tfrac{2}{\norm{z}^4} \int_0^1
{\textstyle
   \int_0^s\Abs{z_{\sigma}}^2 d\sigma
}
   \cdot\inner{\dot{\aaa}_{t_{z}(s)}|_{z_{s}}}{z^\prime_{s}}_0\; ds
   &&\colon \tilde T_{71}^z\xi    \;(+m_7=T_{71}^z)
\\
   &-z\tfrac{4}{\norm{z}^4} \int_0^1
{\textstyle
   \int_0^s\inner{z_\sigma}{\xi_\sigma}\, d\sigma
}
   \cdot\inner{\dot{\aaa}_{t_{z}(s)}|_{z_{s}}}{z^\prime_{s}}_0\; ds
   &&\colon  T_{72}^z\xi
\\
   &
   -z\tfrac{2}{\norm{z}^4}\int_0^1
{\textstyle
   \int_0^s\Abs{z_{\sigma}}^2 d\sigma
}
   \cdot\inner{\ddot{\aaa}_{t_{z}(s)}|_{z_{s}}{\color{red}\,(dt_z\xi)}_s}{z^\prime_{s}}_0\;
   ds
   &&\colon  T_{73}^z\xi
\\
   &
   -z\tfrac{2}{\norm{z}^4}\int_0^1
{\textstyle
   \int_0^s\Abs{z_{\sigma}}^2 d\sigma
}
   \cdot\INNER{
   \begin{pmatrix}
      d\dot a^1_{t_z(s)}|_{z_s} \xi_s
      \\
      d\dot a^2_{t_z(s)}|_{z_s} \xi_s
   \end{pmatrix}
   }{z^\prime_{s}}_0\; ds
   &&\colon  T_{74}^z\xi
\\
   &
   -z\tfrac{2}{\norm{z}^4} \int_0^1
{\textstyle
   \int_0^s\Abs{z_{\sigma}}^2 d\sigma
}
   \cdot\inner{\dot{\aaa}_{t_{z}(s)}|_{z_{s}}}{{\color{orange}\,\xi^\prime_{s}}}_0\; ds
   &&\colon {\color{orange} T_{75}^z\xi}
   .
\end{aligned}
\end{equation*}
This finishes Remark~\ref{rem:M_ij}.
\end{remark}

\begin{lemma}\label{le:Hessians-Lagrange}
The Hessian operators of $\Bb=\Kk+\Mm-\Uu$ are given by
\begin{equation*}
\begin{split}
   B^z\xi
   &=\left(T1+T2+T3+T4+T5+T6+T7\right)
   -\tfrac{4\inner{z}{\xi}}{\norm{z}^2}\grad\Bb|_z
\\
   K^z\xi
   &
{\color{gray}\;
   =\left(T1+T2\right)
   -\tfrac{4\inner{z}{\xi}}{\norm{z}^2}\grad\Kk|_z
}
   \\
   &
{\color{gray}\;
   =\left(T1+T2\right)
\underbrace{
   -16 \tfrac{\norm{z^\prime}^2}{\norm{z}^2} \inner{z}{\xi} z
}_{=m_1=-\tilde T_{11}}
   +
\underbrace{
   16 \inner{z}{\xi} z^{\prime\prime}
}_{m_2}
}
   \\
   &=
\underbrace{
   0
}_{T_{11}}
   +
\underbrace{
   8\inner{z^\prime}{\xi^\prime}z
}_{T_{12}}
   +
\underbrace{
   4\norm{z^\prime}^2\xi
}_{T_{13}}
\underbrace{
{\color{red}
   -8\inner{z}{\xi} z^{\prime\prime}
}
}_{T_{21}=\tilde T_{21}+m_2}
\underbrace{ 
{\color{blue}
   -4\norm{z}^2\xi^{\prime\prime}
}
}_{=T_{22}}
\\
   M^z\xi
   &
   =T4+T5+T6+T7+m_4+m_5+m_6+m_7
\\
{\color{cyan}
   -U^z\xi
}
   &
{\color{cyan}\;
   =T3+m_3=\tilde T_{31}^z\xi+m_3
}
   \\
   &=
{\color{cyan}\;
   -\tfrac{2\xi}{\norm{z}^4}
   +
   \tfrac{8\inner{z}{\xi}z}{\norm{z}^6}
   =T_{31}^z\xi
}
\end{split}
\end{equation*}
at $z\in\Ll^\times\Zfrak$ and
where $\grad\Bb|_z$ is given by~(\ref{eq:grad-Bb}).
\end{lemma}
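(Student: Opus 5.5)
The approach is the one already prepared in~(\ref{eq:B^z-alternativ}). Take a smooth variation $z_\eps$ with $z_0=z$ and $\tfrac{d}{d\eps}|_0 z_\eps=\xi$, and set $X_\eps=\grad\Bb(z_\eps)$. The Hessian is $B^z\xi=\tfrac{d}{d\eps}|_0X_\eps$. Since $\norm{z_\eps}^4\neq0$, the product rule gives
\[
B^z\xi=\tfrac{1}{\norm{z}^4}\tfrac{d}{d\eps}\big|_0\bigl(\norm{z_\eps}^4X_\eps\bigr)-\tfrac{4\inner{z}{\xi}}{\norm{z}^2}\grad\Bb|_z .
\]
The first summand is linear in the seven summands of $\norm{z_\eps}^4\grad\Bb|_{z_\eps}$ read off from~(\ref{eq:grad-Bb}). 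These are the terms T1--T7, whose derivatives were computed in the preceding paragraphs and in Remark~\ref{rem:M_ij}. That gives the first line of the lemma.

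To split $B^z$ into $K^z$, $M^z$ and $-U^z$, use linearity: $\grad\Bb=\grad\Kk-\grad\Uu+\grad\Mm$. Apply the same product-rule identity to each summand separately. The kinetic part of $\norm{z_\eps}^4\grad\Bb$ consists exactly of the first two terms of~(\ref{eq:grad-Bb}), so $K^z\xi=(T1+T2)-\tfrac{4\inner{z}{\xi}}{\norm{z}^2}\grad\Kk|_z$. Likewise $-U^z=T3+m_3$ and $M^z=T4+\dots+T7+m_4+\dots+m_7$. The correction terms $m_1,\dots,m_7$ are precisely the seven pieces of $-\tfrac{4\inner{z}{\xi}}{\norm{z}^2}\grad\Bb|_z$ listed in~(\ref{eq:factor-grad-Bb}), each coming from the matching summand of the gradient.

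It then remains to simplify. For $K^z$:
\begin{itemize}
\item $\tilde T_{11}+m_1=16\tfrac{\norm{z'}^2}{\norm{z}^2}\inner{z}{\xi}z-16\tfrac{\norm{z'}^2}{\norm{z}^2}\inner{z}{\xi}z=0$.
\item $\tilde T_{21}+m_2=-24\inner{z}{\xi}z''+16\inner{z}{\xi}z''=-8\inner{z}{\xi}z''$.
\item $T_{12}$ and $T_{22}$ are unchanged.
\end{itemize}
For the potential, $-\tfrac{2\xi}{\norm{z}^4}+m_3$ gives $T_{31}$, where $m_3$ has coefficient $\tfrac{8\inner{z}{\xi}}{\norm{z}^6}z$; this matches $U^z$ in~(\ref{eq:Hess-Hh}).

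The main obstacle is bookkeeping rather than any idea: signs and numerical constants. One must check $T_{13}$ against the T1 computation, where $\tfrac{4\norm{z}^4\norm{z'}^2\xi}{\norm{z}^4}=4\norm{z'}^2\xi$. One must also check the factor $2$ versus $4$ and the exponent in $m_3$. For this I would recompute $\tfrac{d}{d\eps}\norm{z_\eps}^{2k}=2k\norm{z}^{2k-2}\inner{z}{\xi}$ and redo each product-rule step.

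For the magnetic terms there are two further points. Differentiating $t_{z_\eps}$ produces $(dt_z\xi)$ via~(\ref{eq:dt|_z}). The formula~(\ref{eq:grad-Bb}) for $\grad\Mm$ is periodic even though $\Mm$ is not, so differentiating under the integral is justified on smooth loops and then extends by continuity.
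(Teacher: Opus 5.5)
Your proposal is correct and is the same argument as the paper's: the product-rule identity~(\ref{eq:B^z-alternativ}), term-by-term differentiation of $\norm{z_\eps}^4\grad\Bb|_{z_\eps}$ into T1--T7, the correction pieces $m_1,\dots,m_7$ from~(\ref{eq:factor-grad-Bb}), and a linear split into $K^z$, $-U^z$, $M^z$. Your bookkeeping checks are also right. The coefficient of $T_{13}$ is $4\norm{z^\prime}^2$, although the factor $4$ is dropped in the displayed T1 computation. Likewise, $-\tfrac{2\xi}{\norm{z}^4}+\tfrac{8\inner{z}{\xi}z}{\norm{z}^6}$ is the linearization of $-\grad\Uu$, i.e.\ $-U^z\xi$ as in the lemma, even though~(\ref{eq:Hess-Hh}) labels that expression $U^z$.
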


\begin{proof}
Hessian~(\ref{eq:B^z-alternativ}), calculations thereafter,
and gradient formula~(\ref{eq:grad-Bb}).
\end{proof}

\boldmath
\subsection[Without magnetic term]
{Kepler case}
\label{sec:Lag-Kepler}
\unboldmath

\boldmath
\subsubsection{Analytic setup}
\unboldmath

The Kepler case refers to $\Mm=0$.
We restrict ourselves to the case of periodic loops.
The case of twisted periodic loops works analogously.

\begin{definition}[analytic setup in Lagrangian Section~\ref{sec:Hess-Lag}]
\label{def:analytic-setup-Lag}
In the Lagrangian setup target space is the configuration space
instead of the phase space, therefore the target space $\C^2$ has to
be replaced by $\C$.
On the other hand, the Lagrangian case requires an additional
derivative.
Hence the Hilbert space triple in the Lagrangian case is defined by
\begin{equation}\label{eq:bar_+_meeting}
   (\hfrak_0,\hfrak_1,\hfrak_2)=(L^2(\SS^1,\C),W^{2,2}(\SS^1,\C),W^{4,2}(\SS^1,\C))
\end{equation}
and open subsets are defined by
$$
   \ufrak_1:=\{z\in \hfrak_1\mid z\not\equiv 0\wedge
   \forall\tau\in\SS^1\colon z(\tau)\in\Zfrak\}
   ,\qquad
   \ufrak_2:=\ufrak_1\cap \hfrak_2
   .
$$
We denote spaces by lower case letters for distinction from
Section~\ref{sec:Hess-Ham} (uppercase).
\end{definition}

\begin{remark}[twisted loops]
In the case of twisted loops $z(\tau+1)=-z(\tau)$ $\forall
\tau\in\SS^1$ the Hilbert space triple has to be adjusted as follows
\begin{equation*}
\begin{split}
   \hfrak_0^-
   :&=L^2([0,1],\C)
\\
   \hfrak_1^-
   :&=\{z\in W^{1,2}([0,1],\C)\mid z(1)=-z(0)\}
\\
   \hfrak_2^-
   :&=\{z\in W^{2,2}([0,1],\C)\mid z(1)=-z(0)
   \wedge z^\prime(1)=-z^\prime(0)\}
\end{split}
\end{equation*}
Apart from this, the case of twisted loops proceeds completely
analogous as the case of periodic loops.
\end{remark}

\begin{remark}[open problem]
Since in the Lagrangian case the construction of the Hilbert space
triple involves more derivatives,
almost extendability is more involved.
This is already seen in the Kepler case
where in the Hamiltonian setup the Hessian is extendable
whereas in the Lagrangian setup the Hessian is only almost extendable
as we show in this section.

Almost extendability in the general Lagrangian setup
(with magnetic term) is probably still true, but
we did not check it due to the huge number of summands appearing.
\end{remark}

On periodic loop space $\Ll_+^\times\Zfrak$,
defined in~(\ref{eq:twisted-loop-space}), the regularized
Lagrangian action functional $\Bb$
is given by formula~(\ref{eq:Bb}).
The functional $\Bb$ continuously extends to the Sobolev 
$W^{2,2}$-completion $\ufrak_1$ by the same formula.
\\
Let $z\in \ufrak_1$.
As $d\Bb|_z\cdot=\inner{\grad\Bb|_z}{\cdot}\colon \hfrak_1\to\R$
characterizes the $L^2$-gradient,
the Hessian bi-linear form is characterized by
$d^2\Bb|_z(\cdot,\cdot)=\inner{d\grad\Bb|_z\cdot}{\cdot}$.
In words, the derivative of the $L^2$-gradient is the linear operator
$B^z:=d\grad\Bb|_z\colon \hfrak_1\to \hfrak_0$ representing the Hessian
bi-linear form with respect to the $L^2$-inner product.
This is formalized in Appendix~\ref{sec:L2-Hess-symmetry}.
See Theorem~\ref{thm:2nd-derivative} for the next definition.

\begin{definition}\label{def:B^z}
The \textbf{Hessian operator of $\Bb$ at \boldmath$z\in \ufrak_1$}
is the linearization\footnote{
  Given $z\in \ufrak_1$ and $\xi\in \hfrak_1$,
  pick a smooth path of loops $\eps\mapsto z_\eps \in \ufrak_1$
  with $z_0=z$ and $\left.\tfrac{d}{d\eps}\right|_0 z_\eps=\xi$,
  then the linearization is of the form
  $d\grad\Bb|_z\xi=\left.\tfrac{d}{d\eps}\right|_{\eps=0}\grad\Bb(z_\eps)$.
  }
\begin{equation}\label{eq:B^z}
   B^z:=d\grad\Bb|_z\colon \hfrak_1\to \hfrak_0
\end{equation}
of the $L^2$-gradient~(\ref{eq:grad-Bb}) of the function
$\Bb=\Kk-\Uu+\Mm$ in~(\ref{eq:Bb}).
By linearity
$$
   B^z=K^z-U^z+M^z
$$
where the sum consists of the Hessian operators of $\Kk$, $\Uu$, and
$\Mm$; cf.~(\ref{eq:Bb}).
\end{definition}

From now on we consider the Kepler case ($\Mm=0$), in particular
$M=0$.

\boldmath
\subsubsection{Weak Hessian field almost extends}
\label{sec:Lag-Hess-Kepler-ae}
\unboldmath

\begin{theorem}[Kepler case almost extends]
\label{thm:Lag-almex-M=0}
Let $B_0=K{\color{cyan}\,-\,U}\colon \ufrak_1\ni z\mapsto B^z$ be defined
by~(\ref{eq:B^z}) with $\Mm=0$. Then the following holds.
\begin{itemize}\setlength\itemsep{0ex}
\item[\rm (i)]
  $B_0$ defines a weak Hessian field along the set $\ufrak_1$.
\item[\rm (ii)]
  The weak Hessian field $B_0$ almost extends in the sense of
  Definition~\ref{def:alm.ext}.
\end{itemize}
\end{theorem}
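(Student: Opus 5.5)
The plan is to use the explicit formula for the Kepler Hessian from Lemma~\ref{le:Hessians-Lagrange} with $\Mm=0$:
\begin{equation*}
   B^z\xi
   =8\inner{z^\prime}{\xi^\prime}z
   +4\norm{z^\prime}^2\xi
   -8\inner{z}{\xi} z^{\prime\prime}
   -4\norm{z}^2\xi^{\prime\prime}
   -\tfrac{2\xi}{\norm{z}^4}
   +\tfrac{8\inner{z}{\xi}z}{\norm{z}^6},
\end{equation*}
that is, $B^z=T_{12}+T_{13}+T_{21}+T_{22}+T_{31}$. Each summand is then sorted as principal part, compact perturbation, or "$C$-part", relative to the triple $(\hfrak_0,\hfrak_1,\hfrak_2)=(L^2,W^{2,2},W^{4,2})$.

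\emph{Principal part.} The principal part is $T_{22}=-4\norm{z}^2\p_\tau^2$. Since $\norm{z}^2>0$, it suffices to treat $\p_\tau^2$. The operator $\p_\tau^2-\1$ is an isomorphism $W^{k+2,2}\to W^{k,2}$ for $k=0,2$, as one sees from Fourier series. Moreover $\1$ is compact on both levels by the compact embeddings. Hence $T_{22}^z$ is Fredholm of index zero as a map $\hfrak_1\to\hfrak_0$ and $\hfrak_2\to\hfrak_1$, and this holds for every $z\in\ufrak_1$. It depends continuously on $z$ through the scalar $\norm{z}^2$.

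\emph{Compact terms.} $T_{13}$ and the first part of $T_{31}$ are scalar multiples of $\xi$, so they are compact on both levels by the compact embeddings. The remaining pieces are rank one: $T_{12}^z=8z\inner{z^\prime}{\partial_\tau\cdot}$ and $\tfrac{8}{\norm{z}^6}z\inner{z}{\cdot}$. For $z\in\ufrak_1\subset W^{2,2}$ their range vectors $z$ lie in $W^{2,2}=\hfrak_1$, and the functionals are bounded already on $W^{1,2}$. Therefore they are bounded, finite rank, and continuous in $z\in\ufrak_1$ as maps $\hfrak_1\to\hfrak_0$ and $\hfrak_2\to\hfrak_1$, hence compact.

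\emph{The $C$-part.} The only obstruction is $T_{21}^z\xi=-8\inner{z}{\xi}z^{\prime\prime}$. For $z\in\ufrak_1$ we only have $z^{\prime\prime}\in L^2$, so this term does not map $\hfrak_2\to\hfrak_1$ unless $z\in\ufrak_2$. I therefore set
\begin{equation*}
   C:=T_{21},
   \qquad
   F:=T_{12}+T_{13}+T_{22}+T_{31},
   \qquad
   r=0.
\end{equation*}
Note that $C^z\in\Ll(\hfrak_0)$ whenever $z\in\ufrak_1$, and $C^z\in\Ll(\hfrak_1)$ whenever $z\in\ufrak_2$, since then $z^{\prime\prime}\in W^{2,2}$. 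Both dependences are continuous, because $C^z$ is rank one with vector $z^{\prime\prime}$ and functional $\inner{z}{\cdot}$.

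With this decomposition:
\begin{itemize}
\item (i) follows from Corollary~\ref{cor:H0-symm} for (\texttt{Symmetry}), and from the facts above (principal part plus compact terms, $C^z$ compact as a map $\hfrak_1\to\hfrak_0$ and finite rank on $\hfrak_2\to\hfrak_1$ for $z\in\ufrak_2$) for (\texttt{Fredholm}).
\item For (ii), axiom (\texttt{F}) holds at every $z\in\ufrak_1$ by the same compact-perturbation argument.
\item The continuity~(\ref{eq:hyp-F}) of $F$ follows from continuity of $\norm{z}$, $\norm{z^\prime}$ and of $z\in W^{2,2}$.
\end{itemize}

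\emph{Scale Lipschitz estimate.} Given $z\in\ufrak_1$, take the $\hfrak_1$-neighbourhood $V_z$ of loops with $\norm{v}_{2,2}<2\norm{z}_{2,2}$. For $v,w\in V_z\cap\hfrak_2$, insert one zero:
\begin{equation*}
   (C^v-C^w)\xi
   =-8\inner{v-w}{\xi}v^{\prime\prime}
   -8\inner{w}{\xi}(v^{\prime\prime}-w^{\prime\prime}).
\end{equation*}
The $W^{2,2}$-norm is then bounded by
\begin{equation*}
   8\bigl(\norm{v-w}_2\,\abs{v}_{\hfrak_2}
   +\norm{w}_2\,\abs{v-w}_{\hfrak_2}\bigr)\norm{\xi}_2
   \le\kappa\bigl(\abs{v-w}_{\hfrak_2}
   +\abs{v}_{\hfrak_2}\abs{v-w}_{\hfrak_1}\bigr)\abs{\xi}_{\hfrak_1},
\end{equation*}
where $\kappa$ depends only on $\norm{z}_{2,2}$. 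Interchanging $v$ and $w$ and taking the better of the two bounds yields the $\min$ in~(\ref{eq:(C)}), exactly as at the end of Proposition~\ref{prop:C_43}.

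Nothing here seems hard. The step needing most care is checking that every term except $T_{21}$ genuinely maps $\hfrak_2\to\hfrak_1$ continuously for base points $z$ only in $\ufrak_1$. The critical one is $T_{12}$, where the range vector $z$ lies only in $W^{2,2}$, which is just enough.
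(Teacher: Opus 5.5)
Your proposal is correct and follows the paper's proof essentially step for step. It uses the same decomposition $C=T_{21}$, $C^z\xi=-8\inner{z}{\xi}z''$, with $F$ the remaining summands, and the same compact-perturbation argument around the leading term $-4\norm{z}^2\p_\tau^2$. The scale Lipschitz estimate is also done the same way: insert one zero, then swap $v\leftrightarrow w$ to obtain the $\min$. The differences are cosmetic. You get symmetry from Corollary~\ref{cor:H0-symm} rather than by integrating by parts directly. You show $\p_\tau^2$ is Fredholm via the isomorphism $\p_\tau^2-\1$ rather than the paper's kernel/cokernel symmetry remark. You bound $\norm{w}_2$ through a $W^{2,2}$-ball rather than a bounded neighbourhood of $\im z$.
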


We prove the theorem in the form of two separate results.
Part~(i) is Lemma~\ref{le:Lag-weak-nonmag} below,
part~(ii)~is Theorem~\ref{thm:Lag-Hess-nonmag}.

\begin{lemma}\label{le:Lag-weak-nonmag}
The non-magnetic Hessian operators $B^z_0\colon \hfrak_1\to \hfrak_0$
given~by
\begin{equation}\label{le:B^z_0}
   B^z_0\xi
   =
{\color{blue}
   -4\norm{z}^2\xi^{\prime\prime} 
}
   -4\inner{z^{\prime\prime}}{z}\xi
   -8\inner{z^{\prime\prime}}{\xi}z
{\,\color{red}\,
   -8\inner{z}{\xi} z^{\prime\prime}
}
{\color{cyan}
   -\tfrac{2\xi}{\norm{z}^4}
   +\tfrac{8\inner{z}{\xi}z}{\norm{z}^6}
   ,
}
\end{equation}
one for each $z\in \ufrak_1$, determine a weak Hessian field $B_0$ on $\ufrak_1$.
\end{lemma}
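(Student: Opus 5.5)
Following the three-step scheme of Lemma~\ref{le:weak-Hess-nonmag}, we check (Spaces), (\texttt{Symmetry}) and (\texttt{Fredholm}) for the triple $(\hfrak_0,\hfrak_1,\hfrak_2)=(L^2,W^{2,2},W^{4,2})$. First we fix the formula~(\ref{le:B^z_0}). Lemma~\ref{le:Hessians-Lagrange} gives $K^z\xi$. Integrating by parts, $\norm{z^\prime}^2=-\inner{z^{\prime\prime}}{z}$ and $\inner{z^\prime}{\xi^\prime}=-\inner{z^{\prime\prime}}{\xi}$. Hence $T_{12}+T_{13}=-8\inner{z^{\prime\prime}}{\xi}z-4\inner{z^{\prime\prime}}{z}\xi$. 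Together with $T_{21}$, $T_{22}$ and the potential term $T_{31}$ from Lemma~\ref{le:Hessians-Lagrange}, this yields~(\ref{le:B^z_0}).

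\textsc{Spaces.} We write $B^z_0=-4\norm{z}^2\p_\tau^2+R^z$, where $R^z$ collects the remaining five summands. For $z\in\ufrak_1\subset W^{2,2}$ the terms $\inner{z^{\prime\prime}}{z}\xi$, $\inner{z^{\prime\prime}}{\xi}z$, $\xi/\norm{z}^4$ and $\inner{z}{\xi}z/\norm{z}^6$ are bounded $W^{2,2}\to W^{2,2}$. Indeed, they are scalar multiples of $\xi$ or of $z$, with coefficients that are continuous in $z$ and bounded by $\norm{\xi}_2$. The term $\inner{z}{\xi}z^{\prime\prime}$ only maps into $L^2$ when $z\in W^{2,2}$, but it is rank one. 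For $z\in\ufrak_2=\ufrak_1\cap W^{4,2}$ we have $z^{\prime\prime}\in W^{2,2}$, so it maps into $W^{2,2}$. Continuity of $z\mapsto B_0^z$ in $\Ll(\hfrak_1,\hfrak_0)$ on $\ufrak_1$ and in $\Ll(\hfrak_2,\hfrak_1)$ on $\ufrak_2$ follows from continuity of $z\mapsto\norm{z}^2,z,z^{\prime\prime}$ in the relevant norms. It also uses $\norm{z}\ne0$.

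\textsc{Symmetry.} $B^z_0$ is the derivative of the $L^2$-gradient of the $C^1$ functional $\Bb$ on $\ufrak_1$, so Corollary~\ref{cor:H0-symm} applies. Alternatively one checks directly: $\p_\tau^2$ is symmetric, the multiplication by $\inner{z^{\prime\prime}}{z}$ is symmetric, the two rank-one pieces $-8\inner{z^{\prime\prime}}{\cdot}z-8\inner{z}{\cdot}z^{\prime\prime}$ are adjoint to each other, and the potential part is symmetric.

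\textsc{Fredholm.} The operator $-4\norm{z}^2\p_\tau^2$ is a nonzero constant times $\p_\tau^2$. Since $\p_\tau^2-\1$ is an isomorphism $W^{k+2,2}\to W^{k,2}$ and $\1$ is compact $W^{k+2,2}\to W^{k,2}$, it is Fredholm of index zero both as $W^{2,2}\to L^2$ and as $W^{4,2}\to W^{2,2}$. All summands of $R^z$ are compact perturbations. The rank-one terms are compact at both levels, on $\ufrak_1$ respectively $\ufrak_2$. The multiplication-type terms are bounded $\hfrak_k\to\hfrak_k$, hence compact $\hfrak_k\to\hfrak_{k-1}$ by the compact embeddings $W^{2,2}\INTO L^2$ and $W^{4,2}\INTO W^{2,2}$. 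This gives index zero at both levels.

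The main obstacle is the bookkeeping of the rank-one term $-8\inner{z}{\xi}z^{\prime\prime}$, the red summand. At level two it needs $z\in\ufrak_2$, which is exactly why the field only almost extends rather than extends (Theorem~\ref{thm:Lag-Hess-nonmag}). For the present lemma it is harmless, since Definition~\ref{def:weak-Hessian-field} only demands level-two statements on $\ufrak_2$.
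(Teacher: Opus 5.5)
Your proposal is correct and follows the paper's proof. You use the same integration by parts of $T_{12}$, $T_{13}$ to get the formula, the same leading term $-4\norm{z}^2\p_\tau^2$ with every other summand treated as a compact perturbation, and the same observation that the red term $-8\inner{z}{\xi}z^{\prime\prime}$ needs $z\in\ufrak_2$ on level two. The deviations are cosmetic. You get the Fredholm property of $\p_\tau^2$ from the isomorphism $\p_\tau^2-\1$ plus a compact inclusion, which handles closed range explicitly, whereas the paper argues via symmetry and $\ker\p_\tau^2=\C$. You may also cite Corollary~\ref{cor:H0-symm} in place of the paper's direct integration-by-parts check of symmetry.
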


\begin{proof}
The formula for $B_0^z$ is due to Lemma~\ref{le:Hessians-Lagrange}
where, in addition, we applied to $T_{12}$ and $T_{13}$ integration by
parts (boundary terms vanishes by periodicity).
By 
Theorem~\ref{thm:Behz-Hol} in the form $W^{2,2}\times L^2\to L^2$
and $W^{4,2}\times W^{2,2}\to W^{2,2}$ 
the map $z\mapsto B^z_0$ is
element of the spaces $C^0(\ufrak_1,\Ll(\hfrak_1,\hfrak_0))$ and
$C^0(\ufrak_2,\Ll(\hfrak_2,\hfrak_1))$.
\\
We verify the (\texttt{Symmetry}) axiom~(\ref{eq:u-0-symmetric}):
Pick $\xi,\eta\in \hfrak_0$, then
\begin{equation*}
\begin{split}
   \inner{B^z_0\xi}{\eta}
   &=\inner{
   -4\norm{z}^2\xi^{\prime\prime}
   -4\inner{z^{\prime\prime}}{z}\xi
   -8\inner{z^{\prime\prime}}{\xi}z
   -8\inner{z}{\xi} z^{\prime\prime}
   -\tfrac{2\xi}{\norm{z}^4}
   +\tfrac{8\inner{z}{\xi} z}{\norm{z}^6}
   }
   {\eta}
\\
   &=
   -4\norm{z}^2\inner{\xi}{\eta^{\prime\prime}}
   -4\inner{z^{\prime\prime}}{z}\inner{\xi}{\eta}
   -8\inner{z^{\prime\prime}}{\xi}\inner{z}{\eta}
   -8\inner{z}{\xi} \inner{z^{\prime\prime}}{\eta}
   \\
   &\quad\,
   -\tfrac{2\inner{\xi}{\eta}}{\norm{z}^4}
   +\tfrac{8\inner{z}{\xi} \inner{z}{\eta}}{\norm{z}^6}
\\
   &=\inner{\xi}{B^z_0\eta}
   .
\end{split}
\end{equation*}
Concerning equality two, to summand one we applied twice integration
by parts. Inequality three holds by symmetry of  inner products
and by interchanging the two summands with factor 8.
\\
It remains to check the (\texttt{Fredholm}) axiom.
The following operator as a map
\begin{equation*}
   -\p_\tau\p_\tau\colon\quad W^{2,2}\to L^2
   ,\quad W^{4,2}\to W^{2,2}
   ,\qquad \forall z\in \ufrak_1
\end{equation*}
is positive semi-definite, symmetric, and $\ker\p_\tau\p_\tau=\C$
are the constant maps only
(by periodicity affine maps are excluded).
By symmetry, kernel and cokernel coincide,
so $-\p_\tau\p_\tau$ is Fredholm of index zero.
The factor $4\norm{z}^2$ is non-zero since $z\in \ufrak_1$
is not the zero map, hence the \textbf{leading term}
$-4\norm{z}^2\p_\tau\p_\tau$ in the formula for $B_0^z$ is
Fredholm of index zero as well.
\\
In the formula for $B_0^z$ remain five summands.
We separate $-8\inner{z}{\xi} z^{\prime\prime}_\tau$ for later recycling
in Theorem~\ref{thm:Lag-Hess-nonmag} Step~(\texttt{F}).
The sum of the other four, denoted by
\begin{equation}\label{eq:T^z}
   T^z\colon \xi\mapsto
   \left[
   \tau\mapsto
   -4\inner{z^{\prime\prime}}{z}\xi_\tau
   -8\inner{z^{\prime\prime}}{\xi}z_\tau
   %
   -\tfrac{2\xi_\tau}{\norm{z}^4}\tfrac{\norm{z}^2}{\norm{z}^2}
   +\tfrac{8\inner{z}{\xi}z_\tau}{\norm{z}^6}
   \right]
   ,
\end{equation}
is bounded as a map $\hfrak_0\to \hfrak_0$ and even as a map $\hfrak_1\to \hfrak_1$
if $z\in \ufrak_1$. Indeed
\begin{equation*}
\begin{split}
   \norm{T^z\xi}_2
   &\le 8\norm{z}_{2,2}^2
   \left(
   \tfrac12+1+\tfrac{1}{4\norm{z}_2^6} +\tfrac{1}{\norm{z}_2^6}
   \right)
   \norm{\xi}_2
\\
   \norm{(T^z\xi)^{\prime\prime}}_2
   &=\Norm{
   -4\inner{z^{\prime\prime}}{z}\xi^{\prime\prime}
   -8\inner{z^{\prime\prime}}{\xi}z^{\prime\prime}
   %
   -\tfrac{2\xi^{\prime\prime}}{\norm{z}^4}\tfrac{\norm{z}^2}{\norm{z}^2}
   +\tfrac{8\inner{z}{\xi}z^{\prime\prime}}{\norm{z}^6}
   }_2
\\
   &\le 8\norm{z}_{2,2}^2
   \left(
   \tfrac12+1+\tfrac{1}{4\norm{z}_2^6} +\tfrac{1}{\norm{z}_2^6}
   \right)
   \norm{\xi}_{2,2}
   .
\end{split}
\end{equation*}
So, by compactness of the embeddings
$\hfrak_1\INTO \hfrak_0$ and $\hfrak_2\INTO \hfrak_1$, both operators $T^z$, as $\hfrak_1\to \hfrak_0$
and as $\hfrak_2\to \hfrak_1$, are compact.
But Fredholm property and index are stable under compact perturbation.
So both operators
\begin{equation}\label{eq:F^z}
   F^z\colon \xi\mapsto -4\norm{z}^2\xi^{\prime\prime}+T^z\xi
   \colon\quad \hfrak_1\to \hfrak_0
   ,\quad \hfrak_2\to \hfrak_1
   ,\qquad \forall z\in \ufrak_1
\end{equation}
are Fredholm of index zero where $T^z$ is defined by~(\ref{eq:T^z}).
It remains to exhibit the yet missing summand
as a compact perturbation of $F^z$.
Indeed we estimate
\begin{equation*}
\begin{split}
   \norm{8\inner{z}{\xi} z^{\prime\prime}}_2
   &\le
   8\norm{z}_2\norm{\xi}_2\norm{z^{\prime\prime}}_2\,\;
   \le 8\norm{z}_{2,2}^2\norm{\xi}_2
\\
   \norm{(8\inner{z}{\xi} z^{\prime\prime})^{\prime\prime}}_2
   &\le
   8\norm{z}_2\norm{\xi}_2\norm{z^{\prime\prime \prime\prime}}_2
   \le 8\norm{z}_{4,2}^2\norm{\xi}_2
   .
\end{split}
\end{equation*}
This shows that $\xi\mapsto 8\inner{z}{\xi} z^{\prime\prime}$
is bounded as a map $\hfrak_0\to \hfrak_0$ if $z\in \ufrak_1$
and as a map $\hfrak_1\INTO \hfrak_0\to \hfrak_1$ if $z\in \ufrak_2$.
Now the compactness and perturbation argument previous
to~(\ref{eq:F^z}) proves the (\texttt{Fredholm}) axiom
and Lemma~\ref{le:Lag-weak-nonmag}.
\end{proof}

\begin{theorem}\label{thm:Lag-Hess-nonmag}
The non-magnetic weak Hessian field $B$ is almost extendable.
Moreover, the pair $(F,C)$ defined for $z\in \ufrak_1$ by
\begin{equation}\label{eq:Lag-Hess-nonmag}
\begin{split}
   F^z\xi:&=
{\color{blue}
   -4\norm{z}^2\xi^{\prime\prime} 
}
   -4\inner{z^{\prime\prime}}{z}\xi
   -8\inner{z^{\prime\prime}}{\xi}z
{\color{cyan}\,
   -\tfrac{2\xi}{\norm{z}^4}
   +\tfrac{8\inner{z}{\xi}z}{\norm{z}^6}
}
\\
   C^z\xi:&=
{\color{red}
   -8\inner{z}{\xi} z^{\prime\prime}
}
\end{split}
\end{equation}
is a decomposition~(\ref{eq:A=F+C}).
\end{theorem}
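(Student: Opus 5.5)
The plan follows the four-step structure of Theorem~\ref{thm:Ham-ae}, now for the Lagrangian triple $(\hfrak_0,\hfrak_1,\hfrak_2)=(L^2,W^{2,2},W^{4,2})$ of Definition~\ref{def:analytic-setup-Lag}. Two things are immediate. First, $B_0=F+C$ by~(\ref{le:B^z_0}). Second, axiom~(\texttt{F}) already holds: the proof of Lemma~\ref{le:Lag-weak-nonmag} shows that $F^z\colon\hfrak_2\to\hfrak_1$ is Fredholm of index zero for every $z\in\ufrak_1$. This is because the leading term $-4\norm{z}^2\p_\tau^2$ is Fredholm of index zero, and $T^z$ is bounded $\hfrak_1\to\hfrak_1$ (its coefficients involve only $z$, $z''\in L^2$ through $L^2$-pairings and $z\in W^{2,2}$), hence compact $\hfrak_2\to\hfrak_1$.

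\textbf{Continuity of $F$.} For~(\ref{eq:hyp-F}) I check that $z\mapsto F^z$ is continuous into $\Ll(\hfrak_1,\hfrak_0)\cap\Ll(\hfrak_2,\hfrak_1)$ on all of $\ufrak_1$.
\begin{itemize}
\item The term $-4\norm{z}^2\p_\tau^2$ depends on $z$ only through the scalar $\norm{z}^2$.
\item The scalar factors $\inner{z''}{z}$, $\norm{z}^{-4}$ and $\norm{z}^{-6}$ are continuous on $\ufrak_1$, since $\norm{z}>0$ there.
\item The rank-one terms $\xi\mapsto\inner{z''}{\xi}z$ and $\xi\mapsto\inner{z}{\xi}z$ need $z\in W^{4,2}$ only if $z$ is differentiated. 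For the $\hfrak_2\to\hfrak_1$ estimate this requires $z\in W^{2,2}$ and $z''\in L^2$ only, and these vary continuously with $z\in\hfrak_1$.
\end{itemize}

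\textbf{Mapping properties of $C$.} For~(\ref{eq:hyp-C}) with $r=0$, the bound $\norm{C^z\xi}_{0}\le 8\norm{z}_2\norm{z''}_2\norm{\xi}_2$ gives $C\in C^0(\ufrak_1,\Ll(\hfrak_0,\hfrak_0))$. For $z\in\ufrak_2$, differentiating twice gives $(C^z\xi)''=-8\inner{z}{\xi}z''''$. Hence $\norm{C^z\xi}_{\hfrak_1}\lesssim\norm{z}_2\norm{z}_{4,2}\norm{\xi}_2$, and $C\in C^0(\ufrak_2,\Ll(\hfrak_1,\hfrak_1))$, continuous by bilinearity in $z$.

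\textbf{Scale Lipschitz estimate.} This is the main point, and it is much easier than Propositions~\ref{prop:C_43} and~\ref{prop:C_42} because $C$ is bilinear in $z$. Fix $z\in\ufrak_1$ and take the neighborhood $V_z=\{v:\norm{v}_{2,2}<2\norm{z}_{2,2}\}$. For $v,w\in V_z\cap\hfrak_2$, write
$$
(C^v-C^w)\xi=-8\inner{v-w}{\xi}v''-8\inner{w}{\xi}(v''-w'').
$$
I estimate both summands in the $W^{2,2}$ norm, which involves up to the fourth derivatives of $v$ and $w$.
\begin{itemize}
\item The first summand is bounded by $8\norm{v-w}_2\norm{v}_{4,2}\norm{\xi}_2\le 8\,\abs{v}_{\hfrak_2}\abs{v-w}_{\hfrak_1}\norm{\xi}_{\hfrak_1}$.
\item The second is bounded by $8\norm{w}_2\norm{v-w}_{4,2}\norm{\xi}_2\le 16\norm{z}_{2,2}\,\abs{v-w}_{\hfrak_2}\norm{\xi}_{\hfrak_1}$.
\end{itemize}
This gives~(\ref{eq:(C)}) with $\abs{v}_{\hfrak_2}$ in place of the minimum. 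Exchanging $v$ and $w$, exactly as at the end of Proposition~\ref{prop:C_43}, replaces it by $\min\{\abs{v}_{\hfrak_2},\abs{w}_{\hfrak_2}\}$.

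The only spot needing care is the continuity of $F$ in $\Ll(\hfrak_2,\hfrak_1)$ at points of $\ufrak_1\setminus\ufrak_2$. It works because every term of $F$ either differentiates only $\xi$ or enters through $L^2$-pairings, never through $z''''$. That is exactly why the term $-8\inner{z}{\xi}z''$, whose $W^{2,2}$ norm requires $z''''$, had to be moved into $C$.
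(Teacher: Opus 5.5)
Your proposal is correct and follows the paper's proof essentially step for step. It uses the same splitting $(C^v-C^w)\xi=-8\inner{v-w}{\xi}v''-8\inner{w}{\xi}(v''-w'')$ followed by the $v\leftrightarrow w$ symmetrization, the same direct bounds for the mapping properties of $C$ and the continuity of $F$, and takes axiom (\texttt{F}) from Lemma~\ref{le:Lag-weak-nonmag}; the only differences are cosmetic (bounding $\norm{w}_2$ via a $W^{2,2}$-ball instead of the paper's sup-norm bound $\kappa_0$ on $W^{2,2}(\SS^1,\Ufrak)$, and taking $r=0$, which the paper's estimate also gives).
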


\begin{proof}[Proof of Theorem~\ref{thm:Lag-Hess-nonmag}]
The proof has four steps~1, 2, (\texttt{C}), and (\texttt{F}).

\smallskip\noindent
\textbf{Step~1.}
By~(\ref{eq:Behz-Hol-2})
the map $u\mapsto C^u$ is element of the spaces
$C^0(\ufrak_1,\Ll(\underline{\hfrak_r},\hfrak_0))$ and $C^0(\ufrak_2,\Ll(\underline{\hfrak_1},\hfrak_1))$
where $\hfrak_r=W^{2r,2}(\SS^1,\R^{2})$ for some $r\in(\frac12,1)$.

\begin{proof} Given $z\in \ufrak_1$, the estimate
$$
   \norm{C^z\xi}_2
   =\norm{8\inner{z}{\xi} z^{\prime\prime}}_2
   \le 8\norm{z}_2\norm{\xi}_2\norm{z^{\prime\prime}}_2
   \le 8\norm{z}_{2,2}^2\norm{\xi}_2
$$
shows that $C$ is continuous as a map from $\ufrak_1$ even to
$\Ll(\hfrak_0,\hfrak_0)$, so to $\Ll(\hfrak_r,\hfrak_0)$ whenever $r\in[0,1)$ due to
the embedding $\hfrak_r\INTO \hfrak_0$.
Given $z\in \ufrak_2$, then
$$
   \norm{(C^z\xi)^{\prime\prime}}_2
   =\norm{8\inner{z}{\xi} z^{\prime\prime\prime\prime}}_2
   \le 8\norm{z}_2\norm{\xi}_2\norm{z^{\prime\prime\prime\prime}}_2
   \le 8\norm{z}_{4,2}^2\norm{\xi}_2
$$
shows that $C$ is continuous as a map from $\ufrak_2$ even to
$\Ll(\hfrak_0,\hfrak_1)$, hence to $\Ll(\hfrak_1,\hfrak_1)$ due to
the embedding $\hfrak_1\INTO \hfrak_0$.
\end{proof}

\smallskip\noindent
\textbf{Step~2.}
We need to show that the map $F\colon z\mapsto F^z$
is element of the space $C^0(\ufrak_1,\Ll(\hfrak_1,\hfrak_0)\cap\Ll(\hfrak_2,\hfrak_1))$;
see~(\ref{eq:hyp-F}).

\begin{proof}
We focus on $C^0(\ufrak_1,\Ll(\hfrak_2,\hfrak_1))$. Given $z\in \ufrak_1$, then the estimate
\begin{equation*}
\begin{split}
   \norm{(F^z\xi)^{\prime\prime}}_2
   &=\Norm{
   -4\norm{z}_2^2 \xi^{\prime\prime\prime\prime}
   -4\inner{z^{\prime\prime}}{z}\xi^{\prime\prime}
   -8\inner{z^{\prime\prime}}{\xi}z^{\prime\prime}
   -\tfrac{2\xi^{\prime\prime}}{\norm{z}^4}\tfrac{\norm{z}^2}{\norm{z}^2}
   +\tfrac{8\inner{z}{\xi}z^{\prime\prime}}{\norm{z}^6}
   }_2
\\
   &\le 8\norm{z}_{2,2}^2
   \left(
   \tfrac12+\tfrac12+1+\tfrac{1}{4\norm{z}_2^6} +\tfrac{1}{\norm{z}_2^6}
   \right)
   \norm{\xi}_{4,2}
\end{split}
\end{equation*}
and an analogous estimate for the simpler case $\norm{F^z\xi}_2$
show that $F$ is continuous as a map from $\ufrak_1$ to $\Ll(\hfrak_2,\hfrak_1)$
and also to $\Ll(\hfrak_1,\hfrak_0)$.
\end{proof}

\medskip\noindent
\textbf{Step~(\texttt{C}).} The map $z\mapsto C^z$ satisfies the scale
Lipschitz estimate~(\ref{eq:(C)}).

\begin{proof}
Fix a loop $z\in \ufrak_1=W^{2,2}(\SS^1,\Zfrak)\subset W^{2,2}(\SS^1,\C)=\hfrak_1$.
By continuity of $z$ its image is compact.
So there exists a bounded open neighborhood $\Ufrak$
of $\im z$ in $\Zfrak$. Let $\kappa_0$ be a bound of $\Ufrak$.
An open neighborhood of $z$ in $\ufrak_1$ is defined by
$$
   \mathrm{v}_z:=W^{2,2}(\SS^1,\Ufrak)
   \supset W^{2,2}(\SS^1,\Zfrak)=\ufrak_1 \ni z
   .
$$
The $L^\infty$ norm of the elements of $\mathrm{v}_z$ is
bounded by $\kappa_0$.
Let $\xi\in \hfrak_1$ and $v,w\in\mathrm{v}_z$.
In the following we use the norm
$\norm{f}_2^2+\norm{f^{\prime\prime}}_2^2$
which is equivalent to the $W^{2,2}$ norm
$\norm{f}_2^2+\norm{f^{\prime}}_2^2+\norm{f^{\prime\prime}}_2^2$.
Adding twice \underline{zero} we estimate
\begin{equation*}
\begin{split}
   \tfrac{1}{8^2}\norm{(C^v-C^w)\xi}_{2,2}^2
   &=\norm{
   \inner{v}{\xi} v^{\prime\prime}-\inner{w}{\xi} w^{\prime\prime}
   }_{2}^2
   +\norm{
   \inner{v}{\xi} v^{\prime\prime \prime\prime}
   -\inner{w}{\xi} w^{\prime\prime\prime\prime}
   }_{2}^2
\\
   &\le
   2\norm{
   \inner{v\underline{\,-\,w}}{\xi} v^{\prime\prime}
   }_{2}^2
   +2\norm{
   \inner{w}{\xi}\left(\underline{v^{\prime\prime}}-w^{\prime\prime} \right)
   }_{2}^2
   \\
   &\quad
   +2\norm{
   \inner{v\underline{\,-w\,}}{\xi} v^{\prime\prime \prime\prime}
   }_{2}^2
   +2\norm{
   \inner{w}{\xi}
   \left(\underline{v^{\prime\prime \prime\prime}}-w^{\prime\prime \prime\prime}\right)
   }_{2}^2
\\
   &\le
   2 \inner{v-w}{\xi}^2\norm{v^{\prime\prime}}_{2}^2
   +2 \inner{w}{\xi}^2\norm{v^{\prime\prime} -w^{\prime\prime}}_{2}^2
   \\
   &\quad
   +2 \inner{v-w}{\xi}^2\norm{v^{\prime\prime \prime\prime}}_{2}^2
   +2 \inner{w}{\xi}^2
   \norm{v^{\prime\prime \prime\prime}-w^{\prime\prime \prime\prime}}_{2}^2
\\
   &\le
   2\norm{v-w}_2^2\norm{\xi}_2^2\norm{v}_{4,2}^2
   +2\norm{w}_2^2\norm{\xi}_2^2\norm{v-w}_{4,2}^2
\\
   &\le
   2\abs{v-w}_{\hfrak_1}^2\abs{\xi}_{\hfrak_1}^2\abs{v}_{\hfrak_2}^2
   +2\norm{w}_\infty^2\abs{\xi}_{\hfrak_1}^2\abs{v-w}_{\hfrak_2}^2
\\
   &\le
   2\abs{v-w}_{\hfrak_1}^2\abs{\xi}_{\hfrak_1}^2\abs{v}_{\hfrak_2}^2
   +2\kappa_0^2\abs{\xi}_{\hfrak_1}^2\abs{v-w}_{\hfrak_2}^2
   .
\end{split}
\end{equation*}
To get inequality three we apply Cauchy-Schwarz and
in addition we incorporate the first two summands of four
into the estimate of the last two summands.
\newline
With
$
   \kappa
   :=8\sqrt{2}\max\{1,\kappa_0\}
$
this implies that the operator norm is bounded by
$$
   \norm{C^v-C^w}_{\Ll(\hfrak_1)}
   \le\kappa \Bigl(\abs{v-w}_{\hfrak_2}+\abs{v}_{\hfrak_2}\abs{v-w}_{\hfrak_1}\Bigr).
$$
Interchanging the roles of $v$ and $w$ we get
$$
   \norm{C^v-C^w}_{\Ll(\hfrak_1)}
   \le\kappa \Bigl(\abs{v-w}_{\hfrak_2}+\abs{w}_{\hfrak_2}\abs{v-w}_{\hfrak_1}\Bigr).
$$
The above two estimates imply the scale Lipschitz estimate
\begin{equation}\label{eq:C-paraDarboux}
   \norm{C^v-C^w}_{\Ll(\hfrak_1)}
   \le\kappa \Bigl(\abs{v-w}_{\hfrak_2}
   +\min\{\abs{v}_{\hfrak_2},\abs{w}_{\hfrak_2}\}\cdot\abs{v-w}_{\hfrak_1}
   \Bigr)
\end{equation}
which is precisely~(\ref{eq:(C)}) in axiom~(\texttt{C}).
This proves Step~(\texttt{C}).
\end{proof}

\smallskip\noindent
\textbf{Step~(\texttt{F}).} $\forall z\in \ufrak_1\colon$
$F^z_2:=F^z|_{\hfrak_2}\colon \hfrak_2\to \hfrak_1$ is Fredholm of index zero.

\smallskip\noindent
The proof of Step~(\texttt{F}) was given in~(\ref{eq:F^z}).
%
This proves Theorem~\ref{thm:Lag-Hess-nonmag}.
\end{proof}

\boldmath
\section{Estimates for Hessian operator summands}
\label{sec:ests-summands-Hess-mag-Ham}
\unboldmath

In Appendix~\ref{sec:ests-summands-Hess-mag-Ham}
we analyze the summands of the Hessian operator~$B^z$ in
Lemma~\ref{le:Hessians-Lagrange}.
As a preparation we collect in~\ref{sec:util} a number of estimates.

In the following we suppose that $z$ and $\xi$ are smooth,
so the calculations and estimates are justified
and we can extract in each case the information in which Sobolev
spaces $z$ and $\xi$ must at least be.
By $\inner{\cdot}{\cdot}$ we denote the $L^2$-inner product,
by $\norm{\cdot}_{\rho}$ the $L^\rho=:H_\rho$ norm,
and by $\norm{\cdot}_{k,2}$ the Sobolev $W^{k,2}=:H_k$ norm.
We set $z_\tau:=z(\tau)$. 
For Banach spaces $X$ and $Y$ let
$
   \Ll(X,Y)
$,
and
$
   \mbf{\Cc(X,Y)}
$
be the space of bounded, respectively \textbf{compact}, linear operators.

\boldmath
\subsection{Utilities}
\label{sec:util}
\unboldmath

\textbf{Factor \boldmath$j_0$.}
Since $j_0$ is rotation by $\frac{\pi}{2}$ it
is an isometry, so it disappears in norms.

\boldmath
\subsubsection{Fractional Sobolev spaces}
\unboldmath

\begin{theorem}[Multiplication map, {\cite[Thm.\,7.4]{Behzadan:2021a}}]
\label{thm:Behz-Hol}
Assume $\rho_1, \rho_2, \rho\ge0$ are real and satisfy
  $\rho_1\ge \rho$ and $\rho_2\ge \rho$ and
  $\rho_1+\rho_2>\frac12 +\rho$.
Then the following is true.
If $v\in W^{\rho_1,2}(\SS^1)$ and $w\in W^{\rho_2,2}(\SS^1)$, then
$vw\in W^{\rho,2}(\SS^1)$ and pointwise multiplication of functions is a
continuous bi-linear map
\begin{equation*}
   W^{\rho_1,2}(\SS^1)\times W^{\rho_2,2}(\SS^1)\to W^{\rho,2}(\SS^1) .
\end{equation*}
\end{theorem}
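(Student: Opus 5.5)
We will work on the Fourier side, using the norm $\norm{v}_{\rho,2}^2=\sum_{k\in\Z}\langle k\rangle^{2\rho}\abs{\hat v_k}^2$ with $\langle k\rangle:=(1+k^2)^{1/2}$. This norm is equivalent to the usual one, for fractional $\rho$ as well.

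\textbf{Reduction to a sequence inequality.} For smooth $v,w$ the Fourier coefficients of the product are given by the convolution $\widehat{vw}_k=\sum_j\hat v_j\hat w_{k-j}$. Since $\langle k\rangle^\rho\le 2^\rho(\langle j\rangle^\rho+\langle k-j\rangle^\rho)$ for $\rho\ge0$, we get
$$
   \langle k\rangle^\rho\abs{\widehat{vw}_k}
   \le 2^\rho\bigl((a*\beta)_k+(\alpha*b)_k\bigr),
$$
where
$$
   a_j:=\langle j\rangle^\rho\abs{\hat v_j},\quad
   \beta_m:=\abs{\hat w_m},\quad
   \alpha_j:=\abs{\hat v_j},\quad
   b_m:=\langle m\rangle^\rho\abs{\hat w_m}.
$$
Hence it suffices to prove the bilinear estimate
$$
   \norm{a*\beta}_{\ell^2}\le C\norm{v}_{\rho_1,2}\norm{w}_{\rho_2,2},
$$
together with the symmetric one for $\alpha*b$. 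Bilinearity plus such a bound yields continuity of the multiplication map, and density of smooth functions then extends it from smooth functions to all of $W^{\rho_1,2}\times W^{\rho_2,2}$.

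\textbf{Key tools.} Two elementary facts will do the work.
\begin{itemize}
\item[(i)] Weighted embedding. If $\langle j\rangle^{s}c_j\in\ell^2$ with $s\ge0$, then $c\in\ell^p$ for every $p\in[1,2]$ with $\tfrac1p<\tfrac12+s$, and for $p=2$ also when $s=0$. This follows from Hölder's inequality, since $\sum_j\langle j\rangle^{-2sp/(2-p)}<\infty$ exactly when $\tfrac{2sp}{2-p}>1$, which is the condition $\tfrac1p<\tfrac12+s$.
\item[(ii)] Young's inequality $\ell^p*\ell^q\subset\ell^2$, valid when $\tfrac1p+\tfrac1q=\tfrac32$ with $p,q\in[1,2]$.
\end{itemize}

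\textbf{Choice of exponents.} For $a*\beta$, the sequence $a$ carries the spare weight $s_1=\rho_1-\rho\ge0$ and $\beta$ carries $s_2=\rho_2\ge0$. We look for $x=\tfrac1p$ and $y=\tfrac1q$ satisfying
$$
   x,y\in[\tfrac12,1],\qquad x+y=\tfrac32,
$$
together with the admissibility conditions from (i):
$$
   x<\tfrac12+s_1 \ \text{or}\ \bigl(x=\tfrac12,\ s_1=0\bigr),
   \qquad
   y<\tfrac12+s_2 \ \text{or}\ \bigl(y=\tfrac12,\ s_2=0\bigr).
$$
The hypothesis $\rho_1+\rho_2>\tfrac12+\rho$ says exactly that $s_1+s_2>\tfrac12$. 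The cases split as follows.
\begin{itemize}
\item If $s_1=0$, then $s_2>\tfrac12$. Take $x=\tfrac12$ and $y=1$; then $y=1<\tfrac12+s_2$ as required.
\item If $s_2=0$, the situation is symmetric: take $x=1$ and $y=\tfrac12$.
\item If $s_1,s_2>0$, the open interval of admissible $x$ is $\bigl(\max\{\tfrac12,1-s_2\},\min\{1,\tfrac12+s_1\}\bigr)$. It is nonempty precisely because $s_1+s_2>\tfrac12$, together with $s_1,s_2>0$.
\end{itemize}
The term $\alpha*b$ is handled identically, with spare weights $\rho_1$ and $\rho_2-\rho$, whose sum is again $>\tfrac12$. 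Chaining (i) and (ii) then gives the desired bilinear estimate, with a constant depending only on $\rho,\rho_1,\rho_2$.

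\textbf{Main obstacle.} The delicate step is this exponent bookkeeping, in particular the endpoint cases $s_i=0$, where $\ell^2$ must be used without any gain. One must check that the strict inequality in the hypothesis supplies exactly the strictness needed in (i). Everything else is routine.
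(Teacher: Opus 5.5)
Your proof is correct. It takes a genuinely different route from the paper, which gives no proof of its own.

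\textbf{What the paper does.} It imports the statement as the special case $p=2$, $n=1$ of the general multiplication theorem \cite[Thm.\,7.4]{Behzadan:2021a}. That theorem covers Sobolev--Slobodeckij spaces $W^{s,p}$ for general integrability, dimension and domains, and its proof relies on considerably heavier function-space machinery.

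\textbf{What you do.} You use only the two features actually present here, $p=2$ and the domain $\SS^1$.
\begin{itemize}
\item Fourier series turn the product into a convolution on $\Z$.
\item The elementary splitting $\langle k\rangle^\rho\le 2^\rho(\langle j\rangle^\rho+\langle k-j\rangle^\rho)$ puts the $\rho$ derivatives on one factor at a time.
\item H\"older's inequality (weighted $\ell^2\subset\ell^p$) and Young's inequality $\ell^p*\ell^q\subset\ell^2$ close the estimate.
\end{itemize}
I checked the exponent bookkeeping and it is right. The endpoint cases $s_i=0$ reduce to $\ell^2*\ell^1\subset\ell^2$, which needs exactly $s_{3-i}>\frac12$. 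In the case $s_1,s_2>0$, the interval $\bigl(\max\{\frac12,1-s_2\},\min\{1,\frac12+s_1\}\bigr)$ is nonempty precisely because $s_1+s_2>\frac12$. So the strict hypothesis $\rho_1+\rho_2>\frac12+\rho$ supplies exactly the strictness your step (i) needs.

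\textbf{What each route buys.} Yours is short and self-contained, gives explicit constants, and works directly on the circle with no localization to Euclidean domains. The citation buys generality in $p$, dimension and boundary behaviour, none of which the paper uses.

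\textbf{One small tidy-up.} Extending from smooth functions by density needs one more line: check that the continuous extension agrees with the pointwise product, for example because $v_nw_n\to vw$ in $L^1$. You can skip this altogether. For $v,w\in L^2(\SS^1)$ the identity $\widehat{vw}_k=\sum_j\hat v_j\hat w_{k-j}$ already holds by Parseval, so your estimate applies directly to all $v\in W^{\rho_1,2}$ and $w\in W^{\rho_2,2}$.
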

Most important for us is the theorem in the form
\begin{align}
   \label{eq:Behz-Hol-2}
   W^{r,2}\times L^2\stackrel{r>\frac12}{\longrightarrow} L^2
   ,\qquad
   W^{1,2}\times W^{1,2}\to W^{1,2}
   .
\end{align}
The \textbf{Sobolev inequality} {\color{gray}2}
on $\SS^1$ asserts,\footnote{
  for a proof with constant $1$ see e.g.~\cite[\S A.6 Sobolev
  embedding]{Frauenfelder:2025e}
  }
respectively implies, that
$$
   \norm{z}_2
   \le\norm{z}_\infty
   \stackrel{{\color{gray}2}}{\le}\norm{z}_{1,2}
   ,\qquad
   \norm{z^\prime}_\infty\le\norm{z^\prime}_{1,2}
   \le\norm{z}_{2,2}
   .
$$
All norms are on the domain $\SS^1$, unless
indicated differently.
In fact, not only $H_1$ embedds into $C^0$, but even the larger
spaces $H_r$ whenever $r$ is strictly larger then the borderline
value $\frac12$.
This improvement is crucial for us, see e.g. term $C_{43}$
further below.

\begin{proposition}\label{prop:Taylor}
For any $\alpha\in(0,1)$ the inclusion map
$H_{\frac{1}{2}+\alpha}(\R)\subset C^{0,\alpha}(\R)$ into the space of
\textbf{\boldmath$\alpha$-H\"older continuous} maps is continuous.
Here
$$
   u\in C^{0,\alpha}(\R)\quad\Leftrightarrow\quad
   \text{$u$ bounded and $\exists C\colon
   \abs{u(x+y)-u(x)}\le C\abs{y}^\alpha$ $\forall x,y$}.
$$
\end{proposition}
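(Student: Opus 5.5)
We would prove Proposition~\ref{prop:Taylor} in two steps, Fourier transform followed by a Cauchy--Schwarz splitting, writing $s:=\frac12+\alpha$ and using the norm $\abs{u}_{H_s}^2=\int_\R(1+\abs{k}^2)^s\abs{\hat u(k)}^2\,dk$. All estimates are first done for Schwartz functions $u$ and then extended to $H_s(\R)$ by density. The target is
$$
   \norm{u}_\infty+\sup_{y\not=0}\sup_x\frac{\abs{u(x+y)-u(x)}}{\abs{y}^\alpha}
   \le C_\alpha\abs{u}_{H_s} ,
$$
which gives boundedness and linearity of the inclusion, hence continuity.

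\textbf{Step 1 (sup bound).} By Fourier inversion and Cauchy--Schwarz,
$$
   \abs{u(x)}\le\int\abs{\hat u}
   \le\Bigl(\int(1+k^2)^{-s}dk\Bigr)^{1/2}\abs{u}_{H_s} .
$$
The first factor is finite since $2s=1+2\alpha>1$. This also shows that $u$ agrees almost everywhere with a continuous function.

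\textbf{Step 2 (H\"older seminorm).} Write
$$
   u(x+y)-u(x)=\int\hat u(k)e^{ikx}\bigl(e^{iky}-1\bigr)\,dk
$$
and use $\abs{e^{iky}-1}\le\min\{2,\abs{ky}\}$. Cauchy--Schwarz against the weight $(1+k^2)^s$ gives
$$
   \abs{u(x+y)-u(x)}^2\le\abs{u}_{H_s}^2\int\frac{\min\{4,k^2y^2\}}{\abs{k}^{1+2\alpha}}\,dk ,
$$
where we bound $(1+k^2)^{-s}\le\abs{k}^{-2s}$. The main obstacle is showing that this integral scales like $\abs{y}^{2\alpha}$. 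Substituting $k=t/\abs{y}$ turns it into $\abs{y}^{2\alpha}\int\min\{4,t^2\}\abs{t}^{-1-2\alpha}dt$. The remaining integral converges: near $0$ it is $\int t^{1-2\alpha}dt$, which is fine because $\alpha<1$, and at $\infty$ it is $\int 4t^{-1-2\alpha}dt$, which is fine because $\alpha>0$. This is exactly where the strict inequalities $0<\alpha<1$ are used.

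Combining the two steps gives the stated inequality for Schwartz functions. For general $u\in H_s$, take an approximating Schwartz sequence: it converges in $H_s$, hence uniformly by Step 1. The H\"older bound then passes to the limit, because pointwise limits preserve inequalities of the form $\abs{u(x+y)-u(x)}\le C\abs{y}^\alpha$. This yields the continuous inclusion $H_s(\R)\subset C^{0,\alpha}(\R)$.
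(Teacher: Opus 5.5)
Your proof is correct. It is essentially the argument the paper relies on: the paper gives no proof of its own and simply cites \cite[Ch.\,4 Prop.\,1.5]{Taylor:1996a}, where the standard proof is the same Fourier representation of $u(x+y)-u(x)$, Cauchy--Schwarz against the weight $(1+k^2)^{s}$, and the bound $\int\abs{e^{iky}-1}^2(1+k^2)^{-s}\,dk\lesssim\abs{y}^{2\alpha}$.

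Your rescaling $k=t/\abs{y}$, after replacing $(1+k^2)^{-s}$ by $\abs{k}^{-2s}$, gets that bound in one line and uniformly in $y$, where the usual presentation splits the integral at $\abs{k}=1/\abs{y}$. The density step is harmless but could be dropped, since your Step~1 already shows $\hat u\in L^1$ for every $u\in H_s$.
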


\begin{proof}
See e.g.~\cite[Ch.\,4 Prop.\,1.5]{Taylor:1996a}.
\end{proof}

By Proposition~\ref{prop:Taylor}, given $r\in(\frac12,1)$, there
exists a constant $c_r$ such that
$
   \norm{\cdot}_{C^0}
   \le \norm{\cdot}_{C^{0,r-1/2}}
   \le c_\alpha \norm{\cdot}_{H_{1/2+\alpha}}
$
on $\SS^1$.
Writing $\norm{\cdot}_\infty=\norm{\cdot}_{C^0}$
and $r=\frac12+\alpha$, then for $r\in(\frac12,1]$ it holds that
\begin{equation}\label{eq:H_r-est}
   \norm{\cdot}_2
   \le\norm{\cdot}_\infty
   \le c_\alpha \norm{\cdot}_{r,2}
   ,\qquad
   \norm{\cdot}_{r,2}
   \le\norm{\cdot}_{1,2}
   .
\end{equation}

\boldmath
\subsubsection{Factor $t_z$}\label{sec:t_z}
\unboldmath

Recall from~(\ref{eq:class-time-tau-C})
the Barutello-Ortega-Verzini reparametrization
\begin{equation}\label{eq:class-time-tau-deriv-C}
   t_z(\tau)
   :=\frac{\int_0^\tau \Abs{z(s)}^2\, ds}{\norm{z}^2}
   ,\qquad
   t_z^\prime(\tau)
   =\frac{\Abs{z(\tau)}^2}{\norm{z}^2}
   ,\qquad
   t_z\in C^1
   .
\end{equation}
which takes a loop $z$ to a map
$t_z:=t(z)\colon\SS^1\to\SS^1$, $\tau\mapsto t_z(\tau)$.
There are two obvious identities
\begin{equation}\label{eq:dt|_z}
\begin{split}
   \tfrac{d}{d\tau} t_z(\tau)
   &=
   \frac{\Abs{z(\tau)}^2}{\norm{z}^2}
\\
   (dt|_z\xi)_\tau
   &=
   \frac{2}{\norm{z}^2}\int_0^\tau\INNER{z_\sigma}{\xi_\sigma} d\sigma
   -\frac{2\INNER{z}{\xi}}{\norm{z}^4}\int_0^\tau\Abs{z_\sigma}^2 d\sigma
   .
\end{split}
\end{equation}
We estimate
\begin{equation*}
\begin{split}
   \norm{t_z^\prime}_2
   &
   \stackrel{\text{(\ref{eq:class-time-tau-deriv-C})}}{=}
   \Norm{\tfrac{\Abs{z(\cdot)}^2}{\norm{z}_2^2}}_2
   \le\frac{\norm{z}_\infty\norm{z}_2}{\norm{z}_2^2}
   \le\frac{\norm{z}_{1,2}}{\norm{z}_2}
\end{split}
\end{equation*}
and
\begin{equation*}
\begin{split}
   \norm{t_z^\prime}_\infty:
   &=\sup_{\tau\in\SS^1}\Abs{\tfrac{d}{d\tau} t_z(\tau)}
   \stackrel{\text{(\ref{eq:class-time-tau-deriv-C})}}{=}
   \sup_{\tau\in\SS^1}\frac{\Abs{z(\tau)}^2}{\norm{z}_2^2}
   \le\frac{\norm{z}_\infty^2}{\norm{z}_2^2}
   \le\frac{\norm{z}_{1,2}^2}{\norm{z}_2^2}
   .
\end{split}
\end{equation*}
By~(\ref{eq:dt|_z}) we have and by the triangle and Cauchy-Schwarz
inequalities we get
\begin{equation}\label{eq:dt|_z-2-infty}
\begin{split}
   \norm{(dt|_z\xi)}_2
   &\le\norm{(dt|_z\xi)}_{{\color{red}\infty}}
\\
   &=\sup_{\tau\in\SS^1}\Abs{
   \tfrac{2}{\norm{z}_2^2}\int_0^\tau\INNER{z_\sigma}{\xi_\sigma}_0 d\sigma
   -\tfrac{2\INNER{z}{\xi}}{\norm{z}_2^4}\int_0^\tau\Abs{z_\sigma}^2 d\sigma
   }
\\
   &\le 
   \tfrac{2}{\norm{z}_2^2}
   \sup_{\tau\in\SS^1}\Abs{\INNER{z}{\xi}_{L^2_{0,\tau}}}
   +\tfrac{2\norm{z}_2\norm{\xi}_2}{\norm{z}_2^4}
   \sup_{\tau\in\SS^1}\Abs{\INNER{z}{z}_{L^2_{0,\tau}}}
\\
   &\le \tfrac{2}{\norm{z}_2^2}
   \sup_{\tau\in\SS^1}\left(\norm{z}_{L^2_{0,\tau}}\norm{\xi}_{L^2_{0,\tau}}\right)
   +\tfrac{2\norm{\xi}_2}{\norm{z}_2^3}
   \sup_{\tau\in\SS^1}\norm{z}_{L^2_{0,\tau}}^2
\\
   &\le
   \tfrac{2}{\norm{z}_2}\norm{\xi}_2
   +\tfrac{2}{\norm{z}_2}\norm{\xi}_2
\\
   &\le
   \tfrac{4}{\norm{z}_2}\norm{\xi}_{{\color{red} 2}}
   .
\end{split}
\end{equation}
Differentiate~(\ref{eq:dt|_z}) to get equality one here
\begin{equation*}
\begin{split}
   \norm{(dt|_z\xi)^\prime}_2
   &=\Norm{\tfrac{2}{\norm{z}_2^2}\INNER{z_\cdot}{\xi_\cdot}_0
   -\tfrac{2\INNER{z}{\xi}}{\norm{z}_2^4}\Abs{z_\cdot}^2}_2
\\
   &\le\tfrac{2}{\norm{z}_2^2}
   \Norm{\INNER{z_\cdot}{\xi_\cdot}_0}_2
   +\tfrac{2\norm{z}_\infty\norm{\xi}_2}{\norm{z}_2^4}
   \norm{\inner{z_\cdot}{z_\cdot}_0}_2
\\
   &\le \tfrac{2}{\norm{z}_2^2}\norm{z}_\infty\norm{\xi}_2
   +\tfrac{2\norm{z}_{1,2}\norm{\xi}_2}{\norm{z}_2^4}
   \norm{z}_\infty\norm{z}_2
\\
   &\le\left(
   \tfrac{2 \norm{z}_{1,2}}{\norm{z}_2^2}
   +\tfrac{2 \norm{z}_{1,2}^2}{\norm{z}_2^3}
   \right) \norm{\xi}_2
\\
  :&= d^\prime\tfrak^{z_{1,2}} \norm{\xi}_{{\color{red} 2}}
\end{split}
\end{equation*}
and to get equality one here
\begin{equation}\label{eq:dt_z-xi-infty}
\begin{split}
   \norm{(dt|_z\xi)^\prime}_{{\color{red} \infty}}
   &=
   \sup_{\tau\in\SS^1}\Abs{\tfrac{2}{\norm{z}_2^2}\INNER{z_\tau}{\xi_\tau}
   -\tfrac{2\INNER{z}{\xi}}{\norm{z}_2^4}\Abs{z_\tau}^2}
\\
   &\le \tfrac{2}{\norm{z}_2^2}\norm{z}_\infty\norm{\xi}_\infty
   +\tfrac{2\norm{\xi}_2}{\norm{z}_2^3}\norm{z}_\infty^2
\\
   &
   \le
   \tfrac{2 \norm{z}_{1,2}}{\norm{z}_2^2}\norm{\xi}_{1,2}
   +\tfrac{2 \norm{z}_{1,2}^2}{\norm{z}_2^3}\norm{\xi}_2
\\
   &\le d^\prime\tfrak^{z_{1,2}} \norm{\xi}_{{\color{red} 1,2}}
   .
\end{split}
\end{equation}
By definition~(\ref{eq:class-time-tau-C}) of $t_z$ and adding zero we
estimate
\begin{equation}\label{eq:t_z-diff}
\begin{split}
   \Abs{t_v(\tau)-t_w(\tau)}
   &=\Abs{
   \tfrac{\int_0^\tau \Abs{v_s}^2\, ds}{\norm{v}_2^2}
{\color{gray}\;
   -\;
   \tfrac{\int_0^\tau \Abs{w_s}^2\, ds}{\norm{v}_2^2}
   +
   \tfrac{\int_0^\tau \Abs{w_s}^2\, ds}{\norm{v}_2^2}
}
   -
   \tfrac{\int_0^\tau \Abs{w_s}^2\, ds}{\norm{w}_2^2}
   }
\\
   &\le\tfrac{1}{\norm{v}_2^2}
   \Abs{\int_0^\tau \Abs{v_s}^2
{\color{gray}\;
   -\langle v_s,w_s\rangle_0+\langle v_s,w_s\rangle_0
}
   -\Abs{w_s}^2\, ds}
   \\
   &\quad
   +\int_0^\tau \Abs{w_s}^2\, ds
   \Abs{\tfrac{\norm{w}_2^2
{\color{gray}\;
   -\langle v,w\rangle+\langle v,w\rangle
}
   -\norm{v}_2^2}{\norm{v}_2^2\norm{w}_2^2}}
\\
   &=\tfrac{1}{\norm{v}_2^2}
   \Abs{\int_0^\tau 
   \langle v_s,v_s-w_s\rangle_0+\langle v_s-w_s,w_s\rangle_0
   \, ds}
   \\
   &\quad
   +\int_0^\tau \Abs{w_s}^2\, ds
   \Abs{\tfrac{
   \langle v,v-w\rangle+\langle v-w,w\rangle
   }{\norm{v}_2^2\norm{w}_2^2}}
\\
   &\stackrel{{\color{gray}3}}{\le} 2
   \tfrac{\norm{v}_2\norm{v-w}_2+\norm{v-w}_2\norm{w}_2}{\norm{v}_2^2}
   =2\norm{v-w}_2
   \tfrac{\norm{v}_2+\norm{w}_2}{\norm{v}_2^2}
   .
\end{split}
\end{equation}
In step {\color{gray}3} , on summand one, we take the absolute value
inside the integral, then enlarge $\int_0^\tau$ to $\int_0^1$,
then we apply Cauchy-Schwarz.
On summand two we enlarge $\int_0^\tau$ to $\int_0^1$ in which case
$\norm{w}_2^2$ cancels.

\medskip
\noindent
We need in~(\ref{eq:D43-42}) the following estimate.
Differentiate~(\ref{eq:dt|_z}) to get step 1.
Step 2 is by the triangle inequality
and produces a sum of two absolute values.
In summand one add zero
and in summand two add two times zero, then apply the triangle
inequality to get step~3 where in addition we insert
{\color{gray}three zeroes}
\begin{equation}\label{eq:dt_z-deriv-difference}
\begin{split}
   &
   \tfrac12
   \abs{(dt_v\xi)^\prime_\tau-(dt_w\xi)^\prime_\tau}
\\
   &\stackrel{{\color{gray}1}}{=}
   \Abs{
   \tfrac{1}{\norm{v}_2^2}\INNER{v_\tau}{\xi_\tau}_0
   -
   \tfrac{\INNER{v}{\xi}}{\norm{v}_2^4}\Abs{v_\tau}^2
   -
   \tfrac{1}{\norm{w}_2^2}\INNER{w_\tau}{\xi_\tau}_0
   +
   \tfrac{\INNER{w}{\xi}}{\norm{w}_2^4}\Abs{w_\tau}^2
   }
\\
   &\stackrel{{\color{gray}2}}{\le}
   \Abs{
   \tfrac{1}{\norm{v}_2^2}\INNER{v_\tau}{\xi_\tau}_0
   -
   \tfrac{1}{\norm{w}_2^2}\INNER{w_\tau}{\xi_\tau}_0
   }
   +
   \Abs{
   \tfrac{\INNER{v}{\xi}}{\norm{v}_2^4}\Abs{v_\tau}^2
   -
   \tfrac{\INNER{w}{\xi}}{\norm{w}_2^4}\Abs{w_\tau}^2
   }
\\
   &\stackrel{{\color{gray}3}}{\le}
   \Abs{
   \tfrac{\norm{w}_2^2
{\color{gray}
   -\inner{w}{v}+\inner{w}{v}
}
   -\norm{v}_2^2}{\norm{v}_2^2 \norm{w}_2^2}
   }
   \Abs{\INNER{v_\tau}{\xi_\tau}_0}
   +
   \Abs{
   \INNER{v_\tau-w_\tau}{\xi_\tau}_0
   }
   \tfrac{1}{\norm{w}_2^2}
   \\
   &\quad
   +
   \Abs{\INNER{v-w}{\xi}}
   \tfrac{\Abs{v_\tau}^2}{\norm{v}_2^4}
   \\
   &\quad
   +
   \Abs{
   \tfrac{\norm{w}_2^4 
{\color{gray}
   -\norm{w}_2^2 \norm{v}_2^2 +\norm{w}_2^2 \norm{v}_2^2
}
   -\norm{v}_2^4}
   {\norm{v}_2^4 \norm{w}_2^4}
   }\Abs{v_\tau}^2 \Abs{\INNER{w}{\xi}}
   \\
   &\quad
   +\tfrac{\Abs{\INNER{w}{\xi}}}{\norm{w}_2^4}
   \Abs{
   \Abs{v_\tau}^2
{\color{gray}
   -\inner{w_\tau}{v_\tau}_0+\inner{w_\tau}{v_\tau}_0
}
   -
   \Abs{w_\tau}^2
   }
\\
   &\stackrel{{\color{gray}4}}{\le}
   \norm{v-w}_2
   \tfrac{\norm{v}_2+\norm{w}_2}
   {\norm{v}_2^2 \norm{w}_2^2}
   \cdot\abs{v_\tau}\cdot\abs{\xi_\tau}
   +
   \abs{v_\tau-w_\tau}\cdot
   \tfrac{\abs{\xi_\tau}}{\norm{w}_2^2}
   \\
   &\quad
   +
   \norm{v-w}_2\norm{\xi}_2
   \tfrac{\Abs{v_\tau}^2}{\norm{v}_2^4}
   \\
   &\quad
   +
   \norm{v-w}_2\left(\norm{v}_2+\norm{w}_2\right)
   \left(\norm{v}_2^2+\norm{w}_2^2\right)
   \Abs{v_\tau}^2 \norm{w}_2\norm{\xi}_2
   \\
   &\quad
   +\tfrac{\norm{\xi}_2}{\norm{w}_2^3}
   \Abs{v_\tau-w_\tau}\left(\Abs{v_\tau}+\Abs{w_\tau}\right)
\\
\end{split}
\end{equation}
pointwise for $\tau\in\SS^1$.
In step~4 we used the Cauchy-Schwarz inequality
for the Euclidean inner product $\inner{\cdot}{\cdot}_0$ on $\R^2$
and for the $L^2(\SS^1,\R^2)$ inner product $\inner{\cdot}{\cdot}$.
Take the supremum over $\tau$ and simplify
with $\norm{\cdot}_2\le\norm{\cdot}_\infty$
we get the estimate
\begin{equation}\label{eq:dt_z-deriv-difference-infty}
\begin{split}
   &
   \tfrac12
   \norm{(dt_v\xi)^\prime-(dt_w\xi)^\prime}_\infty
\\
   &\le
   \norm{v-w}_\infty\norm{\xi}_\infty\cdot
   \\
   &\quad
   \left(
   \tfrac{\left(\norm{v}_\infty+\norm{w}_\infty\right)^2}
      {\norm{v}_2^2 \norm{w}_2^2}
   +
   \tfrac{1}{\norm{w}_2^2}
   +
   \tfrac{\norm{v}_\infty}{\norm{v}_2^2}
   +
   \left(\norm{v}_\infty+\norm{w}_\infty\right)^6
   +
   \tfrac{\norm{v}_\infty+\norm{w}_\infty}{\norm{w}_2^3}
   \right)
\\
   &\le
   C_\rho^{\norm{z}_2}\norm{v-w}_\infty\norm{\xi}_\infty
      \quad
{\color{gray}
   ,\;
   C_\rho^{\norm{z}_2}
   :=
   \left(
   64\rho^6
   +\tfrac{4+\rho^4}{\norm{z}_2^2}
   +\tfrac{16\rho}{\norm{z}_2^3}
   +\tfrac{64\rho^2}{\norm{z}_2^4}
   \right)
   .
}
\end{split}
\end{equation}
where the last inequality uses hypothesis~(\ref{eq:V_z})
met in the applications in \S\,\ref{sec:ae}, namely
$\norm{v}_2,\norm{w}_2\ge\frac12\norm{z}_2$ and
$\norm{v}_\infty,\norm{w}_\infty\le \rho$.

\medskip
\noindent
We need in~(\ref{eq:42-D53}) an estimate for the following $L^\infty$
norm. Step~2 is by~(\ref{eq:dt|_z})
\begin{equation}\label{eq:dt_z-diff}
\begin{split}
   &\tfrac12\norm{dt_v\xi-dt_w\xi}_\infty
\\
   &=\tfrac12\sup_{\tau\in\SS^1}
   \abs{(dt_v\xi)_\tau-(dt_w\xi)_\tau}
\\
   &\stackrel{{\color{gray}2}}{=}
   \sup_{\tau\in\SS^1}
   \biggl|
   \int_0^\tau
   \left(
   \tfrac{\inner{v_\sigma}{\xi_\sigma}_0}{\norm{v}_2^2}
   -
     \tfrac{\inner{w_\sigma}{\xi_\sigma}_0}{\norm{w}_2^2}
   -
   \tfrac{\INNER{v}{\xi}}{\norm{v}_2^4}\Abs{v_\sigma}^2
   +
   \tfrac{\INNER{w}{\xi}}{\norm{w}_2^4}\Abs{w_\sigma}^2
   \right)
   d\sigma   
   \biggr|
\\
   &\stackrel{{\color{gray}3}}{\le}
   \int_0^1
   \left|
   \tfrac{\inner{v_\sigma}{\xi_\sigma}_0}{\norm{v}_2^2}
   -
     \tfrac{\inner{w_\sigma}{\xi_\sigma}_0}{\norm{w}_2^2}
   -
   \tfrac{\INNER{v}{\xi}}{\norm{v}_2^4}\Abs{v_\sigma}^2
   +
   \tfrac{\INNER{w}{\xi}}{\norm{w}_2^4}\Abs{w_\sigma}^2
   \right|
   d\sigma
\\
   &\stackrel{{\color{gray}4}}{\le}
   \sup_{\tau\in\SS^1}
   \left|
   \tfrac{\inner{v_\tau}{\xi_\tau}_0}{\norm{v}_2^2}
   -
     \tfrac{\inner{w_\tau}{\xi_\tau}_0}{\norm{w}_2^2}
   -
   \tfrac{\INNER{v}{\xi}}{\norm{v}_2^4}\Abs{v_\tau}^2
   +
   \tfrac{\INNER{w}{\xi}}{\norm{w}_2^4}\Abs{w_\tau}^2
   \right|
\\
   &\stackrel{{\color{gray}5}}{=}
   \tfrac12\norm{(dt_v\xi)^\prime-(dt_w\xi)^\prime}_\infty
\\
   &\stackrel{{\color{gray}6}}{\le}
   C_\rho^{\norm{z}_2}\norm{v-w}_\infty\norm{\xi}_\infty
   .
\end{split}
\end{equation}
Step 3 takes the absolute value inside the integral,
but then $\int_0^\tau\abs{\cdot}\le\int_0^1\abs{\cdot}$ and the
supremum over $\tau$ becomes void.
Step~4 estimates the integral by the supremum over the integrand
times the length of the integration interval $[0,1]$.
Step~5 is by equality 1 in~(\ref{eq:dt_z-deriv-difference}).
Step~6 is~(\ref{eq:dt_z-deriv-difference-infty}).

\boldmath
\subsubsection{Factor $\aaa$}
\unboldmath

Recall Remark~\ref{rmk:tw-per}\,(iii) that
$\aaa=(a^1, a^2)\colon\SS^1\times\Zfrak\to\R^2$ is the vector
potential of the twisted-periodic $1$-form $\vartheta$.
Restricted to the compact set $\SS^1\times\im z$, see factor $\bbb$, 
the following are finite constants 
(which depend on ${\norm{z}_\infty\le\norm{z}_{1,2}}$)
\begin{align}\label{eq:a-infty}
   \alpha_{\SS^1}^z
   &:=\bigl\|\aaa|_{\SS^1\times\im z}\bigr\|_\infty
   &
   \dot\alpha_{\SS^1}^z
   &:=\bigl\|\dot \aaa|_{\SS^1\times\im z}\bigr\|_\infty
\nonumber
\\
   d\alpha_{\SS^1}^z
   &:=\bigl\|d\aaa|_{\SS^1\times\im z}\bigr\|_\infty
   &
   \ddot\alpha_{\SS^1}^z
   &:=\bigl\|\ddot \aaa|_{\SS^1\times\im z}\bigr\|_\infty
\\
\nonumber
   d^2\alpha_{\SS^1}^z
   &:=\bigl\|d^2\aaa|_{\SS^1\times\im z}\bigr\|_\infty
   &
   d\dot\alpha_{\SS^1}^z
   &:=\bigl\|d\dot \aaa|_{\SS^1\times\im z}\bigr\|_\infty
   .
\end{align}

\boldmath
\subsubsection{Factor $b=\rot\,\aaa$}
\unboldmath

\begin{remark}[Estimates for $b=\rot\,\aaa$]\label{rem:b}
Recall that $b$ defined in~(\ref{eq:b}) abbreviates the rotational.
We further abbreviate
\begin{equation}\label{eq:b-infty}
\begin{gathered}
   \beta_{\SS^1}^z
   :=\bigl\|b|_{\SS^1\times\im z}\bigr\|_\infty
   ,\quad
   \dot\beta_{\SS^1}^z
   :=\bigl\|\dot b|_{\SS^1\times\im z}\bigr\|_\infty
   ,\quad
   d\beta_{\SS^1}^z
   :=\bigl\|db|_{\SS^1\times\im z}\bigr\|_\infty
   ,
\\
   d\dot\beta_{\SS^1}^z
   :=\bigl\|d\dot b|_{\SS^1\times\im z}\bigr\|_\infty
   ,\quad
   d^2\beta_{\SS^1}^z
   :=\bigl\|d^2b|_{\SS^1\times\im z}\bigr\|_\infty
   .
\end{gathered}
\end{equation}
These values are finite:
Firstly $b$ is a smooth map
$b\colon \SS^1\times \Zfrak\to \R$.
Secondly, since $z$ is in $U_2$ (could be even $U_1$),
it is in particular a continuous map
$\SS^1\to\Zfrak$ and therefore its image as a subset
$\im(z)\subset \Zfrak$ is compact.
Therefore the sup norm of the restriction of $b$
to $\SS^1\times\im(z)$ is finite, similarly for derivatives of $b$.
Hence the $\beta$-constants, analogously the $\beta$-constants above,
depend on ${\norm{z}_\infty\le\norm{z}_{1,2}}$.

With the chain rule we differentiate to obtain the estimate
\begin{equation*}
\begin{split}
   \norm{(db_{t_z}|_z)^\prime}_2
   &=\norm{d\dot b_{t_z}|_z\, t_z^\prime+d^2b_{t_z}|_z z^\prime}_2
\\
   &\le d\dot\beta_{\SS^1}^z\norm{t_z^\prime}_2
   +d^2\beta_{\SS^1}^z\norm{z}_{1,2}
\\
   &\le
   \left(d\dot\beta_{\SS^1}^z\tfrac{1}{\norm{z}_2}
   +d^2\beta_{\SS^1}^z
   \right) \norm{z}_{1,2}
   =: (d\beta)^{\prime,z_{1,2}}_2
   .
\end{split}
\end{equation*}
and the estimate
\begin{equation*}
\begin{split}
   \norm{(db_{t_z}|_z)^\prime}_\infty
   &=\norm{d\dot b_{t_z}|_z\, t_z^\prime+d^2b_{t_z}|_z z^\prime}_\infty
\\
   &\le d\dot\beta_{\SS^1}^z\norm{t_z^\prime}_\infty
   +d^2\beta_{\SS^1}^z\norm{z^\prime}_\infty
\\
   &\le
   d\dot\beta_{\SS^1}^z\tfrac{\norm{z}_{1,2}^2}{\norm{z}_2^2}
   +d^2\beta_{\SS^1}^z \norm{z}_{2,2}
   =:(d\beta)^{\prime,z_{2,2}}_\infty
   .
\end{split}
\end{equation*}
This concludes Remark~\ref{rem:b}.
\end{remark}

\boldmath
\subsection{Lagrangian scale 
$(\hfrak_0,\hfrak_1,\hfrak_2)=(L^2,W^{2,2},W^{4,2})$}
\label{sec:Lag-sc}
\unboldmath

For the notation $\hfrak_k$ and $\ufrak_k$
see Definition~\ref{def:analytic-setup-Lag}.

\boldmath
\subsubsection{Kinetic terms T1 and T2}
\unboldmath

\boldmath
\subsubsection*{Term T1 -- $T_{11}=0{\color{gray}\;=(\tilde T_{11}+m_1)}$,
$T_{12}$, $T_{13}$}
\unboldmath

\smallskip
\noindent
\textbf{\boldmath$T_{11}$.}
This term is zero.

\smallskip
\noindent
\textbf{\boldmath$T_{12}$.}
We set $T^z\xi:=8\inner{z^\prime}{\xi^\prime} z_\tau$ and estimate
\begin{equation*}
\begin{split}
   \norm{T^z\xi}_2
   &\le
   8\norm{z^\prime}_2\norm{\xi^\prime}_2\norm{z}_2
   \le 
   8\norm{z}_2\norm{z}_{1,2}\norm{\xi}_{1,2}
\\
{\color{gray}
   \norm{(T^z\xi)^\prime}_2
}
   &
{\color{gray}\;
   =\norm{8\inner{z^\prime}{\xi^\prime} z^\prime}_2
   \le
   8\norm{z^\prime}_2\norm{\xi^\prime}_2\norm{z^\prime}_2
}
{\color{gray}
   8\norm{z}_{1,2}^2\norm{\xi}_{1,2}
}
\\
   \norm{(T^z\xi)^{\prime\prime}}_2
   &=\norm{8\inner{z^{\prime}}{\xi^\prime} z^{\prime\prime}}_2
   \le
   8\norm{z^\prime}_2\norm{\xi^\prime}_2\norm{z^{\prime\prime}}_2
   \le
   8 {\color{blue}\norm{z}_{2,2}^2}\norm{\xi}_{1,2}
   .
\end{split}
\end{equation*}
This shows that
$$
   [z\mapsto T^z]
   \in C^0({\color{gray}\ufrak_1\subset\;} U_1,\Ll({\color{gray}\hfrak_1\INTO\;} W^{1,2},\hfrak_0))
   \cap C^0(\ufrak_1,\Ll({\color{gray}\hfrak_2\INTO \hfrak_1\INTO\;} W^{1,2}, \hfrak_1))
   .
$$
Hence on both levels $T=T_{12}$ takes values in the \emph{compact} linear
operators
$$
   T_{12}
   \in C^0(\ufrak_1,\Cc(\hfrak_1,\hfrak_0))
   \cap C^0({\color{blue}\ufrak_1},\Cc(\hfrak_2, \hfrak_1))
   .
$$

\smallskip
\noindent
\textbf{\boldmath$T_{13}$.}
We set $T^z\xi:=4\norm{z^\prime}_2^2\xi_\tau$ and estimate
\begin{equation*}
\begin{split}
   \norm{T^z\xi}_2
   &=
   4\norm{z^\prime}_2^2\norm{\xi}_2
   \le 
   4\norm{z}_{1,2}^2\norm{\xi}_2
\\
{\color{gray}
   \norm{(T^z\xi)^\prime}_2
}
   &
{\color{gray}\;
   =
   4\norm{z^\prime}_2^2\norm{\xi^\prime}_2
   \le 
   4\norm{z}_{1,2}^2\norm{\xi}_{1,2}
}
\\
   \norm{(T^z\xi)^{\prime\prime}}_2
   &=
   4\norm{z^\prime}_2^2\norm{\xi^{\prime\prime}}_2
   \le 
   4 {\color{blue}\norm{z}_{1,2}^2}\norm{\xi}_{2,2}
   .
\end{split}
\end{equation*}
This shows that
$$
   [z\mapsto T^z]
   \in C^0({\color{gray}\ufrak_1\subset\;} U_1,\Ll({\color{gray}\hfrak_1\INTO\;} \hfrak_0,\hfrak_0))
   \cap C^0({\color{gray}\ufrak_1\subset\;}
   U_1,\Ll({\color{gray}\hfrak_2\INTO\;} \hfrak_1,\hfrak_1))
   .
$$
Hence on both levels $T=T_{13}$ takes values in the \emph{compact} linear
operators
$$
   T
   \in C^0(\ufrak_1,\Cc(\hfrak_1,\hfrak_0))
   \cap C^0({\color{blue}\ufrak_1},\Cc(\hfrak_2, \hfrak_1))
   .
$$

\boldmath
\subsubsection*{Term T2 -- $C_{21}{\color{gray}\;=(\tilde T_{21}+m_2)}$,
$F_{22}$}
\unboldmath

\smallskip
\noindent
\textbf{\boldmath\color{red}$C_{21}$.}
We set
$C^z\xi:=-8 \inner{z}{\xi} {\color{red}z^{\prime\prime}_\tau}$ and estimate
\begin{equation*}
\begin{split}
   \norm{C^z\xi}_2
   &
   \le 8\norm{z}_2\norm{z^{\prime\prime}}_2\norm{\xi}_2
   \le 
   8\norm{z}_2\norm{z}_{2,2}\norm{\xi}_{\color{magenta}2}
\\
{\color{gray}\;
   \norm{(C^z\xi)^\prime}_2
}
   &
{\color{gray}\; 
  \le 8\norm{z}_2\norm{z^{\prime\prime\prime}}_2\norm{\xi}_2
   \le
   8\norm{z}_2\norm{z}_{3.2}\norm{\xi}_2
}
\\
   \norm{(C^z\xi)^{\prime\prime}}_2
   &
   \le 8\norm{z}_2\norm{{\color{red}z^{\prime\prime\prime\prime}}}_2\norm{\xi}_2
   \le
   8\norm{z}_2{\color{red}\norm{z}_{4,2}}\norm{\xi}_2
   .
\end{split}
\end{equation*}
This shows that, given any $r\in(\frac12,1)$, we have
$$
   [z\mapsto C^z]
   \in C^0(\ufrak_1,\Ll({\color{gray}\hfrak_r\INTO\;} \hfrak_{\color{magenta}0},\hfrak_0))
   \cap C^0({\color{red}\ufrak_2},\Ll({\color{gray}\hfrak_1\INTO\;}\hfrak_0,\hfrak_1))
   .
$$
Hence on both levels $C=C_{21}$ takes values in the \emph{compact} linear
operators
$$
   C_{21}
   \in C^0(\ufrak_1,\Cc({\color{magenta}\hfrak_r},\hfrak_0))
   \cap C^0({\color{red}\ufrak_2},\Cc(\hfrak_1))
   .
$$
\begin{remark}
Since on level two $C_{21}$ does not extend to $\ufrak_1$
this operator necessarily goes into the $C$-part of
the decomposition~(\ref{eq:Lag-Hess-nonmag}).
This is in general very undesirable since for the 
$C$-part one must show the scale Lipschitz estimate~(\ref{eq:(C)}).
For $C_{21}$, due to its extremely simple formula,
this estimate is still relatively short,
see Step~(\texttt{C}) in the proof of
Theorem~\ref{thm:Lag-Hess-nonmag}.

The following operator $F_{22}$ extends on level two to $\ufrak_1$,
it goes in the $F$-part of the decomposition~(\ref{eq:Lag-Hess-nonmag}).
In fact, it is Fredholm of index zero.
The previous operators $T_{12}$ and $T_{13}$, as well as $T_{31}$ below,
are compact perturbations of $F_{22}$.
\end{remark}

\smallskip
\noindent
\textbf{\boldmath\color{orange}$F_{22}$.}
We set $F^z\xi:=-4\norm{z}^2 {\color{orange}\xi^{\prime\prime}_\tau}$ and estimate
\begin{equation*}
\begin{split}
   \norm{F^z\xi}_2
   &
   \le 4\norm{z}_2^2\norm{{\color{orange}\xi^{\prime\prime}}}_2
   \le 
   4\norm{z}_2^2 {\color{orange}\norm{\xi}_{2,2}}
\\
{\color{gray}
   \norm{(F^z\xi)^\prime}_2
}
   &
{\color{gray}\;
   \le 4\norm{z}_2^2\norm{\xi^{\prime\prime\prime}}_2
   \le 
   4\norm{z}_2^2\norm{\xi}_{3,2}
}
\\
   \norm{(F^z\xi)^{\prime\prime}}_2
   &
   \le 4\norm{z}_2^2\norm{{\color{orange}\xi^{\prime\prime\prime\prime}}}_2
   \le 
   4{\color{blue}\norm{z}_2^2}{\color{orange}\norm{\xi}_{4,2}}
   .
\end{split}
\end{equation*}
This shows that
$$
   [z\mapsto F^z]
   \in C^0({\color{gray}\ufrak_0\subset\;} \ufrak_1,\Ll(\hfrak_1,\hfrak_0))
   \cap C^0({\color{gray}\ufrak_0\subset\;} {\color{blue}\ufrak_1},\Ll(\hfrak_2,\hfrak_1))
   .
$$
Hence on both levels $z\mapsto F^z=F_{22}^z$ takes values in the
bounded linear operators and on level two it {\color{blue}extends to $\ufrak_1$}
$$
   F_{12}
   \in C^0(\ufrak_1,\Ll({\color{orange} \hfrak_1},\hfrak_0))
   \cap C^0({\color{blue}\ufrak_1},\Ll({\color{orange} \hfrak_2}, \hfrak_1))
   .
$$

\boldmath
\subsubsection{Potential term T3}
\unboldmath

\smallskip
\noindent
\textbf{\boldmath$T_{31}=\tilde T_{31}+m_3$.}
We set
$T^z\xi:=-\tfrac{2\xi_\tau}{\norm{z}_2^4}
+\tfrac{8\inner{z}{\xi}z_\tau}{\norm{z}_2^6}$ and estimate
\begin{equation}\label{eq:T31-Lag}
\begin{split}
   \norm{T^z\xi}_2
   &
   \le 
   \tfrac{10}{\norm{z}_2^4}\norm{\xi}_2
\\
{\color{gray}
   \norm{(T^z\xi)^\prime}_2
}
   &
{\color{gray}\;
   \le
   \tfrac{2}{\norm{z}_2^4}
   \left(1+4\tfrac{\norm{z}_{1,2}}{\norm{z}_2}\right)
   \norm{\xi}_{1,2}
}
\\
   \norm{(T^z\xi)^{\prime\prime}}_2
   &\le
   \tfrac{2}{\norm{z}_2^4}
   \left(1+4\tfrac{{\color{blue}\norm{z}_{2,2}}}{\norm{z}_2}\right)
   \norm{\xi}_{2,2}
   .
\end{split}
\end{equation}
This shows that
$$
   [z\mapsto T^z]
   \in C^0({\color{gray}\ufrak_0\subset\;} \ufrak_1,\Ll({\color{gray}\hfrak_1\INTO\;} \hfrak_0,\hfrak_0))
   \cap C^0({\color{blue}\ufrak_1},
   \Ll({\color{gray}\hfrak_2\INTO\;} \hfrak_1,\hfrak_1))
   .
$$
Hence on both levels $T=T_{13}$ takes values in the \emph{compact} linear
operators
$$
   T
   \in C^0(\ufrak_1,\Cc(\hfrak_1,\hfrak_0))
   \cap C^0({\color{blue}\ufrak_1},\Cc(\hfrak_2, \hfrak_1))
   .
$$

\boldmath
\subsection{Hamiltonian scale
$( h_0, h_1, h_2)=(L^2,W^{1,2},W^{2,2})$}
\label{sec:Ham-sc}
\unboldmath

In this section we analyze the potential contribution $z\mapsto U^z$
and, as described in Remark~\ref{rem:M_ij} and most importantly,
the magnetic contribution $z\mapsto M^z$ to the
Hessian field~$z\mapsto B^z$; see Lemma~\ref{le:Hessians-Lagrange}.

For the notation $h_k\supset u_k$
see Definition~\ref{def:analytic-setup}.

\boldmath
\subsubsection{Potential term T3}
\unboldmath

\smallskip
\noindent
\textbf{\boldmath$T_{31}=\tilde T_{31}+m_3$.}
We set
$
T^z\xi:=-\tfrac{2\xi_\tau}{\norm{z}_2^4}
+\tfrac{8\inner{z}{\xi}z_\tau}{\norm{z}_2^6}
$.
\newline
The first two estimates in~(\ref{eq:T31-Lag})
show that
$$
   [z\mapsto T^z]
   \in
   C^0({\color{gray} u_0\subset\;}  u_1,\Ll({\color{gray} h_1\INTO\;}  h_0, h_0))
   \cap
   C^0({\color{blue} u_1},\Ll({\color{gray} h_2\INTO\;}  h_1, h_1))
   .
$$
Hence on both levels $T=T_{13}$ takes values in the \emph{compact} linear
operators
$$
   T_{31}
   \in C^0( u_1,\Cc( h_1, h_0))
   \cap C^0({\color{blue} u_1},\Cc( h_2,  h_1))
   .
$$

\boldmath
\subsubsection{Magnetic terms T4-T7}
\unboldmath

Concerning notation $h_k\supset u_k$ see
Definition~\ref{def:analytic-setup}.

To carry out the following $17+{\color{orange}1}$ estimates we
use the utility estimates prepared in \S\,\ref{sec:util},
in particular for the vector potential $\aaa$, the function
$b=\rot\,\aaa$, and the Barutello-Ortega-Verzini reparametrization
$t_z$.

\boldmath
\subsubsection*{Term T4 -- $T_{41}=0{\color{gray}\;=\tilde T_{41}+m_4}$,
${\color{red}C_{42}}$, ${\color{red}C_{43}}$, ${\color{orange}F_{44}}$}
\label{sec:T4}
\unboldmath

\smallskip
\noindent
\textbf{\boldmath$T_{41}+m_4$.} 
This term is zero.

\smallskip
\noindent
\textbf{\boldmath\color{red}$C_{42}$.}
We define
$
C^z\xi:=\dot b_{t_z(\tau)}|_{z_\tau}(dt_z\xi)_\tau\, j_0 z_\tau^\prime
$
and estimate
\begin{equation*}
\begin{split}
   \norm{C^z\xi}_2
   &\le\norm{\dot b|_{\SS^1\times\im z}}_\infty
   \norm{dt_z\xi}_\infty
   \norm{z^\prime}_2
   \le 
   \dot\beta_{\SS^1}^z
   \tfrac{4 {\color{blue}\norm{z}_{1,2}}}{\norm{z}_2}\,\norm{\xi}_2
\\
   \norm{(C^z\xi)^\prime}_2
   &\le\norm{(\dot b_{t_{z}}|_{z})^\prime}_2
   \norm{dt_z\xi}_\infty
   \norm{z^\prime}_\infty
   +\norm{\dot b|_{\SS^1\times\im z}}_\infty
   \norm{(dt_z\xi)^\prime}_{{\color{cyan} 2}}
   \norm{z^\prime}_\infty
   \\
   &\quad
   +\norm{\dot b|_{\SS^1\times\im z}}_\infty
   \norm{dt_z\xi}_\infty
   \norm{{\color{red} z^{\prime\prime}}}_2
\\
   &\le
   \left(
   (d\beta)^{\prime,z_{1,2}}_2\, \tfrac{4}{\norm{z}_2}
   +\dot\beta_{\SS^1}^z\, d^\prime\tfrak^{z_{1,2}}_\infty 
   +\dot\beta_{\SS^1}^z\, \tfrac{4}{\norm{z}_2}
   \right)
   {\color{red} \norm{z}_{2,2}}\norm{\xi}_{{\color{cyan} 2}}
   .
\end{split}
\end{equation*}
This shows that, given any $r\in(\frac12,1)$, we have
$$
   [z\mapsto C^z]
   \in C^0( u_1,\Ll({\color{gray} h_r\INTO\;}  h_0, h_0))
   \cap C^0({\color{red} u_2},\Ll({\color{gray} h_1\INTO\;} h_0, h_1))
   .
$$
Hence on both levels $C=C_{42}$ takes values in the \emph{compact} linear
operators
\begin{equation}\label{eq:C_42}
\begin{split}
   C_{42}
   &\in C^0( u_1,\Cc({\color{red} h_r}, h_0))
   \cap C^0({\color{red} u_2},\Cc( h_1))
\\
   &\subset
   C^0( u_1,\Cc( h_1, h_0))\cap C^0({\color{red} u_2},\Cc( h_2, h_1))
   .
\end{split}
\end{equation}
\begin{remark}
Since on level two $C_{42}$ does not extend to $ u_1$
this operator necessarily goes into the $C$-part of
the decomposition~(\ref{eq:A=F+C}).
This is in general very undesirable, since for the 
$C$-part one must show the scale Lipschitz estimate~(\ref{eq:(C)}).
For $C_{42}$ this cumbersome task is carried out
in Proposition~\ref{prop:C_42}
and for the following operator $C_{43}$ in Proposition~\ref{prop:C_43}.

As a rule of thumb, all operators which extend on level two to $ u_1$
should be put into the $F$-part of
the decomposition~(\ref{eq:A=F+C}).
One of them should be Fredholm of index zero and
others compact perturbations.
In the case at hand $f_{44}$ below contributes to the Fredholm
operator in Lemma~\ref{le:f_44-sFred}.
Luckily all the many summands in T5 T6 T7 further below play the role
of compact perturbations.
So these do not require any further work to prove almost extendable.
\end{remark}

\smallskip
\noindent
\textbf{\boldmath\color{red}$C_{43}$.}
We define 
$
C^z\xi:=(db_{t_z(\tau)}|_{z_\tau}\xi_\tau)\, j_0 z_\tau^\prime
$,
pick $r\in(\frac12,1)$, and estimate
\begin{equation*}
\begin{split}
   \norm{C^z\xi}_2
   &\le\norm{db_{t_{z}}|_{z}}_\infty \norm{\xi}_{\color{red}\infty}
   \norm{z^\prime}_{\color{blue}2}
   \stackrel{\text{(\ref{eq:H_r-est})}}{\le}
   d\beta_{\SS^1}^z
{\color{blue}
   \norm{z}_{1,2}
}
{\color{red}
   \norm{\xi}_{r,2}
}
\\
   \norm{(C^z\xi)^\prime}_2
   &\le
   \norm{(db_{t_{z}}|_{z})^\prime}_2
   \norm{\xi}_\infty\norm{z^\prime}_\infty
   +\norm{db_{t_{z}}|_{z}}_\infty\norm{\xi^\prime}_2\norm{z^\prime}_\infty
   \\
   &\quad
   +\norm{db_{t_{z}}|_{z}}_\infty\norm{\xi}_\infty\norm{{\color{red} z^{\prime\prime}}}_2
\\
   &\le
   \bigl(
    (d\beta)^{\prime,z_{1,2}}_2
   +2d\beta_{\SS^1}^z
   \bigr)\,
{\color{red}
   \norm{z}_{2,2}
}
\norm{\xi}_{1,2}
   .
\end{split}
\end{equation*}
This shows that, given any $r\in(\frac12,1)$, we have
\begin{equation}\label{eq:C_43}
\begin{split}
   C_{43}
   &\in C^0( u_1,\Ll({\color{red} h_r}, h_0))
   \cap C^0({\color{red} u_2},\Ll( h_1))
\\
   &\subset
   C^0( u_1,\Cc( h_1, h_0))\cap C^0({\color{red} u_2},\Cc( h_2, h_1))
   .
\end{split}
\end{equation}

\smallskip
\noindent
\textbf{\boldmath\color{orange}$F_{44}=f_{44}\p_\tau$.}
Set $F_{44}^z=f_{44}^z\p_\tau$
and $f^z_{44}\xi:=-4b_{t_z}|_z\, j_0\xi$.
With the estimates for $t_z$ in \S\,\ref{sec:t_z}
and the $\beta$-constants in~(\ref{eq:b-infty})
we get the estimates
\begin{equation}\label{eq:f_44-individual}
\begin{split}
   \norm{f^z_{44}\xi}_2
   &\le
   \norm{4b_{t_z}|_z\, j_0 \xi}_2
   \le
   4 \beta_{\SS^1}^z \norm{\xi}_{2}
\\
   \norm{(f^z_{44}\xi)^\prime}_2
   &\le
   4\norm{(b_{t_{z}}|_{z})^\prime}_2\norm{\xi}_\infty
   +4\norm{b_{t_{z}}|_{z}}_\infty\norm{\xi^{\prime}}_2
\\
   &\le
   4 \left(\norm{\dot b_{t_{z}}|_{z}\, t_z^\prime}_2
   +\norm{db_{t_{z}}|_{z} z^\prime}_2
   + \beta_{\SS^1}^z\right) \norm{\xi}_{1,2}
\\
   &\le
   \underbrace{
   4 \left(
   \dot\beta_{\SS^1}^z\tfrac{\norm{z}_{1,2}}{\norm{z}_2}
   +d\beta_{\SS^1}^z\norm{z}_{1,2}
   + \beta_{\SS^1}^z\right)
   }_{=:\gamma_{\norm{z}_{1,2}}}
   \norm{\xi}_{1 ,2}
   .
\end{split}
\end{equation}
This shows the following estimate
\begin{equation}\label{eq:f_44}
\begin{split}
   \norm{f^z_{44}\xi}_{1,2}
   \le \norm{f^z_{44}\xi}_2+\norm{(f^z_{44}\xi)^\prime}_2
   &\le2 {\color{blue}\gamma_{\norm{z}_{1,2}}}\norm{\xi}_{1 ,2}
\\
   [z\mapsto f_{44}^z]
   &\in C^0({\color{blue}  u_1},\Ll( h_1))
   ,
\end{split}
\end{equation}
while the continuity assertion follows by writing out the
formula for ${f^z_{44}}^\prime$ together with smoothness of
$b_{t_z}|_z$ and $t_z(\tau)$ in both variables.
Since $F^z_{44}=f_{44}^z\p_\tau=-4b_{t_z}|_z\, j_0\p_\tau$
the estimates above immediately tell that
\begin{equation*}
\begin{split}
   \norm{F^z_{44}\xi}_2
   &=\norm{f^z_{44}\xi^\prime}_2
   \le
   4 \beta_{\SS^1}^z \norm{\xi}_{1,2}
\\
   \norm{(F^z_{44}\xi)^\prime}_2
   &=\norm{(f^z_{44}\xi^\prime)^\prime}_2
   \le
{\color{blue}\;
   \gamma_{\norm{z}_{1,2}}
}
   \norm{\xi}_{2,2}
   .
\end{split}
\end{equation*}
This shows that on both levels $z\mapsto F^z=F_{44}^z$ takes values in the
bounded linear operators and on level two it {\color{blue}extends to $ u_1$}
$$
   F_{44}
   \in C^0( u_1,\Ll( h_1, h_0))
   \cap C^0({\color{blue} u_1},\Ll( h_2, h_1))
   .
$$

\boldmath
\subsubsection*{Term T5 -- $T_{51}$, $T_{52}=\tilde T_{52}+m_5$,
$T_{53}$, $T_{54}$}
\unboldmath

\smallskip
\noindent
\textbf{\boldmath$T_{51}$.} 
We define $T^z\xi:=-\tfrac{2\inner{z_\tau}{\xi_\tau}_0}{\norm{z}_2^2}\,
\dot\aaa_{t_z(\tau)}|_{z_\tau}$
and estimate
\begin{equation*}
\begin{split}
   \norm{T^z\xi}_2
   &\le\tfrac{2\norm{z}_2\norm{\xi}_2}{\norm{z}_2^2}\,
   \dot\alpha_{\SS^1}^z
   \le 2\dot\alpha_{\SS^1}^z\tfrac{1}{\norm{z}_2}\norm{\xi}_2
\\
   \norm{(T^z\xi)^\prime}_2
   &=\Bigl\|
   \tfrac{2\inner{z_\tau^\prime}{\xi_\tau}_0}{\norm{z}_2^2}\,
   \dot\aaa_{t_z(\tau)}|_{z_\tau}
   +
   \tfrac{2\inner{z_\tau}{\xi_\tau^\prime}_0}{\norm{z}_2^2}\,
   \dot\aaa_{t_z(\tau)}|_{z_\tau}
   \\
   &\quad\;
   +
   \tfrac{2\inner{z_\tau}{\xi_\tau}_0}{\norm{z}_2^2}\,
   \ddot\aaa_{t_z(\tau)}|_{z_\tau} t_z^\prime(\tau)
   +
   \tfrac{2\inner{z_\tau}{\xi_\tau}_0}{\norm{z}_2^2}\,
   d\dot\aaa_{t_z(\tau)}|_{z_\tau} z^\prime_\tau
   \Bigr\|_2
   \\
   &\le
   \tfrac{2}{\norm{z}_2^2}
   \Bigl(
   \dot\alpha_{\SS^1}^z\norm{\xi}_\infty\norm{z^\prime}_2
   +\dot\alpha_{\SS^1}^z\norm{z}_\infty\norm{\xi^\prime}_2
   \\
   &\quad\;
   +\ddot\alpha_{\SS^1}^z\norm{z}_\infty\norm{\xi}_2
   \tfrac{\norm{z}_{1,2}^2}{\norm{z}_2^2}
   +d\dot\alpha_{\SS^1}^z\norm{z}_\infty\norm{\xi}_\infty\norm{z^\prime}_2
   \Bigr)
   \\
   &\le
   \tfrac{2}{\norm{z}_2^2}
   \left(
   2 \dot\alpha_{\SS^1}^z
   +\ddot\alpha_{\SS^1}^z\tfrac{\norm{z}_{1,2}^2}{\norm{z}_2^2}
   +d\dot\alpha_{\SS^1}^z \norm{z}_{1,2}
   \right)
{\color{blue}
   \norm{z}_{1,2}
}
   \norm{\xi}_{1,2}
   .
\end{split}
\end{equation*}
This shows that
$$
   [z\mapsto T^z]
   \in C^0(  u_1,\Ll({\color{gray} h_1\INTO\;}  h_0, h_0))
   \cap C^0({\color{blue}  u_1},\Ll({\color{gray} h_2\INTO\;}  h_1, h_1))
   .
$$
Hence on both levels $T=T_{51}$ takes values in the
\emph{compact} linear operators and on level two it
{\color{blue}extends to $ u_1$}
$$
   T_{51}
   \in C^0( u_1,\Cc( h_1, h_0))
   \cap C^0({\color{blue} u_1},\Cc( h_2,  h_1))
   .
$$

\smallskip
\noindent
\textbf{\boldmath$T_{52}+m_5$.} 
We define $T^z\xi:=\tfrac{2\abs{z_\tau}^2\inner{z}{\xi}}{\norm{z}_2^4}\,
\dot\aaa_{t_z(\tau)}|_{z_\tau}$ and estimate
\begin{equation*}
\begin{split}
   \norm{T^z\xi}_2
   &\le\tfrac{2\norm{z}_\infty\norm{z}_2^2\norm{\xi}_2}{\norm{z}_2^4}\,
   \dot\alpha_{\SS^1}^z
   \le 2\dot\alpha_{\SS^1}^z\tfrac{\norm{z}_{1,2}}{\norm{z}_2^2}\norm{\xi}_2
\\
   \norm{(T^z\xi)^\prime}_2
   &=\Bigl\|
   \tfrac{4\inner{z_\tau}{z_\tau^\prime}\inner{z}{\xi}}{\norm{z}_2^4}\,
   \dot\aaa_{t_z(\tau)}|_{z_\tau}
   \\
   &\quad\;
   +
   \tfrac{2\abs{z_\tau}^2\inner{z}{\xi}}{\norm{z}_2^4}\,
   \ddot\aaa_{t_z(\tau)}|_{z_\tau} t_z^\prime(\tau)
   +
   \tfrac{2\abs{z_\tau}^2\inner{z}{\xi}}{\norm{z}_2^4}\,
   d\dot\aaa_{t_z(\tau)}|_{z_\tau} z^\prime_\tau
   \Bigr\|_2
   \\
   &\le
   4\dot\alpha_{\SS^1}^z
   \tfrac{\norm{z}_\infty\norm{z^\prime}_2\norm{z}_2\norm{\xi}_2}{\norm{z}_2^4}
   +
   2\ddot\alpha_{\SS^1}^z
   \tfrac{\norm{z}_\infty\norm{z}_2\norm{z}_2\norm{\xi}_2}{\norm{z}_2^4}
   \tfrac{\norm{z}_{1,2}^2}{\norm{z}_2^2}
   \\
   &\quad\;
   +2d\dot\alpha_{\SS^1}^z
   \tfrac{\norm{z}_\infty\norm{z}_\infty\norm{z}_2\norm{\xi}_2}{\norm{z}_2^4}
   \norm{z^\prime}_2
   \\
   &\le
   \tfrac{\norm{z}_{1,2}^2}{\norm{z}_2^3}
   \left(
   4\dot\alpha_{\SS^1}^z
   +2\ddot\alpha_{\SS^1}^z \tfrac{\norm{z}_{1,2}}{\norm{z}_2}
   +2d\dot\alpha_{\SS^1}^z \norm{z}_{1,2}
   \right)\norm{\xi}_2
   .
\end{split}
\end{equation*}
This shows that on both levels $T=T_{52}$ takes values in the
\emph{compact} linear operators and on level two it
{\color{blue}extends to $ u_1$}
$$
   T_{52}
   \in C^0( u_1,\Cc( h_1, h_0))
   \cap C^0({\color{blue} u_1},\Cc( h_2,  h_1))
   .
$$

\smallskip
\noindent
\textbf{\boldmath$T_{53}$.} 
We define $T^z\xi:=-\tfrac{\abs{z_\tau}^2}{\norm{z}_2^2}\,
\ddot\aaa_{t_z(\tau)}|_{z_\tau} \,(dt_z\xi)_\tau$ and estimate
\begin{equation*}
\begin{split}
   \norm{T^z\xi}_2
   &\le\tfrac{\norm{z}_\infty\norm{z}_2}{\norm{z}_2^2}\,
   \norm{\ddot\aaa_{t_z}|_z}_\infty
   \norm{dt_z\xi}_\infty
   \le 4 \ddot\alpha_{\SS^1}^z
   \tfrac{\norm{z}_{1,2}}{\norm{z}_2^2} \norm{\xi}_2
\\
   \norm{(T^z\xi)^\prime}_2
   &=\Bigl\|
   \tfrac{2\inner{z_\tau}{z_\tau^\prime}}{\norm{z}_2^2}\,
   \ddot\aaa_{t_z(\tau)}|_{z_\tau} \,(dt_z\xi)_\tau
   +
   \tfrac{\abs{z_\tau}^2}{\norm{z}_2^2}\,
   \dddot\aaa_{t_z(\tau)}|_{z_\tau} t_z^\prime(\tau) \,(dt_z\xi)_\tau
   \\
   &\quad\;
   +
   \tfrac{\abs{z_\tau}^2}{\norm{z}_2^2}\,
   d\ddot\aaa_{t_z(\tau)}|_{z_\tau} z_\tau^\prime\,(dt_z\xi)_\tau
   +
   \tfrac{\abs{z_\tau}^2}{\norm{z}_2^2}\,
   \ddot\aaa_{t_z(\tau)}|_{z_\tau} \,(dt_z\xi)^\prime_\tau
   \Bigr\|_2
   \\
   &\le
   2\ddot\alpha_{\SS^1}^z
   \tfrac{\norm{z}_\infty\norm{z^\prime}_2}{\norm{z}_2^2}
   \tfrac{4\norm{\xi}_2}{\norm{z}_2}
  +\dddot\alpha_{\SS^1}^z
   \tfrac{\norm{z}_\infty^2}{\norm{z}_2^2}\tfrac{\norm{z}_{1,2}}{\norm{z}_2}
   \tfrac{4\norm{\xi}_2}{\norm{z}_2}
   \\
   &\quad\;
   +
   d\ddot\alpha_{\SS^1}^z
   \tfrac{\norm{z}_\infty^2}{\norm{z}_2^2}\norm{z^\prime}_2
   \tfrac{4\norm{\xi}_2}{\norm{z}_2}
  +
   \ddot\alpha_{\SS^1}^z
   \tfrac{\norm{z}_\infty^2}{\norm{z}_2^2}
   2\tfrac{\norm{z}_{1,2}}{\norm{z}_2^2}
   \left(1+\tfrac{\norm{z}_{1,2}}{\norm{z}_2}\right)
   \norm{\xi}_2
   \\
   &\le const(\norm{z}_{1,2})\,\norm{\xi}_2
   .
\end{split}
\end{equation*}
This shows that on both levels $T=T_{53}$ takes values in the
\emph{compact} linear operators and on level two it
{\color{blue}extends to $ u_1$}
$$
   T_{53}
   \in C^0( u_1,\Cc( h_1, h_0))
   \cap C^0({\color{blue} u_1},\Cc( h_2,  h_1))
   .
$$

\smallskip
\noindent
\textbf{\boldmath$T_{54}$.} 
We define $T^z\xi:=-\tfrac{\inner{z_\tau}{{\color{red}z_\tau}}}{\norm{z}_2^2}
\begin{pmatrix}
   d\dot a^1_{t_z(\tau)}|_{z_\tau} \xi_\tau
   \\
   d\dot a^2_{t_z(\tau)}|_{z_\tau} \xi_\tau
\end{pmatrix}$
and estimate
\begin{equation*}
\begin{split}
   \norm{T^z\xi}_2
   &\le\tfrac{\norm{z}_\infty^2}{\norm{z}_2^2}\,
   d\dot\alpha_{\SS^1}^z\norm{\xi}_2
   \le
   d\dot\alpha_{\SS^1}^z\tfrac{\norm{z}_{1,2}^2}{\norm{z}_2^2}\norm{\xi}_2
\\
   \norm{(T^z\xi)^\prime}_2
   &=\Biggl\|
   \tfrac{2\inner{z_\tau}{{\color{red}z_\tau^\prime}}}{\norm{z}_2^2}
\begin{pmatrix}
   d\dot a^1_{t_z(\tau)}|_{z_\tau} \xi_\tau
   \\
   d\dot a^2_{t_z(\tau)}|_{z_\tau} \xi_\tau
\end{pmatrix}
   \\
   &\quad\;
   +
   \tfrac{\abs{z_\tau}^2}{\norm{z}_2^2}
\begin{pmatrix}
   d\ddot a^1_{t_z(\tau)}|_{z_\tau} t_z^\prime(\tau) \xi_\tau
   +d^2\dot a^1_{t_z(\tau)}|_{z_\tau} z^\prime_\tau \xi_\tau
   +d\dot a^1_{t_z(\tau)}|_{z_\tau} \xi^\prime_\tau
   \\
   d\ddot a^2_{t_z(\tau)}|_{z_\tau} t_z^\prime(\tau) \xi_\tau
   +d^2\dot a^2_{t_z(\tau)}|_{z_\tau} z^\prime_\tau \xi_\tau
   +d\dot a^2_{t_z(\tau)}|_{z_\tau} \xi^\prime_\tau
\end{pmatrix}
   \Biggr\|_2
\\
   &\le const(\norm{z}_{1,2})\,\norm{\xi}_{1,2}
   .
\end{split}
\end{equation*}
This shows that on both levels $T=T_{54}$ takes values in the
\emph{compact} linear operators and on level two it
{\color{blue}extends to $ u_1$}
$$
   T_{54}
   \in C^0( u_1,\Cc( h_1, h_0))
   \cap C^0({\color{blue} u_1},\Cc( h_2,  h_1))
   .
$$

\boldmath
\subsubsection*{Term T6 -- $T_{61}$, $T_{62}=\tilde T_{62}+m_6$,
$T_{63}$, $T_{64}$, $T_{65}$}
\unboldmath

\smallskip
\noindent
\textbf{\boldmath$T_{61}$.} 
We define $T^z\xi:=\tfrac{2 \xi_\tau}{\norm{z}_2^2} {\textstyle\int_{\tau}^1}
\inner{\dot{\aaa}_{t_{z}(\sigma)}|_{z_{\sigma}}}{z^\prime_{\sigma}}_0 \; d\sigma$
and estimate
\begin{equation}\label{eq:C_61}
\begin{split}
   \norm{T^z\xi}_2
   &\le
   \tfrac{2\norm{\xi}_2}{\norm{z}_2^2}
   \sup_{\tau\in\SS^1}\Abs{{\textstyle\int_{\tau}^1}
   \inner{\dot{\aaa}_{t_{z}(\sigma)}|_{z_{\sigma}}}{z^\prime_{\sigma}}_0\;
   d\sigma}
   \\
   &
   \le\tfrac{2\norm{\xi}_2}{\norm{z}_2^2}
   \sup_{\tau\in\SS^1}\left(\norm{\dot{\aaa}_{t_z}|_z}_{{\color{red}L^2_{[\tau,1]}}}
   \norm{z^\prime}_{{\color{red}L^2_{[\tau,1]}}}\right)
   \\
   &
   \le 2\dot\alpha_{\SS^1}^z\tfrac{\norm{z}_{1,2}}{\norm{z}_2^2}\norm{\xi}_2
\\
   \norm{(T^z\xi)^\prime}_2
   &\le
   \tfrac{2\norm{\xi^\prime}_2}{\norm{z}_2^2}
   \sup_{\tau\in\SS^1}\Abs{
{\textstyle
   \int_{\tau}^1
}
\inner{\dot{\aaa}_{t_{z}(\sigma)}|_{z_{\sigma}}}{z^\prime_{\sigma}}_0
\; d\sigma}
   +\tfrac{2\norm{\xi}_\infty}{\norm{z}_2^2}\Norm{
   \inner{\dot{\aaa}_{t_{z}(\cdot)}|_{z_{\cdot}}}
   {z^\prime_{\cdot}}_0}_2
   \\
   &\le 2\dot\alpha_{\SS^1}^z\tfrac{\norm{z}_{1,2}}{\norm{z}_2^2}\norm{\xi}_{1,2}
   +2\dot\alpha_{\SS^1}^z\tfrac{\norm{z^\prime}_{2}}{\norm{z}_2^2}\norm{\xi}_{1,2}
   .
\end{split}
\end{equation}
This shows that on both levels $T=T_{61}$ takes values in the
\emph{compact} linear operators and on level two it
{\color{blue}extends to $ u_1$}
$$
   T_{61}
   \in C^0( u_1,\Cc( h_1, h_0))
   \cap C^0({\color{blue} u_1},\Cc( h_2,  h_1))
   .
$$

\smallskip
\noindent
\textbf{\boldmath$T_{62}$.} 
We define $T^z\xi:=-\tfrac{4 z_\tau\inner{z}{\xi}}{\norm{z}^4}
{\textstyle\int_{\tau}^1}
\inner{\dot{\aaa}_{t_{z}(\sigma)}|_{z_{\sigma}}}{z^\prime_{\sigma}}_0
\; d\sigma$
and estimate
\begin{equation*}
\begin{split}
   \norm{T^z\xi}_2
   &
   \le 4 \dot\alpha_{\SS^1}^z\tfrac{\norm{z}_{1,2}}{\norm{z}_2^2}
   \norm{\xi}_2
   \qquad \text{\small\color{gray} (see $T_{61}$)}
\\
   \norm{(T^z\xi)^\prime}_2
   &\le
   \tfrac{4\norm{z^\prime}_2\norm{z}_2\norm{\xi}_2}{\norm{z}_2^4}
   \sup_{\tau\in\SS^1}\Abs{
{\textstyle
   \int_{\tau}^1
}
\inner{\dot{\aaa}_{t_{z}(\sigma)}|_{z_{\sigma}}}{z^\prime_{\sigma}}_0
\; d\sigma}
   \qquad \text{\small\color{gray} (see $T_{61}$)}
   \\
   &\quad
   +\tfrac{4\norm{z}_\infty\norm{z}_2\norm{\xi}_2}{\norm{z}_2^4}\Norm{
   {\color{gray}\tau\mapsto\;}\inner{\dot{\aaa}_{t_{z}(\tau)}|_{z_{\tau}}}
   {z^\prime_{\tau}}_0}_2
   \\
   &\le
   8 \dot\alpha_{\SS^1}^z\tfrac{\norm{z}_{1,2}^2}{\norm{z}_2^3}\norm{\xi}_2
   .
\end{split}
\end{equation*}
This shows that on both levels $T=T_{62}$ takes values in the
\emph{compact} linear operators and on level two it
{\color{blue}extends to $ u_1$}
$$
   T_{62}
   \in C^0( u_1,\Cc( h_1, h_0))
   \cap C^0({\color{blue} u_1},\Cc( h_2,  h_1))
   .
$$

\smallskip
\noindent
\textbf{\boldmath$T_{63}$.}
We define $T^z\xi:=\tfrac{2z_\tau}{\norm{z}^2_2}
{\textstyle\int_{\tau}^1}
\inner{\ddot{\aaa}_{t_{z}(\sigma)}|_{z_{\sigma}}\,(dt_z\xi)_\sigma}{z^\prime_{\sigma}}_0\; d\sigma$
and estimate
\begin{equation*}
\begin{split}
   \norm{T^z\xi}_2
   &\le \tfrac{2\norm{z}_2}{\norm{z}^2_2}
   \left(\norm{\ddot{\aaa}_{t_z}|_z}_{L^\infty_{[\tau,1]}}
   \norm{dt_z\xi}_{L^2_{[\tau,1]}}
   \norm{z^\prime}_{L^2_{[\tau,1]}}\right)
   \qquad\, \text{\small\color{gray} (see $T_{61}$)}
   \\
   &\le\tfrac{2}{\norm{z}_2}\ddot\alpha_{\SS^1}^z
   \tfrac{4}{\norm{z}_2}\norm{\xi}_2 \norm{z^\prime}_2
   \\
   &\le
   8 \ddot\alpha_{\SS^1}^z
   \tfrac{\norm{z}_{1,2}}{\norm{z}_2^2}\norm{\xi}_2
\\
   \norm{(T^z\xi)^\prime}_2
   &\le \tfrac{2\norm{z^\prime}_2}{\norm{z}^2_2}
   \left(\norm{\ddot{\aaa}_{t_z}|_z}_{L^\infty_{[\tau,1]}}
   \norm{dt_z\xi}_{L^2_{[\tau,1]}}
   \norm{z^\prime}_{L^2_{[\tau,1]}}\right)
   \qquad \text{\small\color{gray} (see $\norm{T^z\xi}_2$)}
   \\
   &\quad
   +\tfrac{2\norm{z}_\infty}{\norm{z}_2^2}
   \norm{
   {\color{gray}\tau\mapsto\;}
   \inner{\underbrace{\ddot{\aaa}_{t_{z}(\tau)}|_{z_{\tau}}}_{L^\infty}
   \,\underbrace{(dt_z\xi)_\tau}_{L^\infty}}
   {\underbrace{z^\prime_{\tau}}_{L^2}}_0}_2
   \qquad\, \text{\small\color{gray} (see $T_{61}$)}
   \\
   &\le
   16 \ddot\alpha_{\SS^1}^z
   \tfrac{\norm{z}_{1,2}^2}{\norm{z}_2^3}\norm{\xi}_2
   .
\end{split}
\end{equation*}
This shows that on both levels $T=T_{63}$ takes values in the
\emph{compact} linear operators and on level two it
{\color{blue}extends to $ u_1$}
$$
   T_{63}
   \in C^0( u_1,\Cc( h_1, h_0))
   \cap C^0({\color{blue} u_1},\Cc( h_2,  h_1))
   .
$$

\smallskip
\noindent
\textbf{\boldmath$T_{64}$.} 
Define $T^z\xi:=\tfrac{2z_\tau}{\norm{z}_2^2}
   \int_{\tau}^1
   \inner{
      (d\dot a^1_{t_z(\sigma)}|_{z_\sigma} \xi_\sigma,
      d\dot a^2_{t_z(\sigma)}|_{z_\sigma} \xi_\sigma)
   }
   {z^\prime_{\sigma}}_0
   \; d\sigma$
and estimate
\begin{equation*}
\begin{split}
   \norm{T^z\xi}_2
   &\le \tfrac{2\norm{{\color{red}z}}_2}{\norm{z}_2^2}
   \sup_{\tau\in\SS^1}\Abs{\int_{\tau}^1
   \INNER{
   \begin{pmatrix}
      d\dot a^1_{t_z(\sigma)}|_{z_\sigma} \xi_\sigma
      \\
      d\dot a^2_{t_z(\sigma)}|_{z_\sigma} \xi_\sigma
   \end{pmatrix}
   }
   {z^\prime_{\sigma}}_0
   \; d\sigma
   }
   \\
   &=
   \tfrac{2}{\norm{z}_2}\sup_{\tau\in\SS^1}\Abs{
   \INNER{
   \begin{pmatrix}
      d\dot a^1_{t_z}|_z \xi
      \\
      d\dot a^2_{t_z}|_z \xi
   \end{pmatrix}
   }
   {z^\prime}_{L^2_{[\tau,1]}}
   }
   \\
   &\le
   \tfrac{2}{\norm{z}_2}
   \Norm{\begin{pmatrix}
      d\dot a^1_{t_z}|_z \xi
      \\
      d\dot a^2_{t_z}|_z \xi
   \end{pmatrix}}_2
   \norm{z^\prime}_2
   \\
   &\le \tfrac{2}{\norm{z}_2}\sqrt{2}
   \bigl\|d\dot \aaa|_{\SS^1\times\im z}\bigr\|_\infty
   \norm{\xi}_2\norm{z^\prime}_2
   \\
   &
   \le 2\sqrt{2}d\dot\alpha_{\SS^1}^z
   \tfrac{\norm{z}_{1,2}}{\norm{z}_2} \norm{\xi}_2
\\
   \norm{(T^z\xi)^\prime}_2
   &\le \tfrac{2\norm{{\color{red}z^\prime}}_2}{\norm{z}_2^2}
   \sup_{\tau\in\SS^1}\Abs{\int_{\tau}^1
   \INNER{
   \begin{pmatrix}
      d\dot a^1_{t_z(\sigma)}|_{z_\sigma} \xi_\sigma
      \\
      d\dot a^2_{t_z(\sigma)}|_{z_\sigma} \xi_\sigma
   \end{pmatrix}
   }
   {z^\prime_{\sigma}}_0
   \; d\sigma
   }
   \quad \text{\small\color{gray} (see $\norm{T^z\xi}_2$)}
   \\
   &\quad\;
   +\tfrac{2\norm{z}_{{\color{red}\infty}}}{\norm{z}_2^2}
   \Norm{
   {\color{gray}\tau\mapsto\;}
   \INNER{
   \begin{pmatrix}
      d\dot a^1_{t_z(\tau)}|_{z_\tau} \xi_\tau
      \\
      d\dot a^2_{t_z(\tau)}|_{z_\tau} \xi_\tau
   \end{pmatrix}
   }
   {z^\prime_{\tau}}_0}_2
   \\
   &\le
   2\sqrt{2}d\dot\alpha_{\SS^1}^z
   \tfrac{\norm{z}_{1,2}^2}{\norm{z}_2^2} \norm{\xi}_2
   %
   +2 \tfrac{\norm{z}_{1,2}   }{\norm{z}_2^2}
   \underbrace{
   \Norm{
   \begin{pmatrix}
     d\dot a^1_{t_z}|_z \xi
     \\
     d\dot a^2_{t_z}|_z \xi
   \end{pmatrix}}_\infty
    }_{\le \sqrt{2}d\dot\alpha_{\SS^1}^z\norm{\xi}_\infty}
   \norm{z^\prime}_2
   \\
   &
   \le 4\sqrt{2}d\dot\alpha_{\SS^1}^z
   \tfrac{\norm{z}_{1,2}^2}{\norm{z}_2^2} \norm{\xi}_{1,2}
   .
\end{split}
\end{equation*}
This shows that on both levels $T=T_{64}$ takes values in the
\emph{compact} linear operators and on level two it
{\color{blue}extends to $ u_1$}
$$
   T_{64}
   \in C^0( u_1,\Cc( h_1, h_0))
   \cap C^0({\color{blue} u_1},\Cc( h_2,  h_1))
   .
$$

\smallskip
\noindent
\textbf{\boldmath\color{red}$T_{65}$.}
We define
\begin{equation}\label{eq:T_65}
\begin{split}
   F^z\xi:
   &=\tfrac{2z_\tau}{\norm{z}_2^2}\int_{\tau}^1
   \inner{\dot{\aaa}_{t_{z}(\sigma)}|_{z_{\sigma}}}{{\color{orange}\xi^\prime_{\sigma}}}_0
   \; d\sigma
\end{split}
\end{equation}
and we estimate
\begin{equation}\label{eq:F_65-est}
\begin{split}
   \norm{F^z\xi}_2
   &\le
   2\dot\alpha_{\SS^1}^z\tfrac{1}{\norm{z}_2}\norm{\xi^\prime}_2
   \le
   2\dot\alpha_{\SS^1}^z\tfrac{1}{\norm{z}_2}\norm{\xi}_{1,2}
   \qquad \text{\small\color{gray} (see $C_{61}$)}
\\
   \norm{(F^z\xi)^\prime}_2
   &=\tfrac{2}{\norm{z}_2^2}
   \Norm{{\color{gray}\tau\mapsto\;} z^\prime_\tau
{\textstyle
   \int_{\tau}^1
}
\inner{\dot{\aaa}_{t_{z}(\sigma)}|_{z_{\sigma}}}{{\color{orange}\xi^\prime_{\sigma}}}_0
\; d\sigma
   -z_\tau \inner{\dot{\aaa}_{t_{z}(\tau)}|_{z_{\tau}}}
   {{\color{orange}\xi^\prime_{\tau}}}_0}_2
   \\
   &\le 
   4\dot\alpha_{\SS^1}^z\tfrac{\norm{z}_{1,2}}{\norm{z}_2^2}\norm{\xi}_{1,2}
   .
   \qquad \text{\small\color{gray} (see $C_{61}$)}
\end{split}
\end{equation}
As usual on level two $z\mapsto T_{65}^z$
{\color{blue}extends to $ u_1$} and induces a
compact operator
$$
    h_2\stackrel{\rm cp.}{\INTO}
    h_1\stackrel{\rm bd}{\longrightarrow}
    h_1
   .
$$
On level one we only get boundedness $ h_1\to  h_0$, to prove
compactness is harder; see Lemma~\ref{le:f_65_75-comp}.

\boldmath
\subsubsection*{Term T7 -- $T_{71}=\tilde T_{71}+m_7$,
$T_{72}$, $T_{73}$, $T_{74}$, $T_{75}$}
\unboldmath

\smallskip
\noindent
\textbf{\boldmath$T_{71}$.} 
For $T^z\xi:=\left(\tfrac{8\inner{z}{\xi}z_\tau}{\norm{z}_2^6}
-\tfrac{2\xi_\tau}{\norm{z}_2^4}\right)
\int_0^1
{\textstyle
   \int_0^s\Abs{z_{\sigma}}^2 d\sigma
}
   \cdot\inner{\dot{\aaa}_{t_{z}(s)}|_{z_{s}}}{z^\prime_{s}}_0\; ds$
we estimate
\begin{equation*}
\begin{split}
   \norm{T^z\xi}_2
   &\le \left(\tfrac{8\norm{\xi}_2}{\norm{z}_2^4}
   +\tfrac{2\norm{\xi}_2}{\norm{z}_2^4}\right)
   \biggl| \int_0^1
\underbrace{
{\textstyle
   \int_0^s\Abs{z_{\sigma}}^2 d\sigma}
}_{=: f(s)\le f(1)}
   \cdot\inner{\dot{\aaa}_{t_{z}(s)}|_{z_{s}}}{z^\prime_{s}}_0\; ds\biggr|
   \\
   &=
   \tfrac{10\norm{\xi}_2}{\norm{z}_2^4}
   \Abs{\INNER{f \dot{\aaa}_{t_z}|_z}{z^\prime}}
   \le \tfrac{10\norm{\xi}_2}{\norm{z}_2^4}
   \norm{(\dot{\aaa}_{t_z}|_z)}_\infty
   \norm{\underbrace{f}_{\le f(1)}}_2
   \norm{z^\prime}_2
   \\
   &\le \tfrac{10\norm{\xi}_2}{\norm{z}_2^4}
   \dot\alpha_{\SS^1}^z
   \underbrace{f(1)}_{=\norm{z}_2^2}\underbrace{\norm{1}_2}_{=1}
   \norm{z}_{1,2}
   \\
   &
   \le 10 \dot\alpha_{\SS^1}^z 
   \tfrac{\norm{z}_{1,2}}{\norm{z}_2^2}\,\norm{\xi}_2
\\
   \norm{(T^z\xi)^\prime}_2
   &\le
   8 \dot\alpha_{\SS^1}^z 
   \tfrac{\norm{z}_{1,2}}{\norm{z}_2^2}
   \left(\tfrac{\norm{z}_{1,2}}{\norm{z}_2}+1\right)\norm{\xi}_{1,2}
   .
   \qquad \text{\small\color{gray} (cf. $\norm{T^z\xi}_2$)}
\end{split}
\end{equation*}
This shows that on both levels $T=T_{71}$ takes values in the
\emph{compact} linear operators and on level two it
{\color{blue}extends to $ u_1$}
$$
   T_{71}
   \in C^0( u_1,\Cc( h_1, h_0))
   \cap C^0({\color{blue} u_1},\Cc( h_2,  h_1))
   .
$$

\smallskip
\noindent
\textbf{\boldmath$T_{72}$.} 
For $T^z\xi:=-z_\tau\tfrac{4}{\norm{z}_2^4} \int_0^1
{\textstyle
   \int_0^s\inner{z_\sigma}{\xi_\sigma}\, d\sigma
}
   \cdot\inner{\dot{\aaa}_{t_{z}(s)}|_{z_{s}}}{z^\prime_{s}}_0\; ds$
we estimate
\begin{equation*}
\begin{split}
   \norm{T^z\xi}_2
   &\le \tfrac{4\norm{z}_2}{\norm{z}_2^4}
{\textstyle
   \int_0^1
}
\underbrace{
{\textstyle
   \int_0^s\inner{z_\sigma}{\xi_\sigma}\, d\sigma
}
}_{=:g(s)}
   \cdot\inner{\dot{\aaa}_{t_{z}(s)}|_{z_{s}}}{z^\prime_{s}}_0\; ds
   \qquad \text{\small\color{gray} (cf. $T_{71}$)}
   \\
   &\le\tfrac{4}{\norm{z}_2^3}
   \norm{\dot \aaa|_{\SS^1\times\im z}}_\infty
   \norm{g}_2\norm{z^\prime}_2
   {\color{gray}\qquad\small
   \text{, $\norm{g}_2\le \norm{z}_2\norm{\xi}_2$}}
   \\
   &
   \le 4 \dot\alpha_{\SS^1}^z
   \tfrac{\norm{z}_{1,2}}{\norm{z}_2^2}\norm{\xi}_2
\\
   \norm{(T^z\xi)^\prime}_2
   &=\Norm{
   {\color{red}z^\prime}\tfrac{4}{\norm{z}_2^4} \int_0^1
{\textstyle
   \int_0^s\inner{z_\sigma}{\xi_\sigma}\, d\sigma
}
   \cdot\inner{\dot{\aaa}_{t_{z}(s)}|_{z_{s}}}{z^\prime_{s}}_0\; ds
   }_2
   \\
   &\le
   4 \dot\alpha_{\SS^1}^z
   \tfrac{\norm{z}_{1,2}^2}{\norm{z}_2^3}\norm{\xi}_2
   .
   \qquad \text{\small\color{gray} (cf. $\norm{T^z\xi}_2$)}
\end{split}
\end{equation*}
This shows that on both levels $T=T_{72}$ takes values in the
\emph{compact} linear operators and on level two it
{\color{blue}extends to $ u_1$}
$$
   T_{72}
   \in C^0( u_1,\Cc( h_1, h_0))
   \cap C^0({\color{blue} u_1},\Cc( h_2,  h_1))
   .
$$

\smallskip
\noindent
\textbf{\boldmath$T_{73}$.} 
For $T^z\xi:=-z_\tau\tfrac{2}{\norm{z}_2^4}\int_0^1
{\textstyle
   \int_0^s\Abs{z_{\sigma}}^2 d\sigma
}
   \cdot\inner{\ddot{\aaa}_{t_{z}(s)}|_{z_{s}}\, (dt_z\xi)_s}{z^\prime_{s}}_0\; ds$
we estimate
\begin{equation*}
\begin{split}
   \norm{T^z\xi}_2
   &\le \tfrac{2\norm{z}_2}{\norm{z}_2^4}
   \norm{\underbrace{f}_{\le f(1)}}_2\;
   \norm{(\ddot{\aaa}_{t_z}|_z)}_\infty
   \norm{dt_z\xi}_\infty
   \norm{z^\prime}_2
   \qquad \text{\small\color{gray} (see $T_{71}$)}
   \\
   &
   \le 8 \ddot\alpha_{\SS^1}^z \tfrac{\norm{z}_{1,2}}{\norm{z}_2^2}\norm{\xi}_2
\\
   \norm{(T^z\xi)^\prime}_2
   &=\Norm{
   {\color{red}z^\prime}\tfrac{2}{\norm{z}_2^4}
{\textstyle
   \int_0^1
}
{\textstyle
   \int_0^s\Abs{z_{\sigma}}^2 d\sigma
}
   \cdot\inner{\ddot{\aaa}_{t_{z}(s)}|_{z_{s}} (dt_z\xi)_s}{z^\prime_{s}}_0\; ds
   }_2
   \\
   &
   \le 8 \ddot\alpha_{\SS^1}^z \tfrac{\norm{z}_{1,2}^2}{\norm{z}_2^3}\norm{\xi}_2
   .
   \qquad \text{\small\color{gray} (cf. $\norm{T^z\xi}_2$)}
\end{split}
\end{equation*}
This shows that on both levels $T=T_{73}$ takes values in the
\emph{compact} linear operators and on level two it
{\color{blue}extends to $ u_1$}
$$
   T_{73}
   \in C^0( u_1,\Cc( h_1, h_0))
   \cap C^0({\color{blue} u_1},\Cc( h_2,  h_1))
   .
$$

\smallskip
\noindent
\textbf{\boldmath$T_{74}$.} 
For $T^z\xi:=-z_\tau\tfrac{2}{\norm{z}^4}\int_0^1
{\textstyle
   \int_0^s\Abs{z_{\sigma}}^2 d\sigma
}
   \cdot\INNER{
   \begin{pmatrix}
      d\dot a^1_{t_z(s)}|_{z_s} \xi_s
      \\
      d\dot a^2_{t_z(s)}|_{z_s} \xi_s
   \end{pmatrix}
   }{z^\prime_{s}}_0\; ds$
we estimate
\begin{equation*}
\begin{split}
   \norm{T^z\xi}_2
   &
   \le 2\sqrt{2}d\dot\alpha_{\SS^1}^z
   \tfrac{\norm{z}_{1,2}}{\norm{z}_2}\norm{\xi}_2
\\
   \norm{(T^z\xi)^\prime}_2
   &\le
   \le 2\sqrt{2}d\dot\alpha_{\SS^1}^z
   \tfrac{\norm{z}_{1,2}^2}{\norm{z}_2^2}\norm{\xi}_2
   .
   \qquad \text{\small\color{gray} (cf. $\norm{T^z\xi}_2$)}
\end{split}
\end{equation*}
This shows that on both levels $T=T_{74}$ takes values in the
\emph{compact} linear operators and on level two it
{\color{blue}extends to $ u_1$}
$$
   T_{74}
   \in C^0( u_1,\Cc( h_1, h_0))
   \cap C^0({\color{blue} u_1},\Cc( h_2,  h_1))
   .
$$

\smallskip
\noindent
\textbf{\boldmath\color{red}$T_{75}$.}
We define 
\begin{equation}\label{eq:T_75}
   T^z\xi:=-z_\tau\tfrac{2}{\norm{z}^4_2} \int_0^1
{\textstyle
   \int_0^s\Abs{z_{\sigma}}^2 d\sigma
}
   \cdot\inner{\dot{\aaa}_{t_{z}(s)}|_{z_{s}}}{{\color{orange}\xi^\prime_s}}_0\;
   ds
\end{equation}
and estimate
\begin{equation}\label{eq:F_75-est}
\begin{split}
   \norm{T^z\xi}_2
   &
   \le 2\dot\alpha_{\SS^1}^z\tfrac{1}{\norm{z}_2}\norm{\xi^\prime}_2
   \le
   2\dot\alpha_{\SS^1}^z\tfrac{1}{\norm{z}_2}
   \;{\color{red}\norm{\xi}_{1,2}}
   \qquad \text{\small\color{gray} (see $T_{71}$)}
\\
   \norm{(T^z\xi)^\prime}_2
   &\le\tfrac{2}{\norm{z}^4_2} 
   \norm{z^\prime_\tau}_2
   \Abs{\int_0^1
{\textstyle
   \int_0^s\Abs{z_{\sigma}}^2 d\sigma
}
   \cdot\inner{\dot{\aaa}_{t_{z}(s)}|_{z_{s}}}{{\color{orange}\xi^\prime_s}}_0\; ds
   }
   \\
   &\le
   2\dot\alpha_{\SS^1}^z\tfrac{\norm{z}_{1,2}}{\norm{z}_2^2}\norm{\xi}_{1,2}
   .
   \qquad \text{\small\color{gray} (see $T_{71}$)}
\end{split}
\end{equation}
As usual on level two $z\mapsto T_{75}^z$
{\color{blue}extends to $ u_1$} and induces a
compact operator
$$
    h_2\stackrel{\rm cp.}{\INTO}
    h_1\stackrel{\rm bd}{\longrightarrow}
    h_1
   .
$$
On level one we only get boundedness $ h_1\to  h_0$, to prove
compactness is harder; see Lemma~\ref{le:f_65_75-comp}.







\bibliographystyle{alpha}
\addcontentsline{toc}{section}{References}
\small
\bibliography{$HOME/Dropbox/0-Libraries+app-data/Bibdesk-BibFiles/library_math,$HOME/Dropbox/0-Libraries+app-data/Bibdesk-BibFiles/library_math_2020,$HOME/Dropbox/0-Libraries+app-data/Bibdesk-BibFiles/library_physics}{}

\begin{thebibliography}{FW26d}

\bibitem[AP93]{ambrosetti:1993a}
Antonio Ambrosetti and Giovanni Prodi.
\newblock {\em A primer of nonlinear analysis}, volume~34 of {\em Cambridge
  Studies in Advanced Mathematics}.
\newblock Cambridge University Press, Cambridge, 1993.

\bibitem[BH21]{Behzadan:2021a}
A.~Behzadan and M.~Holst.
\newblock Multiplication in {S}obolev spaces, revisited.
\newblock {\em Ark. Mat.}, 59(2):275--306, 2021.

\bibitem[BOV21]{Barutello:2021b}
Vivina Barutello, Rafael Ortega, and Gianmaria Verzini.
\newblock Regularized variational principles for the perturbed {K}epler
  problem.
\newblock {\em Adv. Math.}, 383:Paper No. 107694, 64, 2021.
\newblock \href{https://arxiv.org/abs/2003.09383}{arXiv:2003.09383}.

\bibitem[FW24]{Frauenfelder:2024c}
Urs {Frauenfelder} and Joa {Weber}.
\newblock {Growth of eigenvalues of Floer Hessians}.
\newblock {\em \href{https://vixra.org/author/joa_weber}{viXra e-prints}
  {\small science, freedom, dignity}}, pages 1--50, August 2024.
\newblock \href{https://vixra.org/abs/2411.0060}{viXra:2411.0060}.

\bibitem[FW25]{Frauenfelder:2025e}
Urs {Frauenfelder} and Joa {Weber}.
\newblock {Hilbert manifold structures on path spaces}.
\newblock {\em \href{https://vixra.org/author/joa_weber}{viXra e-prints}
  {\small science, freedom, dignity}}, pages 1--81, July 2025.
\newblock \href{https://vixra.org/abs/2507.0031}{viXra: 2507.0031}.

\bibitem[FW26a]{Frauenfelder:2026c}
Urs Frauenfelder and Joa Weber.
\newblock {Loop space blow-up and scale calculus}.
\newblock {\em Arch. Math. (Basel)}, 126:335--342, 2026.
\newblock \href{https://rdcu.be/eZpmp}{Open access}.

\bibitem[FW26b]{Frauenfelder:2026a}
Urs {Frauenfelder} and Joa {Weber}.
\newblock {Merry-go-round and time-dependent symplectic forms}.
\newblock {\em \href{https://vixra.org/author/joa_weber}{viXra e-prints}
  {\small science, freedom, dignity}}, pages 1--20, January 2026.
\newblock \href{https://vixra.org/abs/2601.0019}{viXra: 2601.0019}.

\bibitem[FW26c]{Frauenfelder:2025g}
Urs {Frauenfelder} and Joa {Weber}.
\newblock {The linearized Floer equation in a chart}.
\newblock {\em SIGMA}, 22(032):38 pages, 2026.
\newblock \href{https://sigma-journal.com/Merry.html}{Special Issue} on
  Geometry and Dynamics in memory of Will Merry.
  \href{https://doi.org/10.3842/SIGMA.2026.032}{Open access}.

\bibitem[FW26d]{Frauenfelder:2026b}
Urs {Frauenfelder} and Joa {Weber}.
\newblock {Towards a Floer theory for Mars~I -- Twisted Zeeman systems}.
\newblock {\em \href{https://vixra.org/author/joa_weber}{viXra e-prints}
  {\small science, freedom, dignity}}, May 2026.
\newblock \href{https://vixra.org/abs/2605.0112}{viXra: 2605.0112}.

\bibitem[FW26e]{Frauenfelder:2026e}
Urs {Frauenfelder} and Joa {Weber}.
\newblock {Towards a Floer theory for Mars~III -- Nonlocal Floer-Morse index
  correspondence}.
\newblock {\em \href{https://vixra.org/author/joa_weber}{In preparation}},
  2026.

\bibitem[MS04]{mcduff:2004a}
Dusa McDuff and Dietmar Salamon.
\newblock {\em {$J$}-holomorphic curves and symplectic topology}, volume~52 of
  {\em American Mathematical Society Colloquium Publications}.
\newblock American Mathematical Society, Providence, RI, 2004.

\bibitem[M{\"{u}}l07]{Muller:2007a}
Vladimir M{\"{u}}ller.
\newblock {\em {Spectral Theory of Linear Operators -- and Spectral Systems in
  Banach Algebras}}, volume 139 of {\em {Operator Theory: Advances and
  Applications}}.
\newblock Birkh\"{a}user Verlag, Basel, 2nd edition, 2007.

\bibitem[Tay96]{Taylor:1996a}
Michael~E. Taylor.
\newblock {\em Partial differential equations. Basic theory.}, volume~23 of
  {\em Texts in Applied Mathematics}.
\newblock Springer-Verlag, New York, 1996.

\end{thebibliography}

%


\end{document}